\numberwithin{equation}{section}
\theoremstyle{plain}
\newtheorem{thm}{Theorem}[section]
\newtheorem{lem}[thm]{Lemma}
\newtheorem{prop}[thm]{Proposition}
\newtheorem{cor}[thm]{Corollary}
\newtheorem{defn}[thm]{Definition}
\def \be {\begin{equation}}
\def \ee {\end{equation}}
\def \E {\mathbb{E}}
\def \R {\mathbb{R}}
\def \fD {\mathfrak{D}}
\def \fd {\mathfrak{d}}
\def \Int {\mathrm{Int}(\widehat{\mathcal S}_d)}
\def \H {\widehat{\mathcal{H}}}
\def\balpha{\boldsymbol \alpha}
\def\E{{\mathbb E}}
\def\ES{\llbracket d \rrbracket}
\def\ESd{\llbracket d-1 \rrbracket}
\def\R{{\mathbb R}}
\def\RR{{\mathbb R}}
\def\PP{{\mathbb P}}
\def\H{{\mathcal H}}
\renewcommand{\phi}{\varphi}
\renewcommand{\epsilon}{\varepsilon}
\renewcommand{\tilde}{\widetilde}
\renewcommand{\hat}{\widehat}
\providecommand{\customgenericname}{}
\newcommand{\newcustomtheorem}[2]{%
  \newenvironment{#1}[1]
  {%
   \renewcommand\customgenericname{#2}%
   \renewcommand\theinnercustomgeneric{##1}%
   \innercustomgeneric
  }
  {\endinnercustomgeneric}
}
\begin{document}

\title[Selection for potential randomized finite state MFG]{Selection by vanishing common noise for potential finite state mean field games}

\author{Alekos Cecchin}

\author{Fran\c cois Delarue}

\address[A. Cecchin and F. Delarue]
{\newline \indent Universit\'e C\^ote d'Azur, CNRS, 
Laboratoire de Math\'ematiques  J.A. Dieudonn\'e,
\newline 
\indent 28 Avenue Valrose, 06108 Nice Cedex 2, France
\newline }
\email{alekos.cecchin@univ-cotedazur.fr, francois.delarue@univ-cotedazur.fr}
\thanks{A. Cecchin and F. Delarue acknowledge the financial support of French ANR project ANR-16-CE40-0015-01
	on ``Mean Field Games''. 
}

\date{\today}

\keywords{Mean field games, finite state space, Wright-Fischer diffusion, Kimura operator, master equation, restoration of uniqueness, vanishing viscosity, selection principle, Hamilton-Jacobi-Bellman PDE, hyperbolic system of PDEs, semiconcave functions, viscosity solution}

\subjclass[2010]{35K65, 35L40, 35Q89, 49L25, 49N80, 60F05, 91A16}

\date{\today}

\begin{abstract}
	The goal of this paper is to provide a selection principle for potential mean field games on a finite state space and, in this respect, to show that equilibria that do not minimize the corresponding mean field control problem should be ruled out. Our strategy is a tailored-made version of the vanishing viscosity method for partial differential equations. Here, the viscosity has to be understood as the square intensity of a common noise that is inserted in the mean field game or, equivalently, as the diffusivity parameter in the related parabolic version of the master equation. 
	As established in the recent contribution \cite{mfggenetic}, 
	the randomly forced mean field game becomes indeed
	uniquely solvable 
	for a relevant choice of a Wright-Fisher  common noise, the counterpart of which in the master equation is a Kimura operator on the simplex. We here elaborate on  
	\cite{mfggenetic} to make the mean field game with common noise both uniquely solvable and potential, meaning that its unique solution is in fact equal to the unique minimizer of a suitable stochastic mean field control problem. Taking the limit as the intensity of the common noise vanishes, we obtain a rigorous proof of the aforementioned selection principle. 
	As a byproduct, we get that the classical solution to the viscous master equation associated with the mean field game with common noise converges to the gradient of the value function of the mean field control problem without common noise; we hence select a particular weak solution of the master equation of the original mean field game. Lastly, we establish 
	{an intrinsic}
	uniqueness {criterion for} this solution within a suitable class.
\end{abstract}
\maketitle

\section{Introduction}

The theory of Mean Field Games (MFG) addresses Nash equilibria within infinite population of rational players subjected to mean field interactions. 
It has received a lot of attention since the pioneering works of Lasry and Lions \cite{Lasry2006,LasryLions2,LasryLions} and 
of Huang, Caines and Malham\'e \cite{HuangCainesMalhame1,Huang2007,Huang2006}.
Earlier works in the field were mostly dedicated to proving the existence of such equilibria
in a various types of settings, including 
deterministic or stochastic dynamics, 
stationary or 
time-inhomogeneous models, 
continuous or finite state spaces, local or nonlocal couplings... 
Many of the proofs in this direction go through the analysis of the so-called MFG system, which is a system of two forward and backward Partial Differential Equations (PDEs) --PDEs reducing to mere ODEs for finite state spaces-- describing both the dynamics of an equilibrium and the evolution of the cost to a typical player along this equilibrium, see for instance \cite{cardaliaguet,Gomes2013,LasryLions}
and  
\cite[Chapter 3]{CardaliaguetDelarueLasryLions} for a tiny example, together with the notes and complements in 
\cite[Chapter 3]{CarmonaDelarue_book_I} for more references. Another and slightly more recent object in the field is the master equation, which is the analogue of the Nash system for games with finitely many players and which hence describes the evolution of the value of the game in the form of a PDE set on the space of probability measures. Informally, the connection between the MFG system and the master equation is pretty simple: The MFG system is nothing but the system of characteristics of the master equation. This picture may be made rigorous when the MFG has a unique equilibrium. Provided that the coefficients of the game are smooth enough, the master equation is then expected to have itself a classical solution. In all the instances where this guess can be indeed demonstrated
(see for instance \cite{CardaliaguetDelarueLasryLions,CarmonaDelarue_book_II,cha-cri-del_AMS}
in the continuous setting and \cite{bay-coh2019,cec-pel2019} in the discrete case),
the standard assumption that is used --and in fact it is, up to some slight extensions, more or less the only one that exists-- for ensuring uniqueness is the so-called Lasry-Lions monotonocity condition, see \cite{Lasry2006,LasryLions2,LasryLions}
and \cite[Chapter 3]{CarmonaDelarue_book_I} for monotonicity on continuous state spaces and \cite{Gomes2013} on finite state spaces. In fact, monotonicity has the great advantage of being very robust (meaning that it not only forces uniqueness but also stability of the equilibria) but, at the same time, it has the drawback of being rather restrictive from a practical point of view. Unfortunately,  the master equation becomes poorly understood beyond the monotonous case. In particular, the connection between the 
MFG system and the master equation takes a dramatic turn whenever equilibria are no longer unique: In the latter case, 
there may be several possible values for the game; 
accordingly, 
classical solutions to the master equation cease to exist
and almost nothing is then known on the master equation, except maybe in few examples in which the master equation can be reduced to a one-dimensional PDE. 

This is precisely the goal of our paper to make one new step forward and to address, in a more systematic way, 
the following two questions for a suitable class of MFGs without uniqueness:
\begin{enumerate}
	\item Is it possible to select some of the equilibria of the MFG?
	\item Is it possible to select one specific solution of the master equation?
\end{enumerate}
For sure, those two questions are very challenging in full generality. Subsequently, we cannot hope for a class of MFGs that is too big. In fact, the typical examples for which those two questions have been addressed rigorously in the literature are cases where equilibria can be described through a one-dimensional parameter only, say their mean if the state space is embedded in ${\mathbb R}$, see for instance \cite{delfog2019} that addresses 
a linear-quadratic 1d MFG with Gaussian equilibria and 
\cite{bayzhang2019,cecdaifispel} that address two examples of  MFGs 
on $\{0,1\}$ and $\{-1,1\}$ respectively. We here intend to study a generalization of \cite{bayzhang2019,cecdaifispel} and to consider MFGs on a finite state space of any cardinality. However, even the latter would remain too much: We thus restrict ourselves to so-called potential games, namely to games whose cost coefficients derive from potentials; As explained in Subsection \ref{subse:example} below, this still covers the framework of \cite{bayzhang2019,cecdaifispel} and, interestingly, this provides an example where equilibria cannot be described by a single parameter.  

The great interest of potential games is that 
they are intrinsically associated with a variational problem, usually referred to as a Mean Field Control Problem (MFCP): 
The MFG {indeed} reads as a first order condition for the MFCP, meaning that 
any minimal trajectory of the corresponding MFCP solves the MFG, see for instance 
\cite{cardaliaguet,LasryLions}
and 
\cite{Gomes2013,Gueant_tree} for earlier refences on the continuous and discrete settings respectively. In short, the MFCP is here a deterministic control problem with trajectories taking values within the space of probability measures (over the state space supporting the MFG) and the cost functional of which is driven by the potentials of the cost coefficients of the original MFG. Noticeably, 
this variational interpretation of MFG has been widely used in the analysis of the MFG system, 
see for example \cite{Cardaliaguet_local_coupling,CardaliaguetGraber,CardaliaguetGraberPorrettaTonon}. Here, we want to use it as a way to rule out some of the equilibria of the MFG, namely those that are not minimizers of the MFCP: We provide examples of such equilibria in Subsection \ref{subse:example}. 
For sure, we could decide to impose this selection principle arbitrarily but, in the end, this would make little sense. The main purpose of the paper is thus to   justify rigorously such a procedure. 

Before we say more on the mathematical approaches to this selection principle, it might be worth recalling that, intuitively, MFG are to be thought of as asymptotic versions
of games with finitely many players, see for instance \cite{LasryLions} for an earlier discussion on this question together with \cite{Carmona2013,Cecchin2017,Huang2006} for a generic manner to reconstruct {approximate equilibria} to the finite game from solutions of the MFG. In this respect, the most convincing strategy for justifying the selection principle would certainly consist in proving that the equilibria 
of the corresponding finite player version of the game converge (in some way) to minimizers of the corresponding MFCP. Actually, this is precisely what is done in \cite{bayzhang2019,cecdaifispel} in a specific case where the state space has exactly two elements. However, this turns out to be a difficult approach since the passage from games with finitely many players to MFGs remains, in general and regardless any question of selection, rather subtle, see for instance \cite{CardaliaguetDelarueLasryLions,Fischer2017,Lacker_2016_convergence,Lacker2}
and 
\cite{bay-coh2019,cec-pel2019}
for several contributions on this matter in continuous and discrete settings respectively. Another strategy, already used in \cite{delfog2019} in a linear quadratic case, is to 
pass to the limit in a randomly forced version of the MFG, the limit being taken as the intensity of the random forcing tends to $0$. In the MFG folklore, this random forcing is usually referred to as a \textit{common} or \textit{systemic} noise, 
since it must be understood, in the finite version of the MFG, as a noise that is common to all the players --in contrast to idiosyncratic noises that are independent and specific to each given player--. 
We refer for instance to 
\cite{CardaliaguetDelarueLasryLions,CarmonaDelarueLacker} for 
two distinct approaches to continuous state MFGs 
with a common noise; as for the finite state case, 
we refer, among others,
to \cite{BertucciLasryLions}, 
the key idea of which is to  
force the finite-player system to have many simultaneous jumps at some random times prescribed by the common noise.
The key fact for us is that, provided that it is rich enough, the common noise may restore uniqueness, see for instance \cite{DelarueSPDE,fog2018} for continuous state MFGs and the recent article \cite{mfggenetic} 
for finite state MFGs (noticeably, the
latter also involves an additional repulsive forcing at the boundary); in brief, the cornerstone in 
\cite{mfggenetic} is to design a form a common noise, which we call Wright-Fisher, so that the corresponding master equation becomes a system of nonlinear PDEs driven by a so-called Kimura operator and hence enjoys the related Schauder like smoothing estimates established in \cite{epsteinmazzeo}. This paves the way for the following sketch: If we succeed to associate a variational structure to the MFG with a common noise --meaning that the unique equilibrium of the MFG with common noise is also 
the unique minimizer of some MFCP with common noise-- and if we then manage to show that the minimizer of the MFCP with common noise converges --in some {suitable} sense-- to solutions of the original MFCP without common noise, then we are done! Although it is quite clear, this idea is not so simple to implement: In short, 
the procedure used in 
\cite{mfggenetic} to restore uniqueness in finite state MFGs does not preserve the potential structure. Part of our job here is thus to 
elaborate on \cite{mfggenetic} in order to cook up a randomly forced version of the MFG that is uniquely solvable and that derives from a potential; equivalently, the corresponding master equation is required to coincide with the derivative of a suitable parabolic Hamilton-Jacobi-Bellman equation on the simplex, the analysis of which is here carried out explicitly by means of the properties of the Kimura operator associated with the common noise. Another task is then to take the limit as the intensity of the common noise tends to zero and 
to show that the solutions that are selected in this way are indeed minimizers of the MFCP without common noise, hence justifying the selection principle that we figured out. The last step in our program is to make the connection between 
the selection principle and the master equation: As for the potential MFG with a common noise, we show that the master equation has indeed a unique classical solution and that the latter converges almost everywhere to the gradient of the 
value function of the MFCP without common noise; following an earlier work of Kru\v zkov \cite{kruzkov}, 
we are then able to prove that 
this limit is in fact a weak solution to a conservative form of the master equation
and that it is the unique one that satisfies in addition a weak semiconcavity property. 
We provide a lengthy review of all these results in Section 
\ref{sec:main} below. 
The MFCP with common noise is introduced and studied in Section 
\ref{sec:3}. The related MFG with common noise is investigated in Section \ref{sec:4}. 
Selection of the equilibria is addressed in Subsection 
\ref{subse:selection}
and selection of a solution to the master equation is discussed in 
Subsection \ref{subse:distance:MFCP} by letting the common noise vanish, and in Section \ref{sec:uniqueness:master:equation} in an intrinsic manner.

\vspace{5pt}
\noindent\textbf{Notation.}
Throughout the text, the state space is taken as
$\ES:=\{1,\cdots,d\}$, for an integer $d \geq 2$.  
We use the generic notation $p = (p_i)_{i \in \ES}$ (with   $i$ in subscript) for elements of $\R^d$, while processes are usually denoted by ${\boldsymbol p}=((p^i_t)_{i=1,\dots,d })_{0 \leq t \leq T}$ (with $i$ in superscript).
Also, we let ${\mathcal S}_{d}:= \{ (p_{1},\cdots,p_{d}) \in (\RR_{+})^d : \sum_{i\in\llbracket d \rrbracket}p_{i}=1\}$ be the {$(d\!-\!1)$}-dimensional simplex. The Euclidean norm of $p\in\R^d$ is denoted by $|p|$.   We can identify 
${\mathcal S}_{d}$
with the convex polyhedron of {${\mathbb R}^{d-1}$ $\hat{\mathcal S}_{d}:=\{ (x_{1},\cdots,x_{d-1}) \in (\RR_{+})^{d-1} : \sum_{i \in \llbracket  d\!-\!1 \rrbracket} x_{i} \leq 1\}$}. In particular, we sometimes write 
``the interior'' of ${\mathcal S}_{d}$; in such a case, we implicitly {define} the interior 
of ${\mathcal S}_{d}$ as the {$(d\!-\!1)$}-dimensional interior of $\hat{\mathcal S}_{d}$. 
To make it clear, 
for some $p \in {\mathcal S}_{d}$, 
we write $p \in 
\textrm{Int}(\mathcal S_{d})$ 
to say that  
$p_{i}>0$ for any $i \in \ES$.
We also write $x\in \textrm{Int}(\hat{\mathcal S}_{d})$ to say that {$x \in \hat{\mathcal S}_{d}$}, $x_i >0$ for each $i\in \llbracket {d\!-\!1} \rrbracket$ and $\sum_{i \in \llbracket {d\!-\!1} \rrbracket} x_{i} < 1$.

We use the same convention when speaking about the boundary of ${\mathcal S}_{d}$:
For some $p \in {\mathcal S}_{d}$, 
we may write $p \in 
\partial \mathcal S_{d}$ to say that 
$p_{i}=0$ for some $i \in \ES$. 
Finally, 
$\delta_{i,j}$ is the Kronecker symbol, $r_+$ denotes the positive part of $r\in\mathbb{R}$
and, for two elements $(v_{i})_{i \in \ES}$ and $(w_{i})_{i \in \ES}$ of $\RR^d$, we sometimes denote the
inner product $\sum_{i \in \ES} v_{i} w_{i}$ by $\langle v_{\bullet},w_{\bullet}\rangle$.

\section{Main results}
\label{sec:main}
In order to state our main results, we first introduce step by step the several forms of MFGs that we handle in the paper. 
We start with the game without common noise, which is assumed to be 
potential. Borrowing from the PDE literature --which is justified here since we make a rather systematic use of the master equation--, this game could be called \textit{inviscid}. As we already explained in introduction, it might not be uniquely solvable, which fact is the basic rationale for inserting next a common noise in the dynamics. Following 
\cite{mfggenetic}, we may indeed cook up a form of noise --together with a repulsive forcing at the boundary-- that preserves the 
structure of the 
simplex 
and that forces the MFG to become uniquely solvable. Accordingly, the game with common noise should be called 
\textit{viscous}. Unfortunately, a striking point in our study is that the common noise, at least in the form postulated in 
\cite{mfggenetic}, destroys the potential structure of the game.
This prompts us to address in the end a new and tailored-made form of MFG that is
driven by both a common noise and 
a potential structure.



\subsection{A first form of MFG}
The general form of inviscid MFGs that we here consider is given by
the following fixed point problem: For some time horizon $T>0$, 
find an ${\mathcal S}_{d}$-valued continuous trajectory ${\boldsymbol p}=(p_{t})_{0 \le t \le T}$ 
that is an
optimal trajectory to the ${\boldsymbol p}$-dependent control problem 
\begin{equation}
	\label{eq:cost:functional}
	\inf_{\balpha=(\alpha_{t})_{0 \le t \le T}}
	J(\balpha;{\boldsymbol p}), \quad 
	J(\balpha;{\boldsymbol p}) = 
	\int_0^T \sum_{i \in \ES} q^i_t \left({\mathfrak L}^i(\alpha_t) + f^i(p_t)\right)dt +  \sum_{i \in \ES} q^i_T g^i(p_T),
\end{equation}
where ${\boldsymbol q}=(q_{t})_{0 \le t \le T}$ solves the Fokker-Planck (FP) equation
\begin{equation}
	\label{eq:fp}
	\dot{q}_{t}^i = \sum_{j \in \ES} q_{t}^j \alpha_{t}^{j,i}, \quad t \in [0,T], \quad i \in \ES,
\end{equation} 
subjected to
the initial condition $q_{0}=p_{0}$
and to 
the control $\balpha=((\alpha_{t}^{i,j})_{i,j \in \ES})_{0 \le t \le T}$ satisfying the  constraint
\begin{equation}
	\label{eq:fp:control}
	\alpha_{t}^{i,j} \geq 0, \quad i,j \in \ES, \ i \not = j \, ; \quad 
	\alpha_{t}^{i,i} = - \sum_{j \neq i} \alpha_{t}^{i,j}, \quad i \in \ES \, ; \quad t \in [0,T]. 
\end{equation}
Obviously, the latter constraint says that the 
trajectory 
${\boldsymbol q}$ may be interpreted as the collection of marginal distributions of 
a Markov process with rates $((\alpha_{t}^{i,j})_{i,j \in \ES})_{0 \le t \le T}$. 
In the definition of the cost functional \eqref{eq:cost:functional}, $(f^i)_{i \in \ES}$ and $(g^i)_{i \in \ES}$
are tuples of real valued enough smooth functions, the form of which is specified in the next subsection. As for the cost $({\mathfrak L}^i)_{i \in \ES}$, we take for convenience
\begin{equation}
	\label{eq:lagrangian:main}
	\mathfrak{L}^i(\alpha)= \tfrac12 \sum_{j\neq i} |\alpha^{i,j}|^2. 
\end{equation}
The MFG associated with 
\eqref{eq:cost:functional} and 
\eqref{eq:fp} has been widely studied. In this respect, it is worth recalling that uniqueness is known to hold true in a few settings only 
and may actually fail in many cases. 
The typical condition that is used in practice to ensure uniqueness is a form of monotonicity of the cost coefficients 
$f$ and $g$, but 
as recalled in Introduction
and as shown in the recent paper
\cite{mfggenetic}, 
uniqueness can be also restored --without any further need of monotonicity-- by adding 
to the dynamics of ${\boldsymbol q}$
a convenient kind of common noise together 
with a repulsive forcing at the boundary. 
In the presence of common noise, equilibria become random:
In \cite{mfggenetic},
candidates ${\boldsymbol p}$ for solving the 
equilibria are then sought as ${\mathcal S}_{d}$-valued continuous stochastic processes 
(on $[0,T]$) that are adapted to the (complete) filtration ${\mathbb F}$ generated by a collection of Brownian motions $((B_{t}^{i,j})_{0 \le t \le T})_{i,j \in \ES : i \not =j}$ --this collection forming the common noise--, 
{constructed on a given (complete) probability space $(\Omega,{\mathcal A},{\mathbb P})$}.
Accordingly, the FP equation 
\eqref{eq:fp} 
for ${\boldsymbol q}=(q_{t})_{0 \le t \le T}$
becomes a Stochastic Differential Equation (SDE) driven by both the common noise 
$((B_{t}^{i,j})_{0 \le t \le T})_{i,j \in \ES : i \not =j}$
and the environment ${\boldsymbol p}$, the general form of which is 
\begin{equation}
	\label{dynmfg}
	dq_{t}^i = \sum_{j\neq i} \left(q_{t}^j (\varphi(p^i_t) +\alpha_{t}^{j,i}) 
	- q_t^i(\varphi(p^j_t)+\alpha_t^{i,j})\right) dt + \frac{\varepsilon}{\sqrt{2}} \sum_{j\neq i} 
	\frac{q_{t}^i}{p_{t}^i} 
	\sqrt{p_{t}^i p_{t}^j} \bigl( dB_{t}^{i,j} - dB_{t}^{j,i} \bigr),
\end{equation}
for $i \in \ES$ and $t \in [0,T]$, 
with $q_{0}=p_{0}$ as initial condition. 
Forgetting for a while the presence of $\varphi$ --we comment more on it in the sequel, but, in our claim here, 
the reader may take it as zero--, a peculiar 
point with 
\eqref{dynmfg} is that, generally speaking,  
the components $(q_{t}^i)_{0 \le t \le T}$
are positive but 
the mass process 
$(\sum_{i=1}^d q_{t}^i)_{0 \le t \le T}$ is just equal to $1$ 
in the mean under the expectation 
${\mathbb E}$ carrying the common noise. We refer to   
\cite[Prop 2.3]{mfggenetic} for more details on this subtlety, but also on the solvability of 
\eqref{dynmfg}: Basically, 
\eqref{dynmfg} is uniquely solvable if $\balpha$ is a bounded process and 
$\varepsilon^2 \int_{0}^T (1/p_{t}^i) dt$ has finite exponential moments of sufficiently high order for any $i \in \ES$. 
Consistently with the fact that both 
${\boldsymbol p}$
and 
${\boldsymbol q}$
are random, the control process $\balpha$ is also assumed to be progressively-measurable with respect to 
${\mathbb F}$
and, in the resulting MFG with a common noise, the
cost \eqref{eq:cost:functional} is averaged out with respect to the expectation ${\mathbb E}$, namely
the cost functional becomes  
\begin{equation}
	\label{eq:cost:functional:noise}
	\inf_{\balpha=(\alpha_{t})_{0 \le t \le T}}
	J^{\varepsilon,\varphi}(\balpha;{\boldsymbol p}), 
	\quad 
	J^{\varepsilon,\varphi}(\balpha;{\boldsymbol p}) =
	{\mathbb E}
	\biggl[
	\int_0^T \sum_{i \in \ES} q^i_t \left({\mathfrak L}^i(\alpha_t) + f^i(p_t)\right)dt +  \sum_{i \in \ES} q^i_T g^i(p_T) \biggr].
\end{equation}

The reader must pay attention to the superscript $\varphi$ right above. Indeed, 
in addition to the common noise, 
the intensity of which is denoted by the positive parameter 
$\varepsilon$ in \eqref{dynmfg} ({which we take in $(0,1]$ in the sequel}),
the other main feature of \eqref{dynmfg}
is the additional $\varphi$ therein: From now on, we may no longer assume 
it to be zero.
As we alluded to, $\varphi$ is actually intended to induce a repulsive
drift that forces equilibria (dynamics of equilibria are obtained by taking ${\boldsymbol p}={\boldsymbol q}$ in \eqref{dynmfg})
to stay away from the boundary of the simplex, whenever $p_0 \in \mathrm{Int}(\mathcal{S}_d)$ --we return to this point in the next subsection--, which explains in the end why we 
are allowed to divide by $p_{t}^i$ in 
the right-hand side of 
\eqref{dynmfg}. 
To achieve this goal and apply the results of \cite{mfggenetic}, it suffices for the moment to assume that $\phi$ is  a non-increasing smooth function such that 
\begin{equation}
	\label{eq:varphi:2}
	\varphi(r) = \left\{
	\begin{array}{ll}
		\kappa, \quad &\text{if} \quad r \in [0, \theta],
		\\
		\geq 0, \quad &\text{if} \quad r > \theta .
	\end{array}
	\right.
\end{equation}
Here, $\kappa$ and $\theta$ are two additional positive parameters that permit to tune 
the intensity of the drift induced by $\varphi$. In this framework, assuming that $f$ and $g$ have suitable H\"older regularity (we return to this point next), the main result of 
\cite{mfggenetic} may be summarized as follows: 
there exists a constant $\kappa_{1}>0$ only depending on $\|f \|_{\infty}$, $\|g\|_{\infty}$, $T$ and $d$, such that for any ${\varepsilon \in (0,1]}$, $\theta >0$
and $\kappa \geq \kappa_{1}/\varepsilon^2$,  
for any initial condition $p_{0}$ such that 
$p_{0}^i >0$ for any $i \in \ES$, 
the MFG associated with
the dynamics
\eqref{dynmfg}
and with the cost functional 
\eqref{eq:cost:functional:noise} is uniquely solvable.

\subsection{Potential structure} 
\label{subse:potential}
As announced in  Introduction, our main objective in this paper is to provide a selection criterion for the original MFG \eqref{eq:cost:functional}--\eqref{eq:fp} --so without common noise and outside any monotonicity condition-- by letting the intensity 
$\varepsilon$ tend to $0$ 
and the support of $\varphi$ shrink to the boundary of the simplex 
in 
\eqref{dynmfg}
and 
\eqref{eq:cost:functional:noise}.
This is however a very ambitious program that goes far beyond the single scope of this paper. 
{Indeed, due to the numerous singularities that may emerge when taking the limit 
$\varepsilon \rightarrow 0$ (the fact that  
\eqref{eq:cost:functional}--\eqref{eq:fp} is not uniquely solvable makes a strong evidence for the existence of such singularities), there are indeed no good stability properties for the solutions to  
\eqref{dynmfg}--\eqref{eq:cost:functional:noise} when $\varepsilon$ is small.} 
To reduce much of the complexity, we here restrict ourselves to the so-called potential case. Following \cite{cardaliaguet,Gueant_tree,LasryLions}, we 
hence assume that the coefficients $f$ and $g$ derive from smooth potentials $F$ and $G$. Roughly speaking, 
this means that
\begin{equation}
	\label{potnabla}
	f^{i}(p) = \frac{\partial F}{\partial p_{i}}(p), \quad 
	g^i(p) = \frac{\partial G}{\partial p_{i}}(p), \quad p \in {\mathcal S}_{d},
\end{equation} 
but this writing is not completely satisfactory: In order to give a meaning to the two derivatives in the right-hand side above, both $F$ and $G$ must be in fact defined on an open subset of $\RR^d$ containing 
${\mathcal S}_{d}$ --recall that the latter is a $(d\!-\!1)$-dimensional manifold--. 
In case when $F$ and $G$ are just defined on the simplex, we may use instead the intrinsic derivative on the simplex, which identifies with a $(d\!-\!1)$-dimensional instead of $d$-dimensional vector. 
We refer to \cite[Subsection 3.2.1]{mfggenetic} for the definition of intrinsic derivatives, but say to clarify that, whenever 
$F$ is differentiable on a neighborhood of the simplex in $\R^d$, the intrinsic gradient ${\mathfrak D} F=(\fd_1 F,\dots, \fd_d F)\in \R^d$  of $F$ is simply given by 
the orthogonal {projection} of the $d$-dimensional gradient $\nabla F$  onto the orthogonal space to the $d$-dimensional vector 
$\bm{1}=(1,\cdots,1)$, which is the tangent space to the simplex. Hence we define ${\mathfrak D} F = \nabla F - \frac1d \langle \nabla F , \bm{1} \rangle \bm{1}$, and, when $F$ is just defined on the simplex, the intrinsic derivative is defined by the same formula, but rewritten as 
\[
\fd_i F(p) = {\partial_{\varepsilon} \bigl[ F \bigl( p + \varepsilon (e_{i}- \bar e) \bigr) \bigr]_{\vert \varepsilon = 0}},
\qquad {p \in \mathrm{Int}({\mathcal S}_d), \,  i\in\ES}.
\]    
{In the above definition, $e_{i}$ is defined as the $i$th vector of the canonical basis of ${\mathbb R}^d$ and 
$\bar e$ as $\bar e:= (e_{1}+\cdots+e_{d})/d$; in particular, $e_{i} - \bar e$ is a tangent vector to the simplex.}
From the construction, we have 
$\sum_{i\in\ES} \fd_i F =0$.
Therefore, from now on we assume that, for any $i\in\ES$ and $p\in {\mathrm{Int}({\mathcal S}_d)}$, 
\be 
\label{potdef}
\fd_i F(p)=  f^i(p)- \frac1d \sum_{j \in \ES} f^j(p), \qquad \fd_i G(p)=  g^i(p)- \frac1d \sum_{j \in \ES} g^j(p).
\ee
Note that this is satisfied in case $F$ is differentiable in a neighbourhood of $\mathcal{S}_d$ in $\R^d$ and \eqref{potnabla} holds, but \eqref{potdef}  is slightly more general than \eqref{potnabla} because, roughly speaking, it involves $d\!-\!1$ entries instead of $d$. In particular,
{we will see in Subsection 
\ref{subse:example}
that 
any two state mean field game is potential, in the sense that we can always find 
$F$ and $G$ satisfying 
\eqref{potdef}.}



A very appealing fact with potential games --without common noise-- is that they are naturally associated with a control problem. 
Actually, this connection is a general fact in game theory and it goes far beyond the single scope of MFGs. 
In the specific framework of MFGs, the underlying control problem is an MFCP, as we pointed out in introduction. 
In our setting --and once again without common noise--, the MFCP takes the form: 
\begin{equation}
	\label{eq:cost:functional:mfc}
	\inf_{\balpha=(\alpha_{t})_{0 \le t \le T}}
	{\mathcal J}(\balpha), \quad 
	{\mathcal J}(\balpha)=
	\int_0^T \biggl( \sum_{i \in \ES}  q^i_t  {\mathfrak L}^i(\alpha_t) + F(q_t)\biggr)dt +  G(q_T),
\end{equation}
where, as in 
\eqref{eq:cost:functional:mfc}, 
${\boldsymbol q}=(q_{t})_{0 \le t \le T}$ is a deterministic trajectory solving
\eqref{eq:fp}
subjected to
the initial condition $q_{0}=p_{0}$, for some 
given $p_{0} \in {\mathcal S}_{d}$, 
and to 
the deterministic 
control $\balpha=((\alpha_{t}^{i,j})_{i,j \in \ES})_{t \geq 0}$ satisfying the constraint
\eqref{eq:fp:control}. {For conveniency, we also assume that 
admissible controls are bounded, meaning that 
$\alpha^{i,j}\in L^\infty(0,T)$ (see footnote \ref{foo:linfty} for more details).}
The connection between the MFCP 
\eqref{eq:cost:functional:mfc}--\eqref{eq:fp}
and the MFG \eqref{eq:cost:functional}--\eqref{eq:fp}
has been widely addressed in the literature, see for instance 
\cite{cardaliaguet,LasryLions}
for continuous state MFGs
and \cite{Gomes2013,Gueant_tree} for finite state MFGs. Generally speaking, it says that any 
optimal trajectory 
${\boldsymbol p}=(p_{t})_{0 \le t \le T}$ to 
\eqref{eq:cost:functional:mfc}--\eqref{eq:fp}
{that stays away from the boundary of the simplex} 
solves 
the MFG associated with 
\eqref{eq:cost:functional}--\eqref{eq:fp}. However --and this is the starting point of our paper--, there are known instances of MFG equilibria that are not minimizers of 
the corresponding MFCP, see Subsection \ref{subse:example} below for a benchmark example. 
In this regard, 
our main result here is precisely to construct a selection procedure that rules 
out these non-minimal equilibria, meaning that rules out solutions ${\boldsymbol p}$ to the MFG 
\eqref{eq:cost:functional}--\eqref{eq:fp} that are not optimal trajectories of 
\eqref{eq:cost:functional:mfc}--\eqref{eq:fp}. Using the same terminology as in the previous subsection, our strategy is to associate with the inviscid MFG, 
which is hence an inviscid potential game, a viscous potential game with the following four features:
\begin{enumerate}
	\item The viscous potential game is associated with a viscous MFCP -- that is a MFCP with a 
	common noise of intensity $\varepsilon$--, in the sense that any 
	minimizer --we prove that they do exist-- of the viscous MFCP is an equilibrium of the viscous potential game;
	\item The viscous potential game is uniquely solvable, hence implying that its unique solution, say ${\boldsymbol p}^{\varepsilon,\varphi}$, is also the unique optimal trajectory
	of the viscous MFCP;
	\item The optimal trajectory ${\boldsymbol p}^{\varepsilon,\varphi}$ converges in the weak sense, as the viscosity $\varepsilon^2$  tends to 
	$0$, to a probability distribution ${\mathbb M}$ on ${\mathcal C}([0,T];{\mathcal S}_{d})$ that is supported by the set of 
	optimal trajectories of the inviscid MFCP;
	\item The cost functional of the viscous potential game, which is in the end a variant of $J^{\varepsilon,\varphi}$ in 
	\eqref{eq:cost:functional:noise}, converges in a suitable sense to the cost functional $J$ in 
	\eqref{eq:cost:functional}. In particular, the equilibrium cost of the viscous potential game converges to the mean of the equilibrium costs of the inviscid potential game under the limiting distribution ${\mathbb M}$.
\end{enumerate}

The combination of the first three items reads as a selection principle since it rules out equilibria of 
\eqref{eq:cost:functional}--\eqref{eq:fp} that are not optimizers of 
\eqref{eq:cost:functional:mfc}--\eqref{eq:fp}, whilst the last item guarantees 
some consistency in our approach as it says that the cost functional underpinning the approximating viscous potential game
is itself a good approximation of the original cost function $J$ in 
\eqref{eq:cost:functional}.
Although this strategy looks quite natural, it is in fact rather subtle. 
{The major obstacle is that, as we already said, the pair
\eqref{dynmfg}--\eqref{eq:cost:functional:noise}
is not a potential game, hence advocating for the search of a version that derives  from a potential.} 

Before we elucidate the form of the viscous potential game, we stress the fact that, at the end of the day, we are not able to address the limit of \eqref{dynmfg}--\eqref{eq:cost:functional:noise} --in its primary non-potential version--. 
This might seem rather disappointing for the reader, but, once again, this should not come as a suprise: Even though the 
viscous potential game has, as we clarify below, 
a structure that is very close to  
\eqref{dynmfg}--\eqref{eq:cost:functional:noise}, the lack of any good stability estimate 
on 
\eqref{dynmfg}--\eqref{eq:cost:functional:noise} for $\varepsilon$ small, makes really challenging the analysis of the distance between the solutions to 
\eqref{dynmfg}--\eqref{eq:cost:functional:noise} and the solutions to the viscous potential game defined below.

Our first step in the construction of a suitable viscous potential game is the construction of the viscous MFCP itself. 
To do so, we elaborate on \cite{mfggenetic}. 
Following \eqref{eq:cost:functional:mfc}, we can indeed associate with 
the dynamics 
\eqref{dynmfg} a stochastic control problem, which we precisely call viscous MFCP.
It has the following form:
\begin{equation}
	\label{eq:Cost:mfc}
	\inf_{\balpha=(\alpha_{t})_{0 \le t \le T}}
	{\mathcal J}^{\varepsilon,\varphi}(\balpha), \quad 
	{\mathcal J}^{\varepsilon,\varphi}(\balpha)=
	{\mathbb E } \biggl[ \int_0^T \biggl( \sum_{i \in \ES} p^i_t {\mathfrak L}^i(\alpha_t) + F(p_t)\biggr)dt +  G(p_T) \biggr],
\end{equation}
where ${\boldsymbol p}=(p_{t})_{0 \le t \le T}$ solves the $\balpha$-driven SDE
\begin{equation}
	\label{dynpot}
	dp_{t}^i = \sum_{j\neq i} \left(p_{t}^j (\varphi(p^i_t) +\alpha_{t}^{j,i}) 
	- p_t^i(\varphi(p^j_t)+\alpha_t^{i,j})\right) dt + \frac{\varepsilon}{\sqrt{2}} \sum_{j\neq i} 
	\sqrt{p_{t}^i p_{t}^j} \bigl( dB_{t}^{i,j} - dB_{t}^{j,i} \bigr),
\end{equation}
for $i \in \ES$ and $t \in [0,T]$, 
with $q_{0}=p_{0}$ as initial condition, and, as before, 
$\balpha$ is an ${\mathbb F}$-progressively measurable process satisfying 
\eqref{eq:fp:control} except for the fact that, for purely technical reasons, we will rescrit ourselves to processes whose off-diagonal coordinates are 
bounded by a constant $M$ that is explicitly given in terms of $f$, $g$ and $T$ (even though
\eqref{eq:fp:control}
just implies that 
the diagonal coordinates are bounded by $(d\!-\!1)M$, we will say abusively that such processes are bounded by $M$). 
The function $\varphi$ is chosen as in \eqref{eq:varphi:2} and the initial condition $p_0$ belongs to the interior of the simplex. Such  {a} condition on $p_0$ will be always assumed in the rest of the paper, the main reason being that it permits to apply results from \cite{mfggenetic}:
By Proposition 2.1 therein, the SDE 
\eqref{dynpot} 
({which is usually called a Wright-Fisher SDE})
is uniquely solvable in the strong sense if $\kappa \geq \varepsilon^2/2$ and the solution remains in the interior of the simplex, and further, by Proposition 2.2 {(also in \cite{mfggenetic})}, $\int_{0}^T (1/p_{t}^i) dt$ has exponential moments of sufficiently high order, if $\kappa$ is large enough. We recall that this latter integrability condition is necessary for the well-posedness of \eqref{dynmfg}.

In this framework, our first main result has some interest in its own, independently of the 
aforementioned selection principle. 
The functional spaces to which $F$ and $G$ are assumed to belong, and to which the value function is proved to belong, are defined in details in the Appendix, by means of local charts. These are called Wright-Fisher, as introduced in \cite{epsteinmazzeo}, and are used in \cite{mfggenetic} to prove well posedness of the MFG master equation. We just say here that: 
{\begin{enumerate}
\item 
$\mathcal{C}^{0,\gamma}_{\rm WF}(\mathcal{S}_d)$ consists of continuous functions  on ${\mathcal S}_{d}$ that are $\gamma$-H\"older continuous up to the boundary with respect to the metric associated with the 
Wright-Fisher noise in \eqref{dynpot};
accordingly, 
$\mathcal{C}^{1,\gamma}_{\rm WF}(\mathcal{S}_d)$ consists of continuous functions  on ${\mathcal S}_{d}$ that are continuously differentiable in $\textrm{\rm Int}({\mathcal S}_{d})$, with H\"older continuous derivatives up to the boundary;
both spaces are equipped with norms $\| \cdot \|_{{\rm WF},0,\gamma}$
and 
$\| \cdot \|_{{\rm WF},1,\gamma}$;
\item 
for $k=0,1$, 
$\mathcal{C}^{k,2+\gamma}_{\rm WF}(\mathcal{S}_d)$ consists of continuous functions  on ${\mathcal S}_{d}$ that are $2\!+\! k$ times continuously differentiable in  $\textrm{\rm Int}({\mathcal S}_{d})$, 
with derivatives satisfying a suitable behaviour at the boundary and a suitable H\"older regularity that depend on the order of the derivative; in particular, 
the derivatives
of order 1 (if $k=0$) and 
 of order 1 and 2 (if $k=1$) are H\"older continuous up to the boundary, but the derivative of order $2+k$ (i.e. 2 if $k=0$ and $3$ if $k=1$) may blow up at the boundary and be only locally H\"older continuous in the interior;
 both spaces are equipped with norms $\| \cdot \|_{{\rm WF},0,2+\gamma}$
and 
$\| \cdot \|_{{\rm WF},1,2+\gamma}$;
\item $\mathcal{C}^{0,\gamma}_{\rm WF}([0,T]\times\mathcal{S}_d)$ 
and $\mathcal{C}^{k,2+\gamma}_{\rm WF}([0,T]\times\mathcal{S}_d)$
are the parabolic versions of 
$\mathcal{C}^{0,\gamma}_{\rm WF}(\mathcal{S}_d)$,
and 
$\mathcal{C}^{k,2+\gamma}_{\rm WF}(\mathcal{S}_d)$; 
while the former consists of functions on $[0,T] \times {\mathcal S}_{d}$ that are H\"older continuous for a suitable metric, 
the latter consists of continuous functions  on $[0,T] \times {\mathcal S}_{d}$ that are continuously differentiable in 
time $t \in [0,T]$ and that are $2\!+\!k$ times continuously differentiable in space in $
\textrm{\rm Int}({\mathcal S}_{d})$, with derivatives satisfying a suitable behaviour at the boundary and a suitable H\"older regularity; in particular, the time derivative and the space derivatives up to order $1\!+\!k$ are H\"older continuous up to the boundary but the derivative of order $2\!+\!k$ may blow up at the boundary; 
the norms are also denoted by  $\| \cdot \|_{{\rm WF},0,\gamma}$
and $\| \cdot \|_{{\rm WF},k,2+\gamma}$ (below, the norm is understood as being for the parabolic space if the function in argument of the norm is time-space dependent). 
\end{enumerate}
Throughout the sequel}, the parameter $\gamma\in(0,1)$ is fixed. 
\begin{thm}
	\label{main:thm}
	{Recall 
$(\theta,\kappa)$ from \eqref{eq:varphi:2}.}	
	If $F \in \mathcal{C}^{1,\gamma}_{\rm WF}(\mathcal{S}_d)$ and $G\in \mathcal{C}^{1,2+\gamma}_{\rm WF}(\mathcal{S}_d)$, then there exists a constant $\kappa_{1}>0$ only depending on $\|f \|_{\infty}$, $\|g\|_{\infty}$, $T$ and $d$, such that for any ${\varepsilon \in (0,1]}$, $\theta >0$
	and $\kappa \geq \kappa_{1}/\varepsilon^2$,  
	and any initial state 
	$p_{0}$ in $\textrm{\rm Int}({\mathcal S}_{d})$, the 
	MFCP 
	\eqref{eq:Cost:mfc}--\eqref{dynpot}
	set over 
	${\mathbb F}$-progressively measurable processes $\balpha$ that are bounded by 
	$M=2 (\|g \|_{\infty}+ T \| f \|_{\infty})$
	has a unique solution.
	Moreover, there exists $\gamma'\in (0,\gamma]$, possibly depending on $\varepsilon$ and $\kappa$, such that the corresponding Hamilton-Jacobi-Bellman equation has a unique solution ${\mathcal V}^{\varepsilon,\varphi}$ in $\mathcal{C}^{1,2+\gamma'}_{\rm WF}([0,T]\times\mathcal{S}_d)$.
\end{thm}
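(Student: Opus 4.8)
The plan is to identify the value function of the stochastic control problem \eqref{eq:Cost:mfc}--\eqref{dynpot} with the unique classical solution of the associated Hamilton--Jacobi--Bellman (HJB) equation on the simplex, to prove well-posedness of this equation by means of the Wright-Fisher Schauder theory of \cite{epsteinmazzeo} already used in \cite{mfggenetic}, and then to run a verification argument producing the unique optimizer. Let $\mathcal{L}_0^{\varepsilon,\varphi}$ be the generator of the uncontrolled part of \eqref{dynpot}: a computation of the quadratic covariation shows that its principal part is the Kimura/Wright-Fisher operator $\tfrac{\varepsilon^2}{2}\sum_{i,j}(\delta_{i,j}p_i-p_ip_j)\partial_{p_i}\partial_{p_j}$ (which is tangential to the simplex) and that it carries the inward drift induced by $\varphi$. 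Because $\mathfrak{L}^i$ is quadratic and the control enters the drift linearly, the minimization, over the admissible rates bounded by $M$, of $\sum_i\fd_i\mathcal{V}\,\sum_{j\neq i}(p_j\alpha^{j,i}-p_i\alpha^{i,j})+\sum_ip_i\mathfrak{L}^i(\alpha)$ is explicit, with minimizer $\widehat{\alpha}^{i,j}(p,z)=\min\{(z_i-z_j)_+,M\}$ for $z=\mathfrak{D}\mathcal{V}$ and Hamiltonian $H_M(p,z)$ that is convex in $z$, globally Lipschitz in $z$, and equal, on $\{\,|z_i-z_j|\le M\,\}$, to $H(p,z)=-\tfrac12\sum_{i\neq j}p_i\bigl[(z_i-z_j)_+\bigr]^2$, whose $z$-gradient is Lipschitz. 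The HJB equation then reads
\[
\partial_t\mathcal{V}(t,p)+\mathcal{L}_0^{\varepsilon,\varphi}\mathcal{V}(t,p)+H_M\bigl(p,\mathfrak{D}\mathcal{V}(t,p)\bigr)+F(p)=0,\qquad \mathcal{V}(T,\cdot)=G.
\]

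First I would establish a priori bounds that are uniform in $\varepsilon\in(0,1]$, in $\theta$, in $\kappa$ (above a threshold $\kappa_1$) and in $M$: a bound on $\|\mathcal{V}\|_\infty$ by comparison with the zero control (using $\sum_ip_i\mathfrak{L}^i\ge0$), and, above all, a bound on the directional derivatives $\fd_i\mathcal{V}-\fd_j\mathcal{V}$. The latter solve a linear system in these very quantities with source $\fd_iF-\fd_jF=f^i-f^j$ and terminal datum $g^i-g^j$ by \eqref{potdef}; arguing as in \cite{mfggenetic} --the $\varphi$-drift being favourable and the advection by the optimal rates vanishing at a tangential extremum-- a maximum principle gives $\|\fd_i\mathcal{V}-\fd_j\mathcal{V}\|_\infty\le 2\|g\|_\infty+2T\|f\|_\infty=M$, so that along any classical solution the truncation at $M$ is inactive and $H_M$ may be replaced by $H$; this pins down $\kappa_1$, which we also take at least as large as the threshold of \cite{mfggenetic} ensuring strong solvability of \eqref{dynpot} and exponential moments for $\int_0^T(1/p_t^i)\,\mathrm{d}t$. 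For existence I would solve the semilinear Cauchy problem by a fixed point: the linearized operator $\partial_t+\mathcal{L}_0^{\varepsilon,\varphi}$ is precisely of the Kimura type covered by the Schauder estimates of \cite{epsteinmazzeo}, so the map sending $w$ to the solution of the linear equation with source $H_M(p,\mathfrak{D}w)+F$ is well defined and a short-time contraction on a ball of $\mathcal{C}^{1,2+\gamma'}_{\rm WF}([0,T]\times\mathcal{S}_d)$, the assumptions $F\in\mathcal{C}^{1,\gamma}_{\rm WF}(\mathcal{S}_d)$ and $G\in\mathcal{C}^{1,2+\gamma}_{\rm WF}(\mathcal{S}_d)$ providing exactly the compatibility that propagates to $\mathcal{V}^{\varepsilon,\varphi}$; iterating over $[0,T]$ by means of the a priori bounds yields a global classical solution $\mathcal{V}^{\varepsilon,\varphi}\in\mathcal{C}^{1,2+\gamma'}_{\rm WF}([0,T]\times\mathcal{S}_d)$. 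The loss from $\gamma$ to some $\gamma'\in(0,\gamma]$ comes from composing the merely $C^{1,1}$-in-$z$ Hamiltonian with $\mathfrak{D}w$, and the possible dependence of $\gamma'$ on $\varepsilon$ and $\kappa$ from the fact that the Epstein--Mazzeo H\"older exponents depend on the coefficients of the Kimura operator, which scale with $\varepsilon$ and $\kappa$. Uniqueness of the classical solution follows from the contraction run over all of $[0,T]$, or from a comparison principle for the semilinear Kimura equation.

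It then remains to verify optimality. Given $\mathcal{V}^{\varepsilon,\varphi}$, I would apply It\^o's formula to $t\mapsto\mathcal{V}^{\varepsilon,\varphi}(t,p_t)$ along the solution of \eqref{dynpot} driven by any admissible $\balpha$: the process stays in $\mathrm{Int}(\mathcal{S}_d)$ by Proposition 2.1 in \cite{mfggenetic}, and the possible blow-up of the second space derivative of $\mathcal{V}^{\varepsilon,\varphi}$ at $\partial\mathcal{S}_d$ is absorbed against the vanishing diffusion coefficient thanks to the exponential integrability of $\int_0^T(1/p_t^i)\,\mathrm{d}t$ from Proposition 2.2 in \cite{mfggenetic}, valid for $\kappa$ large. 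This gives $\mathcal{J}^{\varepsilon,\varphi}(\balpha)\ge\mathcal{V}^{\varepsilon,\varphi}(0,p_0)$ with equality if and only if $\balpha_t=\widehat{\alpha}\bigl(p_t,\mathfrak{D}\mathcal{V}^{\varepsilon,\varphi}(t,p_t)\bigr)$ for $\mathrm{d}t\otimes\mathrm{d}\mathbb{P}$-almost every $(t,\omega)$; since this feedback is bounded by $M$ by the previous step and $\kappa\ge\kappa_1/\varepsilon^2\ge\varepsilon^2/2$, the closed-loop version of \eqref{dynpot} has a unique strong solution ${\boldsymbol p}^{\varepsilon,\varphi}$ (again Proposition 2.1 in \cite{mfggenetic}), which, since strict convexity of $\mathfrak{L}$ makes the minimizer in $H_M$ unique, is the unique optimal trajectory and carries a unique optimal control. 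Hence the MFCP has a unique solution, and $\mathcal{V}^{\varepsilon,\varphi}$ is its value function.

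The hard part will be the degeneracy of the dynamics at $\partial\mathcal{S}_d$, which confines the whole analysis to the Wright-Fisher H\"older scale: solving the HJB requires the full strength of the Epstein--Mazzeo Schauder theory together with a careful control of the regularity of the quadratic nonlinearity through the boundary degeneracy (whence the non-sharp, possibly $\varepsilon$- and $\kappa$-dependent exponent $\gamma'$), while the verification step is delicate because It\^o's formula must be justified in spite of the second space derivative of $\mathcal{V}^{\varepsilon,\varphi}$ possibly exploding at the boundary, which is exactly where the repulsive forcing $\varphi$ and the exponential moment estimates of \cite{mfggenetic} are indispensable.
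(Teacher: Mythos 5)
Your overall architecture (classical solvability of the HJB equation via the Epstein--Mazzeo Schauder theory, then verification by It\^o's formula using the interior confinement of the solution to \eqref{dynpot} and the exponential moments of $\int_0^T(1/p_t^i)\,dt$) is the one the paper follows, and your verification step is essentially identical to the paper's. The existence step, however, contains a genuine gap. The paper does \emph{not} run a contraction on the HJB equation itself: it first solves the \emph{derivative system} \eqref{derhjbchart} by a Leray--Schauder fixed point, precisely because there the nonlinearity is of order zero, so that the a priori H\"older estimate of \cite[Theorem 3.6]{mfggenetic} --- which is where the threshold $\kappa\ge\kappa_1/\varepsilon^2$ and the exponent $\gamma'$ actually come from --- can be applied to bound the fixed-point set. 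Only afterwards is the HJB equation solved, as a \emph{linear} Kimura equation with the Hamiltonian evaluated at the already-constructed $Z$, and the nontrivial identification $D_x\mathcal{Z}=Z$ is carried out by differentiating the linear equation in the interior and running a Feynman--Kac uniqueness argument. Your short-time contraction would have to be iterated over $[0,T]$, which requires a global a priori estimate in a norm strong enough to keep the contraction interval bounded below (at least a uniform WF-H\"older bound on $\mathfrak{D}\mathcal{V}$). You propose to get this from a maximum principle giving $\|\fd_i\mathcal{V}-\fd_j\mathcal{V}\|_\infty\le M$, but this bound is neither proved in the paper nor needed there: the paper's $L^\infty$ bound on the gradient (Step 2 of the proof of Theorem \ref{thmder}) comes from a probabilistic representation plus Gronwall and carries a factor $e^{CT}$ from the zero-order coefficients $\hat c^{i,j}$ (which involve $\varphi$ and $\varphi'$), so it is in general strictly larger than $M$; the truncation at $M$ is kept in the Hamiltonian throughout and is part of the statement, not something to be removed. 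The step ``a maximum principle gives the bound $M$, which pins down $\kappa_1$'' thus asserts an unestablished estimate and misattributes the origin of $\kappa_1$.

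A second, related point: Theorem 10.0.2 of \cite{epsteinmazzeo} produces solutions in $\mathcal{C}^{0,2+\gamma'}_{\rm WF}$, not in the hybrid space $\mathcal{C}^{1,2+\gamma'}_{\rm WF}$; membership of $\mathcal{V}$ in the hybrid space is obtained in the paper precisely by showing that $\mathfrak{D}\mathcal{V}$ coincides with the solution $Z$ of the derivative system, which lives in $[\mathcal{C}^{0,2+\gamma'}_{\rm WF}]^{d-1}$. A direct contraction in the hybrid space would require Schauder estimates for the differentiated linear Kimura problem --- which brings you back to the derivative system anyway. So the detour through \eqref{derhjbchart} is not optional: it is the mechanism that produces both the regularity class and the constant $\kappa_1$.
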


The proof of this result is given in Section \ref{sec:3}, see Theorem \ref{thm:3.2}. Generally speaking and using the notation ${\mathfrak D}=({\mathfrak d}_{j})_{j \in \ES}$ and 
$\mathfrak{D}^2=({\mathfrak d}_{j,k})_{j,k \in \ES}$ for the first and second order derivatives on the simplex --the second derivative being defined similarly to the first, {see
Subsection \ref{subse:classical:solutions}
for a short account and} \cite[Subsection 3.2.1]{mfggenetic} for more details--, the Hamilton-Jacobi-Bellman (HJB) equation has the following form:
\begin{equation}
	\label{hjbpotnew:sec:2}
	\left\{
	\begin{array}{l}
		\partial_t \mathcal{V} + {\mathcal{H}}_{{M}}^{\varphi}(p, {\mathfrak D} \mathcal{V})
		+ F(p) + \frac{\varepsilon^2}{2} \sum_{j,k \in \ES}(p_j \delta_{j,k}-p_{j} p_{k}) {\mathfrak d}^2_{j,k} \mathcal{V}=0,
		\\
		\mathcal{V}(T,p)=G(p),
	\end{array}
	\right.  
\end{equation}
for $(t,p) \in [0,T] \times {\mathcal S}_{d}$, 
where ${\mathcal H}_{{M}}^{\varphi}$ is an Hamiltonian term depending explicitly on $\varphi$
{and $M$}, the precise form of 
which is not so relevant at this early stage of the paper and will be just given in the sequel of the text, {see \eqref{eq:mathcal:H:varphi:M}}. In fact,  
we feel 
more useful for the reader to be aware of the key fact that, here, this HJB equation is shown to have a unique classical solution. Obviously, this is a strong result that is true because of the presence of the common noise and, in particular, 
that bypasses any use of convexity on $F$ and $G$ (and hence of monotonicity on $f$ and $g$). 
The proof makes use of the smoothing properties obtained in \cite{epsteinmazzeo}
and \cite{mfggenetic} for so-called \textit{Kimura} diffusions that are second-order diffusion operators on the simplex:
In \eqref{hjbpotnew:sec:2}, the second-order structure manifests through the operator 
$\frac{\varepsilon^2}{2} \sum_{j,k}(p_j \delta_{j,k}-p_{j} p_{k}) {\mathfrak d}^2_{j,k}$, which is --and this is the main difficulty in the analysis-- degenerate at the boundary of the simplex. 
The latter explains why we need the forcing $\varphi$ to be sufficiently strong --see the condition $\kappa \geq \kappa_{1}/\epsilon^2$ in the statement--  in order to guarantee for the existence of a classical solution.  

Our second main result is to prove that there is a uniquely solvable MFG that derives 
from the viscous MFCP. Noticeably, this is a non-trivial fact.
The reason is that, because of the presence {of} stochastic terms in \eqref{dynpot}, the standard computations that permit to pass from inviscid MFCPs to inviscid potential games are no longer true. To wit, the result below says that the shape of the cost of the viscous potential game is not the same as the shape of the cost of the original inviscid one. 

\begin{thm}
	\label{main:thm:2}
	Take $F$, $G$, and $M$ as in the statement of Theorem \ref{main:thm}. Then, there exists $\kappa_{2} \geq \kappa_{1}$, 
{only depending on $\|f \|_{\infty}$, $\|g\|_{\infty}$, $T$ and $d$},
	 such that, for any ${\varepsilon \in (0,1]}$, any $\theta>0$  and any $\kappa \geq \kappa_{2}/\varepsilon^2$, we can find a time-dependent 
	coefficient $\vartheta_{\varepsilon,\varphi} : [0,T] \times {\mathcal S}_{d}
	\rightarrow {\mathbb R}^d$ that is continuous on 
	{$[0,T] \times {\mathcal S}_{d}$}, such that, for any 
	initial condition  $p_{0} \in \textrm{\rm Int}({\mathcal S}_{d})$, 
	the optimal trajectory ${\boldsymbol p}^{\varepsilon,\varphi}$ of 
	the 
	MFCP 
	\eqref{eq:Cost:mfc}--\eqref{dynpot} is also the unique equilibrium of the  MFG  with common noise 
	{driven by the ${\boldsymbol p}$-dependent cost functional} 
	\begin{equation}
		\label{eq:new:J:varepsilon,varphi}
		\tilde{J}^{\varepsilon,\varphi}(\balpha;{\boldsymbol p}) =
		{\mathbb E}
		\biggl[
		\int_0^T \sum_{i \in \ES} q^i_t \left( {\mathfrak L}^i(\alpha_t) +   f^i(p_t) + \vartheta_{\varepsilon,\varphi}^i(t,p_{t})\right)dt +  \sum_{i \in \ES} q^i_T g^i(p_T) \biggr],
	\end{equation}
	defined over 
	{pairs $({\boldsymbol q},{\boldsymbol \alpha})$
	solving 
	\eqref{dynmfg}, for ${\mathbb F}$-progressively measurable processes $\balpha$ that are bounded by $M$,  
	and over 
	${\mathbb F}$-adapted continuous processes ${\boldsymbol p}$ that take values in $\textrm{\rm Int}({\mathcal S}_{d})$}.  
\end{thm}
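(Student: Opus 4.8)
The plan is to choose the correction $\vartheta_{\varepsilon,\varphi}$ so that the intrinsic gradient of the value function of the viscous MFCP becomes the per-state value of the representative player in the corrected game. To set things up, let $\mathcal{V} = \mathcal{V}^{\varepsilon,\varphi} \in \mathcal{C}^{1,2+\gamma'}_{\rm WF}([0,T]\times\mathcal{S}_d)$ be the classical solution of \eqref{hjbpotnew:sec:2} produced by Theorem \ref{main:thm}, and let $\hat\alpha = \hat\alpha^{\varepsilon,\varphi}(t,p)$ be the minimiser in the Hamiltonian $\mathcal{H}_M^{\varphi}(p,\mathfrak{D}\mathcal{V}(t,p))$; it is unique because each $\mathfrak{L}^i$ is strictly convex in the off-diagonal entries over the constraint set, and one computes $\hat\alpha^{i,j}(t,p) = \min\bigl(M,(\mathfrak{d}_i\mathcal{V}(t,p) - \mathfrak{d}_j\mathcal{V}(t,p))_+\bigr)$, a continuous feedback bounded by $M$. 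I would then let ${\boldsymbol p}^{\varepsilon,\varphi}$ be the corresponding closed-loop solution of \eqref{dynpot}, which is well posed and $\mathrm{Int}(\mathcal{S}_d)$-valued by Proposition 2.1 of \cite{mfggenetic}; a verification argument applying It\^o's formula to $\mathcal{V}(t,p_t)$ along an arbitrary admissible trajectory of \eqref{dynpot} --- legitimate since $\mathcal{V}$ is classical in $\mathrm{Int}(\mathcal{S}_d)$ and, by Proposition 2.2 of \cite{mfggenetic} for $\kappa$ large enough, $\int_0^T (1/p^i_t)\,dt$ has finite exponential moments --- shows that ${\boldsymbol p}^{\varepsilon,\varphi}$ is the unique optimiser of \eqref{eq:Cost:mfc}--\eqref{dynpot}.

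The core of the argument is to differentiate \eqref{hjbpotnew:sec:2} with respect to $p_i$. Since $\mathcal{V} \in \mathcal{C}^{1,2+\gamma'}_{\rm WF}$, the field $\mathbf{w} := \mathfrak{D}\mathcal{V} = (\mathfrak{d}_i\mathcal{V})_{i\in\ES}$ is $C^1$ in time and $C^2$ in space on $[0,T]\times\mathrm{Int}(\mathcal{S}_d)$, with the first and second intrinsic derivatives of $\mathcal{V}$ moreover H\"older continuous up to $\partial\mathcal{S}_d$ in the Wright--Fisher sense, only the third derivatives being allowed to blow up there. Using the envelope theorem for $\mathcal{H}_M^{\varphi}$ and the symmetry $\mathfrak{d}_i\mathfrak{d}_k\mathcal{V} = \mathfrak{d}_k\mathfrak{d}_i\mathcal{V}$ (which turns the term $\sum_k \partial_{z_k}\mathcal{H}_M^{\varphi}\,\mathfrak{d}_i w^k$ into the transport of $w^i$ by the optimal Kolmogorov drift), the differentiated equation reads, on $[0,T]\times\mathrm{Int}(\mathcal{S}_d)$,
\[ \partial_t w^i + \mathcal{L}^{\hat\alpha}_p w^i + \sum_{j \neq i}\varphi(p^j)(w^j - w^i) + \mathcal{K}^i(\mathbf{w}) + f^i(p) + \vartheta^i_{\varepsilon,\varphi}(t,p) = 0, \qquad w^i(T,\cdot) = \mathfrak{d}_i G, \]
where $\mathcal{L}^{\hat\alpha}_p$ is the generator of \eqref{dynpot} run with $\hat\alpha$ (transport by the optimal drift plus the degenerate Kimura operator $\tfrac{\varepsilon^2}{2}\sum_{j,k}(p_j\delta_{j,k}-p_jp_k)\mathfrak{d}^2_{j,k}$), $\mathcal{K}^i(\mathbf{w}) = \min_{\beta^{i,\bullet}}\{\mathfrak{L}^i(\beta) + \sum_{j\neq i}\beta^{i,j}(w^j - w^i)\}$, and $\vartheta^i_{\varepsilon,\varphi}$ is, by definition, whatever must be added for this to be exactly the master equation of the (non-potential) game \eqref{dynmfg}--\eqref{eq:cost:functional:noise} with $f^i$ replaced by $f^i+\vartheta^i_{\varepsilon,\varphi}$; concretely it consists of the correction $\varphi'(p^i)\sum_{l\neq i}p^l(w^i - w^l)$ from differentiating $\varphi$, the $i$-independent (hence harmless) shift $-\tfrac1d\sum_j f^j(p)$ arising because $f^i = \mathfrak{d}_i F + \tfrac1d\sum_j f^j$, the genuinely new second-order term $\tfrac{\varepsilon^2}{2}\sum_{j,k}\bigl(\mathfrak{d}_i(p_j\delta_{j,k}-p_jp_k)\bigr)\mathfrak{d}^2_{j,k}\mathcal{V}$ obtained by differentiating the Kimura coefficients, and a covariation term of the form $\varepsilon^2\sum_{l \neq i}p^l(\mathfrak{d}^2_{i,l}\mathcal{V} - \mathfrak{d}^2_{i,i}\mathcal{V})$ created by the quadratic covariation between the Wright--Fisher noise driving ${\boldsymbol q}$ in \eqref{dynmfg} and the one driving ${\boldsymbol p}$ in \eqref{dynpot}. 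The decisive and most delicate point --- where I expect the main difficulty --- is that the third intrinsic derivatives of $\mathcal{V}$ enter the differentiated equation only through $\tfrac{\varepsilon^2}{2}\sum_{j,k}(p_j\delta_{j,k}-p_jp_k)\mathfrak{d}^3_{i,j,k}\mathcal{V}$, i.e. exactly through the Kimura part of $\mathcal{L}^{\hat\alpha}_p w^i$, so they cancel in the identification; consequently $\vartheta_{\varepsilon,\varphi}$ depends on $\mathcal{V}$ only through $\mathfrak{D}\mathcal{V}$ and $\mathfrak{D}^2\mathcal{V}$, and with $f,\varphi$ regular and $\mathcal{V} \in \mathcal{C}^{1,2+\gamma'}_{\rm WF}$ it is continuous --- in fact of class $\mathcal{C}^{0,\gamma'}_{\rm WF}([0,T]\times\mathcal{S}_d)$ --- on $[0,T]\times\mathcal{S}_d$, as the statement requires. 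Informally, this cancellation reflects the fact that \eqref{dynmfg} and \eqref{dynpot} carry the same Wright--Fisher diffusion matrix on the diagonal ${\boldsymbol p} = {\boldsymbol q}$.

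With $\vartheta_{\varepsilon,\varphi}$ so defined I would then show that ${\boldsymbol p}^{\varepsilon,\varphi}$ is an equilibrium of the $\tilde J^{\varepsilon,\varphi}$-game. Freezing the environment equal to ${\boldsymbol p}^{\varepsilon,\varphi}$ --- whose generator is precisely $\mathcal{L}^{\hat\alpha}_p$, since it solves \eqref{dynpot} with feedback $\hat\alpha$ --- It\^o's formula and the backward system above give that, for every admissible pair $({\boldsymbol q},\balpha)$ solving \eqref{dynmfg}, the process
\[ t \longmapsto \sum_i q^i_t\, w^i(t,p^{\varepsilon,\varphi}_t) + \int_0^t \sum_i q^i_s\bigl(\mathfrak{L}^i(\alpha_s) + f^i(p^{\varepsilon,\varphi}_s) + \vartheta^i_{\varepsilon,\varphi}(s,p^{\varepsilon,\varphi}_s)\bigr)\,ds \]
is a submartingale, and a martingale exactly when $\balpha_s = \hat\alpha(s,p^{\varepsilon,\varphi}_s)$ for $ds \otimes d\mathbb{P}$-almost every $(s,\omega)$ (here one uses $q^i_s > 0$, the boundedness of $\vartheta_{\varepsilon,\varphi}$, and the exponential moments of $\int_0^T (1/p^{\varepsilon,\varphi,i}_s)\,ds$ to control the localisation). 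Since the $i$-independent pieces of the running weight together with the gap $g^i - \mathfrak{d}_i G$ contribute to $\tilde J^{\varepsilon,\varphi}(\cdot\,;{\boldsymbol p}^{\varepsilon,\varphi})$ a quantity that does not depend on $\balpha$, this identifies $\hat\alpha(\cdot,p^{\varepsilon,\varphi}_\cdot)$ as the unique best response; and the associated ${\boldsymbol q}$, which solves \eqref{dynmfg} with ${\boldsymbol p} = {\boldsymbol q} = {\boldsymbol p}^{\varepsilon,\varphi}$ --- hence \eqref{dynpot} --- coincides with ${\boldsymbol p}^{\varepsilon,\varphi}$ by strong uniqueness (Proposition 2.3 of \cite{mfggenetic}). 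Thus ${\boldsymbol p}^{\varepsilon,\varphi}$ is an equilibrium, and by the first step it is the unique optimal trajectory of the viscous MFCP.

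Finally I would establish uniqueness of the equilibrium. The backward system above is, up to the H\"older source $\vartheta_{\varepsilon,\varphi} \in \mathcal{C}^{0,\gamma'}_{\rm WF}([0,T]\times\mathcal{S}_d)$, of exactly the Kimura type analysed in \cite{epsteinmazzeo} and exploited in \cite{mfggenetic}; enlarging $\kappa$ into a regime $\kappa \geq \kappa_2/\varepsilon^2$ with $\kappa_2 \geq \kappa_1$ (still only depending on $\|f\|_\infty,\|g\|_\infty,T,d$) if necessary for the associated Schauder estimates to apply, it admits a unique classical solution, which must therefore be $\mathbf{w} = \mathfrak{D}\mathcal{V}^{\varepsilon,\varphi}$. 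Running the same verification along an arbitrary equilibrium ${\boldsymbol p}$ of the $\tilde J^{\varepsilon,\varphi}$-game, the representative player's (unique) best response is the feedback $\hat\alpha(\cdot,\mathfrak{D}\mathcal{V}^{\varepsilon,\varphi}(\cdot,p_\cdot))$, so ${\boldsymbol p}$ solves \eqref{dynpot} with that feedback and hence equals ${\boldsymbol p}^{\varepsilon,\varphi}$ by strong uniqueness. Besides the identification step of the second paragraph, the only further technical care needed is the boundary behaviour in these verification arguments, which is exactly why the exponential-moment estimates of \cite{mfggenetic} --- and the slightly larger threshold $\kappa_2$ --- are required.
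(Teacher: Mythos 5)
Your overall strategy (differentiate the viscous HJB equation, read off the extra terms as $\vartheta_{\varepsilon,\varphi}$, then run a verification argument) is the same as the paper's, and several of your observations are correct: the optimal feedback is $a^\star(\mathfrak{d}_i\mathcal{V}-\mathfrak{d}_j\mathcal{V})$, the third intrinsic derivatives of $\mathcal{V}$ enter the differentiated equation only through the Kimura operator acting on $\mathfrak{D}\mathcal{V}$ (compare \eqref{derhjbsimplex}), and uniqueness is ultimately delegated to the Kimura/Schauder machinery of \cite{mfggenetic}. However, there is a genuine gap at the point where you declare that ``the $i$-independent pieces of the running weight together with the gap $g^i - \mathfrak{d}_i G$ contribute to $\tilde J^{\varepsilon,\varphi}$ a quantity that does not depend on $\balpha$,'' and consequently that $\vartheta_{\varepsilon,\varphi}$ depends on $\mathcal{V}$ only through $\mathfrak{D}\mathcal{V}$ and $\mathfrak{D}^2\mathcal{V}$. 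In the presence of the common noise the mass process $\sum_i q^i_t$ of \eqref{dynmfg} is \emph{not} conserved pathwise (it is a martingale equal to $1$ only in the mean, as recalled after \eqref{dynmfg}), and it is correlated with $(p_t)_t$ through the common noise; hence an $i$-independent additive shift of the running or terminal cost \emph{does} change $\mathbb{E}[\int \sum_i q^i_t c(t,p_t)\,dt]$ in a control-dependent way, and the additive normalisation of the candidate value process cannot be discarded. The same normalisation also feeds back into the drift of the stochastic HJB equation \eqref{newshjb} through the It\^o covariation term $\sqrt{p^j_t/p^i_t}\,(\nu^{i,i,j}_t-\nu^{i,j,i}_t)$, which involves the martingale integrands of the \emph{full} value process $u^i$, not merely of the differences $u^i-u^j$.

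This is exactly the difficulty the paper flags (``We can not have directly $U^i=\mathfrak{d}_i\mathcal{V}$ \dots because the intrinsic gradient sums to zero, while the functions $U^i$ do not''), and it is resolved by a construction your proposal is missing: one introduces the auxiliary linear Kimura PDE \eqref{eq:PDE:mathcal Y} for $\mathcal{Y}$ (the candidate for the total value $\langle p, U^\bullet\rangle$, with terminal condition $\langle p, g^\bullet(p)\rangle$), sets $u^i_t = \mathfrak{d}_i\mathcal{V}(t,p_t) - \langle p_t, \mathfrak{D}\mathcal{V}(t,p_t)\rangle + \mathcal{Y}(t,p_t)$, and builds into $\vartheta$ the correction terms $\Upsilon^{i,j}$ of \eqref{eq:Upsilon:i,j}, which measure the mismatch between the martingale integrands of $\mathcal{Y}(t,p_t)$ and those of $\langle p_t,\mathfrak{D}\mathcal{V}(t,p_t)\rangle$. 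The resulting $\vartheta$ in \eqref{eq:vartheta}--\eqref{eq:expression:vartheta:i} genuinely involves $\mathfrak{D}\mathcal{Y}$ and is therefore not a function of $\mathfrak{D}\mathcal{V}$ and $\mathfrak{D}^2\mathcal{V}$ alone. Without this step your candidate value process has the wrong terminal condition and the wrong covariation contribution, so the submartingale/verification argument in your third paragraph does not identify the best response to the cost $\tilde J^{\varepsilon,\varphi}$ as stated.
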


The statement of Theorem \ref{main:thm:2} deserves some explanations. {First, we feel useful to specify the definition of an equilibrium in our framework:}
\begin{defn}
	\label{def:MFG}
	With the same notation as in Theorem \ref{main:thm:2} (in particular $\kappa$ large enough), an ${\mathbb F}$-adapted continuous process ${\boldsymbol p}$ with values in ${\mathcal S}_{d}$ is said to be an equilibrium if the following two properties are satisfied:
	\vspace{4pt}
	
	(i) There exists an $M$-bounded and ${\mathbb F}$-progressively measurable process $\balpha$ such that ${\boldsymbol p}$ solves 
	the SDE 
	\eqref{dynpot}
	--obtained by equalizing ${\boldsymbol p}$ and ${\boldsymbol q}$ in 
	\eqref{dynmfg}--,
	with $p_{0}$ as initial condition;
	\vspace{4pt}
	
	(ii) For any other $M$-bounded and ${\mathbb F}$-progressively measurable process ${\boldsymbol \beta}$ for which \eqref{dynmfg} is uniquely solvable,
	$\tilde{J}^{\varepsilon,\varphi}({\boldsymbol \alpha},{\boldsymbol p}) \leq 
	\tilde{J}^{\varepsilon,\varphi}({\boldsymbol \beta},{\boldsymbol p})$.
\end{defn}


{In particular, 
from item (i) in the above definition,  
${\boldsymbol p}$ in \eqref{eq:new:J:varepsilon,varphi} is implicitly required to solve 
	\eqref{dynpot}
for some $M$-bounded and ${\mathbb F}$-progressively measurable control process
(even though this control is denoted by ${\balpha}$ in \eqref{dynpot}, we feel better not to use this notation here in order to 
distinguish from the control ${\balpha}$ used in 
\eqref{eq:new:J:varepsilon,varphi}, which stands for the control used in 
\eqref{dynmfg}). Also, as recalled above,
it is proven in \cite[Proposition 2.2 and 2.3]{mfggenetic} that, 
whatever the choice of the control in 
\eqref{dynpot}, the solution 
${\boldsymbol p}$
is uniquely defined,
provided that $\kappa$ in \eqref{eq:varphi:2} is greater than some threshold 
$\kappa_{0} \varepsilon^2$, with $\kappa_{0}$ only depending on the dimension; moreover,
$\int_{0}^T (1/p_{t}^i) dt$ has exponential moments of sufficiently 
high order so that
\eqref{dynmfg}
always has a unique solution that is square-integrable, whatever the choice of ${\boldsymbol \alpha}$ therein
(here, 
${\boldsymbol \alpha}$ fits 
${\boldsymbol \alpha}$  in 
\eqref{eq:new:J:varepsilon,varphi}).  In particular, 
under the assumption of Theorem \ref{main:thm:2} ($\kappa$ large enough), we should not worry for the exponential integrability 
of  $\int_{0}^T (1/p_{t}^i) dt$, for $i \in \ES$, nor for the well-posedness of 
\eqref{dynmfg} when {${\boldsymbol p}$
in  \eqref{eq:new:J:varepsilon,varphi}
(and hence ${\boldsymbol p}^{\varepsilon,\varphi}$ itself)}
is a candidate for solving the MFG}. 

The proof of Theorem \ref{main:thm:2} in given in Section \ref{sec:4}, together with the precise definition of the additional cost $\vartheta$; see Theorem \ref{thm4} {and \eqref{eq:vartheta}}.
 As for the latter, it is certainly fair to say that the definition of $\vartheta$ is implicit, meaning that it depends on ${\mathcal V}$ itself, which might seem a bit disappointing but looks in the end inevitable. As for the proof itself, it relies on a variant of the argument used in \cite{mfggenetic},  the main point being to take benefit of the smoothness of the 
solution to the HJB equation \eqref{hjbpotnew:sec:2} in order to identify the equilibria. In this regard,  
a key step in the proof is to expand (as in a verification argument)
the intrinsic gradient\footnote{Consistently with the notation $\vartheta$ in Theorem \ref{main:thm:2},
we here put the parameters $\varepsilon$ and $\varphi$ in subscripts as we sometimes write 
$V^i_{\varepsilon,\varphi}$ for denoting the coordinates of $V_{\varepsilon,\varphi}$; even though ${\mathcal V}_{\varepsilon,\varphi}$ is scalar-valued, we feel more consistent to use the same convention for it.} {$V_{\varepsilon,\varphi}={\mathfrak D} {\mathcal V}_{\varepsilon,\varphi}$} 
of the value function ${\mathcal V}_{\varepsilon,\varphi}$ -- solving the HJB equation \eqref{hjbpotnew:sec:2}--
along 
any possible equilibrium ${\boldsymbol p}$. This allows us to prove that, whatever the equilibrium ${\boldsymbol p}$, the optimal solution to $\tilde{J}^{\varepsilon,\varphi}(\, \cdot \, ; {\boldsymbol p})$ is in the form  
\begin{equation}
	\label{eq:optimal:control:mfg}
	(\alpha_{t}^{\varepsilon,\varphi,{\boldsymbol p}} )^{i,j}  = a^\star \Bigl( {\mathfrak d}_{i} {\mathcal V}_{\varepsilon,\varphi}(t,p_{t}) - {\mathfrak d}_{j} {\mathcal V}_{\varepsilon,\varphi}(t,p_{t}) \Bigr), \quad t \in [0,T],   \quad i\neq j,
\end{equation}
where
\begin{equation}
	\label{eq:astar}
	a^\star(r) = \left\{ 
	\begin{array}{ll}
		0 \quad &\textrm{\rm if} \quad r < 0,
		\\
		z \quad &\textrm{\rm if} \quad r \in [ 0,M],
		\\
		M \quad &\textrm{\rm if} \quad r > M.
	\end{array}
	\right.
\end{equation}
By plugging \eqref{eq:optimal:control:mfg} into 
\eqref{dynpot}, we get that any equilibrium satisfies 
the same SDE. Thanks to the smoothness properties we have on ${\mathfrak D} {\mathcal V}_{\varepsilon,\varphi}$,
the latter is uniquely solvable, hence the uniqueness property. See Section \ref{sec:4}
for more details.

\subsection{Selection}
The next step in our program is to address the asymptotic behavior of the equilibria 
$({\boldsymbol p}^{\varepsilon,\varphi})_{\varepsilon,\varphi}$ as $\varepsilon$ tends to $0$ and
the support of ${\varphi}$ shrinks to the boundary of the simplex. 
In this regard, 
one difficulty is that, in the statements of both Theorems \ref{main:thm}
and \ref{main:thm:2}, the function $\varphi$ is implicitly required to become larger and larger, as $\varepsilon$ tends to $0$,
on the interval $[0,\theta]$. Equivalently, 
 {the constant $\kappa$ therein blows}
up as $\varepsilon$ tends to $0$.  
Obviously, this looks a serious hindrance for passing to the limit. In Section \ref{sec:5} below, we bypass this difficulty by using the fact that, in the limit, the 
solutions
of the Fokker-Planck equation \eqref{eq:fp} without common noise 
cannot reach the boundary when starting from the interior of the simplex, and in fact the solution stays away from the boundary with an explicit threshold 
 {(this advocates once more for taking $p_{0}$ in $\textrm{\rm Int}({\mathcal S}_{d})$)}.
{Also, for the subsequent analysis,} we introduce a new parameter $\delta$, which is understood below as the  {half length of the} support of $\phi$: {In short, $\theta$ should be understood as the half length of the interval on which $\varphi$ blows up as $\varepsilon$ tends to $0$ (as explained right above) and 
$\delta$ for the half length of the interval on which it is non-zero, see \eqref{newphi} for the details. Obviously $\theta \leq \delta$ (in fact, we even require $2\theta \leq  \delta$); also, $\delta$ is taken small in the sequel. The dependence of the solution ${\boldsymbol p}^{\varepsilon,\varphi}$ on  {$\delta$}, $\theta$ and $\kappa$ is implicitly written as a dependence upon $\phi$}. 
We then get the following result, which holds without any further condition on $\theta$ and $\delta$, so that $\delta$ can be taken as $\theta$:
\begin{thm}
	\label{main:thm:4}
	Let the assumptions of both Theorems \ref{main:thm} and \ref{main:thm:2}
	be in force and, 
	with the same notation as in \eqref{eq:optimal:control:mfg}, let
	\begin{equation*}
		\alpha^{\varepsilon,\varphi}_{t} := 
		\alpha^{\varepsilon,\varphi,{\boldsymbol p}^{\varepsilon,\varphi}}_{t}, \quad t \in [0,T].
	\end{equation*}
	Then, for any initial condition $p_{0} \in \textrm{\rm Int}({\mathcal S}_{d})$, 
	there is a constant $\delta_{0}>0$ such that
	the family of  laws $({\mathbb P}\circ ({\boldsymbol p}^{\varepsilon,\varphi},{\boldsymbol \alpha}^{\varepsilon,\varphi})^{-1})_{{\varepsilon \in (0,1]},\delta \in (0,\delta_{0}),{2\theta \leq \delta}}$ 
	is tight {in} 
	${\mathcal{P}({\mathcal C}([0,T];{\mathbb R}^d) \times L^2([0,T];[-dM,dM]^{d^2}))}$, the first factor being equipped with the topology of uniform convergence and the second one with the weak topology. 
	Moreover, any weak limit ${\mathbb M}$, as $\varepsilon$ and $\delta$ tend to $0$, is supported by pairs $({\boldsymbol p},{\boldsymbol \alpha})$ that minimize the cost functional 
	${\mathcal J}$ in 
	\eqref{eq:cost:functional:mfc},
	with $\boldsymbol{p}= \boldsymbol{q}$ therein, with $p_{0}$ as initial condition. 
\end{thm}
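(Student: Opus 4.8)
\noindent\textbf{Proof strategy for Theorem \ref{main:thm:4}.}
The plan is a vanishing-viscosity ($\Gamma$-convergence) argument, and the cornerstone --- which is also the one genuinely difficult point --- is an a priori estimate established in Section \ref{sec:5}: for $\delta$ below a threshold $\delta_{0}$ depending only on $p_{0}$, $M$, $T$ and $d$, the coordinates of the optimal trajectory $\boldsymbol{p}^{\varepsilon,\varphi}$ (and, more generally, of the solution of \eqref{dynpot} driven by any $M$-bounded control) remain above a level $c_{0}>\delta_{0}$, uniformly in $\varepsilon\in(0,1]$, $\delta\in(0,\delta_{0})$ and $2\theta\le\delta$. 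Since $\varphi$ vanishes outside the layer $[0,\delta]$, this forces $\varphi\equiv 0$ along such trajectories, so that \eqref{dynpot} reduces to the Fokker--Planck equation \eqref{eq:fp} (with $\boldsymbol{q}$ equal to the trajectory, $q_{0}=p_{0}$) perturbed only by the Wright--Fisher martingale term of \eqref{dynpot}, whose diffusion coefficient is bounded by $\varepsilon/\sqrt2\le1$ and whose drift, once $\varphi\equiv 0$, is bounded by a constant depending only on $d$ and $M$. The hard part is precisely to prove this reduction: one must show that the repulsion of strength $\kappa\gtrsim\varepsilon^{-2}$, which acts only on the shrinking layer $[0,\delta]$, dominates the boundary-degenerate diffusion of intensity $\varepsilon^{2}$ closely enough that the trajectory, issued from the interior and driven by $M$-bounded controls, never enters $\{\varphi\neq0\}$. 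Granted this reduction, the remaining steps are standard, and I sketch them now.

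\textbf{Tightness.} Each $\boldsymbol{\alpha}^{\varepsilon,\varphi}$ takes values, pathwise, in the fixed set $\mathcal{K}:=\{\boldsymbol{a}\in L^{2}([0,T];\R^{d^{2}}):\boldsymbol{a}(t)\in[-dM,dM]^{d^{2}}\text{ for a.e.\ }t\}$, which is bounded, convex and strongly closed, hence weakly compact, and metrizable for the weak topology since $L^{2}$ is separable; tightness of the laws on the control factor is therefore automatic. For the trajectory factor I would apply the Kolmogorov--Chentsov criterion: the bounds on the drift and diffusion coefficients of the reduced equation together with the Burkholder--Davis--Gundy inequality give $\E[|\boldsymbol{p}^{\varepsilon,\varphi}_{t}-\boldsymbol{p}^{\varepsilon,\varphi}_{s}|^{4}]\le C|t-s|^{2}$ with $C$ independent of $(\varepsilon,\delta,\theta)$, whence tightness in $\mathcal{C}([0,T];\R^{d})$ for the topology of uniform convergence. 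Tightness of the joint laws follows, and Prokhorov's theorem provides weak limit points.

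\textbf{Identification of the limit.} Let $\mathbb{M}$ be a weak limit along a sequence $(\varepsilon_{n},\delta_{n})\to(0,0)$. It is carried by $\mathcal{C}([0,T];\mathcal{S}_{d})\times\mathcal{K}$ (closed sets are preserved under weak convergence), and, since the pointwise constraint \eqref{eq:fp:control} cuts out a convex, weakly closed subset of $\mathcal{K}$ (stable under weak limits by Mazur's lemma), $\mathbb{M}$-almost every control is admissible in the sense of \eqref{eq:fp:control}. Next I would show that under $\mathbb{M}$ the pair $(\boldsymbol{p},\boldsymbol{\alpha})$ solves the Fokker--Planck equation \eqref{eq:fp} with $\boldsymbol{q}=\boldsymbol{p}$ and $q_{0}=p_{0}$. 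Introduce the nonnegative defect functional $\Phi(\boldsymbol{p},\boldsymbol{\alpha}):=\sup_{0\le t\le T}\sum_{i\in\ES}\bigl|p^{i}_{t}-p^{i}_{0}-\int_{0}^{t}\sum_{j\neq i}(p^{j}_{s}\alpha^{j,i}_{s}-p^{i}_{s}\alpha^{i,j}_{s})\,\ud s\bigr|$; it is continuous on $\mathcal{C}([0,T];\R^{d})\times\mathcal{K}$ for the product of the uniform and weak topologies, because the bilinear maps $(\boldsymbol{p},\boldsymbol{\alpha})\mapsto\int_{0}^{t}p^{j}_{s}\alpha^{j,i}_{s}\,\ud s$ are jointly continuous there (split off the strongly convergent $\boldsymbol{p}$-factor). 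By the reduction of the first paragraph, $\Phi(\boldsymbol{p}^{\varepsilon,\varphi},\boldsymbol{\alpha}^{\varepsilon,\varphi})$ equals the supremum of the absolute values of the Wright--Fisher martingale terms, which is $O(\varepsilon)$ in $L^{2}(\mathbb{P})$ by Doob's inequality; passing to the limit (Portmanteau for the nonnegative continuous $\Phi$) yields $\E_{\mathbb{M}}[\Phi]=0$, so $\Phi=0$ $\mathbb{M}$-a.s., which is the claim. In particular the cost functional $\mathcal{J}$ of \eqref{eq:cost:functional:mfc}, with $\boldsymbol{q}=\boldsymbol{p}$, is well defined on $\mathbb{M}$-a.e.\ pair.

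\textbf{Optimality.} Finally I would compare costs, recalling (Theorems \ref{main:thm}--\ref{main:thm:2}) that $\boldsymbol{\alpha}^{\varepsilon,\varphi}$ is the optimal control of the viscous MFCP \eqref{eq:Cost:mfc}, so that $\mathcal{J}^{\varepsilon,\varphi}(\boldsymbol{\alpha}^{\varepsilon,\varphi})=\inf\mathcal{J}^{\varepsilon,\varphi}$. For the upper bound, let $\bar{\boldsymbol{\alpha}}$ be a deterministic minimizer of \eqref{eq:cost:functional:mfc}, which an a priori estimate shows to be $M$-bounded; plugged into \eqref{dynpot} it produces, for $\delta<\delta_{0}$, a trajectory along which $\varphi\equiv 0$ (cornerstone estimate) and which therefore converges, as $\varepsilon\to0$, uniformly in $L^{2}(\mathbb{P})$ to the inviscid Fokker--Planck flow driven by $\bar{\boldsymbol{\alpha}}$; dominated convergence then gives $\mathcal{J}^{\varepsilon,\varphi}(\bar{\boldsymbol{\alpha}})\to\mathcal{J}(\bar{\boldsymbol{\alpha}})$, hence $\limsup_{\varepsilon,\delta\to0}\inf\mathcal{J}^{\varepsilon,\varphi}\le\inf\mathcal{J}$. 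For the lower bound, the functional $(\boldsymbol{p},\boldsymbol{\alpha})\mapsto\int_{0}^{T}\bigl(\sum_{i\in\ES}p^{i}_{t}\mathfrak{L}^{i}(\alpha_{t})+F(p_{t})\bigr)\,\ud t+G(p_{T})$ is bounded below and lower semicontinuous for the product of the uniform and weak topologies (it is convex in $\boldsymbol{\alpha}$ with coefficients depending uniformly continuously on $\boldsymbol{p}$, and $F,G$ are continuous), so Portmanteau together with the identification step yields $\E_{\mathbb{M}}[\mathcal{J}(\boldsymbol{\alpha})]\le\liminf_{n}\inf\mathcal{J}^{\varepsilon_{n},\varphi_{n}}\le\inf\mathcal{J}$. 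But $\mathbb{M}$-a.e.\ pair is admissible for \eqref{eq:cost:functional:mfc}, so $\mathcal{J}(\boldsymbol{\alpha})\ge\inf\mathcal{J}$ $\mathbb{M}$-a.s.; combining the two inequalities forces $\mathcal{J}(\boldsymbol{\alpha})=\inf\mathcal{J}$ for $\mathbb{M}$-a.e.\ $(\boldsymbol{p},\boldsymbol{\alpha})$, i.e.\ $\mathbb{M}$ is carried by minimizers of $\mathcal{J}$ with $\boldsymbol{p}=\boldsymbol{q}$, which is the assertion.
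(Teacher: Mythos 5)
Your architecture (tightness of the controls by weak compactness of the bounded convex set, identification of the Fokker--Planck limit through a jointly continuous defect functional, and a $\Gamma$-convergence--type cost comparison using convexity for the liminf and a deterministic recovery control for the limsup) is essentially the one used in the paper's proof of Theorem \ref{thm10}, which subsumes this statement. The problem is the ``cornerstone'' on which you hang everything else: the claim that, uniformly in $\varepsilon\in(0,1]$, $\delta\in(0,\delta_{0})$ and $2\theta\le\delta$, the coordinates of ${\boldsymbol p}^{\varepsilon,\varphi}$ remain \emph{almost surely} above a level $c_{0}$ on which $\varphi$ vanishes, so that the trajectory ``never enters $\{\varphi\neq 0\}$''. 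This estimate is false. The Wright--Fisher diffusion coefficient $\tfrac{\varepsilon}{\sqrt2}\sqrt{p^{i}_{t}p^{j}_{t}}$ is non-degenerate away from the boundary, so for every fixed $\varepsilon>0$ the process reaches the band $[\delta,2\delta]$ (where $\varphi>0$ but the repulsion is only of order $\kappa_{0}$, not $\kappa_{\varepsilon}$) with positive probability before time $T$; no drift, however strong on a thinner layer, can produce an almost-sure barrier against a uniformly elliptic noise. The paper says this explicitly when motivating Proposition \ref{prop:cv:cost}: the process ``may visit the neighborhood of the boundary of the simplex where the function $\varphi$ is non-zero'', only with small probability. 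Consequently your pathwise reduction to $\varphi\equiv 0$, the uniform Kolmogorov bound $\E[|p_{t}-p_{s}|^{4}]\le C|t-s|^{2}$ (the drift contains $\varphi$, hence terms of order $\varepsilon^{-2}$, so the constant is not uniform), the $O(\varepsilon)$ bound on your defect $\Phi$, and the recovery-sequence step all rest on a statement that cannot be proved.

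The correct substitute, which is what the paper does, is a localization-plus-Portmanteau loop rather than an a priori barrier. One stops the process at $\tau:=\inf\{t:\min_{i}p^{i,\theta,\delta,\varepsilon}_{t}\le 3\delta_{0}\}$ with $\delta_{0}:=\tfrac14 e^{-TM(d-1)}\min_{i}p^{i}_{0}$; the \emph{stopped} process never sees $\varphi$, so your tightness and identification arguments apply verbatim to it. One then observes that any weak limit of the stopped pairs solves the inviscid equation \eqref{eq:fp} with an $M$-bounded control and therefore satisfies the deterministic lower bound $p^{i}_{t}\ge p^{i}_{0}e^{-tM(d-1)}\ge 4\delta_{0}>3\delta_{0}$; Portmanteau applied to the open event $\{\inf_{t}\min_{i}p^{i}_{t}>3\delta_{0}\}$ then yields $\mathbb{P}(\tau<T)\to 0$, which upgrades all conclusions from the stopped to the unstopped process and shows a posteriori that the excursions into $\{\varphi\neq0\}$ are asymptotically negligible in probability --- but only in the limit, not pathwise for fixed parameters. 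The same localization is needed in your limsup step for the trajectory driven by the deterministic minimizer $\bar{\boldsymbol\alpha}$. With this replacement your proof goes through and coincides with the paper's.
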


The proof is given in Subsection \ref{subse:selection}, together with the precise definition of the function $\phi$ that we use; see Theorem  \ref{thm10}.
It is worth mentioning that
the inviscid MFCP may have a unique minimizer even though the MFG has several equilibria. 
To wit, we provide 
an example 
in Subsection \ref{subse:example}. In such a case, the probability ${\mathbb M}$ in the above statement reduces to one point and 
the family $({{\mathbb P} \circ} ({\boldsymbol p}^{\varepsilon,\varphi},{\boldsymbol \alpha}^{ \varepsilon,\varphi})^{-1})_{{\varepsilon \in (0,1]},\delta \in (0,\delta_{0}),{2\theta \leq \delta}}$ converges to the unique pair $({\boldsymbol p},\balpha)$ minimizing the cost functional 
${\mathcal J}$ in 
\eqref{eq:cost:functional:mfc}
(with 
${\boldsymbol q}={\boldsymbol p}$
therein). As explained in the next subsection, it happens quite often that the minimizer of the inviscid MFCP is unique: For almost every $(t,p) \in [0,T]\times \textrm{\rm Int}({\mathcal S}_{d})$, the MFCP 
\eqref{eq:Cost:mfc}--\eqref{eq:fp} has a unique solution whenever ${\boldsymbol q}$ in 
\eqref{eq:fp} starts from $p$ at time $t$. These are the points in which the value function of the inviscid MFCP is differentiable, thus they have full measure since the value function 
 {can be shown to be}
 Lipschitz in time and space,  {see Proposition \ref{prop:solution:mfc:zero:epsilon}}.

Back to the statement of 
Theorem \ref{main:thm:2}, 
we get the announced limiting behavior for the equilibria therein. 
Anyhow, the reader may also wonder about the behavior of the cost functional 
$\tilde{J}^{\varepsilon,\varphi}$ in 
\eqref{eq:new:J:varepsilon,varphi}
as $\varepsilon$ tends to $0$ and the support of $\varphi$ shrinks to the boundary  {(and hence $\delta$ vanishes)}. In fact, 
this asks us to revisit 
the shape of the coefficient $\vartheta_{\varepsilon,\varphi}$, 
which is certainly 
the most intriguing term therein,  {see again} Section \ref{sec:4}. Importantly, we learn from its construction that, in order to control the impact of 
$\vartheta_{\varepsilon,\varphi}$ accurately in the cost functional $\tilde{J}^{\varepsilon,\varphi}$,
we cannot play for free with  {$\varepsilon$, $\delta$ and $\theta$}
at the same time --{the three} of them popping up in the definition of $\varphi$--.
The reason is that, even though this may only happen with small probability, 
the process ${\boldsymbol p}^{\varepsilon,\varphi}$
may visit the neighborhood of the boundary of the simplex where the function $\varphi$ is non-zero.  {Even more}, 
$\varphi$ may become very large when $\varepsilon$ tends to $0$. Since the 
geometry of this neighborhood of the boundary of the simplex is determined by  {$\delta$
and $\theta$}, this explains why 
some trade-off between 
 {$\varepsilon$, $\delta$ and $\theta$} is necessary when averaging out the cost functional $\vartheta_{\varepsilon,\varphi}$ with respect to 
all the possible trajectories of  
${\boldsymbol p}^{\varepsilon,\varphi}$. 
In this context, the following result says that we can tune both $\delta$,  {$\theta$} and $\varphi$ in terms of $\varepsilon$ such that, along the equilibrium, the influence of 
$\vartheta_{\varepsilon,\varphi}$ vanishes as $\varepsilon$ tends to $0$:
\begin{prop}
	\label{prop:cv:cost}
	Let the assumptions of both Theorems \ref{main:thm} and \ref{main:thm:2}
	be in force.
	Then,
	for any $\varepsilon \in (0,1]$, we can choose $\delta$
	{as $\delta=\hat{\delta}(\varepsilon)$ 
	and 
	$\theta$ as $\theta=\hat{\theta}(\varepsilon) \leq \hat{\delta}(\varepsilon)/2$,
	for some 
	(strictly) positive-valued functions 
	$\hat{\delta}$
	and $\hat{\theta}$,
	 with $0$ as limit in $0$,}
	and then $\varphi=\hat{\varphi}(\varepsilon)$ 
	in 
	\eqref{eq:varphi:2},
	such that all the assumptions required in the statements of Theorems \ref{main:thm} and \ref{main:thm:2}
	are satisfied {together with the following limit:}
	\begin{equation*}
		\lim_{\varepsilon \rightarrow 0} {\mathbb E} \Bigg[ \bigg|\int_{0}^T \sum_{i \in \ES} q^{i,\varepsilon,\hat{\varphi}(\epsilon)}_t \vartheta^{i}_{\varepsilon,\hat\varphi(\varepsilon)}({\boldsymbol p}^{\varepsilon,\hat\varphi(\varepsilon)}) dt
		\bigg| \Bigg]= 0, 
	\end{equation*}
	for any $p_0\in \mathrm{Int}(\mathcal{S}_d)$
	and $q_0\in  {{\mathcal S}_{d}}$, where $\boldsymbol{q}^{\varepsilon,\hat{\varphi}(\epsilon)}$ solves \eqref{dynmfg} with initial condition $q_0$ and ${\boldsymbol p}={\boldsymbol p}^{\varepsilon,\hat\varphi(\varepsilon)}$ therein.
\end{prop}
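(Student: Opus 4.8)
The plan is to combine the explicit expression for $\vartheta_{\varepsilon,\varphi}$ obtained in Section \ref{sec:4} (see \eqref{eq:vartheta}) with the two elementary facts at our disposal about the processes entering the statement. On the one hand, $\boldsymbol p^{\varepsilon,\hat\varphi(\varepsilon)}$ solves the Wright--Fisher SDE \eqref{dynpot} and, because $\kappa$ is taken large, stays in $\textrm{\rm Int}({\mathcal S}_{d})$ and obeys the exponential moment bounds of \cite[Propositions 2.1 and 2.2]{mfggenetic} on $\int_0^T (1/p^i_t)\,dt$. On the other hand, $\boldsymbol q^{\varepsilon,\hat\varphi(\varepsilon)}$ solves \eqref{dynmfg}, is square-integrable by \cite[Proposition 2.3]{mfggenetic}, and --- taking expectations in \eqref{dynmfg} and noting that the drift of $\sum_{i\in\ES}q^i_t$ telescopes to zero while the stochastic integrals have zero mean (thanks to the integrability just recalled) --- satisfies $\mathbb{E}[\sum_{i\in\ES}q^i_t]=1$ for every $t$. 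I would then split $\vartheta_{\varepsilon,\varphi}=\vartheta_{\varepsilon,\varphi}^{\rm visc}+\vartheta_{\varepsilon,\varphi}^{\rm rep}$ along \eqref{eq:vartheta}: the ``viscous'' part gathers the terms carrying the prefactor $\varepsilon^2$ together with second-order derivatives of $\mathcal{V}_{\varepsilon,\varphi}$ (it arises from commuting the intrinsic gradient $\mathfrak{D}$ with the Kimura operator $\tfrac{\varepsilon^2}{2}\sum_{j,k}(p_j\delta_{j,k}-p_jp_k)\mathfrak{d}^2_{j,k}$ in \eqref{hjbpotnew:sec:2}), whereas the ``repulsive'' part gathers the terms involving $\varphi$ and $\varphi'$; crucially, $\vartheta_{\varepsilon,\varphi}^{\rm rep}$ is supported in $\mathcal{N}_\delta:=\{p\in{\mathcal S}_{d}:\min_{i\in\ES}p_i<\delta\}$ and is bounded there, in sup-norm, by an explicit quantity $D(\varepsilon,\delta,\theta)$ of order $(\kappa/\theta)\,\|\mathfrak{D}\mathcal{V}_{\varepsilon,\varphi}\|_\infty$, the factor $1/\theta$ coming from $\|\varphi'\|_\infty$. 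The target expectation is thus at most $\mathbb{E}\!\int_0^T\!\sum_i q^i_t|\vartheta^{{\rm visc},i}_{\varepsilon,\varphi}(p_t)|\,dt+\mathbb{E}\!\int_0^T\!\sum_i q^i_t|\vartheta^{{\rm rep},i}_{\varepsilon,\varphi}(p_t)|\,dt$, and it suffices to make each piece vanish along suitable $\hat\delta(\varepsilon),\hat\theta(\varepsilon),\hat\varphi(\varepsilon)$.

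For the repulsive term I would bound it by $D(\varepsilon,\delta,\theta)\,\mathbb{E}\!\int_0^T \mathbbm{1}_{\{p_t\in\mathcal{N}_\delta\}}\big(\sum_i q^i_t\big)\,dt$ and estimate the weighted occupation time of $\mathcal{N}_\delta$ by Cauchy--Schwarz, using $\mathbb{E}\!\int_0^T(\sum_i q^i_t)^2\,dt<\infty$ from \cite[Proposition 2.3]{mfggenetic}, the pointwise bound $\mathbbm{1}_{\{p^i_t\le\delta\}}\le\delta/p^i_t$, and the exponential moments of $\int_0^T(1/p^i_t)\,dt$ from \cite[Proposition 2.2]{mfggenetic}. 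The decisive quantitative input is that, once $\kappa$ is frozen at $\kappa=\kappa_2/\varepsilon^2$, the strength of the boundary repulsion does \emph{not} degenerate as $\delta\to0$, so that the occupation time of the shrinking set $\mathcal{N}_\delta$ is in fact smaller than any power of $\delta$; hence, for each fixed $\varepsilon$, it dominates the blow-up of $D(\varepsilon,\delta,\theta)$ and one can pick $\delta=\hat\delta(\varepsilon)$ small enough --- together with $\theta=\hat\theta(\varepsilon)=\hat\delta(\varepsilon)/2$ and $\varphi=\hat\varphi(\varepsilon)$ the corresponding profile of the form \eqref{eq:varphi:2} --- to make the repulsive contribution $\le\varepsilon$.

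For the viscous term I would use $\mathbb{E}[\sum_i q^i_t]=1$ to bound it by $T\,\varepsilon^2\,\|\mathfrak{D}^2\mathcal{V}_{\varepsilon,\varphi}\|_{\infty,[0,T]\times K_\delta}+D'(\varepsilon,\delta)\,\mathbb{E}\!\int_0^T \mathbbm{1}_{\{p_t\in\mathcal{N}_\delta\}}(\sum_i q^i_t)\,dt$, with $K_\delta:={\mathcal S}_{d}\setminus\mathcal{N}_\delta$: on the compact interior set $K_\delta$, the interior Schauder estimates of \cite{epsteinmazzeo,mfggenetic} for Kimura operators provide a bound on the second derivatives of $\mathcal{V}_{\varepsilon,\varphi}$ that is uniform in $\varepsilon$ (with a constant depending on $\delta$), so the first term is $O(\varepsilon^2)$ once $\delta$ is fixed, while the $\mathcal{N}_\delta$-piece is again absorbed by the occupation-time estimate of the previous paragraph. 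A cleaner alternative, avoiding any second-derivative bound, is to transform $\mathbb{E}\!\int_0^T\sum_i q^i_t\,\varepsilon^2(p_j\delta_{j,k}-p_jp_k)\,\mathfrak{d}^2_{j,k}\mathcal{V}_{\varepsilon,\varphi}(p_t)\,dt$ by applying It\^o's formula to $\mathfrak{D}\mathcal{V}_{\varepsilon,\varphi}(t,p_t)$ along \eqref{dynpot} (coupled with the adjoint evolution of $\boldsymbol q$), turning it into boundary and drift terms controlled by the $\varepsilon$-uniform Lipschitz bound on $\mathcal{V}_{\varepsilon,\varphi}$, of the type of Proposition \ref{prop:solution:mfc:zero:epsilon}. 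Either way, one finally lets $\hat\delta(\varepsilon)\to0$ slowly enough (e.g.\ $\hat\delta(\varepsilon)\le\varepsilon$) for both constraints to hold, and checks that the resulting $(\hat\delta(\varepsilon),\hat\theta(\varepsilon),\hat\varphi(\varepsilon))$ meet the hypotheses of Theorems \ref{main:thm} and \ref{main:thm:2} (which they do, since $\kappa=\kappa_2/\varepsilon^2\ge\kappa_1/\varepsilon^2$ and $2\hat\theta(\varepsilon)\le\hat\delta(\varepsilon)$); the announced limit then follows.

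The main obstacle is precisely the quantitative matching in the repulsive term, which is where the ``trade-off'' announced before the statement is genuinely needed: one must show that the occupation time of the vanishing neighbourhood $\mathcal{N}_\delta$ by the Wright--Fisher process $\boldsymbol p^{\varepsilon,\varphi}$ decays in $\delta$ strictly faster than $D(\varepsilon,\delta,\theta)$ blows up, which calls for sharp hitting/occupation estimates near the boundary --- going beyond mere finiteness of exponential moments in \cite{mfggenetic} --- and for a careful bookkeeping of how every constant depends on $\varepsilon,\delta,\theta$ and $\kappa$. A secondary difficulty is to arrange, simultaneously, a choice of $\hat\delta(\varepsilon)$ for which the interior bound on the viscous part (whose constant degenerates as $\delta\to0$) remains compatible with the smallness required for the repulsive part.
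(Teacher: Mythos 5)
Your overall architecture -- split $\vartheta_{\varepsilon,\varphi}$ into the $\varphi'$-part and the $\varepsilon^2$-part, kill the first by boundary avoidance and the second by the explicit $\varepsilon$ prefactor, and tune $\hat\delta(\varepsilon),\hat\theta(\varepsilon)$ to reconcile the two -- is the paper's architecture, and you correctly identify the trade-off as the crux. But the two quantitative inputs you treat as available are not, and this is where the proposal breaks. First, your bound $D(\varepsilon,\delta,\theta)\sim(\kappa_\varepsilon/\theta)\,\|\mathfrak{D}\mathcal{V}_{\varepsilon,\varphi}\|_\infty$ on the repulsive part requires a sup-norm bound on $V_{\varepsilon,\varphi}=\mathfrak{D}\mathcal{V}_{\varepsilon,\varphi}$ \emph{up to the boundary} (that is where $\varphi'\neq0$). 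No such bound exists in controllable form: the a priori $L^\infty$ estimate from Theorem \ref{thmder} comes from a Gronwall argument whose constant involves $\|\varphi\|_\infty+\|\varphi'\|_\infty\sim\kappa_\varepsilon/\theta$, and Proposition \ref{thmbounds} only gives \eqref{boundVK} on compact subsets of $\mathrm{Int}(\mathcal{S}_d)$. What the paper actually uses in \eqref{lim1} is the weighted estimate $\mathbb{E}[\sup_t|V(t,p_t)|^2]\le\bar C$ \emph{along the optimal trajectory} (\eqref{boundV0}), obtained by the BSDE expansion of $v_t=V(t,p_t)$ with the exponential weight $\mathcal{E}_t=\exp\{\lambda\int\sum_j([\varphi-\varphi'](p^j_s)+1/p^j_s)ds\}$, whose very purpose is to absorb the terms involving $\varphi$, $\varphi'$ and $1/p$; the three factors are then recombined by H\"older together with the moment bounds on $\boldsymbol q$ (Lemma \ref{lem:moments:p,q:l}) and the uniform exponential bound on $\int|\varphi'(p_t)|dt$ (Proposition \ref{propexp}). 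Your occupation-time route, even granting the "faster than any power of $\delta$" decay (which itself requires exactly the $\varepsilon$-uniform exponential moments of Propositions \ref{expfin}--\ref{propexp} and the delicate inversion producing $\hat\delta(\varepsilon)$), cannot close without replacing the sup-norm of $V$ by such a pathwise estimate.

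Second, the claim that the interior Schauder estimates of \cite{epsteinmazzeo,mfggenetic} give a bound on $\mathfrak{D}^2\mathcal{V}_{\varepsilon,\varphi}$ on $K_\delta$ that is uniform in $\varepsilon$ is false: the diffusion $\tfrac{\varepsilon^2}{2}\sum(p_j\delta_{j,k}-p_jp_k)\mathfrak{d}^2_{j,k}$ degenerates everywhere as $\varepsilon\to0$, and indeed the constants (and even the H\"older exponent $\gamma'$) in Theorem \ref{thmder} are explicitly allowed to depend on $\varepsilon$ and $\kappa$. So $\varepsilon^2\|\mathfrak{D}^2\mathcal{V}_{\varepsilon,\varphi}\|_{\infty,K_\delta}=O(\varepsilon^2)$ is unjustified. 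The paper never bounds second derivatives pointwise: the $\varepsilon^2\mathfrak{D}^2\mathcal{V}$ contributions to $\vartheta$ are rewritten as $\varepsilon\cdot W^{i,j,k}$ with $W^{i,j,k}=\tfrac{\varepsilon}{\sqrt2}\sqrt{p_jp_k}(\mathfrak{d}_jV^i-\mathfrak{d}_kV^i)$, i.e.\ the martingale integrand of the BSDE for $v_t$, and only its $L^2(\Omega\times[0,T])$ norm is controlled (\eqref{boundzeta0}) -- which is your "cleaner alternative", but it is the only viable one, and it is not controlled by a Lipschitz bound of the type of Proposition \ref{prop:solution:mfc:zero:epsilon} (which concerns the inviscid value function). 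You also omit entirely the terms in $\vartheta$ involving $\mathcal{Y}$ (the correction $\Upsilon^{i,j}$ needed to restore the sum of the value functions), which require their own energy estimate \eqref{boundfrak}, and the fact that the statement demands a single choice of $\hat\delta,\hat\theta,\hat\varphi$ valid for \emph{every} $p_0\in\mathrm{Int}(\mathcal{S}_d)$, which forces the compact-exhaustion/diagonal argument of Theorem \ref{thm:5.10} since all the uniform estimates above are first proved on compacts $\mathcal{K}$ with $\hat\delta,\hat\theta$ depending on $\mathcal{K}$.
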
 

 {In the statement, it is implicitly understood that $\hat \varphi(\varepsilon)$ is parametrized by 
$\hat \theta(\varepsilon)$ and $\hat \delta(\varepsilon)$ (see the discussion above Theorem \ref{main:thm:4}
for the meaning of these two parameters). As for the proof, it is given in 
Subsection 
\ref{subse:distance:MFCP}, see  
Proposition \ref{prop:5:9} and Theorem \ref{thm:5.10}.} For sure, the above result says
that $\sup_{\balpha : \vert \alpha_{t} \vert \leq M}
\vert \tilde{J}^{\varepsilon,\hat\varphi(\varepsilon)}(\balpha,{\boldsymbol p}^{\varepsilon,\hat\varphi(\varepsilon)})
- {\mathbb E}J(\balpha,{\boldsymbol p}^{\varepsilon,\hat\varphi(\varepsilon)})\vert$
tends to $0$ as $\varepsilon$ tends to $0$. 
Since the sequence 
of laws
$({\mathbb P} \circ ({\boldsymbol p}^{\varepsilon,\hat\varphi(\varepsilon)},{\boldsymbol \alpha}^{ \varepsilon,\hat{\varphi}(\varepsilon)})^{-1})_{{\varepsilon \in (0,1]}}$ 
is tight in the same space as in the statement of Theorem \ref{main:thm:4}, we deduce that,
along 
any converging subsequence (still indexing the latter by $\varepsilon$)
with ${\mathbb M}$ as weak limit\footnote{It looks like \eqref{eq:convergence:values} could be recast differently, in a fashion closer to $\Gamma$-convergence, but
this would ask for more materials in the text and 
 we would make little use of it in the end. Instead, our formulation suffices to 
 address the convergence of the solution to the master equation, which is a key point in our paper.} , 
\begin{equation}
	\label{eq:convergence:values}
	\lim_{\varepsilon \rightarrow 0} \tilde{J}^{\varepsilon,\varphi}\bigl( {\boldsymbol \alpha}^{ \varepsilon,\hat{\varphi}(\varepsilon)};{\boldsymbol p}^{\varepsilon;\hat{\varphi}(\varepsilon)}\bigr) = {\mathbb E}^{\mathbb M} \bigl[ J({\boldsymbol \alpha};{\boldsymbol p}) \bigr],
\end{equation}
where ${\mathbb E}^{\mathbb M}$ denotes the expectation under ${\mathbb M}$. 
At this stage, we recall from Theorem \ref{main:thm:4}
that, under the probability ${\mathbb M}$, almost every path $({\boldsymbol p},{\boldsymbol \alpha})$ -- understood as the canonical processes in ${\mathcal C}([0,T];{\mathbb R}^d) \times L^2([0,T];[-dM,dM]^{d^2})$-- forms an equilibrium 
of the original inviscid MFG 
\eqref{eq:cost:functional}--\eqref{eq:fp}. In particular, 
$J({\boldsymbol \alpha};{\boldsymbol p})$ is nothing but $J^\star({\boldsymbol p}) = \inf_{\boldsymbol \beta} J({\boldsymbol \beta};{\boldsymbol p})$, the infimum being here taken over all the
deterministic processes ${\boldsymbol \beta}$, see \eqref{eq:cost:functional}. 
At the end of the day, we may interpret the right-hand side as a mean over the values of the equilibria 
of the inviscid MFG. Obviously, the argument inside the limit symbol in the left-hand side is also the value of the unique equilibrium of the viscous MFG, hence proving that the limit points of the values of the viscous MFGs are means over the values of the inviscid MFG. 
Importantly, the probability ${\mathbb M}$ here just charges the minimizers of the 
inviscid MFCP:  In case when the inviscid MFCP has a unique solution, the expectation ${\mathbb E}^{\mathbb M}[J(\balpha,{\boldsymbol p})]$ then reduces to 
$\inf_{\balpha} J(\balpha;{\boldsymbol p})$, where ${\boldsymbol p}$ is the unique minimal path of the inviscid MFCP.

\subsection{Master equation}
\label{subse:2:master}
It is worth recalling that the value of an MFG --at least when the latter is uniquely solvable-- has a nice interpretation 
in terms of the solution of a partial differential equation set on the space of probability measures. This equation, see for instance 
\cite{gomes2014dual, gomes2014socio}
and \cite[Chapter 7]{CarmonaDelarue_book_I} for finite state MFGs
and
\cite{cardaliaguet,ben-fre-yam2015, CarmonaDelarue_book_II, CardaliaguetDelarueLasryLions, cha-cri-del2019}
for continuous state MFGs, is usually known as the master equation for the underlying MFG and should be understood as the asymptotic version, as the number of players tends to $\infty$, of the Nash system associated with the finite $N$-player game. 
Our first main result in this direction concerns the master equation of the viscous MFG: It is here a system of second-order partial differential equations on the simplex, driven by the same Kimura operator as the HJB equation 
\eqref{hjbpotnew:sec:2}. 
It has the following general form:
\begin{equation}
	\label{eq:master:equation:introl}
	\left\{
	\begin{array}{l}
		\displaystyle 
		\partial_t U^i + H_{{M}}\Bigl( (U^i-U^j)_{j \in \ES} \Bigr) + {\sum_{j \in \ES} \varphi(p_j) ( U^j - U^i) }
		+
		\bigl( f^i  + \vartheta^{i,\varepsilon,\varphi} \bigr) (t,p)  
		\\
		\displaystyle 
		\quad+
		\sum_{j,k \in \ES} p_k\bigl[ \varphi(p_{j}) + (U^k-U^j)_+ \bigr] \bigl( {\mathfrak d}_{j} U^i - {\mathfrak d}_{k} U^i\bigr) 
		\\
		\displaystyle 
		\quad+ \varepsilon^2\sum_{j \in\ES} p_{j} \bigl( {\mathfrak d}_{i} U^i - {\mathfrak d}_{j} U^i\bigr)
		+\tfrac12 {\varepsilon^2} \sum_{j,k \in \ES}\bigl(p_j \delta_{j,k}-p_{j} p_{k}\bigr) {\mathfrak d}_{j,k}^2 U^i =0,\\
		U^i(T,p)= g^i(p),
	\end{array}
	\right.
\end{equation}
for $(t,p) \in [0,T] \times {\mathcal S}_{d}$, 
where $H_{{M}}$ is 
{the Hamiltonian:
\begin{equation}
\label{HM}
		 {H}_{M}(w) = 
		\sum_{j\in \ES} \Bigl\{ - a^\star(w_{j}) w_{j} + \tfrac12  |a^\star(w_{j})|^2  \Bigr\},  \quad w \in {\mathbb R}^d, 
\end{equation} 
with $a^\star$ as in 	\eqref{eq:astar}.}
A key fact 
--which we implicitly use in our text-- is that, under the assumption of Theorem \ref{main:thm:2},  
this master equation has a unique classical solution (with a suitable behaviour at the boundary, see the definition of the so-called Wright-Fischer spaces in Appendix): This result is mostly due to \cite{mfggenetic}. 
Given a classical solution $U_{\varepsilon,\varphi}=(U_{\varepsilon,\varphi}^i)_{i \in \ES}$ 
to \eqref{eq:master:equation:introl}, the value of the viscous MFG, when initialized from 
a state $p \in \textrm{\rm Int}({\mathcal S}_{d})$ at some time $t \in [0,T)$, is given by 
$\sum_{i \in \ES} p_{i} U_{\varepsilon,\varphi}^i(t,p)$. In other words, $U_{\varepsilon,\varphi}^i(t,p)$ is nothing but $\inf_{\balpha} \tilde{J}^{\varepsilon,\varphi}(\balpha;{\boldsymbol p}^{\varepsilon,\varphi})$ whenever 
${\boldsymbol p}^{\varepsilon,\varphi}$ is initialized from $p$ at time $t$ and ${\boldsymbol q}$ in
\eqref{dynmfg} 
is initialized at time $t$ from $(q_{t}^j=\delta_{i,j})_{j \in \ES}$.

Due to the potential structure of the game, there is in fact a strong connection between the HJB equation 
\eqref{hjbpotnew:sec:2} and the 
master equation \eqref{eq:master:equation:introl}. We can not have directly $U_{\varepsilon,\varphi}^i(t,p) = 
{\mathfrak d}_{i} {\mathcal V}_{\varepsilon,\varphi}(t,p)$ for any $i\in\ES$, because the intrinsic gradient sum to zero, while the 
functions $U_{\varepsilon,\varphi}^i$ do not. This is {by the way} part of the difficulty in proving Theorem \ref{main:thm:2}, see Section \ref{sec:4}. 
{What we can show is} that
\begin{equation}
	\label{eq:master:HJB}
	U_{\varepsilon,\varphi}^i(t,p) - U_{\varepsilon,\varphi}^j(t,p) = 
	{\mathfrak d}_{i} {\mathcal V}_{\varepsilon,\varphi}(t,p) - 
	{\mathfrak d}_{j} {\mathcal V}_{\varepsilon,\varphi}(t,p), 
\end{equation}
for 
$t \in [0,T]$, $p \in \textrm{\rm Int}\bigl({\mathcal S}_{d}\bigr)$ and $i,j \in \ES$,  
which is reminiscent of \cite[Theorem 3.7.1]{CardaliaguetDelarueLasryLions} (in the sense that, heuristically, space derivatives in continuous state space are replaced here by differences). 
Notably, \eqref{eq:master:HJB} sufficies to prove that the MFG and the MFCP have the same solution, because the optimal control is given by \eqref{eq:optimal:control:mfg}.
Interestingly, 
Proposition 
\ref{prop:cv:cost}
provides a way to pass to the limit  for $U_{\varepsilon,\varphi}$. In case
when the 
inviscid MFCP \eqref{eq:cost:functional:mfc}--\eqref{eq:fp}
has a unique minimizer initialized from $p$ at time $t$, Proposition 
\ref{prop:cv:cost} implies that the limit of $U_{\varepsilon,\varphi}^i$ (provided that $\delta$ is chosen 
as $\delta=\hat{\delta}(\varepsilon)$, 
 {$\theta$ as $\theta=\hat{\theta}(\varepsilon)$}
 and $\varphi$ as $\varphi=\hat{\varphi}(\varepsilon)$) is 
$U^i(t,p)$, where now $U^i(t,p)$ stands for $\inf_{\balpha} J(\balpha;{\boldsymbol p})$ with 
${\boldsymbol p}$ denoting the unique minimizer of the inviscid MFCP initialized from $p$ at time $t$ and ${\boldsymbol q}$ in
\eqref{eq:fp}
being initialized at time $t$ from $(q_{t}^j=\delta_{i,j})_{j \in \ES}$. 

Obviously, a natural question is to relate such a limit $U$ 
with the value function ${\mathcal V}$ of 
the inviscid MFCP 
\eqref{eq:cost:functional:mfc}, where, for
$(t,p) \in [0,T] \times {\mathcal S}_{d}$, ${\mathcal V}(t,p)$
is defined
as $\inf_{\balpha} {\mathcal J}(\balpha)$ whenever 
${\boldsymbol q}$ in 
\eqref{eq:fp} starts from $p$ at time $t$. 
We manage to prove {(see Theorem \ref{thmcomparison})} that
${\mathcal V}$ is the unique {Lipschitz} viscosity solution of the following HJB equation:
\begin{equation}
	\label{eq:hjb:inviscid}
	\left\{
	\begin{array}{l}
		\partial_{t} {\mathcal V} +  \sum_{k,j \in \ES} p_{k} H  \bigl( {\mathfrak d}_{k} {\mathcal V} - 
		{\mathfrak d}_{j} {\mathcal V} \bigr)  + F(p) = 0,
		\\
		{\mathcal V}(T,p)=G(p),
	\end{array}
	\right.
	\quad (t,p) \in [0,T] \times {\mathcal S}_{d},
\end{equation}
where $H$ is the Hamiltonian associated to ${\mathfrak L}$ in 
\eqref{eq:lagrangian:main}, namely 
\begin{equation}
	\label{eq:hamiltonian:main}
	H(u) = - \tfrac12 \sum_{j \in \ES} ( u_{j} )_{+}^2, \quad u \in {\mathbb R}^d. 
\end{equation}
Pay attention that there is no condition on the boundary of the simplex, see 
Definition \ref{defvisco}
for the details. 
Obviously, 
\eqref{eq:hjb:inviscid}
should be regarded as the 
inviscid version of the equation
\eqref{hjbpotnew:sec:2} (up to the fact that controls are truncated by $M$ in the latter, {but this may be in fact easily handled 
		by using the fact that optimal controls to \eqref{eq:cost:functional} are bounded by $M$, see 
	Proposition \ref{prop:solution:mfc:zero:epsilon}}).
Importantly, ${\mathcal V}$ is shown to be Lipschitz continuous in time and space, see if needed Proposition 
\ref{prop:solution:mfc:zero:epsilon} in the core of the text. Hence, it is almost every differentiable in 
$[0,T]\times \mathcal{S}_d$, which plays a crucial role in our analysis:
We also prove in Proposition \ref{prop:solution:mfc:zero:epsilon} that the inviscid MFCP has a unique solution when it is initialized from $p \in \textrm{\rm Int}({\mathcal S}_{d})$ at time $t$
such that ${\mathcal V}$ is differentiable in $(t,p)$ --and hence for almost every 
$(t,p)\in [0,T]\times \textrm{\rm Int}({\mathcal S}_{d})$--, which permits to 
pass to the limit (as $\varepsilon$ tends to $0$) in 
$\mathfrak{D}\mathcal{V}_{\varepsilon,\phi}$ almost everywhere in time and space {in \eqref{eq:master:HJB}} ({the simplex being equipped with the $(d\!-\! 1)$ Lebesgue measure}).
  To this end, we need to make the slightly stronger assumption that $F\in \mathcal{C}^{1,1}(\mathcal{S}_d)$, meaning that $f$ is Lipschitz continuous ({on ${\mathcal S}_{d}$ and hence up to the boundary}), in order to ensure that $\mathcal{V}$ is semiconcave; see again Proposition \ref{prop:solution:mfc:zero:epsilon}.

We build on this idea to obtain the following:
\begin{thm}[Part I]
	\label{main:thm:6}
	Under the same assumption and notation as in the statement of Proposition 
	\ref{prop:cv:cost}, we have
	\be 
	\lim_{\varepsilon\rightarrow0}
	\mathcal{V}_{\varepsilon,\hat{\varphi}(\varepsilon)}
	= \mathcal{V} \quad \mbox{locally uniformly in } [0,T]\times \textrm{\rm Int}({\mathcal S}_{d}),
	\ee
	and, moreover, if in addition $F\in \mathcal{C}^{1,1}(\mathcal{S}_d)$,
	\begin{align}
		\label{eq:conv:optimal:policy}
		\lim_{\varepsilon\rightarrow0}
		\mathfrak{D}\mathcal{V}_{\varepsilon,\hat{\varphi}(\varepsilon)}
		&= \mathfrak{D}\mathcal{V}
		\quad \mbox{ a.e. in } [0,T]\times \textrm{\rm Int}({\mathcal S}_{d})
		\quad \mbox{ and \quad in } [L^1_{loc}([0,T]\times \textrm{\rm Int}({\mathcal S}_{d}))]^d.
	\end{align}
\end{thm}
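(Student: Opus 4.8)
The plan is to treat the two assertions in turn. The locally uniform convergence $\mathcal{V}_{\varepsilon,\hat\varphi(\varepsilon)}\to\mathcal{V}$ will follow from a stability argument for the Hamilton--Jacobi--Bellman equations, combined with the fact, recalled in the excerpt (see Theorem~\ref{thmcomparison}), that $\mathcal{V}$ is the unique Lipschitz viscosity solution of the inviscid equation \eqref{eq:hjb:inviscid}; the a.e.\ and $L^1_{loc}$ convergence of the intrinsic gradients will then be obtained from the first assertion together with a uniform-in-$(\varepsilon,\varphi)$ semiconcavity estimate for $\mathcal{V}_{\varepsilon,\varphi}$ and the classical principle that locally uniformly convergent, uniformly semiconcave functions have almost everywhere convergent gradients.

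For the first assertion, I would begin by recording a priori bounds that are uniform in $\varepsilon\in(0,1]$ and in $\varphi$: since $F$, $G$ are bounded and admissible controls are bounded by $M$, the value functions $\mathcal{V}_{\varepsilon,\varphi}$ of \eqref{eq:Cost:mfc}--\eqref{dynpot} are uniformly bounded; moreover they are uniformly Lipschitz on every compact subset of $[0,T]\times\textrm{Int}(\mathcal{S}_d)$, a bound on the spatial intrinsic gradient coming from \eqref{eq:master:HJB} together with a uniform bound on the differences $U^i_{\varepsilon,\varphi}-U^j_{\varepsilon,\varphi}$ of the master-equation solution (the latter following from the linearity of \eqref{dynmfg} in $\boldsymbol q$ and the Markovian, hence $L^1$-contractive, structure of its drift, which absorbs the large parameter $\kappa$), and a bound in time coming from the dynamic programming principle and the explicit escape estimate from the boundary. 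One then introduces the half-relaxed limits
\[
\overline{\mathcal{V}}(t,p)=\limsup_{\varepsilon\to0,\,(s,q)\to(t,p)}\mathcal{V}_{\varepsilon,\hat\varphi(\varepsilon)}(s,q),\qquad
\underline{\mathcal{V}}(t,p)=\liminf_{\varepsilon\to0,\,(s,q)\to(t,p)}\mathcal{V}_{\varepsilon,\hat\varphi(\varepsilon)}(s,q),
\]
which are finite, locally Lipschitz, and satisfy $\overline{\mathcal{V}}(T,\cdot)=\underline{\mathcal{V}}(T,\cdot)=G=\mathcal{V}(T,\cdot)$ on $\textrm{Int}(\mathcal{S}_d)$. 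The heart of the argument is the standard stability lemma for viscosity solutions: $\overline{\mathcal{V}}$ is a subsolution and $\underline{\mathcal{V}}$ a supersolution of \eqref{eq:hjb:inviscid} on $[0,T)\times\textrm{Int}(\mathcal{S}_d)$, because on every compact subset of $\textrm{Int}(\mathcal{S}_d)$ the forcing $\hat\varphi(\varepsilon)$ vanishes for $\varepsilon$ small (as $\hat\delta(\varepsilon)\to0$), the second order term $\tfrac{\varepsilon^2}{2}\sum_{j,k}(p_j\delta_{j,k}-p_jp_k)\mathfrak{d}^2_{j,k}$ vanishes, and the truncated Hamiltonian $\mathcal{H}_M^{\hat\varphi(\varepsilon)}(p,\cdot)$ converges to the inviscid Hamiltonian $\sum_{k,j}p_kH(\cdot)$, the truncation by $M$ being harmless since optimal controls of the inviscid problem are bounded by $M$ (Proposition~\ref{prop:solution:mfc:zero:epsilon}). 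The comparison principle of Theorem~\ref{thmcomparison} applied to $(\overline{\mathcal{V}},\mathcal{V})$ and to $(\mathcal{V},\underline{\mathcal{V}})$ then yields $\overline{\mathcal{V}}\le\mathcal{V}\le\underline{\mathcal{V}}$ on $[0,T]\times\textrm{Int}(\mathcal{S}_d)$; since $\underline{\mathcal{V}}\le\overline{\mathcal{V}}$ by construction, the three functions coincide, which is exactly locally uniform convergence. (Alternatively, the bound $\limsup_\varepsilon\mathcal{V}_{\varepsilon,\hat\varphi(\varepsilon)}\le\mathcal{V}$ can be obtained directly by inserting a near-optimal deterministic control of the inviscid MFCP into the viscous one: the noise contributes only $O(\varepsilon)$ to the cost, and, the limiting trajectory staying uniformly away from $\partial\mathcal{S}_d$ while $\mathrm{supp}\,\hat\varphi(\varepsilon)$ shrinks to the boundary, the repulsive drift is felt only on an event of vanishing probability, along the lines of the estimates behind Theorem~\ref{main:thm:4} and Proposition~\ref{prop:cv:cost}; the reverse bound follows by extracting a weak limit of a sequence of near-optimal viscous controls, which is supported on admissible inviscid pairs issued from the same data, and by lower semicontinuity of $\mathcal{J}$.)

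For the second assertion, I would assume in addition $F\in\mathcal{C}^{1,1}(\mathcal{S}_d)$ (so that, by Proposition~\ref{prop:solution:mfc:zero:epsilon}, $\mathcal{V}$ is semiconcave on $[0,T]\times\textrm{Int}(\mathcal{S}_d)$) and prove that the family $(\mathcal{V}_{\varepsilon,\varphi})$ is \emph{uniformly} semiconcave on every compact subset of $[0,T]\times\textrm{Int}(\mathcal{S}_d)$, with a constant independent of $\varepsilon$ and $\varphi$. This rests on a second-order variation (or a doubling-of-variables) argument for the control problem \eqref{eq:Cost:mfc}--\eqref{dynpot}, using that $G\in\mathcal{C}^{1,2+\gamma}_{\rm WF}$ and $F\in\mathcal{C}^{1,1}$, that $\alpha\mapsto\sum_i p_i\mathfrak{L}^i(\alpha)$ is convex, that the drift is smooth in $p$ in the interior, and, crucially, that the Wright--Fisher diffusion coefficients $\sqrt{p_ip_j}$ and the repulsive drift $\varphi$ lose regularity only near $\partial\mathcal{S}_d$, where the controlled trajectories spend little time: the corresponding second-order contributions are kept uniform by means of the exponential integrability of $\int_0^T(1/p_t^i)\,dt$ established in \cite{mfggenetic}. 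Granting this uniform semiconcavity, since $\mathcal{V}_{\varepsilon,\hat\varphi(\varepsilon)}\to\mathcal{V}$ locally uniformly, the classical result on semiconcave functions (working in local charts on compacts of $\textrm{Int}(\mathcal{S}_d)$, the functions $\mathcal{V}_{\varepsilon,\hat\varphi(\varepsilon)}-C|\cdot|^2/2$ are concave and converge locally uniformly, hence their superdifferentials, and therefore their gradients at every common point of differentiability, converge) gives $\mathfrak{D}\mathcal{V}_{\varepsilon,\hat\varphi(\varepsilon)}\to\mathfrak{D}\mathcal{V}$ almost everywhere in $[0,T]\times\textrm{Int}(\mathcal{S}_d)$; convergence in $L^1_{loc}$ then follows from the a.e.\ convergence, the uniform $L^\infty_{loc}$ bound on the gradients, and dominated convergence.

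The main obstacle is the uniform semiconcavity estimate for $\mathcal{V}_{\varepsilon,\varphi}$: because the Wright--Fisher noise is degenerate and only $1/2$-H\"older at the boundary, and because the repulsive intensity $\kappa$ blows up as $\varepsilon\to0$, one must exploit quantitatively, through the exponential moments of $\int_0^T(1/p_t^i)\,dt$, that the controlled trajectories remain in the interior in order to prevent the second-order estimates from degenerating. A secondary delicate point, in the first assertion, is that the stability argument only identifies the half-relaxed limits as sub/supersolutions on the interior of the simplex, so that the comparison of Theorem~\ref{thmcomparison} has to be localized there, which is legitimate precisely because the inviscid optimal trajectories issued from $\textrm{Int}(\mathcal{S}_d)$ stay away from $\partial\mathcal{S}_d$ with the explicit threshold of Proposition~\ref{prop:solution:mfc:zero:epsilon}.
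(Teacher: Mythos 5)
Your proposal follows a genuinely different route from the paper on both assertions, and each branch has a gap at its key step.

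For the locally uniform convergence, the paper does not use half-relaxed limits: it first obtains \emph{pointwise} convergence $\mathcal{V}_{\theta,\delta,\varepsilon}(t_0,p_0)\to\mathcal{V}(t_0,p_0)$ by probabilistic compactness of the optimal pairs $({\boldsymbol p}^{\theta,\delta,\varepsilon},{\boldsymbol \alpha}^{\theta,\delta,\varepsilon})$ and identification of their weak limits as minimizers of the inviscid MFCP (Theorem \ref{thm10}, in particular \eqref{limcosts:bis}), and then upgrades this to local uniformity via equicontinuity (a uniform local Lipschitz bound in space from \eqref{boundVK}, a $1/2$-H\"older bound in time from It\^o's formula and a stopping argument) and Ascoli--Arzel\`a (Proposition \ref{cor:5:8}). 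Your stability-plus-comparison route is plausible in principle, but as written it cannot invoke Theorem \ref{thmcomparison}: that comparison principle requires the sub- and supersolutions to be Lipschitz on all of $[0,T]\times\widehat{\mathcal S}_d$ (its proof truncates the Hamiltonian at $R=\max(\|D_xu\|_\infty,\|D_xv\|_\infty)$ and uses a log-barrier over the whole simplex), whereas your half-relaxed limits are only known to be locally Lipschitz in the interior, with constants depending on the compact set. ``Localizing'' the comparison to a compact $\mathcal{K}\subset\textrm{Int}(\mathcal S_d)$ is not covered by the theorem either, since one then lacks boundary data on $\partial\mathcal{K}$; a cone-of-dependence argument might repair this but would have to be supplied. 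Your parenthetical alternative (inserting near-optimal controls of each problem into the other and passing to the limit) is essentially the paper's actual argument and is the safer route.

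The more serious gap is in the second part. The paper never proves --- and does not need --- uniform-in-$\varepsilon$ semiconcavity of $\mathcal{V}_{\varepsilon,\varphi}$, yet your entire second step hinges on this estimate, which you only assert. It is far from clear that it holds: a second-order variation of the viscous control problem differentiates the drift $\varphi_{\theta,\delta,\varepsilon}$ twice, and already $|\varphi'_{\theta,\delta,\varepsilon}|$ is of order $\kappa_\varepsilon/\theta=\kappa_2/(\varepsilon^2\theta)$ near the boundary (see \eqref{newphi}--\eqref{eq:bound:phiprime}); the exponential moments of $\int_0^T(1/p^i_t)\,dt$ control terms of this type only after a delicate tuning of $(\theta,\delta)$ against $\varepsilon$, and they do not obviously deliver a semiconcavity constant independent of $\varepsilon$. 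The paper instead identifies $\mathfrak{D}\mathcal{V}_{\varepsilon,\varphi}$ through the game: by \eqref{eq:master:HJB}, $\partial_{x_i}\widehat{\mathcal V}_{\varepsilon,\varphi}=U^i_{\varepsilon,\varphi}-U^d_{\varepsilon,\varphi}$; at every point of differentiability of $\mathcal{V}$ (a.e., by Lipschitz continuity; uniqueness of the minimizer there uses the semiconcavity of $\mathcal{V}$ itself, hence the hypothesis $F\in\mathcal{C}^{1,1}$) the optimal pairs converge in probability to the unique inviscid minimizer, whence $U^i_{\varepsilon,\varphi}(t_0,p_0)\to U^i(t_0,p_0)$ and, via the Pontryagin adjoint (items (viii)--(ix) of Proposition \ref{prop:solution:mfc:zero:epsilon}), $\mathfrak{D}\mathcal{V}_{\varepsilon,\varphi}(t_0,p_0)\to\mathfrak{D}\mathcal{V}(t_0,p_0)$ (Proposition \ref{thm:5:9}). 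Finally, in both parts you do not address the fact that the admissible ranges of $(\theta,\delta)$ depend on the compact set $\mathcal{K}$; the paper needs a diagonal extraction combined with Egoroff and Borel--Cantelli (Theorem \ref{thm:5.10}) to produce a single choice $\hat\varphi(\varepsilon)$ for which the stated convergences hold on all of $[0,T]\times\textrm{Int}(\mathcal S_d)$.
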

This is the most technical and demanding result of the paper, and  is proved is Subsection \ref{subse:distance:MFCP}, see Theorem \ref{thm:5.10}; notice that, in the end,  we can not prove convergence at any points of differentiability {of ${\mathcal V}$}, but just almost everywhere. 
Passing to the limit in 
\eqref{eq:master:HJB}, 
equations \eqref{eq:conv:U} and \eqref{eq:conv:optimal:policy} provides 
a strong form of selection for the value of the inviscid MFG. In the above notations, it says that, for almost every $(t,p) \in [0,T] \times {\mathcal S}_{d}$, the value $U(t,p)$ of the game that is selected is given by the derivative of 
${\mathcal V}$, namely
\begin{equation}
	\label{eq:selection:optimal feedback}
	U^i(t,p) - U^j(t,p) = {\mathfrak d}_{i} {\mathcal V}(t,p) - {\mathfrak d}_{j} {\mathcal V}(t,p),
\end{equation} 
for any $i,j \in \ES$. {At first sight, it looks  like that only finite differences of the 
vector
$(U^1(t,p),\cdots,U^d(t,p))$ are hence selected.
In fact, we can reconstruct \textit{a posteriori} the full-fledge collection 
$(U^i(t,p))_{i \in \ES}$ by observing that each $U^i(t,p)$ should coincide with 
the optimal cost to \eqref{eq:cost:functional}
when ${\boldsymbol p}$ is the minimizer of the inviscid MFCP (which is unique for almost every initial 
point $(t,p)$) and when ${\boldsymbol q}$ in 
	\eqref{eq:fp}
	starts from the Dirac point mass in $i$ at time $t$. Hence we may complement 
	Theorem \ref{main:thm:6} in the following way}, which is also proved in Theorem \ref{thm:5.10}:

{\begin{customthm}{\ref{main:thm:6}}[Part II]
	Under the same assumption and notation as in the statement of Proposition 
	\ref{prop:cv:cost} {and provided that  
	$F\in \mathcal{C}^{1,1}(\mathcal{S}_d)$}, we have
		\begin{align}
		\label{eq:conv:U}
		\lim_{\varepsilon\rightarrow0}
		U_{\varepsilon,\hat{\varphi}(\varepsilon)}
		&= U
		\quad \mbox{ a.e. in } [0,T]\times \textrm{\rm Int}({\mathcal S}_{d})
		\quad \mbox{ and \quad in } [L^1_{loc}([0,T]\times \textrm{\rm Int}({\mathcal S}_{d}))]^d,
	\end{align}
	where, for any initial condition $(t,p) \in [0,T]\times \textrm{\rm Int}({\mathcal S}_{d})$ 
	from which the inviscid 
	  MFCP has a unique optimal trajectory ${\boldsymbol p}$, $U^i(t,p)$ is defined as 
$\inf_{\balpha}
	J(\balpha;{\boldsymbol p})$
in \eqref{eq:cost:functional}, the problem being set over the time interval $[t,T]$ 
and ${\boldsymbol q}$ in 
	\eqref{eq:fp}
starting from $q_{t}=(q_{t}^j=\delta_{i,j})_{j \in \ES}$. 
\end{customthm}
}
\subsection{Weak solution to the master equation}
The last step in our program is hence to provide an intrinsic approach to the relationship \eqref{eq:selection:optimal feedback} by addressing 
directly the master equation of the inviscid MFG. 
The latter writes (see for instance
\cite{GomesMohrSouza_discrete,Gomes2013} and \cite[Chapter 7]{CarmonaDelarue_book_I}):
\begin{equation}
	\label{eq:master:equation}
	\left\{
	\begin{array}{l}
		\partial_t U^i + H\Bigl( (U^i-U^j)_{j \in \ES} \Bigr) +  f^i (p)  
		+
		\sum_{j,k \in \ES} p_k  (U^k-U^j)_+  \left( {\mathfrak d}_{j} U^i - {\mathfrak d}_{k} U^i\right) =0,
		\\
		U^i(T,p)= g^i(p), 
	\end{array}
	\right.
\end{equation}
for $i \in \ES$, 
which is informally obtained by taking $\varphi \equiv 0$ and $\varepsilon=0$ in 
\eqref{eq:master:equation:introl}.  
Recasted in terms of the centered value functions $({\mathring U}^i := U^i - \bar U)_{i \in \ES}$, 
with $\bar U = \frac1d \sum_{j \in \ES} U^j$,
\eqref{eq:master:equation} becomes:
\begin{equation}
	\label{eq:master:equation:centred}
	\left\{
	\begin{array}{l}
		\partial_t \mathring U^i + \mathring H^i \Bigl( (\mathring U^j- \mathring U^k)_{j,k \in \ES} \Bigr) +  \mathring{f}^i (p)  
		+
		\sum_{j,k \in \ES} p_k  (\mathring U^k- \mathring U^j)_+  \bigl( {\mathfrak d}_{j} \mathring U^i - {\mathfrak d}_{k} \mathring U^i\bigr) =0,
		\\
		\mathring U^i(T,p)= \mathring{g}^i(p),
	\end{array}
	\right.
\end{equation}
for $i \in \ES$, 
where we have let 
\begin{align*}
	&\mathring{H}^i\Bigl(\bigl(u^{j,k}\bigr)_{j,k \in \ES}\Bigr) := H\Bigl(\bigl(u^i - u^j\bigr)_{j \in \ES}\Bigr)- \tfrac1d \sum_{j \in \ES} H\Bigl(\bigl(u^j-u^k\bigr)_{k \in \ES}\Bigr),
\end{align*} 
{and similarly 
$\mathring{f}^i(p)= f^i(p)-\frac1d \sum_{j\in\ES} f^j(p)$
and
$\mathring{g}^i(p)= g^i(p)-\frac1d \sum_{j\in\ES} g^j(p)$}.
As we have already explained,
the master equation is typically non-uniquely solvable (see the next subsection for a benchmark example). 
The question for us is thus to rephrase \eqref{eq:selection:optimal feedback} as a uniqueness result for the master equation --or at least for its centered version \eqref{eq:master:equation:centred}-- within a well-chosen class of functions. 
Loosely speaking, we succeed to obtain such a result in Section \ref{sec:uniqueness:master:equation} below but for 
the conservative form of 
\eqref{eq:master:equation:centred}, namely
\begin{equation}
	\label{eq:master:equation:conservative}
	\left\{
	\begin{array}{l}
		\partial_t \mathring U^i + \mathring H^i \Bigl( (\mathring U^j- \mathring U^k)_{j,k \in \ES} \Bigr) +  \mathring{f}^i (p)  
		-\frac12
		\sum_{j,k \in \ES} p_k  {\mathfrak d}_{i} \bigl[ (\mathring U^k- \mathring U^j)_+^2 \bigr]=0,
		\\
		\mathring U^i(T,p)= \mathring{g}^i(p),
	\end{array}
	\right. 
\end{equation}
for $i \in \ES$. 
Clearly, the two equations 
\eqref{eq:master:equation}
and
\eqref{eq:master:equation:conservative}
may be identified 
within the class of differentiable functions $\mathring U$ that satisfy, for any $i,j,k \in \ES$,  
\begin{equation*}
	-\frac12 {\mathfrak d}_{i} \bigl[ \bigl(\mathring U^k- \mathring U^j\bigr)_+^2 \bigr]
	=  \bigl( \mathring U^k- \mathring U^j \bigr)_+  \left( {\mathfrak d}_{j} \mathring U^i - {\mathfrak d}_{k} \mathring U^i \right),
\end{equation*}
which indeed holds true if, for any $i,j \in \ES$,  
\begin{equation}
	\label{eq:schwarz}
	{\mathfrak d}_{i}  \mathring U^j
	=  {\mathfrak d}_{j} \mathring U^i.
\end{equation}
As we clarify in Section \ref{sec:uniqueness:master:equation}, 
identity \eqref{eq:schwarz}
guarantees that $\mathring U$ derives --in space-- from a potential, meaning that 
$\mathring U^i(t,p) = {\mathfrak d}_{i} \mathring{\mathcal {V}}(t,p)$ for some real-valued function 
$\mathring{\mathcal {V}}$ defined on $[0,T] \times {\mathcal S}_{d}$. 
As a byproduct, it 
prompts us to regard the conservative formulation 
\eqref{eq:master:equation:conservative}
of the master equation 
as the derivative system obtained by applying the operator ${\mathfrak d}_{i}$, for 
each $i \in \ES$, to the
HJ equation
\eqref{eq:hjb:inviscid}. In 
words, 
\eqref{eq:master:equation:conservative}
may be rewritten as
\begin{equation}
	\label{eq:master:equation:conservative:full}
	\left\{
	\begin{array}{l}
		\partial_t \mathring U^i + 
		{\mathfrak d}_{i} \Bigl(
		\sum_{k,j \in \ES} p_{k} H  \bigl( 
		\mathring U^k - \mathring U^j \bigr)
		\Bigr) 
		+  {\mathfrak d}_{i} F (p)  
		=0,
		\\
		\mathring U^i(T,p)= {\mathfrak d}_{i} G(p),
	\end{array}
	\right. 
\end{equation}
for $i \in \ES$. Interestingly enough, 
the formulation 
\eqref{eq:master:equation:conservative:full}
makes clear the link between the HJ equation  
\eqref{eq:hjb:inviscid} and the master equation, at least when the latter is understood in its conservative form. 
For scalar conservation laws, the usual notion of admissibility which is used to restore uniqueness of weak solutions is the one of entropy solution. In space dimension 1, which is the case when $d=2$ (see next Subsection), the entropy solution to a scalar conservation law is also shown to be the space derivative of the viscosity solution of the corresponding HJB equation; see e.g.  \cite{KarlsenRisebro,Lions_HJB,cannarsa}. However, for hyperbolic systems of PDEs with multiple space dimension, which is the case  {here} when $d\geq 3$, there might be non-uniqueness of entropy solutions and there are very few results in the literature about such systems. In particular, system \eqref{eq:master:equation:conservative:full} is hyperbolic in the wide sense, but not strictly hyperbolic. Nevertheless, exploiting the connection with the HJB equation \eqref{eq:hjb:inviscid}, borrowing the idea from the paper of Kru\v{z}kov
\cite{kruzkov}, it is possible to establish uniqueness in a suitable set of admissible weak solutions.
The whole is captured by the following statement\footnote{The reader should be aware of the fact that the assumption on $G$ in the statement below is weaker than what we required in the previous statements, see for instance Theorems \ref{main:thm} and \ref{main:thm:2}.}:

\begin{thm}
	\label{thm:uniqueness:master}
	 Assume that $F$ and $G$ are in  $\mathcal{C}^{1,1}(\mathcal{S}_d)$. The conservative form \eqref{eq:master:equation:conservative:full} of the master equation has a unique
	weak solution that is bounded and weakly semi-concave in space. This solution is the almost everywhere space derivative
	of the unique viscosity solution ${\mathcal V}$ of the HJ equation 
	\eqref{eq:hjb:inviscid}. 
\end{thm}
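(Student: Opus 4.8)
The plan is to prove that the unique admissible weak solution of \eqref{eq:master:equation:conservative:full} is the a.e.\ spatial gradient $\mathfrak{D}\mathcal{V}$ of the viscosity solution $\mathcal{V}$ of the Hamilton--Jacobi equation \eqref{eq:hjb:inviscid}, in two stages: first checking that $\mathfrak{D}\mathcal{V}$ belongs to the admissible class, then \emph{lifting} an arbitrary competitor back to the scalar Hamilton--Jacobi level, where the comparison principle of Theorem~\ref{thmcomparison} is available. This is exactly the viewpoint borrowed from \cite{kruzkov}, and it is the only one at hand here since \eqref{eq:master:equation:conservative:full} is hyperbolic only in the wide sense, so that a direct $L^1$-type comparison between weak solutions is out of reach. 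Throughout, write $\Phi[v](t,p):=\sum_{k,j\in\ES}p_k\,H\bigl(v^k-v^j\bigr)+F(p)$ with $H$ as in \eqref{eq:hamiltonian:main}, so that \eqref{eq:hjb:inviscid} reads $\partial_t\mathcal{V}+\Phi[\mathfrak{D}\mathcal{V}]=0$ and \eqref{eq:master:equation:conservative:full} reads $\partial_t\mathring{U}^i+\mathfrak{d}_i\Phi[\mathring{U}]=0$.

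\emph{Existence.} By Proposition~\ref{prop:solution:mfc:zero:epsilon} and Theorem~\ref{thmcomparison}, $\mathcal{V}$ is Lipschitz on $[0,T]\times\mathcal{S}_d$ and, since $F,G\in\mathcal{C}^{1,1}(\mathcal{S}_d)$, semiconcave in $p$; being Lipschitz and a viscosity solution, it solves \eqref{eq:hjb:inviscid} pointwise almost everywhere. I would differentiate the a.e.\ identity $\partial_t\mathcal{V}+\Phi[\mathfrak{D}\mathcal{V}]=0$ in the variable $p_i$ in the distributional sense --- which is legitimate because the $p$-dependence of $\Phi$ enters only through the finite differences $\mathfrak{d}_k\mathcal{V}-\mathfrak{d}_j\mathcal{V}$ and through the smooth factors $p_k$ and $F$, with $H$ locally Lipschitz --- to obtain that $(\mathfrak{d}_i\mathcal{V})_{i\in\ES}$ is a bounded weak solution of \eqref{eq:master:equation:conservative:full} with terminal value $\mathfrak{d}_iG$; its weak semiconcavity in space is nothing but the semiconcavity of $\mathcal{V}$, i.e.\ $(\mathfrak{d}_i\mathfrak{d}_j\mathcal{V})_{i,j\in\ES}$ is bounded above in the quadratic-form sense as a matrix of measures. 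So the candidate $\mathfrak{D}\mathcal{V}$ qualifies.

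\emph{Uniqueness.} Let $\mathring{U}$ be any bounded, weakly-semiconcave-in-space weak solution. The first substep is to show it is a spatial gradient. Boundedness together with the one-sided bound on $(\mathfrak{d}_i\mathring{U}^j)_{i,j\in\ES}$ forces $\mathring{U}(t,\cdot)\in BV_{\mathrm{loc}}(\mathrm{Int}(\mathcal{S}_d))$ for a.e.\ $t$, so the $1$-form $\sum_{i\in\ES}\mathring{U}^i\,\mathrm{d}p_i$ has a well-defined distributional exterior derivative. Mollifying in $p$, using the $BV$ bound to pass to the limit and exploiting the conservative structure $\partial_t\mathring{U}^i+\mathfrak{d}_i\Phi[\mathring{U}]=0$, one sees that $\mathfrak{d}_j\mathring{U}^i-\mathfrak{d}_i\mathring{U}^j$ does not depend on $t$; since $\mathring{U}^i(T,\cdot)=\mathfrak{d}_iG$ makes this antisymmetric combination vanish at $t=T$, the Schwarz relation \eqref{eq:schwarz} holds a.e.\ on $[0,T]\times\mathrm{Int}(\mathcal{S}_d)$. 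Hence there is $\mathcal{W}$ with $\mathfrak{d}_i\mathcal{W}=\mathring{U}^i$; fixing the remaining $p$-independent freedom by $\partial_t\mathcal{W}=-\Phi[\mathring{U}]$ together with $\mathcal{W}(T,\cdot)=G$, the function $\mathcal{W}$ is Lipschitz (from the boundedness of $\mathring{U}$ and of $\partial_t\mathcal{W}$), semiconcave in $p$ (from the weak semiconcavity of $\mathring{U}$), and solves \eqref{eq:hjb:inviscid} a.e.\ with terminal data $G$. For the second substep, I would invoke the classical fact that a Lipschitz function that is semiconcave in space and solves a convex/concave Hamilton--Jacobi equation a.e.\ is its viscosity solution: semiconcavity makes every point at which a smooth function touches $\mathcal{W}$ from below a differentiability point, and concavity of the effective Hamiltonian $\xi\mapsto\sum_{k,j}p_k\,H(\xi_k-\xi_j)$ propagates the a.e.\ equation to the superdifferential inequality, so both the sub- and supersolution tests pass (cf.\ \cite{cannarsa}); the absence of a condition on $\partial\mathcal{S}_d$ is accommodated exactly as in Definition~\ref{defvisco}. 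Then the comparison principle of Theorem~\ref{thmcomparison} gives $\mathcal{W}=\mathcal{V}$, whence $\mathring{U}=\mathfrak{D}\mathcal{W}=\mathfrak{D}\mathcal{V}$ a.e.; this is also the limit obtained by vanishing common noise in Theorem~\ref{main:thm:6}.

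The hard part will be the first substep of uniqueness: recovering a genuine potential from a merely weak, $BV$-regular solution, i.e.\ proving that the closedness relation \eqref{eq:schwarz} survives at the level of measures and at \emph{every} time. The difficulties concentrated there are (i) the mollify-and-pass-to-the-limit procedure justifying the $t$-independence of $\mathfrak{d}_j\mathring{U}^i-\mathfrak{d}_i\mathring{U}^j$ with only $BV$ control in space; (ii) the degeneracy of the underlying dynamics at $\partial\mathcal{S}_d$, which forces one to work with the boundary-condition-free notion of viscosity solution and to lean on Theorem~\ref{thmcomparison} rather than on any classical comparison statement; and (iii) the mere $\mathcal{C}^{1,1}$ regularity of $H$ (caused by the positive parts), so that the Hamiltonian appearing in \eqref{eq:hjb:inviscid} is not smooth --- precisely the feature that makes the weak semiconcavity hypothesis indispensable, both when reconstructing $\mathcal{W}$ and when verifying that it is a viscosity solution.
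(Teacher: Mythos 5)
Your architecture is the paper's (and Kru\v{z}kov's): verify that $\mathfrak{D}\mathcal{V}$ is admissible, lift an arbitrary admissible competitor to a Lipschitz, semiconcave, almost-everywhere solution of \eqref{eq:hjb:inviscid}, identify such solutions with viscosity solutions, and conclude via the comparison principle of Theorem~\ref{thmcomparison}. The existence half and the second uniqueness substep coincide with Steps~4--5 of the paper's proof of Theorem~\ref{thm:6.6}.

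The one step whose justification does not go through as written is the $BV$ detour at the start of uniqueness. The weak semiconcavity condition \eqref{dersemi} gives a one-sided bound only on the \emph{symmetric} part of the distributional Jacobian of $\mathring U$, and for a general bounded vector field this does not by itself yield $BV_{\mathrm{loc}}$: that inference is the classical convexity/Alexandrov fact, available once the field is already known to be curl-free so that its Jacobian is a symmetric matrix of distributions bounded above with integrable trace. Since you invoke the $BV$ control precisely in order to \emph{prove} the Schwarz relation \eqref{eq:schwarz}, the argument as stated is circular. The repair --- which is what the paper does in Step~1 --- needs no regularity beyond $Z\in L^\infty$: test the weak formulation \eqref{weaksol} with the antisymmetric pair $\phi^i=\zeta(t)\,\partial_{x_j}w$, $\phi^j=-\zeta(t)\,\partial_{x_i}w$ for $w\in\mathcal{C}^2_C(\mathrm{Int}(\widehat{\mathcal S}_d))$; because the flux is the spatial gradient of the single scalar $\widehat{\mathcal H}+\widehat F$, its contributions cancel against the antisymmetric combination of second derivatives of $w$, leaving that $\int\bigl[Z^i\partial_{x_j}w-Z^j\partial_{x_i}w\bigr]\,dx$ is constant in time; the terminal condition $Z(T,\cdot)=D_x\widehat G$ then forces it to vanish for every $t$. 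With that identity in hand, your reconstruction of the potential $\mathcal W$ (the paper mollifies, builds $v_h$ exactly, and passes to the limit by Ascoli--Arzel\`a) and the remainder of your argument are sound.
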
 

The proof is given in Section \ref{sec:uniqueness:master:equation}, see Theorem \ref{thm:6.6}. 
The notions of weak solution and weak semi-concavity in space are clarified in Definition 
\ref{def:admissible:solution} below. In a shell, the proof of the above statement holds in three steps: The first one 
is to show that any weak solution to the conservative form of the master equation derives from a potential; 
The second one is to prove that the potential must be an almost everywhere and semi-concave solution of the HJ equation 
\eqref{eq:hjb:inviscid}; The last step is to identify almost everywhere and semi-concave solutions with viscosity solutions of
\eqref{eq:hjb:inviscid}, which are shown to be unique, despite the lackness of boundary conditions, see Corollary \ref{CORUNIQ} below. 
To put it differently, the striking facts that we use here to restore a form of uniqueness to the master equation are, on the one hand, the existence of a potential and, on the other hand, the semi-concavity assumption. 
In this regard, it must be fair pointing out that the existence of a potential is somewhat enclosed in the 
conservative form of 
\eqref{eq:master:equation:conservative:full}. 
In other words, the
conservative form not only permits to address solutions in a weak sense, but it also permits to reduce the space 
of solutions to gradient functions. 
As for the semi-concavity assumption, it plays a crucial role in the selection: The connection between semi-concave solutions of HJ equations and entropy  
solutions of scalar conservation laws has been widely discussed, see for instance the first chapter in the monograph of Cannarsa and Sinestrari \cite{cannarsa} together with the bibliography therein; In the case of of hyperbolic systems with a potential structure --like \eqref{eq:master:equation:conservative:full}--, the role of semi-concavity is exemplified in the earlier paper of Kru\v{z}kov
\cite{kruzkov} from which we borrow part of the proof of Theorem \ref{thm:uniqueness:master}.
For sure, it is also important to say that, in the end, we are not able to define weak solutions for  the non-conservative versions 
\eqref{eq:master:equation}
and
\eqref{eq:master:equation:centred} of the master equation. 
However, we prove in Proposition \ref{prop:classical:weak} below that  classical solutions to \eqref{eq:master:equation:centred} are indeed weak solutions to \eqref{eq:master:equation:conservative:full}, the key point being that  Schwarz identity \eqref{eq:schwarz} holds true for classical solutions to \eqref{eq:master:equation:centred}.


\subsection{Example}
\label{subse:example}

To illustrate our results, we feel useful to revisit the $d=2$ example addressed in 
\cite{cecdaifispel} (the reader may also have a look at \cite{bayzhang2019} which shares 
many features with 
\cite{cecdaifispel}). Therein, a selection result is proven by addressing directly the large $N$ behavior of the 
$N$-player game, both in terms of the value functions for the feedback Nash equilibria and of the optimal trajectories. Although this is certainly a much more satisfactory approach than ours, at least from a modelling point view, making a detour via the finite case remains however much more challenging and difficult. To wit, the selection result established in 
\cite{cecdaifispel} is partial only, as it leaves open the case when the initial point of the equilibrium is precisely a singular point of the master equation --we go back to this point next--. And most of all, it is by no means clear --at least for us-- how the 
potential structure we use here could help for addressing the convergence of the corresponding finite player game in the case $d \geq 3$.
Actually, this informal comparison of the tractability of the two limits over $N$ and $\varepsilon$ should not come as a surprise for the reader. Intuitively, it is indeed more difficult to handle the $N$-player game for a large $N$ than the $\varepsilon$-viscous game
for a small intensity $\varepsilon$ of the common noise: 
Even though their solutions are randomized, MFGs with common noise indeed share with standard MFGs the key property that any unilateral deviation from an equilibrium has no influence on the global state of the population; 
this turns out to be useful when addressing the asymptotic behavior --as $\varepsilon$ vanishes-- and, obviously, 
this is false for finite games. Noticeably, this argument in support of the vanishing viscosity approach is exemplified in the paper \cite{delfog2019}: Therein, the authors prove a selection result for linear quadratic games with a continuous state space by both methods; In this setting, the vanishing viscosity method is clearly the easiest one. 

The case $d=2$ is very special because any MFG becomes potential. {Below, we first provide a general description of} two state mean field games and then we specialize our results to the example analyzed in \cite{cecdaifispel}.  For the same two 
sets of coefficients 
$(f^i)_{i=1,2}$ and $(g^i)_{i=1,2}$ as in \eqref{eq:cost:functional},
we can easily reconstruct two potentials $F$ and $G$ such that
\[
\fd_1 F (p) = \frac{f^1(p) -f^2(p)}{2} = - \fd_2 F (p),  \qquad  p \in {\mathcal S}_{2},
\]
{by letting}
\begin{equation*}
	F(p_1,1-p_1) := \int_{0}^{p_{1}} \Bigl( f^1(q,1-q) - f^2(q,1-q) \Bigr) dq, \qquad p_1 \in [0,1],
\end{equation*}
{and similarly for $G$.}
Interestingly the centered master equation \eqref{eq:master:equation:centred}, which is a system in the general case $d \geq 3$, becomes a mere equation when $d=2$. 
Indeed, we then have
$\widetilde U^1 = -\widetilde U^2$, 
which implies that \eqref{eq:master:equation:centred} can be rewritten in terms of the sole $\widetilde U^1 = (U^1-U^2)/2$. Accordingly, 
the conservative version of the master equation takes the form 
\begin{equation}
	\label{eq:conservative:form:d=2}
	\left\{
	\begin{array}{l}
		\partial_{t} \widetilde U^1 +   {\mathfrak d}_{1} \Bigl(
		{\mathcal H}\bigl( p, \widetilde U^1\bigr)
		\Bigr) + \frac12 \bigl(f^1(p_{1},1-p_{1})-f^2(p_{1},1-p_{1})\bigr)=0, 
		\\
		\widetilde U^1(T,p) = \frac12 \bigl(g^1(p_{1},1-p_{1}) - g^2(p_{1},1-p_{1})\bigr),
	\end{array}
	\right.
\end{equation}
where 
\begin{equation*}
	\begin{split}
		-{\mathcal H}(p,u) = 
		2 p_{1} (u)_{+}^2 + 2 (1-p_{1}) (- u)_{+}^2
		&=
		2 p_{1} \Bigl( \frac{\vert u \vert+u}{2} \Bigr)^2 + 2 (1-p_{1})
		\Bigl( \frac{\vert u \vert - u}2 \Bigr)^2
		\\
		&= u^2 + (2p_{1}-1) u \vert u \vert. 
	\end{split}
\end{equation*}
The latter expression prompts us to change the variable $p_{1}$ into $m=2p_{1}-1$ (which should be thought of as the mean
of $(p_{1},p_{2})$ 
if the state space was $\{1,-1\}$ instead of $\{1,2\}$). Letting $Z(m) :=  {-2\widetilde U^{1}}(\tfrac{1+m}2,\tfrac{1-m}2) = (U^2-U^1)(\tfrac{1+m}2,\tfrac{1-m}2)$, for $m \in [-1,1]$, 
we can rewrite  \eqref{eq:conservative:form:d=2} in the form 
\begin{equation}
	\label{eq:conservative:form:d=2:mean}
	\left\{
	\begin{array}{l}
		-\partial_{t} Z +     \partial_{m} \Bigl(
		\frac{mZ \vert Z \vert}{2} -\frac{Z^2}{2}
		\Bigr) {=} f^2\bigl(\tfrac{1+m}2,\tfrac{1-m}2 \bigr) - f^1(\tfrac{1+m}2,\tfrac{1-m}2 \bigr),
		\\
		Z(T,p) = g^2\bigl( \frac{1+m}2,\frac{1-m}2 \bigr) -g^1(\tfrac{1+m}2,\tfrac{1-m}2 \bigr),
	\end{array}
	\right.
\end{equation}
for $(t,m) \in [0,T] \times [-1,1]$. 

In \cite{cecdaifispel}, 
the cost coefficients are chosen as  
\begin{center}
	$F \equiv 0$ \quad and \quad $g^1(p)=- (2p_{1}-1)$, \quad $g^{-1}(p)= 2p_{1}-1$,
\end{center}
so that 
\[
G(p_1,1-p_1)= \int_0^{p_1}-2(2q-1)dq= -2p_1^2 + 2p_1 =  2 p_1  p_{2}.
\] 
The reduced master equation in  \cite{cecdaifispel}, see (3.11) 
therein, is exactly Equation \eqref{eq:conservative:form:d=2:mean} 
 for  $Z$ (up to a time reversal). 
Note also that the potential $\mathcal{G}(p_1,p_2) = - (2p_1-1)^2/2$ therein differs from $G$ above by a constant (which is $1/2$), but obviously this does not matter. 
Importantly, 
the master equation \eqref{eq:conservative:form:d=2:mean} may have multiple weak solutions when $T$ is large enough, hence the need for a selection argument. The solution selected in \eqref{eq:conservative:form:d=2:mean}, following the theory for scalar conservation laws, is the entropy solution, which can be shown to be unique in this case despite the lackness of boundary conditions. As explained in the previous subsection, the entropy solution is the space derivative of the viscosity solution to the HJB equation, making this selection consistent with Theorem \ref{thm:uniqueness:master}. Moreover, {in 
\cite{cecdaifispel}}, the value functions for the feedback Nash equilibrium of the $N$-player game are shown to converge to {this} entropy solution \cite[{Theorem} 8]
{cecdaifispel}; this says
{that the solutions to the master equation that are selected by taking the limit over $\varepsilon$ or over $N$ are the same}.
So, in a shell, our result is fully
consistant with \cite{cecdaifispel}.

As far as convergence of the optimal trajectories is concerned,  
the equilibria are shown to be non-unique, provided that
the time horizon 
$T$ is chosen large enough: Whatever the initial condition at time $0$, there are three solutions to the MFG if $T>2$, see
\cite[Proposition 2]{cecdaifispel}.
In this regard, the main result in 
\cite{cecdaifispel} states that, 
whenever the initial condition $p_{0}=(p^1_{0},{p^{2}_{0}})$ of the population at time $0$ satisfies $m_{0}:=2p_{0}^1 -1 \not =0$ (\textit{i.e.}, the mean parameter is non-zero), 
there is a unique equilibrium $(p_{t})_{0 \leq t \le T}$ that is selected by letting $N$ tend to $\infty$ in the corresponding 
$N$-player game; it satisfies the equation
\begin{equation}
	\label{eq:m_{t}:d=2}
	\frac{d}{dt} m_{t} = - 2m_{t}  \vert Z(t,m_{t}) \vert
	+ 2Z(t,m_{t}), \quad t \in [0,T],
\end{equation}
with  {$(m_{t}=p_{t}^1- p_{t}^{2})_{0 \le t \le T}$} and $Z$ {being} the unique entropy solution to \eqref{eq:conservative:form:d=2:mean}, see \cite[(23)]{cecdaifispel}. Notably, this equation is shown to admit a unique solution, when $m_0\neq 0$; see \cite[Prop 6]{cecdaifispel} . Again, this is consistent with our results: 
\cite[Theorem 15]{cecdaifispel}
asserts that this equilibrium is also the unique minimizer of the corresponding inviscid MFCP
initialized from $(0,m_{0})$, see 
\eqref{eq:cost:functional:mfc}
plugging $F \equiv 0$ and $G(p)=2p_{1} {p_{2}}$ therein. While the proof 
of Theorem \cite[Theorem 15]{cecdaifispel}
is carried out by explicit computations, 
our Theorem 
\ref{main:thm:4}
applies directly. Interestingly, we may recover 
\eqref{eq:m_{t}:d=2} explicitly. Indeed, 
in \cite{cecdaifispel}, 
the function $m \mapsto Z(0,m)$ is shown to be discontinuous 
at $m=0$ only (provided that $T$ is large enough; if $T$ is small, $m \mapsto Z(0,m)$ is continuous);
Accordingly, the function $m \mapsto {\mathcal V}(0,{\tfrac{1+m}2 })$ in  
\eqref{eq:hjb:inviscid} is continuously differentiable at $m_{0}$ (since $m_{0}$ is assumed to be non-zero) which, as we already explained --see also
Proposition 
\ref{prop:solution:mfc:zero:epsilon}--, implies that there is indeed a unique minimizer to the MFCP
initialized from $(0,{\tfrac{1+m_{0}}2})$. Also, our discussion {(see (viii) in Proposition 
\ref{prop:solution:mfc:zero:epsilon})}
 says that this unique minimizer, say $(p_{t}^\star)_{0 \leq t \leq T}$, solves the equation
\begin{equation*}
	\begin{split}
		&\frac{d}{dt} p_{t}^{\star,1}  =
		(1-p_{t}^{\star,1}) \bigl( \widetilde U^{{2}}(t,p_{t}^\star) - 
		\widetilde U^1(t,p_{t}^\star) \bigr)_{+}
		- p_t^{1,\star} \bigl( \widetilde U^1(t,p_{t}^\star) - 
		\widetilde U^{{2}}(t,p_{t}^\star) \bigr)_{+},
	\end{split}
\end{equation*}
for $t \in [0,T]$. 
Letting $(m_{t}^\star:=
2 p^{1,\star}_{t}-1)_{0 \le t \le T}$, 
we easily derive that $m_t^\star$ solves Equation  \eqref{eq:m_{t}:d=2}, whence we get $m_t = m_t^\star$. 

Last but not least, the case $m_0=0$ is left open in \cite{cecdaifispel}. In that case, the 
inviscid MFCP is shown to have two non-trivial symmetric minimizers, see again \cite[Theorem 15]{cecdaifispel}. It is also claimed in \cite{cecdaifispel}, see 
Section 4 therein, that, numerically, equilibria of the $N$-player game are tending 
to converge in law to those two minimizers, with weight $1/2$ each; In other words there are numerical evidences for ruling out the third equilibrium (recalling that the MFG has exactly three solutions). Obviously, our Theorem 
\ref{main:thm:4} sounds as a confirmation of this latter intuition, as it precisely 
says that the third equilibrium (which is shown to be the constant zero) 
is indeed excluded by the vanishing viscosity method. The fact that the two remaining ones should be charged with probability $1/2$ each comes from an additional symmetry argument, which is similar to the one used in \cite{delfog2019}: 
If ${\boldsymbol p}^{\star,\varepsilon,\varphi}=(p_{t}^{\star,1,\varepsilon,\varphi},
p_{t}^{\star,2,\varepsilon,\varphi})_{0 \le t \le T}$ is an optimal trajectory of the viscous mean field 
control problem, then, thanks to the symmetric form of $G$, 
$(p_{t}^{\star,2,\varepsilon,\varphi},
p_{t}^{\star,1,\varepsilon,\varphi})_{0 \le t \le T}$
is an admissible path with the same cost and hence is also an optimal trajectory but for the common noise $(B^{2,1}_{t},B^{1,2}_{t})_{0 \le t \le T}$ (instead of $(B^{1,2}_{t},B^{2,1}_{t})_{0 \le t \le T}$). By uniqueness in law of the equation characterizing the optimal trajectory, 
this shows that 
$(p_{t}^{\star,1,\varepsilon,\varphi},
p_{t}^{\star,2,\varepsilon,\varphi})_{0 \le t \le T}$
and 
$(p_{t}^{\star,2,\varepsilon,\varphi},
p_{t}^{\star,1,\varepsilon,\varphi})_{0 \le t \le T}$
have the same distribution. Consequently, under any weak limit 
${\mathbb M}$ as in
the statement of Theorem 
\ref{main:thm:4}, the marginal law of the first variable --which must be understood as the law of ${\boldsymbol p}$-- has to be symmetric. Here, we know that the support of ${\mathbb M}$ is necessarily included in a set of two non-trivial trajectories. 
Hence, 
each of them should be charged with probability $1/2$. 

Obviously, the thrust of our approach is that it applies to more general 
coefficients $F$ and $G$ and to any number of states $d\geq 2$; of course, the symmetry argument we have just alluded to only applies under appropriate 
forms of symmetry. 
\section{Mean field control problem}
\label{sec:3}

The main goal of this section is to prove Theorem \ref{main:thm}. 
We feel useful to recall that, for a function 
$\varphi$ as in 
\eqref{eq:varphi:2}, 
we aim at minimizing 
${\mathcal J}^{\varepsilon,\varphi}(\balpha)$
in 
\eqref{eq:Cost:mfc}
where ${\boldsymbol p}=(p_{t})_{0 \le t \le T}$ therein solves the $\balpha$-driven SDE
\eqref{dynpot}.
Importantly, the pair $(\varepsilon,\varphi)$ is kept fixed throughout the section,
which prompts us to drop out the superscript $(\varepsilon,\varphi)$ in  the subsequent notations. 
As explained in the previous section, we restrict ourselves to 
processes $\balpha$ that are bounded by $M=2(
\|g \|_{\infty}
+ T \| f\|_{\infty})$, in the sense that $\vert \alpha_{t}^{i,j} \vert \leq M$, $dt \otimes {\mathbb P}$ almost everywhere, 
for any $(i,j) \in \ES$ with $i \not =j$. 
The bound $M$ has  the following interpretation in terms of the inviscid MFG \eqref{eq:cost:functional}-\eqref{eq:fp}: 
For a given (deterministic) path 
${\boldsymbol p}=(p_{t})_{0 \le t \le T}$
with values in ${\mathcal S}_{d}$, 
optimizers of \eqref{eq:cost:functional} are given in terms of the value function 
$((u_{t}^i)_{0 \le t \le T})_{i \in \ES}$, 
namely $\alpha_{t}^{i,j}=(u_{t}^i - u_{t}^j)_{+}$, for $t \in [0,T]$ and $i,j \in \ES$ with $i \not =j$, see 
\cite[Chapter 7]{CarmonaDelarue_book_I}. 
Here,  $u_{t}^i$ is defined as the optimal 
cost when ${\boldsymbol q}$ starts at time $t$ from the initial condition 
$q_{t}^j = \delta_{i,j}$ and hence satisfies $\vert u_t^i \vert \leq T\|f\|_\infty + \|g\|_{\infty}$: the upper bound holds by choosing the zero control, while the lower bound follows from the sign of ${\mathfrak L}$.
With the same {meaning for $a^\star$} as in 
\eqref{eq:astar}, this allows us to express the corresponding Hamiltonian in the form
\begin{equation}
	\label{eq:widetilde H}
	\widetilde{\mathcal{H}}_{M}(p,w) :=
	\inf_{(\alpha_{i,j})_{i,j \in \ES : i \not =j} : 0 \leq \alpha_{i,j} \leq M}
	\widetilde{\mathbb H}_{M}(p,\alpha,w) 
	=  \sum_{i \in \ES} p_i \widetilde{H}_{M}^i(w),
\end{equation}
for $p \in {\mathcal S}_{d}$ and $w=(w_{i})_{i \in \ES} \in \RR^d$, 
with
\begin{equation}
	\label{eq:hamiltonians}
	\begin{split}
		\widetilde{\mathbb H}_{M}(p,\alpha,w) &:= 
		\sum_{i \in \ES} p_{i} 
		\sum_{j \in \ES : j \not = i} \Bigl( \alpha_{i,j} (w_{j} - w_{i}) + \tfrac12 \vert \alpha_{i,j} \vert^2 \Bigr)
		\\
		\widetilde{H}^i_{M}(w) &:= 
		\inf_{(\alpha_j)_{j \in \ES : j \not = i} : 0 \le \alpha_{j} \le M}
		\sum_{j \in \ES}
		\Bigl( 
		\alpha_{j}  \bigl( w_{j} - w_{i} \bigr)  
		+ \tfrac12 \vert \alpha_{j} \vert^2 \Bigr)
		\\
		&\phantom{:}=
		\sum_{j\neq i} \Bigl\{ a^\star(w_{i}-w_{j}) (w_{j}-w_{i}) + \tfrac12  |a^\star(w_{i}-w_{j})|^2  \Bigr\}. 
	\end{split}
\end{equation}
By boundedness of $a^\star$ (which in turn follows from our choice to restrict ourselves to controls that are bounded by $M$),
$\widetilde{H}_{M}^{i}$ is Lipschitz continuous and continuously differentiable with Lipschitz and bounded derivatives (pay attention that it is not $\mathcal{C}^2$).
The Hamiltonian $\widetilde{H}_{M}^{i}$ is used in the rest of the paper; note however that, in Section \ref{sec:main}, see \eqref{HM}, we preferred to use the slightly different Hamiltonian $H_M$, but the two are clearly related by 
the identity $\tilde{H}^i_M (w)= H_M((w_{i}-w_{j})_{j\in\ES})$, for $w\in\R^d$.
The HJB equation for the value function is nothing but 
\eqref{hjbpotnew:sec:2}, with ${\mathcal H}^{\varphi}_{M}$ therein given by
\begin{equation}
\label{eq:mathcal:H:varphi:M}
	{\mathcal H}^{\varphi}_{M}(p,w) := 
	\widetilde{\mathcal H}_{M}(p,w) + \sum_{i \in \ES}
	\sum_{j \not = i} p_i \varphi(p_j) (w_{j} - w_{i}). 
\end{equation}
{The following is straightforward but useful for us:
\begin{equation}
\label{eq:derivative:H}
\partial_{w_i} \mathcal{H}^{\varphi}_{M}(p,w) = 
\sum_{j\in \ES} p_{j}\Bigl(  \varphi(p_{i})+a^\star(w_{j}-w_{i})   \Bigr) - p_{i}\sum_{j\in \ES}
\Bigl(
\varphi(p_{j})
+
 a^\star(w_{i}-w_{j})
\Bigr).
\end{equation}}
\subsection{Classical solutions}
\label{subse:classical:solutions}
The well-known verification argument may be easily adapted to the simplex: 
If there exists a classical solution $\mathcal{V}$ to the HJB equation, then the optimal control is unique (clearly bounded), if the initial condition is in the interior of the simplex, and given in feedback form trough the feedback function $\tilde{\alpha}^{\star,i,j}:= a^\star({\mathfrak d}_{i}\mathcal{V}-{\mathfrak d}_{j}\mathcal{V})$.
The proof proceeds in the same way, by expanding the trajectories along ${\mathcal V}$, and {by} using the fact that 
{solutions} to \eqref{dynpot} remain in $\mathrm{Int}(\mathcal{S}_d)$ {(which makes it possible to use interior smoothness of ${\mathcal V}$ and coercivity of the Hamiltonian on $\mathrm{Int}(\mathcal{S}_d)$)}.

Although intrinsic derivatives are the most {canonical ones}, and will hence be used in the next sections, a key tool to prove the well-posedness of the HJB equation \eqref{hjbpotnew:sec:2}  is to work 
with local charts. In this respect, {it is worth recalling that}
any function {$h$} defined in the simplex $\mathcal{S}_d$ may be easily regarded as a function defined on the set $\widehat{\mathcal S}_{d}$. It suffices to identify $h$ with $\widehat{h}$ defined by 
\begin{equation*}
	\widehat{h}(x) := h (t,\check x), \quad \check{x}:=\bigl( x_{1},\cdots,x_{d-1},1- (x_{1}+\cdots+x_{d-1}) \bigr), \quad x\in \widehat{\mathcal{S}}_d
\end{equation*}
As explained in \cite{mfggenetic}, $h$ is then  once or twice differentiable on the (interior of) the simplex if 
$\widehat{h}$ is once or twice  differentiable in the usual sense as a function defined on an open subset of $\RR^{d-1}$, in which case we have a dictionary to pass from ${\mathfrak D} h$ and ${\mathfrak D}^2 {h}$ to 
$D_{x} \widehat{h}$ and $D_{x}^2 \widehat{h}$ and conversely.  
In short, $\partial_{x_{i}} \widehat{h}(t,x)= {\mathfrak d}_{i} h(t,\check{x}) - {\mathfrak d}_{d} h(t,\check{x})=
{\mathfrak d}_{i} h(t,\check{x}) + \sum_{j \in \ESd}
{\mathfrak d}_{j} h(t,\check{x})$, for $i \in \ESd$ and $x\in \mathrm{Int}(\widehat {\mathcal{S}}_d)$, {and conversely ${\mathfrak d}_{i} h(t,p)=(\partial_{x_{i}} \hat{h} - \tfrac1d \sum_{j \in \ESd} \partial_{x_{j}} \hat{h})(t,p_{1},\cdots,p_{d-1})$, for $i \in \ESd$, and 
${\mathfrak d}_{d} h(t,p)=- \tfrac1d \sum_{j \in \ESd} \partial_{x_{j}} \hat{h}(t,p_{1},\cdots,p_{d-1})$, for 
$p \in \textrm{\rm Int}({\mathcal S}_{d})$}. 
As for the second order derivatives, 
$\partial_{x_i x_{j}}\widehat{h}(t,x) =  \fd_{ij} h(t,\check{x})- \fd_{id} h(t,\check{x})- \fd_{{jd}} h(t,\check{x})+\fd_{dd}h(t,\check{x})$, 
for $i,j = 1,\cdots,d-1$. 
As a byproduct, 
the HJB has the following writing in local charts (sums being taken over $\ESd -1$):
\be
\label{hjbchart}
\left\{
\begin{array}{l}
	\partial_t \widehat{\mathcal{V}} + \widehat{\mathcal{H}}_{M}(x, D_x \widehat{\mathcal{V}}) 
	+\widehat F(x) + \frac{1}{2} \varepsilon^2 \sum_{j,k} (x_j \delta_{j,k}-x_{j} x_{k}) \partial_{x_{j} x_{k}}^2 \widehat{\mathcal{V}} \\
	\quad+\sum_{i} x_{i} \bigl[\sum_{j} \varphi(x_{j})(\partial_{x_{j}} \widehat{\mathcal{V}}-\partial_{x_{i}}\widehat{\mathcal{V}}) - \phi (x^{-d}) 
	\partial_{x_i} \widehat{\mathcal{V}} \bigr] + x^{-d} \sum_{j} \varphi(x_{j})\partial_{x_{j}} \widehat{\mathcal{V}}
	=0,\phantom{\Bigr]}
	\\
	\widehat{\mathcal{V}}(T,x)=\widehat{G}(x),
\end{array}
\right.  
\ee
for $t \in [0,T]$ and $x \in \mathrm{Int}(\widehat {\mathcal{S}}_d)$, where $\widehat{\mathcal{H}}_{M}(x,z) = {\mathcal H}_{M}(\check x,z) = \sum_{j \in \ESd} x_j \widehat{H}_{M}^j(z) +  x^{-d}  \widehat{H}_{M}^d(z)$,   
$\widehat{H}^i_{M}$, for $i=1,\dots,d$, is the Hamiltonian
\be
\label{eq:theta} 
\widehat H_{M}^i(z) := \tilde{H}_{M}^i
\bigl( \Theta(z)\bigr), \quad \Theta(z)=
\bigg(z_{1}-\frac1d \sum_{j=1}^{d-1} z_{j}, \cdots,
z_{d-1} -\frac1d \sum_{j=1}^{d-1} z_{j}, -\frac1d \sum_{j=1}^{d-1} z_{j}\bigg), 
\ee
for 
$z = (z_{1},\cdots,z_{d-1}) \in \RR^{d-1}$ and we denote $x^{-d}= 1- \sum_{j=1}^{d-1}x^j$; we refer to \cite{mfggenetic} for the derivation of the second order term, see Eq. (2.26) therein. 
Interestingly enough, the optimal feedback then writes ({in local chart}) in the form (provided that the HJB equation has a classical solution) 
$(\widehat{a}_{i,j}^\star(D_{x} \widehat{\mathcal V}))_{i,j \in \ES :i \not =j}$
with (recall the definition of $a^\star$ in \eqref{eq:astar})
\begin{equation}
	\label{eq:widehat:a}
	\widehat{a}_{i,j}^\star(z) = 
	\left\{
	\begin{array}{ll}
		a^\star(z_{i} - z_{j}), &\quad i,j \in \ESd,
		\\
		a^\star(z_{i}), &\quad j =d,
		\\
		a^\star(-z_{j}), &\quad i=d.
	\end{array}
	\right.
\end{equation}

We remark that, if the value function is in the Wright-Fischer space $\mathcal{C}^{1,2+\gamma}_{\rm WF}([0,T]\times\mathcal{S}_{d})$ {(to which we already alluded and which is defined in more detail in the Appendix)}, then $\mathcal{V}$ solves \eqref{hjbpotnew:sec:2} if and only in $\hat{\mathcal V}$ solves \eqref{hjbchart}. We choose to express the last coordinate in terms of the first $d-1$ for convenience only, and in fact the choice of the local chart is arbitrary. This is one reason why we expressed the main results in terms of intrinsic derivatives. {Anyhow},  the local chart {is more adapted to} the proof of Theorem \ref{thmder} below.
{Indeed, it is worth emphasizing that, in order to prove the 
well-posedness of \eqref{hjbpotnew:sec:2}, it is enough to check that, provided that 
it belongs to the right space,
 $\widehat{\mathcal{V}}$
 solves \eqref{hjbchart} in the interior of the simplex
for the fixed chart we have chosen.
In this regard, the precise choice of the local chart is not of a great importance 
and expressing any other coordinate than $x_{d}$ in terms of the other ones would work as well; 
to wit, by the same arguments as in \cite[Subsection 3.2.1]{mfggenetic}, Equation \eqref{hjbchart} can be equivalently written in terms of another local chart. In fact, 
the choice of the local chart really matters in the definition of the Wright-Fisher space carrying the solution, 
in order to describe finely the behaviour of the solution at the boundary. 
Fortunately, in the sequel, there is no need for returning to the details of the 
Wright-Fisher space and it is absolutely fine for us to work with the same local chart throughout the analysis. 
 This claim holds also for the derivative systems  \eqref{derhjbsimplex} 
 and
 \eqref{derhjbchart} 
 that we introduce below.}

\subsection{Derivative system}
\label{subse:3:2}

In order to address the HJB equation 
\eqref{hjbpotnew:sec:2}, we first study the derivative system. 
The rationale to do so is that, obviously, the nonlinear term in the derivative system is of order zero only while it is of order one in the HJB equation. As a byproduct, it makes it possible to apply \textit{a priori} estimates proven in \cite{mfggenetic}. 
As explained above, we can use both intrinsic derivatives and local charts.
Deriving \eqref{hjbpotnew:sec:2} ({by means of \eqref{eq:derivative:H}}), we formally get\footnote{{The computations in the
derivation of  
\eqref{derhjbsimplex}
and
\eqref{derhjbchart}
are rather tedious; anyhow, there is nothing difficult. We feel it is sufficient to just provide the final results.}
} the following expression for $V= \fD \mathcal{V}$, applying the Schwarz identity 
$\fd_i V^j = \fd_j V^i$
(the indices in the sums {below} belonging to $\ES$),
%
\be
\label{derhjbsimplex}
\begin{split}
	&\partial_t V^i + \widetilde{H}_{M}^i(V)  - \tfrac1d \sum\nolimits_j \widetilde{H}_{M}^j(V)
	+\sum\nolimits_j \bigl( \varphi(p_{j}) - p_{j} \varphi'(p_{i}) \bigr) (V^j-V^i)  \\
	&\qquad-\tfrac1d \sum\nolimits_l \sum\nolimits_j \bigl( \varphi(p_j) - 
	p_{j}\varphi'(p_l)
	\bigr)
	(V^j-V^l)  
	+f^i(p)
	-\tfrac1d\sum\nolimits_j  f^j(p)\\
	&\qquad 
	+ 	\sum\nolimits_{j,k} p_{k} \bigl( \varphi(p_{j})+  a^\star(V^k-V^j) \bigr) \left( \fd_{j} V^i - \fd_{k} V^i\right) 
	+\tfrac{1}{2} \varepsilon^2 \sum\nolimits_{j,k}(p_j \delta_{j,k}-p^j p^k) \fd_{j k}^2 V^i\\
	&\qquad + \tfrac{1}{2}
	\varepsilon^2 
	\Bigl( \fd_i V^i - 2 \sum\nolimits_{j} p_j\fd_j V^i - \tfrac1d \sum\nolimits_{j} \fd_j V^j \Bigr)
	=0,\\
	&V^i(T,p)= g^i(p) - \tfrac1d \sum\nolimits_j g^j(p),
\end{split}
\ee
where $\widetilde{H}^i_M(V)$ is defined by \eqref{eq:theta}. 
Instead, differentiating \eqref{hjbchart} with respect to 
$x$ (using in the sequel the generic notation $Z$ for $D_{x} \hat{\mathcal V}$)   and applying the Schwarz identity 
$\partial_{x_i} Z^j(t,x)=  \partial_{x_j} Z^i(t,x)$, for $i,j \in \ESd$, we then get, at least formally, 
the following system of equations (all the sums below are taken over $\ESd$): 
\begin{equation}
	\label{derhjbchart}
	\left\{
	\begin{array}{l} 
		\partial_t Z^i + \widehat{H}^i_{M}(Z) -\widehat{H}^d_{M}(Z)+{\hat{f}^i(x)} -{\hat{f}^d(x)}+ 
		\sum_{j}  \Bigl( \hat{b}^j(x,Z) + \tfrac12{\varepsilon^2}  \delta_{i,j} - \varepsilon^2 x_{j} \Bigr) \partial_{x_{j}} Z^i
		\\
		\quad
		+ \sum_{j  } \hat{c}^{i,j}(x) Z^j
		+\tfrac12 {\varepsilon^2}  \sum_{j,k }(x_j \delta_{j,k}-x_{j} x_k) \partial^2_{x_j x_{k}} Z^i =0,
		\phantom{\Bigr]}
		\\
		Z^i(T,x)= {\hat{g}^i(x)} -{\hat{g}^d(x)},
	\end{array}
	\right.
	\ee
	on $[0,T] \times \mathrm{Int}(\widehat{\mathcal{S}}_d)$, for 
	$i \in \ESd$, where, for $j \in \ESd$
	and $z=(z_{k})_{k \in \ESd} \in {\mathbb R}^{d}$, 
	\be
	\label{notation:a:b}
	\begin{split}
		\hat c^{i,j}(x) &= 
		\Bigl(\varphi'(x_{i}) - \varphi(x^{-d}) - \sum_{k \in \ESd} \varphi(x_{k}) \Bigr) \delta_{i,j} 
		+ \bigl( \varphi'(x^{-d}) - \varphi'(x_{i})\bigr) x_{j}
		\\
		\hat{b}^j(x,z) &=
		\sum_{k \in \ESd} \Bigl\{ x_{k} \big[ \phi(x_{j})+ a^\star(z_{k}-z_{j})\big]- x_j \big[\phi(x_{k})+a^\star(z_{j}-z_{k})\big] \Bigr\}
		\\
		&\hspace{15pt} + x^{-d} \big[\phi(x_{j})+a^\star(-z_{j})\big]
		-x_{j} \big[\phi\big( x^{-d}\big)+a^\star(z_{j})\big].
	\end{split}
\end{equation}
The two equations are equivalent, by using the identities
$Z^i = {\widehat V^i - \widehat V^d}$, 
${\widehat V^i = Z^i -\frac1d \sum_{j=1}^{d-1} Z^j}$ and  
${\widehat V^d = -\frac1d \sum_{j=1}^{d-1} Z^j}$, 
given by the {aforementioned dictionary to pass from one derivative to another}.

Here, we prove well-posedness of \eqref{derhjbchart}, because it is needed for solving the HJB equation \eqref{hjbchart}. {Recalling the shape of $\varphi$ from 
\eqref{eq:varphi:2}, our main solvability result is:}

\begin{thm}
	\label{thmder}
	If $f\in[\mathcal{C}^{{0},\gamma}_{\rm WF}({{\mathcal S}_d})]^{d}$ and $g\in[\mathcal{C}^{{0},2+\gamma}_{\rm WF}({{\mathcal S}_d })]^{d}$ for a given $\gamma\in(0,1)$, then
	there exists a constant $\kappa_{1}>0$ only depending on $M$, $T$ and $d$, such that for any $\varepsilon \in (0,1]$, ${\theta} >0$
	and $\kappa \geq \kappa_{1}/\varepsilon^2$, 
	there exists $\gamma'\in (0,\gamma]$, possibly depending on $\varepsilon$ and $\kappa$,  such that  Equation \eqref{derhjbchart} admits a unique solution in 
	$[\mathcal{C}^{{0},2+\gamma'}_{\rm WF}([0,T] \times {{\mathcal S}_d})]^{{d-1}}$.
\end{thm}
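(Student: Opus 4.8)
The plan is to treat the derivative system \eqref{derhjbchart} by a fixed-point argument resting on the degenerate Schauder estimates for Kimura operators of \cite{epsteinmazzeo}, as recast in \cite{mfggenetic}, the proof essentially paralleling the analysis of the derivative of the master equation carried out in \cite{mfggenetic}. The structural observation that makes this work — already emphasised in Subsection \ref{subse:3:2} — is that the nonlinearity in \eqref{derhjbchart} is of order zero: the unknown $Z$ enters nonlinearly only through the Hamiltonians $\widehat{H}^i_M$ and through the truncation $a^\star$ sitting inside the first-order coefficients $\hat{b}^j(x,\cdot)$, both of which are globally bounded and globally Lipschitz (see the remark after \eqref{eq:hamiltonians}). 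Freezing $Z$ in these spots therefore turns \eqref{derhjbchart} into a linear, weakly coupled parabolic system whose second-order part is the Kimura operator $\tfrac12\varepsilon^2\sum_{j,k}(x_j\delta_{j,k}-x_jx_k)\partial^2_{x_jx_k}$, common to all components, the coupling between the components occurring only through the bounded zeroth-order matrix $(\hat{c}^{i,j})_{i,j}$.

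Concretely, fixing $\varepsilon,\kappa,\theta$ and given $W=(W^k)_{k\in\ESd}$ in a ball of $[\mathcal{C}^{0,\gamma'}_{\rm WF}([0,T]\times\mathcal{S}_d)]^{d-1}$, I would let $Z=\Phi(W)$ be the solution of the linear system obtained from \eqref{derhjbchart} by substituting $W$ for $Z$ in $\widehat{H}^i_M(\cdot)$ and in $\hat{b}^j(x,\cdot)$, while keeping $Z^i$ and its derivatives as genuine unknowns. The main point to check is that this linear operator falls within the scope of \cite{epsteinmazzeo,mfggenetic}. For this one verifies, using $\varphi\equiv\kappa$ on $[0,\theta]$ together with $\varphi\ge0$ and $a^\star\ge0$, that whenever $x_j=0$ one has $\hat{b}^j(x,W)=\kappa+\sum_{k\in\ESd}x_ka^\star(W^k-W^j)+x^{-d}a^\star(-W^j)\ge\kappa$, and similarly that the drift in the direction degenerating at the face $x^{-d}=0$ is inward with strength of order $\kappa$; hence, after renormalisation by $\tfrac12\varepsilon^2$, the weights of the Kimura model at every boundary face are bounded below by a positive constant of order $\kappa/\varepsilon^2$. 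Under the standing assumption $\kappa\ge\kappa_1/\varepsilon^2$ this puts the linearised system in the class covered by the Kimura Schauder theory, which — since $\hat{g}^i-\hat{g}^d\in\mathcal{C}^{0,2+\gamma}_{\rm WF}$, the source $\hat{f}^i-\hat{f}^d+\widehat{H}^i_M(W)-\widehat{H}^d_M(W)\in\mathcal{C}^{0,\gamma}_{\rm WF}$, and all lower-order coefficients are bounded and H\"older in $x$ for the fixed parameters — yields a unique $Z\in[\mathcal{C}^{0,2+\gamma'}_{\rm WF}]^{d-1}$ for some $\gamma'\in(0,\gamma]$, together with an a priori bound $\|Z\|_{{\rm WF},0,2+\gamma'}\le C$ with $C$ independent of $W$, precisely because $a^\star$ and $\widehat{H}^i_M$ are globally bounded.

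Taking the radius of the ball equal to this $C$, the map $\Phi$ is a self-map of the ball; it is compact, since $[\mathcal{C}^{0,2+\gamma'}_{\rm WF}]^{d-1}$ embeds compactly into $[\mathcal{C}^{0,\gamma''}_{\rm WF}]^{d-1}$ for $\gamma''<\gamma'$ (equicontinuity up to the boundary coming from the control on the function and its first derivatives, interior compactness from the gain of two derivatives), and continuous, by the Lipschitz dependence of $a^\star$ and $\widehat{H}^i_M$ on the frozen argument combined with the linear a priori estimate applied to the difference of two linearised problems. Schauder's fixed-point theorem then produces a solution $Z$ of \eqref{derhjbchart}. For uniqueness, given two solutions $Z_1,Z_2$ I would set $w=Z_1-Z_2$, subtract the equations, and observe that $w$ solves a linear Kimura system with zero terminal datum whose source is bounded pointwise by $C|w|$; here one uses $|a^\star(r)-a^\star(r')|\le|r-r'|$, the Lipschitz property of $\widehat{H}^i_M$, and the fact that the first derivatives of $Z_2$ are bounded up to the boundary because $Z_2\in[\mathcal{C}^{0,2+\gamma'}_{\rm WF}]^{d-1}$. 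Since the Kimura operator together with the inward-pointing drift $\hat{b}^j$ generates a diffusion that never exits $\widehat{\mathcal{S}}_d$, the maximum principle and Gronwall's lemma force $w\equiv0$.

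The main obstacle is not the fixed-point scheme itself, which is routine once the nonlinearity has been recognised as being of order zero and globally Lipschitz, but the degenerate Schauder estimate for Kimura operators on which everything rests; this I would import verbatim from \cite{epsteinmazzeo,mfggenetic}, the only thing to verify being — as indicated above — that the extra terms coming from the potential structure (the source $\widehat{H}^i_M-\widehat{H}^d_M$ and $\hat{f}^i-\hat{f}^d$) and the smooth bounded first-order perturbations $\tfrac12\varepsilon^2\delta_{i,j}-\varepsilon^2x_j$ affect neither the boundary weights nor the regularity of the data, so that the threshold $\kappa\ge\kappa_1/\varepsilon^2$ still suffices. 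Finally, the equivalence with the intrinsic-derivative form \eqref{derhjbsimplex} follows from the dictionary between $\mathfrak{D}$ and $D_x$ recalled in Subsection \ref{subse:classical:solutions}, and will be invoked when passing to the HJB equations \eqref{hjbpotnew:sec:2}--\eqref{hjbchart} in the next subsections.
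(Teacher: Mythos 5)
Your overall architecture (freeze the order-zero nonlinearity, solve a linear Kimura problem with the Schauder theory of \cite{epsteinmazzeo}, close by a fixed point) is the right one, and your uniqueness argument via the representation of the difference along the interior-valued Wright--Fisher diffusion is a legitimate (and arguably more direct) variant of the paper's four-step-scheme argument. But the fixed-point step, as you set it up, does not close, and this is a genuine gap rather than a presentational one. You invoke plain Schauder on a ball and assert an a priori bound $\|Z\|_{{\rm WF},0,2+\gamma'}\le C$ with $C$ independent of $W$, ``precisely because $a^\star$ and $\widehat{H}^i_M$ are globally bounded.'' Neither half of this is true: $\widehat H^i_M$ is globally Lipschitz but \emph{not} bounded (for $|w_i-w_j|>M$ the term $a^\star(w_i-w_j)(w_j-w_i)$ grows linearly), so the source $\widehat H^i_M(W)-\widehat H^d_M(W)$ has sup- and H\"older norms growing linearly in $\|W\|_{{\rm WF},0,\gamma'}$; and the constant in the Kimura Schauder estimate depends on the H\"older norms of the coefficients, in particular of the frozen drift $\hat b^j(\cdot,W)$, hence on $W$ (this is the constant $C_R$ in \eqref{epsm}). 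The resulting bound is of the form $\|Z\|\le C_R(1+\|W\|_{{\rm WF},0,\gamma'})$ with $C_R$ increasing in the radius, and there is no smallness to produce an invariant ball. This is exactly why the paper uses Leray--Schauder instead: continuity and compactness of $\Phi$ are cheap, and the invariant-ball requirement is replaced by an a priori bound on the set $\{Z=\lambda\Phi(Z)\}$, which is obtained by an entirely different mechanism that is absent from your proposal — first an $L^\infty$ bound via the Feynman--Kac representation \eqref{repre} along the SDE \eqref{Xij} (which stays in $\Int$ once $\kappa\ge\varepsilon^2/2$) followed by Gronwall, and then the uniform interior-to-boundary H\"older estimate of \cite[Theorem 3.6]{mfggenetic}, which upgrades the $L^\infty$ bound to a $\mathcal C^{0,\gamma'}_{\rm WF}$ bound. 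It is this H\"older estimate, not the linear Schauder theory, that requires $\kappa\ge\kappa_1/\varepsilon^2$ and that determines $\gamma'$; you have mislocated the role of the threshold in the boundary weights of the linearised operator.

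Two smaller points. First, you keep the zeroth-order coupling $\sum_j\hat c^{i,j}(x)Z^j$ with the unknown, so your linearised problem is a weakly coupled system, whereas \cite[Theorem 10.0.2]{epsteinmazzeo} is a scalar statement; the paper freezes this term as well precisely so that each component solves a scalar Kimura equation. This is fixable (move the coupling to the source, or iterate), but as written you are applying the cited theorem outside its scope. Second, and relatedly, because $\hat c^{i,j}$ involves $\varphi$ and $\varphi'$, its sup-norm is of order $\kappa_\varepsilon$ and blows up near the boundary; any Gronwall constant in the $L^\infty$ bound therefore depends on $\varepsilon$ and $\kappa$, which is harmless for Leray--Schauder (the bound need only be finite for fixed parameters) but would be fatal if you tried to extract a parameter-free invariant ball from it.
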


\begin{proof}
	The proof of existence is done via Leray-Schauder fixed point theorem. Let $\gamma'\in (0,\gamma]$ to be chosen later.
	Letting\footnote{Our notation for the Wright-Fisher space here is a bit abusive since it is regarded as a space of functions on $[0,T] \times \widehat{{\mathcal S}_{d}}$; as we already explained, there is no difficulty in passing from functions defined on $[0,T] \times \widehat{{\mathcal S}_{d}}$
	 	to functions defined on 
		$[0,T] \times {{\mathcal S}_{d}}$, and conversely. And in fact, the construction of the Wright-Fisher spaces, as outlined in Appendix, is based itself on a local description of the functions (that it contains) through a convenient choice of local charts.} $\mathcal{X}= \mathcal{C}^{{0},\gamma'}_{\rm WF}([0,T] \times \widehat{\mathcal S}_d)$, we consider the map 
	$\Phi:\mathcal{X}^{{d-1}} \rightarrow \mathcal{X}^{{d-1}}$, defined by $\Phi^i(Z)={Y^i}$, 
	where $Y^i$ is the solution to the linear equation 
	{obtained by freezing the zero order terms in \eqref{derhjbchart}}
	(all the sums being taken over $\ESd$):
	\be
	\label{fixpointeq}
	\left\{\begin{array}{l}
		\partial_t Y^i +   
		\sum_{j} \Bigl( \hat{b}^j(x,Z) + \tfrac12 {\varepsilon^2} \delta_{i,j} - 
		\varepsilon^2 x_{j}
		\Bigr)	\partial_{x_{j}} Y^i
		+\tfrac12 {\varepsilon^2}  \sum_{j,k }(x_j \delta_{j,k}-x_{j} x_{k}) \partial^2_{x_{j} x_{k}} Y^i
		\\ 
		\qquad=-\Big[\widehat{H}_{M}^i(Z) -\widehat{H}_{M}^d(Z)+\hat f^i(x) - \hat f^d(x) + \sum_{j} \hat c^{i,j}(x) Z^j
		\Big]
		\\
		Y^i(T,x)= \hat g^i(x) - \hat g^d(x).
	\end{array}
	\right.
	\ee
	The key remark is that, once $Z$ is given, this is a scalar equation for each $Y^i$, in the sense that there is no $Y^j$, $j\neq i$, in the equation. Therefore we are allowed to invoke Theorem 10.0.2 of \cite{epsteinmazzeo}, which states that there exists a unique solution  $Y^i\in\mathcal{C}^{{0},2+\gamma'}_{\rm WF}([0,T] \times \widehat{\mathcal S}_d)$ to \eqref{fixpointeq}, for any $i$, if the right hand side and the drift belong to $\mathcal{C}^{\gamma'}_{\rm WF}([0,T] \times \widehat{\mathcal S}_d)$ and the terminal condition is in  $\mathcal{C}^{{0},2+\gamma'}_{\rm WF}(\widehat{\mathcal S}_d)$. Such assumptions are satisfied in the present situation because $\widehat{H}_{M}^i-\widehat{H}_{M}^d$ and {$a^\star$
	(which shows up in $\hat b$, see \eqref{notation:a:b})}
	 are Lipschitz continuous and $\phi$ and $\phi'$ are bounded and Lipschitz; thus the map $\Phi$ is well-defined. The claim hence follows if $\Phi$ admits a fixed point. In order to apply Leray-Shauder fixed point theorem we must show that $\Phi$ is continuous and compact and that the set
	\[
	\mathfrak{X}=\left\{ Z\in\mathcal{X}^{{d-1}} : Z =\lambda \Phi(Z) \mbox{ for some } \lambda \in (0,1] \right\}
	\]
	is bounded in $\mathcal{X}^{{d-1}}$. 
	\vspace{5pt}
	
	\emph{Step 1.} 
	We first show that $\Phi$ is continuous and compact. {To do so, we may restrict ourselves to inputs $Z$ such that 	
$\max_{j \in \ESd} \|Z^j\|_{\textrm{WF},{0},\gamma'}$
is less than some arbitrarily fixed real $R>0$. Then, Theorem 10.0.2 of \cite{epsteinmazzeo} gives, for any $i \in \ESd$},  
	\be
	\label{epsm}
	\| Y^i \|_{\textrm{WF},{0},2+\gamma'} \leq C_{{R}} 
	\Bigl( \max_{j \in \ESd} \|Z^j\|_{\textrm{WF},{0},\gamma'} + \|f^i - f^d\|_{\textrm{WF},{0},\gamma'} + \|g^i-g^d\|_{\textrm{WF},{0},2+\gamma'} \Bigr),
	\ee
	{for some constant $C_{{R}} \geq 0$ depending on $R$ through the drift
	$\hat{b}(x,Z)$ in 
		\eqref{fixpointeq}}, 
	which yields {(up to a new value of $C_{R}$)}
\begin{equation}
\label{eq:compactness:phi}
{\max_{i \in \ESd} \|Y^i\|_{\textrm{WF},{0},2+\gamma'} \leq C_{R}}.
\end{equation}
{The above inequality} implies that the map $\Phi$ is compact, as $\mathcal{C}^{{0},2+\gamma'}_{\rm WF}({[0,T] \times {\mathcal S}_{d}})$ is compactly embedded in $\mathcal{C}^{{0},\gamma'}_{\rm WF}({[0,T] \times {\mathcal S}_{d}})$, {see the Appendix}. To prove continuity, we consider 
	the analogue of \eqref{epsm}, but  applied to $Y-Y'$ with $(Y,Y') =(\Phi(Z),\Phi(Z'))$, for $(Z,Z') \in ({\mathcal X}^{{d-1}})^2$. 
	{Again, we assume that 
	$\max_{j \in \ESd} \|Z^j\|_{\textrm{WF},{0},\gamma'}$
	and
	$\max_{j \in \ESd} \|(Z')^j\|_{\textrm{WF},{0},\gamma'}$ are less than $R$}. 
	So, 
	{using 
	\eqref{eq:compactness:phi}
	together with the fact that the derivatives of 
	$\widehat{H}_{M}^i-\widehat{H}_{M}^d$ 
are Lipschitz}, 
	we have
	\[
	\|\Phi(Z')-\Phi(Z)\|_{\mathcal{X}^{{d-1}}} \leq \max_{i \in \ESd} \| (Y')^i- Y^i \|_{\textrm{WF},{0},2+\gamma'} 
	\leq C_{{R}} \| Z'-Z\|_{\mathcal{X}^{{d-1}}},
	\]
	which proves continuity.
	\vspace{5pt}
	
	\emph{Step 2.}
	We now prove an $L^\infty$ bound of ${\mathfrak X}$. For $Z \in {\mathfrak X}$, we have, for some $\lambda \in (0,1]$, 
	\be
	\label{eqlambda}
	\left\{\begin{array}{l}
		\partial_t Z^i +   
		\sum_{j} \Bigl( \hat{b}^j(x,Z) + \tfrac12 {\varepsilon^2} \delta_{i,j} - 
		\varepsilon^2 x_{j}
		\Bigr)	\partial_{x_{j}} Z^i
		+\tfrac12 {\varepsilon^2}  \sum_{j,k }(x_j \delta_{j,k}-x_{j} x_{k}) \partial^2_{x_{j} x_{k}} Z^i
		\\ 
		\qquad=-\lambda \Big[\hat{H}^i_{M}(Z) -\hat{H}^d_{M}(Z)+\hat f^i(x) -\hat f^d(x) + \sum_{j} \hat c^{i,j}(x) Z^j
		\Big]
		\\
		Z^i(T,x)= \lambda \bigl( \hat g^i(x) - \hat g^d(x)\bigr).
	\end{array}
	\right.
	\ee
	The proof follows from a standard representation of $Z$ along the solution of the SDE that is driven by the second-order differential operator appearing in \eqref{eqlambda}. To make it clear, we have, for any 
	$i \in \ESd$ and $(t,x) \in [0,T] \times \textrm{\rm Int}(\widehat{S}_{d})$,
	\be
	\label{repre}
	\begin{split}
		&Z^i(t,x)= \lambda \E \biggl[ \int_{t}^T \overline f^i\bigl(X_{s}^{i,\cdot},
		Z(s,X^{i,\cdot}_{s}) \bigr) ds + \overline g^i(X_{T}^{i,\cdot})\biggr],
	\end{split}
	\ee
	where, for convenience, we have let
	$Z(s,X_{s}^{i,\cdot}) := 
	\bigl(Z^j(s,{X^{i,\cdot}_{s}})\bigr)_{j \in \ESd}$ together with 
	\be
	\label{notation:overline:f:g}
	\begin{split}
		&\overline g^i(x) = \hat g^i(x) - \hat g^i(d),
		\\
		&\overline f^i(x,z) = \widehat{H}_{M}^i(z) -\widehat{H}_{M}^d(z)+\hat f^i(x) - \hat f^d(x) + \sum_{i,j \in \ESd} \hat c^{i,j}(x) z_{j},
	\end{split}
	\ee
	for $x \in {\widehat{\mathcal S}_{d}}$ and $z$ in $\RR^{{d-1}}$. In \eqref{repre},  
	${\boldsymbol X}^{i,\cdot}=({\boldsymbol X}^{i,j}=(X^{i,j}_{t})_{t \le s \le T})_{j \in \ESd}$
	denotes a $({d\!-\!1})$-dimensional process 
	solving the SDE
	\be 
	\label{Xij}
	\begin{split}
		dX_{s}^{i,j} &= \Bigl( \hat{b}^j \bigl( X_{s}^{i,j}, Z(s,X_{s}^{i,\cdot}) \bigr) + \frac{\varepsilon^2}2 \delta_{i,j} - \varepsilon^2 X_{s}^{i,j} 
		\Bigr) ds
		\\
		&+ \frac{\varepsilon}{\sqrt{2}} \biggl\{\sum_{k \in \ESd} \sqrt{X^{i,j}_{s} X_{s}^{i,k}} d(W^{j,k}-W^{k,j}) 
		+ \sqrt{X_s^{i,j} X_{s}^{i,-d} } d(W^{j,d}-W^{d,j})\biggr\}
	\end{split}
	\ee
	for $t\leq s\leq T$, with initial condition $X_{t}^{i,\cdot}=x$, where we have denoted  $X^{i,-d}= 1- \sum_{j=1}^{{d-1}} X^{i,j}$. 
	
	Representation \eqref{repre} follows from 
	the fact that $Z^i \in \mathcal{C}^{1,2}({[0,T] \times} \Int)$
	{(which is here the usual space of functions that are once continuously differentiable in time and twice in space)}
	and hence 
	from
	It\^o's formula applied to $(Z^i(s,X_{s}^{i,\cdot}))_{t \leq s \leq T}$, provided that the solution to \eqref{Xij} remains in $\Int$.
	Assume for a while that the latter holds true. Then, having \eqref{repre} (together with the notations \eqref{notation:a:b} and \eqref{notation:overline:f:g}), we exploit the Lipschitz continuity of $(\widehat{H}^i_M)_{i\in\ES}$, the boundedness of $\phi$ and $\phi'$, the fact $\lambda\leq 1$, and the uniform bounds on $f$ and $g$  to obtain
	\begin{equation}
		\label{eq:linfty}
		|Z^i(t,x)| \leq \|{\hat g^i - \hat{g}^d}\|_\infty + T\|{\hat{f}^i-\hat{f}^d}\|_\infty + C\int_t^T\max_{j \in \ESd} \sup_{x' \in\Int}|Z^j_\lambda(s,x')|ds.
	\end{equation}
	Taking the supremum over $x \in \Int$ and the maximum over $i\in\ESd$ in the left-hand side and applying  
	Gronwall's lemma, we get a bound for $\max_{i \in \ESd} \sup_{(t,x) \in [0,T] \times \Int}|Z^i(s,x)|$. 
	By continuity of $Z$,  the $L^\infty$ bound also holds for $x$ in the boundary of $\widehat{\mathcal{S}}_d$.
	
	It remains to address the solvability of 
	\eqref{Xij}. We mostly borrow arguments from \cite[Proposition 2.1]{mfggenetic}. In order to apply the latter, we notice that 
	${\boldsymbol X}^{i,-d}$ solves (noticing that the sum over $j$ in the first line in the definition 
		{\eqref{notation:a:b}}
	of $\hat{b}^j$ 
	is null and similarly for the first term in the second line of \eqref{Xij})
	\be 
	\begin{split}
		dX_{s}^{i,-d} &= \Bigl\{ \sum_{j \in \ESd} X^{i,j}_{s} \bigl[ \varphi(X_{s}^{i,-d}) + 
		a^\star\bigl(Z^j(s,X_{s}^{i,\cdot})\bigr) \bigr] 
		\\
		&\hspace{30pt} - X^{i,-d}_{s}
		\sum_{j \in \ESd}  \bigl[ \varphi(X_{s}^{i,j}) + 
		a^\star\bigl(-Z^j(s,X_{s}^{i,\cdot})\bigr) \bigr] + \varepsilon^2 \bigl( 
		\tfrac12 - X_{s}^{i,-d}
		\bigr) \Bigr\} ds
		\\
		&\hspace{15pt} - \frac{\varepsilon}{\sqrt{2}} 
		\sum_{{j \in \ESd}}\sqrt{X^{i,j}_{s} X_{s}^{i,-d}} \bigl(dW_{s}^{j,d}-dW^{d,j}_{s}\bigr). 
	\end{split}
	\ee 
	The key fact is then to observe that, whenever $X^{i,j}_{s}$ is close to zero, 
	$\varphi(X^{i,j}_{s})$ 
	({which shows up in 
	the definition of the drift, 
	compare 
	\eqref{notation:a:b}
with \eqref{Xij}})	
	is greater than $\kappa$, and thus helps for pushing the particle towards the interior of the simplex. 
	This guarantees that, provided that $\kappa \geq \varepsilon^2/2$, the equation is well-posed and that the unique solution stays in $\Int$, see \cite[Proposition 2.1]{mfggenetic} for the details. 
	%
	\vspace{5pt}
	
	\emph{Step 3.} We now provide a (uniform) H\"older estimate for the elements of ${\mathfrak X}$. 
	Again we borrow the result from \cite{mfggenetic}. Indeed, 
	\eqref{eqlambda} can be rewritten as a system of $d-1$ equations on $[0,T] \times \mathcal{S}_{d}$, using the dictionary to pass from intrinsic derivatives to derivatives in the local chart.
	Thus we can apply\footnote{\label{foo:3}{In fact, this requires a modicum of care, since the function $\varphi$ in \cite{mfggenetic} is assumed to  vanish outside $[0,2\theta]$, see (2.16) therein, with $\delta$ replaced by $\theta$, and $\delta$ itself (with the same notation as therein) is required to be small enough. 
The key point is that we can always modify the function $\varphi$ so that it fits the assumption
of \cite[Theorem 3.6]{mfggenetic}: Going back to 
\cite[(3.20)]{mfggenetic}, it is indeed easy to see that the values of $\varphi$ 
taken at points that are away from the boundary can be inserted in the function $b^\circ$ therein. Since 
$b^\circ$ does not enter the definition of the threshold $\kappa_{0}$ in 
\cite[Theorem 3.6]{mfggenetic}, this leaves the conclusion 
of 
\cite[Theorem 3.6]{mfggenetic} unchanged. 
}} 
Theorem 3.6 of \cite{mfggenetic}, which states that 
	there exist $\kappa_1$ and $\gamma'$ as in the statement, and 
	a constant $C'$, depending on $\varepsilon,{\theta},M,d,T$ and the  $L^\infty$ norm of the r.h.s. of \eqref{fixpointeq} (hence on $f$, $g$, $\phi$, $\phi'$, and $Z$, which is uniformly bounded by step 2) such that $\|Z \|_{\textrm{WF},\gamma'}\leq C'$ {if $\kappa \geq \kappa_{1}/\varepsilon^2$}.
	Therefore $\mathfrak{X}$ is bounded, choosing such $\gamma'$, and the proof is completed.
	\vskip 4pt
	
\emph{Step 4}. {Uniqueness of classical solutions can be proved 
by using the so-called four step-scheme, see \cite{Delarue02,MaProtterYong}. Any classical solution $Z$
can be indeed represented in the form of a multi-dimensional forward-backward SDE (which is nothing 
but a system of stochastic characteristics). In turn, the fact that 
	\eqref{derhjbchart} has a classical solution forces the former 
	forward-backward SDE to be uniquely solvable, and hence 
	\eqref{derhjbchart}  itself to be also uniquely solvable. 
	This argument is in fact explained in detail in 
	 \cite[Theorem 3.3 and Corollary 3.4]{mfggenetic}. 
	 The specific subtlety (which is common to \cite{mfggenetic} and to our 
case) is that, due to the fact that the Kimura operator 
driving \eqref{derhjbchart} degenerates near the boundary, some exponential integrability is needed for 
the inverse of the forward component in the forward-backward system of characteristics. In fact, this integrability 
property is very similar to 
the integrability property discussed after 
	Definition \ref{def:MFG}.
	In short, it holds true provided that $\kappa$ is bigger than (up to a multiplicative constant) $\varepsilon^2$, which is
	obviously the case in our setting since $\kappa$ scales here (at least) like $\varepsilon^{-2}$.  
	 This point is discussed with care in
the paper \cite{mfggenetic}.}
\end{proof}

\subsection{Solving for the HJB equation}
We now turn to the well posedness of \eqref{hjbpotnew:sec:2}, or equivalently of \eqref{hjbchart}, and prove the following refined version of Theorem \ref{main:thm}.

\begin{thm}
	\label{thm:3.2}
	If $F \in \mathcal{C}^{1,\gamma}_{\rm WF}({{\mathcal S}_{d}})$ and $G\in \mathcal{C}^{1,2+\gamma}_{\rm WF}({{\mathcal S}_{d}})$, for a given $\gamma\in(0,1)$, then
	{there exists a constant $\kappa_{1}>0$ only depending on $M$, $T$ and $d$, such that for any $\varepsilon \in (0,1]$, $\theta  >0$
	and $\kappa \geq \kappa_{1}/\varepsilon^2$}, 
	Equation \eqref{hjbpotnew:sec:2} admits a unique solution $\mathcal{V}\in\mathcal{C}^{1,2+\gamma'}_{\rm WF}([0,T]\times {\mathcal S}_d)$. The solution $\mathcal{V}$ 
	is the value function of the viscous MFCP and the optimal feedback function is given by
	\be 
	\label{optconmfcp}
	\tilde{\alpha}^{\star,i,j}(t,p)= a^\star\bigl(\fd_i \mathcal{V}(t,p) - \fd_j \mathcal{V}(t,p)\bigr).
	\ee
	The latter gives the unique optimal control in the sense that, for any 
	initial state $p_{0} \in \mathrm{Int}({\mathcal S}_{d})$
	and any 
	pair of optimal trajectory  ${\boldsymbol p}$ and optimal control  ${\boldsymbol \alpha}$ (which is an ${\mathbb F}$-progressively measurable {process}  bounded by 
	$M$), it holds  $\alpha_t = \tilde{\alpha}^{\star}(t,p_t)$ for  $dt \otimes {{\mathbb P}}$ a.e. $(t,\omega)$.  
	Moreover, the derivative $\fD \mathcal{V}$ is the unique solution to \eqref{derhjbsimplex} in $\mathcal{C}^{0,2+\gamma'}_{\rm WF}([0,T]\times {{\mathcal S}_{d}})$. 
	
	Equivalently, with the same assumptions and in the same space ({up to a change of coordinate}), Equation \eqref{hjbchart} admits a unique solution $\hat{\mathcal V}$ and its derivative $D_x \hat{\mathcal V}$ is the unique solution to \eqref{derhjbchart} {(denoted by $Z$ in the statement of Theorem \ref{thmder})}.

\end{thm}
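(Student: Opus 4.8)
The plan is to deduce Theorem~\ref{thm:3.2} from Theorem~\ref{thmder}: first reconstruct the value function from the unique solution $Z$ of the derivative system, then identify it with the value of the viscous MFCP by a verification argument. Throughout we work in the fixed local chart; by the discussion in Subsection~\ref{subse:classical:solutions} it is enough to exhibit $\widehat{\mathcal V}\in\mathcal C^{1,2+\gamma'}_{\rm WF}([0,T]\times\mathcal S_d)$ solving \eqref{hjbchart} on $[0,T]\times\Int$, since the correspondence between $\mathcal V$ and $\widehat{\mathcal V}$, and between \eqref{hjbpotnew:sec:2} and \eqref{hjbchart}, is an exact dictionary on that space. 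So let $Z\in[\mathcal C^{0,2+\gamma'}_{\rm WF}([0,T]\times\mathcal S_d)]^{d-1}$ be the solution provided by Theorem~\ref{thmder}, with $\gamma'$ and $\kappa_1$ as therein.

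The crux --- and the only place the potential assumption on $(F,G)$ is genuinely used --- is to show that $Z$ obeys the Schwarz relation $\partial_{x_i}Z^j=\partial_{x_j}Z^i$ on $[0,T]\times\Int$ for all $i,j\in\ESd$ (equivalently, back on the simplex, that the solution $V$ of \eqref{derhjbsimplex} satisfies $\fd_iV^j=\fd_jV^i$). I would first mollify, replacing $(F,G)$ by smooth potentials $(F_n,G_n)$ converging to $(F,G)$ in the relevant Wright--Fisher norms with uniformly bounded norms; since $\kappa_1$ and $\gamma'$ in Theorem~\ref{thmder} depend only on $M,T,d$ and on uniformly controlled norms of the data, the solutions $Z_n$ of the mollified version of \eqref{derhjbchart} exist for a common $\gamma'$ and enjoy uniform $L^\infty$ and $\mathcal C^{0,\gamma'}_{\rm WF}$ bounds (Steps~2--3 of the proof of Theorem~\ref{thmder}). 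By interior parabolic regularity --- the Kimura operator being uniformly elliptic with smooth coefficients on compact subsets of $\Int$ --- each $Z_n$ is smooth in $\Int$, so one may differentiate its $i$-th equation in $x_m$, subtract the $m$-th differentiated in $x_i$, and check that $R_n^{m,i}:=\partial_{x_m}Z_n^i-\partial_{x_i}Z_n^m$ solves a \emph{linear} parabolic system on $[0,T]\times\Int$ with zero terminal datum: the source and terminal contributions are curl-free precisely because $\widehat f_n,\widehat g_n$ are $x$-gradients of $\widehat F_n,\widehat G_n$ (commuting mixed second derivatives --- this is the potential structure), while the contributions of the transport coefficients $\widehat b$ and of $\widehat H_M^i-\widehat H_M^d$ recombine, using the identity $\sum_{j\in\ESd}x_j\,\partial_{z_\ell}\widehat H^j_M(z)+x^{-d}\,\partial_{z_\ell}\widehat H^d_M(z)=\partial_{z_\ell}\widehat{\mathcal H}_M(x,z)$, into zeroth- and first-order terms acting on $(R_n^{m,i})_{m,i}$ with the same transport operator as in \eqref{derhjbchart} and bounded coupling. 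A Feynman--Kac representation along the associated degenerate diffusion --- legitimate exactly as in Step~2 of the proof of Theorem~\ref{thmder}, thanks to the exponential integrability forced by $\kappa\geq\kappa_1/\varepsilon^2$ --- then gives $R_n\equiv0$. Finally, interior Schauder estimates give uniform $\mathcal C^{2+\gamma'}$ bounds for $Z_n$ on compact subsets of $\Int$, so up to a subsequence $Z_n\to Z$ in $\mathcal C^1$ on such subsets (the limit being identified with $Z$ by the uniqueness part of Theorem~\ref{thmder}), and the Schwarz relation, being stable under this convergence, holds for $Z$.

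Once $Z$ is known to be a spatial gradient, fix $x^\circ\in\Int$ and set
\[
\widehat{\mathcal V}(t,x):=\psi(t)+\int_0^1\langle Z(t,x^\circ+s(x-x^\circ)),\,x-x^\circ\rangle\,ds,\qquad (t,x)\in[0,T]\times\widehat{\mathcal S}_d,
\]
where $\psi:[0,T]\to\R$ solves the scalar backward ODE obtained by evaluating the left-hand side of \eqref{hjbchart} at $x=x^\circ$ (its right-hand side then depending only on the already-known function $Z$, through $Z(t,x^\circ)$ and $D_xZ(t,x^\circ)$, and on $x^\circ$), with terminal value $\psi(T)=\widehat G(x^\circ)$. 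Path-independence of the line integral makes $\widehat{\mathcal V}$ well defined with $D_x\widehat{\mathcal V}=Z$ and $D^2_x\widehat{\mathcal V}=D_xZ$ on $[0,T]\times\Int$; since $Z$ carries exactly one fewer space derivative than a function of $\mathcal C^{1,2+\gamma'}_{\rm WF}$, one gets $\widehat{\mathcal V}\in\mathcal C^{1,2+\gamma'}_{\rm WF}([0,T]\times\mathcal S_d)$. To see that $\widehat{\mathcal V}$ solves \eqref{hjbchart} on $[0,T]\times\Int$, note that substituting $\widehat{\mathcal V}$ into the left-hand side of \eqref{hjbchart} produces a function whose $x$-gradient vanishes there --- this is exactly the statement that $D_x\widehat{\mathcal V}=Z$ solves \eqref{derhjbchart}, the componentwise $x$-derivative of \eqref{hjbchart} --- hence a function of $t$ alone, which vanishes at $x=x^\circ$ by the choice of $\psi$; the terminal condition follows from $Z(T,\cdot)=D_x\widehat G$ and $\psi(T)=\widehat G(x^\circ)$. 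Transferring back to the simplex yields $\mathcal V\in\mathcal C^{1,2+\gamma'}_{\rm WF}([0,T]\times\mathcal S_d)$ solving \eqref{hjbpotnew:sec:2} with $\fD\mathcal V=V$ solving \eqref{derhjbsimplex}. Uniqueness is immediate: two solutions in $\mathcal C^{1,2+\gamma'}_{\rm WF}$ have the same intrinsic gradient by Theorem~\ref{thmder}, hence differ by a function of $t$ only, which must vanish by subtracting the two instances of \eqref{hjbpotnew:sec:2} and using the terminal condition; the same argument gives the stated uniqueness for \eqref{hjbchart} and \eqref{derhjbchart}.

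It remains to identify $\mathcal V$ with the value function and the feedback in \eqref{optconmfcp} with the a.e.-unique optimal control, which is the verification argument sketched in Subsection~\ref{subse:classical:solutions}. For $p_0\in\mathrm{Int}(\mathcal S_d)$ and any $M$-bounded $\mathbb F$-progressively measurable control ${\boldsymbol\alpha}$, the trajectory ${\boldsymbol p}$ of \eqref{dynpot} stays in $\mathrm{Int}(\mathcal S_d)$ by \cite[Proposition~2.1]{mfggenetic}; applying It\^o's formula to $s\mapsto\mathcal V(s,p_s)$ --- using the interior $\mathcal C^{1,2}$-regularity of $\mathcal V$, a localization away from $\partial\mathcal S_d$, and the exponential integrability of $\int_0^T(1/p_s^i)\,ds$ from \cite[Proposition~2.2]{mfggenetic} to control the possible boundary blow-up of $\fD^2\mathcal V$ and to turn the stochastic integral into a true martingale --- and invoking \eqref{hjbpotnew:sec:2} together with the variational characterization \eqref{eq:widetilde H} of the Hamiltonian, one gets $\mathcal V(0,p_0)\leq\mathcal J^{\varepsilon,\varphi}({\boldsymbol\alpha})$, with equality if and only if $\alpha_s$ minimizes the strictly convex (since $p_s\in\mathrm{Int}(\mathcal S_d)$) map $\alpha\mapsto\widetilde{\mathbb H}_M(p_s,\alpha,\fD\mathcal V(s,p_s))$ for $dt\otimes\PP$-a.e.\ $(s,\omega)$, i.e.\ $\alpha_s=\widetilde\alpha^\star(s,p_s)$ with $\widetilde\alpha^\star$ as in \eqref{optconmfcp}; existence of an optimal pair follows because $\widetilde\alpha^\star$ is bounded and continuous on $\mathrm{Int}(\mathcal S_d)$ (its first-order coefficients being H\"older up to the boundary), so the closed-loop version of \eqref{dynpot} is solvable and interior-valued, again by \cite[Proposition~2.1]{mfggenetic}. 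I expect the Schwarz/curl step of the second paragraph to be the main obstacle: it is where the potential hypothesis is used, it requires the mollification-and-limit procedure to legitimize the double differentiation (recall $\fD F$ is only H\"older), and it rests on the recombination of the nonlinear Hamiltonian and transport terms into a homogeneous linear problem for the curl together with the uniqueness of that linear problem.
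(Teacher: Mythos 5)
Your route is genuinely different from the paper's, and the difference matters. The paper never proves the Schwarz identity for $Z$ directly. Instead it introduces the auxiliary \emph{linear} Kimura equation \eqref{eq:HJB:Z}, obtained from \eqref{hjbchart} by freezing the argument of the Hamiltonian at the already-known $Z$ from Theorem \ref{thmder}; the Epstein--Mazzeo theory gives a unique $\mathcal Z\in\mathcal C^{0,2+\gamma'}_{\rm WF}$, and a \emph{single} interior differentiation of this linear equation (legitimate because its source $\widehat{\mathcal H}_{M}(\cdot,Z)+\widehat F$ is once differentiable with H\"older derivatives) shows that $\zeta=D_x\mathcal Z$ and $Z$ solve the same system up to a difference obeying a linear homogeneous equation with zero terminal datum, whence $D_x\mathcal Z=Z$ by a Feynman--Kac argument. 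The potential and the Schwarz identity then both drop out at once, at the cost of only one derivative of a linear equation. Your reconstruction of $\widehat{\mathcal V}$ by a line integral plus the ODE for $\psi$, and your verification argument, are correct and essentially equivalent to the paper's endgame.

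The gap is in your curl step. To write the equation for $R_n^{m,i}$ you differentiate the nonlinear system \eqref{derhjbchart} a second time and justify this by ``interior parabolic regularity''. But the nonlinearities are built from $a^\star$, which is piecewise linear: $\widehat H^i_M$ and $\hat b^j(x,\cdot)$ are only $\mathcal C^{1,1}$ in $z$ (the paper explicitly warns that $\widetilde H^i_M$ is not $\mathcal C^2$). Consequently, even with mollified data $(F_n,G_n)$, the source $\widehat H^i_M(Z_n)-\widehat H^d_M(Z_n)$ and the drift $\hat b^j(x,Z_n)$ are merely Lipschitz in $x$, so interior Schauder estimates do not upgrade $Z_n$ beyond $\mathcal C^{2,\alpha}$ in space; the third derivatives needed to write the differentiated system, and the second derivatives $\partial^2_{z_j z_\ell}\widehat{\mathcal H}_M$ whose symmetry your recombination identity relies on, do not exist classically. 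A repair would require mollifying the Hamiltonian as well (compatible with the structure, since $\widehat{\mathcal H}_M$ is linear in $x$ and the transport coefficient is its $z$-gradient, so the cancellation survives regularization), proving estimates uniform in both regularizations, and then running the Feynman--Kac uniqueness for a linear system whose zeroth-order coefficients are only bounded and measurable. All of this is avoidable: saving precisely this extra derivative is what the paper's linearization trick is designed for.
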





\begin{proof}
	As announced before, we prove well posedness of \eqref{hjbchart}. The candidate for being the optimal feedback
	is 
	(see \eqref{eq:widehat:a}) 
	$\alpha^\star_{i,j}(t,x) = \widehat{a}_{i,j}^\star(Z(t,x))$, $i,j \in \ES$, $i \not = j$, 
	for $Z$  given by Theorem \ref{thmder}. 
	Using the same notation as in 
	\eqref{eq:hamiltonians} and 
	\eqref{eq:widehat:a}, 
	we thus consider, on $[0,T] \times \widehat{\mathcal S}_d$, the PDE (sums being taken over $\ESd$):
	\be 
	\label{eq:HJB:Z}
	\left\{
	\begin{array}{l}
		\partial_t {\mathcal{Z}} + \widehat{\mathcal{H}}_{M}(x, Z) 
		+\widehat F(x) + \frac{1}{2} \varepsilon^2 \sum_{j,k} (x_j \delta_{j,k}-x_{j} x_{k}) \partial_{x_{j} x_{k}}^2  {\mathcal{Z}} \\
		\quad+\sum_{k} x_{k} \bigl[\sum_{j} \varphi(x_{j})(\partial_{x_{j}}  {\mathcal{Z}}-\partial_{x_{k}} {\mathcal{Z}}) - \phi (x^{-d}) 
		\partial_{x_k}  {\mathcal{Z}} \bigr] + x^{-d} \sum_{j} \varphi(x_{j})\partial_{x_{j}}  {\mathcal{Z}}
		=0, \phantom{\Bigr]}
		\\
		\mathcal{Z}(T,x)=\widehat G(x),
	\end{array}
	\right.
	\ee
	In particular, we can regard 
	\eqref{eq:HJB:Z}
	as a linear Kimura PDE (the drift coefficient driving the first order term is nothing but $\hat{b}_{j}(x,0)$ 
	and hence points inward the simplex). Since $Z\in [\mathcal{C}^{0,2+\gamma'}_{\rm WF}([0,T] \times \widehat{\mathcal S}_d)]^{{d-1}}$, we know from Theorem 10.0.2 of \cite{epsteinmazzeo} that \eqref{eq:HJB:Z} admits a unique solution $\mathcal{Z}\in\mathcal{C}^{0,2+\gamma'}_{\rm WF}([0,T] \times \widehat{\mathcal S}_d)$.
	
	The key fact is to show that ${\zeta}= Z$ where 
	$\zeta=
	D_x\mathcal{Z}$. Since the second order operator driving \eqref{eq:HJB:Z} is elliptic in the interior of the simplex (and non-degenerate in any ball, see for instance \cite[(2.27)]{mfggenetic}) {and the source term is differentiable in space, with time-space H\"older continuous derivatives}, we know from interior estimates for parabolic PDEs (see Theorem 8.12.1 in \cite{krylov}) that $\zeta$  is once continuously differentiable in time and twice in space on $[0,T) \times \Int$
	--even though we have no guarantee on the behavior at the boundary--. This suffices to differentiate \eqref{eq:HJB:Z}.
	We then get the following variant of \eqref{derhjbchart} at any point $(t,x)$ of ${[0,T)} \times \Int$   (the sums below being taken over $j \in \ESd$):
	\begin{equation*}
		\left\{\begin{array}{l}
			\partial_t \zeta^i +   
			\sum_{j} \Bigl( \hat{b}^j_{2}(x) + \tfrac12 {\varepsilon^2} \delta_{i,j} - 
			\varepsilon^2 x_{j}
			\Bigr)	\partial_{x_{j}} \zeta^i
			+\tfrac12 {\varepsilon^2}  \sum_{j,k }(x_j \delta_{j,k}-x_{j} x_{k}) \partial^2_{x_{j} x_{k}} \zeta^i+ \sum_{j} \hat c^{i,j}(x) \zeta^j
			\\ 
			\qquad=-\Big[\widehat{H}^i_{M}(Z) -\widehat{H}^d_{M}(Z)+\hat f^i(x) -\hat f^d(x) 
			+
			\sum_{j} \Bigl( \hat{b}^j_{1}(x,Z) + \tfrac12 {\varepsilon^2} \delta_{i,j} - 
			\varepsilon^2 x_{j}
			\Bigr)	\partial_{x_{j}} Z^i
			\Big],
			\\
			\zeta^i(T,x)= \hat g^i(x) -\hat g^d(x),\phantom{\Bigr]}
		\end{array}
		\right.
	\end{equation*}
	where, for $j \in \ESd$, 
	$x$ and 
	$z$ (as usual sums below are over $k \in \ESd$),
	\begin{equation*}
		\begin{split}
			\hat{b}_{1}^j(x,z) &=
			\sum\nolimits_{k} \Bigl\{ x_{k} a^\star(z_{k}-z_{j}) - x_j  a^\star(z_{j}-z_{k})  \Bigr\}
			+ x^{-d}  a^\star(-z_{j})
			-x_{j}  a^\star(z_{j}).
			\\
			\hat{b}_{2}^j(x) &=
			\sum\nolimits_{k} \Bigl\{ x_{k}   \phi(x_{j}) - x_j  \phi(x_{k})  \Bigr\}
			+ x^{-d}  \phi(x_{j}) 
			-x_{j}  \phi\big( x^{-d}\big) .
		\end{split}
	\end{equation*}
	Obviously, $\hat{b}_{1}^j$ and $\hat{b}_{2}^j$ should be compared with 
	$\hat{b}^j$ in \eqref{notation:a:b}. In particular, $\hat{b}^j(x,z)$ is nothing 
	but $\hat{b}_{1}^j(x,z)+ \hat{b}_{2}^j(x)$. 
	This prompts us to make the difference with \eqref{derhjbchart}, from which we get 
	\begin{equation*}
		\left\{\begin{array}{l}
			\partial_t \bigl( \zeta^i - Z^i \bigr) +   
			\sum_{j} \Bigl( \hat{b}^j_{2}(x) + \tfrac12 {\varepsilon^2} \delta_{i,j} - 
			\varepsilon^2 x_{j}
			\Bigr)	\partial_{x_{j}} \bigl( \zeta^i - Z^i \bigr) 
			\\
			\qquad	 +\tfrac12 {\varepsilon^2}  \sum_{j,k }(x_j  {\delta_{j,k}}
			-x_{j} x_{k}) \partial^2_{x_{j} x_{k}} \bigl( \zeta^i - Z^i \bigr)
			+ 
			\sum_{j} \hat{c}^{i,j}(x) \bigl( 
			\zeta^j  
			- Z^j  \bigr) = 0,
			\\
			(\zeta^i-Z^i)(T,x)= 0. \phantom{\Bigr]}
		\end{array}
		\right.
	\end{equation*}
	
	In order to prove that $\zeta=Z$, we can use It\^o's formula
	as we did in the proof of 
	Theorem \ref{thmder}. Indeed, the interior smoothness of $\zeta^i$, for each $i =1,\cdots,{d-1}$, suffices to apply 
	It\^o's formula to {$(\sum_{j \in \ESd} R_{s}^{i,j} (\zeta^j-Z^j)(s,X^{i,\cdot}_{s}))_{t  \le s \le T}$} 
	for any given $t$ where 
	${\boldsymbol X}^{i,\cdot}$
	solves 
	\eqref{Xij}, but for $\hat{b}^j(x,z)$ therein replaced by $\hat{b}^j_{2}$, 
	with some $x \in \Int$ as initial condition at time $t$, {and 
	$(({R}^{i,j}_{s})_{i,j \in \ESd})_{t \leq s \leq T}$ solves the SDE 
	$dR_{s}^{i,j}= \sum_{\ell \in \ESd} R_{s}^{i,\ell} \hat{c}^{\ell,j}(X_{s}^{i,\cdot}) $, for 
	$s \in [t,T]$ with $(R_{t}^{i,j}=\delta_{i,j})_{i,j \in \ESd}$}. 
	Following 
	the standard proof of Feynman-Kac formula, we get that 
	$\zeta^i(t,x)=Z^i(t,x)$. Hence, $\zeta^i(t,\cdot)$ and $Z^i(t,\cdot)$ coincide on $\Int$ and
	then, by continuity, on the entire $\widehat{\mathcal S}_{d}$. In particular, this implies that $\mathcal{Z}\in\mathcal{C}^{1,2+\gamma'}_{\rm WF}({[0,T] \times \widehat{\mathcal S}_d})$, see 
the definition of the hybrid spaces in Appendix.
	
	By replacing $Z$ by $D_{x} {\mathcal Z}$ in 
	\eqref{eq:HJB:Z}, we deduce that 
	${\mathcal Z}$ solves \eqref{hjbchart}. 
	By a straightforward adaptation of the verification theorem, we deduce that ${\mathcal Z}$ must be the value 
	function of the MFCP and, as by-product, it must be the unique solution 
	of \eqref{hjbchart} in the space $\mathcal{C}^{1,2+\gamma'}_{\rm WF}({[0,T] \times \widehat{\mathcal S}_d})$.
	Also, since $\widetilde{\mathbb H}_{M}(p,\alpha,w)$ (see \eqref{eq:hamiltonians}) is strictly convex with respect to $\alpha$ as long as $p$ is in $\textrm{\rm Int}({\mathcal S}_{d})$ and since any controlled trajectory ${\boldsymbol p}$ in 
	\eqref{dynpot}
	stays in $\textrm{\rm Int}({\mathcal S}_{d})$ 
	(see \cite[Proposition 2.1]{mfggenetic}), 
	we deduce that the optimal control is unique and is in a feedback form. In local coordinates, the optimal feedback function
	writes 
	\[  
	\begin{split}
	&\hat{\alpha}^{i,j}(t,x)= a^\star(\partial_{x_{i}}{\mathcal{Z}}(t,x)-\partial_{x_{j}}\mathcal{Z}(t,x)) \quad \mbox{ if } i,j\in {\ESd},
	\\
	&\hat{\alpha}^{i,d} =a^\star(\partial_{{x_{i}}}\mathcal{Z}(t,x)), \qquad  \hat{\alpha}^{d,i} =a^\star(-\partial_{{x_{i} }}\mathcal{Z}(t,x)),
	\end{split}
	\]
	and this is equivalent to \eqref{optconmfcp} in intrinsic derivatives.
	Relabelling  {${\mathcal Z}$ into $\widehat{\mathcal V}$}, this completes the proof. 
\end{proof}

\section{Potential game with a common noise}
\label{sec:4}

The main purpose of this section is to prove 
Theorem \ref{main:thm:2}.

\subsection{New MFG}
Our first step is to introduce an MFG that derives from the MFCP studied in the previous section. 
Equivalently, we would like the corresponding MFG system to represent the necessary condition for optimality of the MFCP. As we already explained in Section \ref{sec:main}, the problem is that, 
if we use the same dynamics as in 
\eqref{dynmfg} (which are the basis of the results of \cite{mfggenetic}, on which our paper is built), we can no longer use 
the cost functional $J^{\epsilon,\phi}$ (see \eqref{eq:cost:functional:noise}) 
to get a potential structure. To wit, the master equation associated with 
\eqref{eq:cost:functional:noise} (which may be computed along the same lines as in \cite{mfggenetic}, see (3.14) therein)
does not identify with 
the derivative system \eqref{derhjbchart}. 
In particular, the master equation associated with \eqref{dynmfg}-\eqref{eq:cost:functional:noise} 
{(which is an equation for the value of the game)} 
can not be closed as an equation for 
{the centered value of the game (centered here means that the sum over the states $i \in \ES$ is null)}, due to the additional drift in the equation given by the common noise: This means that the master equation can not be the intrinsic derivative of a HJB equation. Instead, this holds true for the MFG without common noise and will be exploited in the next sections.

In order to define the new cost of the MFG (see \eqref{eq:new:J:varepsilon,varphi}), let $V:= \fD \mathcal{V}$ be the classical solution to \eqref{derhjbsimplex} --which is the derivative of the MFCP value function by Theorem \ref{thm:3.2}-- and consider $(v^i_{t}=V^i(t,p_{t}^\star))_{0 \leq t \leq T}$ where ${\boldsymbol p}^\star=(p_{t}^\star)_{0 \leq t \leq T}$ solves 
the SDE driven by the optimal feedback, namely ({sums below are over $\ES$})
\begin{equation}
	\label{eq:optim:path}
	\begin{split}
		dp_{t}^{\star,i} &=
		\Bigl( \sum\nolimits_{j} p_{t}^{\star,j} \bigl( \varphi (p_{t}^{\star,i}) + a^\star(v_{t}^j - v_{t}^i) \bigr) 
		-
		\sum\nolimits_{j} p_{t}^{\star,i} \bigl( \varphi (p_{t}^{\star,j}) + a^\star(v_{t}^i - v_{t}^j) \bigr)
		\Bigr) dt 
		\\
		&\hspace{15pt}+ \frac1{\sqrt{2}} \varepsilon \sum\nolimits_{j} \sqrt{p_{t}^{\star,i} p_{t}^{\star,j} } \bigl( dB_{t}^{i,j} - dB_{t}^{j,i} \bigr),
	\end{split}
\end{equation}
{see \cite[Proposition 2.1]{mfggenetic} for the unique solvability, the unique solution remaining inside the interior of ${\mathcal S}_{d}$}. By It\^o's formula (the fact that we can apply It\^o's formula with intrinsic derivatives can be justified by using the local chart, at least in the interior of the simplex), we get 
(sums below being over indices in $\ES$)
%
\be
\label{ponsim}
\begin{split} 
	&dv^i_t  = - \Bigl( \widetilde H^i_{M}(v_{t}) -\tfrac1d \sum\nolimits_l \widetilde H^l_{M}(v_t)+ f^i(p_{t}^\star) -\tfrac1d \sum\nolimits_l f^l(p_t^\star)\Bigr) dt 
	+ \sum\nolimits_{j,k} w^{i,j,k}_t dB_{t}^{j,k} \\
	&-\biggl( \sum\nolimits_j \Bigl( \varphi(p^{\star,j}_t) - p_{t}^{\star,j} \varphi'(p_{t}^{\star,i}) \Bigr) (v^j_t-v^i_t)  - \tfrac1d  \sum\nolimits_{j,l} \Bigl( \varphi(p^{\star,j}_t) - p_{t}^j \varphi'(p_{t}^{\star,l}) \Bigr) (v^j_t-v^l_t) 
	\biggr)dt\\
	&- \tfrac{1}{\sqrt{2}}  \varepsilon \biggl( \sum\nolimits_{j} 
	\sqrt{p_{t}^{\star,j}(p_{t}^{\star,i})^{-1}} 
	\Bigl( w_{t}^{i,i,j} +w_t^{j,j,i}\Bigr) 
	-  \tfrac1{d}\sum\nolimits_{j,l} \sqrt{p_{t}^{\star,j} (p_{t}^{\star,l})^{-1}}\Bigl(w_{t}^{l,l,j}  + w_t^{j,j,l} \Bigr)\biggr)dt
	\\
	&v^i_T=g^i(p_T^\star) -\tfrac1d\sum\nolimits_l g^l(p_T^\star),
\end{split}
\ee
where
\begin{equation}
	\label{eq:w:i,j,k}
	w^{i,j,k}_{t}
	= W^{i,j,k}(t,p_{t}^\star), \quad \textrm{\rm with} \quad
	W^{i,j,k}(t,p)=
	\tfrac1{\sqrt{2}} \varepsilon \sqrt{p_{j} p_{k}} \bigl( {\mathfrak d}_{j} V^i - {\mathfrak d}_{k} V^i\bigr)(t,p). 
\end{equation}
Notice in particular that 
\begin{equation}
	\label{eq:nice:w:i,i,j}
	\begin{split}
		&\tfrac{1}{ \sqrt{2}} \varepsilon \sum\nolimits_{j}\sqrt{p_{t}^{\star,j}(p_{t}^{\star,i})^{-1}} 
		w_{t}^{i,i,j}  
		= 
		\tfrac12 \varepsilon^2 \sum\nolimits_{j} p_{t}^{\star,j} \bigl( {\mathfrak d}_{i} V^i - {\mathfrak d}_{j} V^i\bigr)(t,p_{t}^\star) ,
		\\
		&\tfrac1{\sqrt{2}} \varepsilon  \sum\nolimits_{j}\sqrt{p_{t}^{\star,j}(p_{t}^{\star,i})^{-1}}
		w_{t}^{j,j,i}  
		= \tfrac12 \varepsilon^2
		\sum\nolimits_{j} p_{t}^{\star,j} \bigl( {\mathfrak d}_{j} V^j - {\mathfrak d}_{i} V^j\bigr)(t,p_{t}^\star),
	\end{split}
\end{equation}
which permits to recover the penultimate line in 
\eqref{derhjbsimplex}, since
${\mathfrak d}_{i} V^j=
{\mathfrak d}_{j} V^i$
by Schwarz' Theorem for intrinsic derivatives.

Ideally, we would like to {see}
\eqref{ponsim}
as the stochastic HJB equation associated with our new MFG with common noise (see \cite[Lemma 3.1]{mfggenetic} for its derivation). 
However, 
we cannot do so directly because 
the pair $({\boldsymbol p}^\star,{\boldsymbol v})$
in 
\eqref{ponsim} takes values in the tangent bundle to the simplex, namely 
$\sum_{i \in \ES} v^i_{t}=0$ for any $t \in [0,T]$. 
Obviously, the latter is not consistent with our original MFG, whether there is a common noise or not. Indeed, if this were consistent, then, discarding for a while the common noise, we would have to think of $v^i_0$ as the minimum of $J(\cdot;{\boldsymbol p}^\star)$
in 
\eqref{eq:cost:functional}
whenever ${\boldsymbol q}$ therein starts from the Dirac mass at point $i$, but, then, there is no reason 
why the sum of all these costs over $i \in \ES$ should be null. 
In fact, we here recover the point raised in \eqref{eq:master:HJB}:
Therein, we can identify the two vectors $(U^{\varepsilon,\varphi,i}(t,p))_{i \in \ES}$ and 
$({\mathfrak d}_{i} {\mathcal V}^{\varepsilon,\varphi}(t,p))_{i \in \ES}$ up to a constant only. 
The idea below is thus to reconstruct from scratch the sum of the value functions. To do so, we notice
from 
\cite[Theorem 10.0.2]{epsteinmazzeo} again that we can solve
the PDE in the simplex (sums below are over indices in $\ES$)
\begin{equation}
	\label{eq:PDE:mathcal Y}
	\left\{
	\begin{array}{l}
		\partial_t \mathcal{Y} +
		\sum_{j,k} p_{j} \bigl( \varphi(p_{k}) + a^\star(V^j - V^k) \bigr)
		\bigl( 
		{\mathfrak d}_{j} {\mathcal Y}
		-{\mathfrak d}_{k} {\mathcal Y}
		\bigr) + \frac{\varepsilon^2}{2} \sum_{j,k }(p_j \delta_{j,k}-p_{j} p_{k}) {\mathfrak d}^2_{j,k} \mathcal{Y}
		\vspace{3pt}
		\\
		\hspace{0pt}+
		\tfrac12 \sum_{j,k} p_{j} \vert a^\star(V^j - V^k) \vert^2
		+
		\sum_{j,k} p_j  p_{k} \varphi'(p_{j})  
		\bigl( V^j - V^k \bigr) + \langle p ,f^{\bullet} (p)\rangle = 0, 
		\vspace{3pt}
		\\
		\mathcal{Y}(T,p)=\langle p,g^\bullet(p) \rangle,
	\end{array}
	\right.  
\end{equation}
where we recall that 
$\langle p,f^{\bullet}(p) \rangle$ (and similarly with $f$ replaced by $g$) here denotes the inner product 
$\sum_{i} p_{i}f^i(p)$. 

We are now in the position  to elucidate the shape of 
$\vartheta_{\varepsilon,\varphi}^i$ in 
\eqref{eq:new:J:varepsilon,varphi}, by letting
(we remove the superscripts $\varepsilon$ and $\varphi$ for simplicity)
\begin{equation}
	\label{eq:vartheta}
	\vartheta^{i}(t,p):= \sum\nolimits_{j }
	\Bigl[
	p_{j} \varphi'(p_{i})\bigl(V^i - V^j\bigr)(t,p) 
	+  \tfrac1{\sqrt2}\varepsilon
	\sqrt{p_{j}p_{i}^{-1}}
	\bigl( \widetilde W^{j,j,i} - \widetilde W^{i,i,j} - 2 \Upsilon^{i,j}\bigr)(t,p)\Bigr],
\end{equation}
where
\begin{equation}
	\label{eq:Upsilon:i,j}
	\begin{split}
		&\widetilde W^{i,j,k}(t,p) = W^{i,j,k}(t,p)
		- \langle p, W^{\bullet,j,k}(t,p) \rangle
		\\
		&\Upsilon^{i,j}(t,p) = \tfrac1{\sqrt2} \varepsilon \sqrt{p_{i}p_{j}} \Bigl( {\mathfrak d}_{i} {\mathcal Y}(t,p)
		-
		{\mathfrak d}_{j} {\mathcal Y}(t,p) - (V^i-V^j)(t,p)
		\Bigr).
	\end{split}
\end{equation}
Observe in particular that, despite the factor $\sqrt{p_{i}^{-1}}$ in 
\eqref{eq:vartheta}, the function $\vartheta$ is bounded and continuous on the entire $[0,T] \times {\mathcal S}_{d}$.
Using 
\eqref{eq:nice:w:i,i,j}, we indeed have
\begin{equation}
	\label{eq:expression:vartheta:i}
	\begin{split}
		\vartheta^{i}(t,p)&= \sum\nolimits_{j }
		p_{j} \varphi'(p_{i})\bigl(V^i - V^j\bigr)(t,p) 
		\\
		&\hspace{15pt}+ 
		\tfrac12 \varepsilon^2
		\sum\nolimits_{j} p_{j} \bigl( {\mathfrak d}_{j} V^j - {\mathfrak d}_{i} V^i\bigr)(t,p)
		+ \varepsilon^2 
		\sum\nolimits_{j} p_{j} \bigl( {\mathfrak d}_{j} {\mathcal Y} - {\mathfrak d}_{i} {\mathcal Y} - (V^j- V^i)\bigr)(t,p)
		\\
		&\hspace{15pt}+ 
		\varepsilon^2 \sum\nolimits_{j,k} p_{j} p_{k} \bigl( {\mathfrak d}_{i} V^k - {\mathfrak d}_{j} V^k \bigr)(t,p).    
	\end{split}
\end{equation}
Now, we recall \eqref{eq:new:J:varepsilon,varphi} {together with 
Definition \ref{def:MFG}}: 
For an adapted continuous process ${\boldsymbol p}$ with   
values in ${\mathcal S}_{d}$, such that $\int_0^T (1/p^i_t) dt$ has exponential moments of sufficiently high order (which we recall holds true if 
{${\boldsymbol p}$
solves an equation of the same type as 
	\eqref{dynpot}
and
$\kappa$ is large enough independently of ${\boldsymbol p}$})
for a progressively-measurable process $\balpha=((\alpha_{t}^{i,j})_{i,j \in \ES : i \not = j})_{0 \leq t \leq T}$ 
such that $0 \leq \alpha^{i,j}_t\leq M$
and for ${\boldsymbol q}$ solving 
\eqref{dynmfg}, we  let
\begin{equation}
	\label{newcostmfg}
	\begin{split}
		\tilde{J}^{\epsilon,\phi}\bigl({\boldsymbol \alpha};{\boldsymbol p}\bigr) &:=  \E \biggl[\int_0^T \sum_{i \in \ES} q^i_t \Bigl[{\mathfrak L}^i(\alpha_t) + f^i(p_t)  
		+ \vartheta^i(t,p_{t})  
		\Bigr]
		dt + \sum_{i \in \ES} q^i_T g^i(p_T) \biggr].
	\end{split}
\end{equation} 
By following \cite[Subsection 3.1.1]{mfggenetic}, the Stochastic HJB (SHJB) equation associated with this minimization problem here writes down
(sums being taken over $\ES$)
\begin{equation}
	\label{newshjb}
	\begin{split}
		&du_{t}^i = - \Bigl( \widetilde H^i_{M}(u_{t}) +\sum\nolimits_j\varphi(p^j_t)(u^j_t-u^i_t) 
		+f^i(p_{t}) + \vartheta^i(t,p_{t}) \Bigr) dt 
		\\
		&\qquad - \tfrac{1}{\sqrt{2}}\varepsilon \sum\nolimits_{j\neq i} 
		\sqrt{ p_{t}^j (p_{t}^i)^{-1}} \bigl( \nu_{t}^{i,i,j} - \nu_{t}^{i,j,i} \bigr) dt
		+ \sum\nolimits_{j\neq k} \nu_{t}^{i,j,k} dB_{t}^{j,k}, \\
		&u^i_T=g^i(p_T).
	\end{split}
\end{equation}
Hence, our new MFG (in the sense of 
Definition \ref{def:MFG})
is characterized by 
the forward-backward system made of the SHJB equation \eqref{newshjb}
and of the Stochastic FP (SFP) equation \eqref{dynpot}, see again 
\cite[Subsection 3.1.1]{mfggenetic} for the proof. 

Of course, the core of our construction is to show that 
the {optimal trajectory} ${\boldsymbol p}^\star$ of the MFCP is the unique possible equilibrium of this new MFG. 
In this regard, our choice for $\vartheta$ is especially designed {so} that 
$(v_{t}^i - \langle p_{t}^\star,v_{t}^{\bullet} \rangle + {\mathcal Y}(t,p_{t}^\star))_{0 \leq t \leq T}$ solves 
\eqref{newshjb} whenever ${\boldsymbol p}$ is taken as ${\boldsymbol p}^\star$. In such a case, 
{by equalizing the martingale terms in the expansions of 
$(u_{t}^i)_{0 \le t \le T}$
and
$(v_{t}^i - \langle p_{t}^\star,v_{t}^{\bullet} \rangle + {\mathcal Y}(t,p_{t}^\star))_{0 \leq t \leq T}$}, 
we get from
\eqref{ponsim} and \eqref{eq:w:i,j,k} 
\begin{equation}
	\label{eq:nu:i,j,k:star}
	\begin{split}
		\nu_{t}^{i,j,k} &= w^{i,j,k}_{t} - \langle p_{t}^\star,w^{\bullet,j,k}_{t} \rangle 
		+ \tfrac{1}{\sqrt{2}} \varepsilon \sqrt{p_{t}^{\star,j}p_{t}^{\star,k}}
		\Bigl( {\mathfrak d}_{j} {\mathcal Y}(t,p_{t}^\star) - {\mathfrak d}_{k} {\mathcal Y}(t,p_{t}^\star) - \bigl(v_{t}^j - v_{t}^k \bigr) \Bigr)
		\\
		& = \tilde w^{i,j,k}_{t}  +  \Upsilon^{j,k}(t,p_{t}^\star),
	\end{split}
\end{equation}
where $\tilde w^{i,j,k}_{t}= w^{i,j,k}_t - \langle p , w^{\bullet,j,k}_r\rangle$, which explains why $\Upsilon$ appears in 
\eqref{eq:vartheta}. The details are given in the proof of Theorem \ref{thm4} below. 

\subsection{Solvability}

This is the refined version  of Theorem \ref{main:thm:2}. Recall that $\phi$ {satisfies} \eqref{eq:varphi:2}.

\begin{thm}
	\label{thm4}
	If $F\in\mathcal{C}^{1,\gamma}_{\rm WF}({{\mathcal S}_{d}})$ and $G\in\mathcal{C}^{1,2+\gamma}_{\rm WF}({{\mathcal S}_{d}})$ for a given $\gamma\in(0,1)$, then
	there exists a constant $\kappa_{2}\geq \kappa_1$ ($\kappa_1$ and $\gamma'$ being given by Theorem \ref{thmder}) only depending on $M$, $T$ and $d$, such that for any {$\varepsilon \in (0,1]$}, ${\theta} >0$
	and $\kappa \geq \kappa_{2}/\varepsilon^2$, 
	there exists $\gamma''\in (0,\gamma']$, possibly depending on $\varepsilon$ and $\kappa$,
	such that the new MFG, associated with the dynamics \eqref{dynmfg} and with the cost \eqref{newcostmfg},  admits a unique solution 
	$(\boldsymbol{p}, \boldsymbol{\alpha})$ for any $p_0\in \mathrm{Int}(\mathcal{S}_d)$.  It is equal to the unique optimizer of the MFCP  
	\eqref{eq:Cost:mfc}-\eqref{dynpot}. Moreover, the master equation \eqref{eq:master:equation:introl} associated with the modified MFG admits a unique solution 
	$U\in [\mathcal{C}^{0,2+\gamma''}_{\rm WF}({[0,T] \times {\mathcal S}_d})]^{d}$ and 
	\eqref{eq:master:HJB} holds.
\end{thm}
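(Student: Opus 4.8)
The plan is to build the new MFG directly around the pair $(\mathcal{V},\mathcal{Y})$ produced in Section~\ref{sec:3}, re-using the Kimura--Schauder estimates of \cite{epsteinmazzeo} and the Wright--Fisher SDE analysis of \cite{mfggenetic}. \emph{Step~1 (the auxiliary PDE and the coefficient $\vartheta$).} Recall from Theorem~\ref{thm:3.2} that $\mathcal{V}=\mathcal{V}_{\varepsilon,\varphi}\in\mathcal{C}^{1,2+\gamma'}_{\rm WF}([0,T]\times\mathcal{S}_d)$ and that $V:=\fD\mathcal{V}\in[\mathcal{C}^{0,2+\gamma'}_{\rm WF}]^{d}$ solves the centered system~\eqref{derhjbsimplex}, with $\sum_{i\in\ES}V^i\equiv0$. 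One first observes that the linear Kimura PDE~\eqref{eq:PDE:mathcal Y} for $\mathcal{Y}$ has an inward-pointing first-order drift (of the form $\hat b(x,\cdot)$ as in~\eqref{notation:a:b}) and a source and terminal datum lying in the relevant Wright--Fisher H\"older classes, because $a^\star$ is Lipschitz and $V,\varphi,\varphi'$ are bounded and H\"older; hence \cite[Theorem~10.0.2]{epsteinmazzeo} gives a unique $\mathcal{Y}=\mathcal{Y}_{\varepsilon,\varphi}\in\mathcal{C}^{0,2+\gamma''}_{\rm WF}([0,T]\times\mathcal{S}_d)$ for some $\gamma''\in(0,\gamma']$ depending on $\varepsilon,\kappa$. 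One then defines $\vartheta^i$ by~\eqref{eq:vartheta}; using~\eqref{eq:nice:w:i,i,j} and~\eqref{eq:Upsilon:i,j} one rewrites it as~\eqref{eq:expression:vartheta:i}, which shows that the apparent singularity $\sqrt{p_i^{-1}}$ cancels and that $\vartheta=(\vartheta^i)_{i\in\ES}$ is bounded, continuous and $\gamma''$-H\"older on all of $[0,T]\times\mathcal{S}_d$. \emph{Step~2 (the master equation).} Set $U^i:=V^i+\mathcal{Y}$; then $U\in[\mathcal{C}^{0,2+\gamma''}_{\rm WF}]^{d}$ and $U^i-U^j=V^i-V^j=\fd_i\mathcal{V}-\fd_j\mathcal{V}$, which is exactly~\eqref{eq:master:HJB}. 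That $U$ solves~\eqref{eq:master:equation:introl} is a substitution check: write $H_M((U^i-U^j)_j)=\widetilde H^i_M(V)$, split $\fd_jU^i-\fd_kU^i$ and $\fd^2_{j,k}U^i$ into their $V$- and $\mathcal{Y}$-parts, and then the $V$-terms are absorbed by~\eqref{derhjbsimplex} (via Schwarz' identity $\fd_iV^j=\fd_jV^i$) while the $\mathcal{Y}$- and cross-terms are precisely what~\eqref{eq:PDE:mathcal Y} and the definition~\eqref{eq:vartheta} of $\vartheta$ were cooked up to cancel. Uniqueness of the classical solution in $[\mathcal{C}^{0,2+\gamma''}_{\rm WF}]^{d}$ then follows by the same scheme as in Theorem~\ref{thmder}: Leray--Schauder gives existence (the nonlinearity being of order zero in $U$ plus a transport term with Lipschitz coefficients, $\vartheta$ now a fixed H\"older source), and the four-step/characteristics argument of \cite{mfggenetic}, whose forward component is a Wright--Fisher SDE with exponentially integrable inverse once $\kappa$ is of order $\varepsilon^{-2}$, gives uniqueness.

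\emph{Step~3 (the MFCP optimizer is an equilibrium).} Let $\mathbf{p}^\star$ solve~\eqref{eq:optim:path}, i.e.\ \eqref{dynpot} driven by the optimal feedback $\widetilde\alpha^{\star,i,j}(t,p)=a^\star(\fd_i\mathcal{V}-\fd_j\mathcal{V})(t,p)$; by \cite[Proposition~2.1]{mfggenetic} this is well posed and $\mathbf{p}^\star$ stays in $\mathrm{Int}(\mathcal{S}_d)$. Put $v^i_t:=V^i(t,p^\star_t)$ and $u^i_t:=v^i_t-\langle p^\star_t,v^\bullet_t\rangle+\mathcal{Y}(t,p^\star_t)$, so that $u^i_t-u^j_t=v^i_t-v^j_t$ and, after the terminal bookkeeping (using the terminal conditions of~\eqref{derhjbsimplex} and~\eqref{eq:PDE:mathcal Y}), $u^i_T=g^i(p^\star_T)$. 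It\^o's formula together with~\eqref{derhjbsimplex}, \eqref{eq:w:i,j,k}, \eqref{ponsim}, and then with~\eqref{eq:PDE:mathcal Y} applied to $\mathcal{Y}(t,p^\star_t)$ and to $\langle p^\star_t,v^\bullet_t\rangle$, shows --- this being exactly the purpose of the choices~\eqref{eq:PDE:mathcal Y}--\eqref{eq:vartheta} --- that $(\mathbf{u},\mathbf{p}^\star)$ solves the forward--backward system made of the stochastic HJB equation~\eqref{newshjb} and of~\eqref{dynpot}, with martingale integrands $\nu^{i,j,k}$ given by~\eqref{eq:nu:i,j,k:star}. Since $\widetilde{\mathbb H}_M(p,\cdot,w)$ is strictly convex on $\mathrm{Int}(\mathcal{S}_d)$ and $\mathbf{p}^\star$ never leaves the interior, the verification argument of \cite[Subsection~3.1.1]{mfggenetic} shows that the feedback $a^\star(u^i_t-u^j_t)=\widetilde\alpha^{\star,i,j}(t,p^\star_t)$ --- the control driving $\mathbf{p}^\star$ --- is the unique minimizer of $\widetilde{J}^{\varepsilon,\varphi}(\cdot\,;\mathbf{p}^\star)$. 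Thus $\mathbf{p}^\star$ is an equilibrium in the sense of Definition~\ref{def:MFG}, and by Theorem~\ref{thm:3.2} it is the unique optimizer of the MFCP.

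\emph{Step~4 (uniqueness of the equilibrium).} Let $\mathbf{p}$ be any equilibrium, with $M$-bounded control $\boldsymbol{\alpha}$ (Definition~\ref{def:MFG}(i)); by \cite[Proposition~2.1--2.3]{mfggenetic}, $\mathbf{p}$ stays in $\mathrm{Int}(\mathcal{S}_d)$ and $\int_0^T(1/p^i_t)\,dt$ has exponential moments of high order for $\kappa$ large. For the fixed environment $\mathbf{p}$ the problem $\inf_{\boldsymbol{\beta}}\widetilde{J}^{\varepsilon,\varphi}(\boldsymbol{\beta};\mathbf{p})$ over~\eqref{dynmfg} is linear-convex; its stochastic HJB equation~\eqref{newshjb} along $\mathbf{p}$ admits a solution $(u^i_t,\nu^{i,j,k}_t)$, and Definition~\ref{def:MFG}(ii) plus verification force $\alpha^{i,j}_t=a^\star(u^i_t-u^j_t)$ $dt\otimes\PP$-a.e. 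Set $\bar u^i_t:=U^i(t,p_t)$; since $\mathbf{p}$ solves~\eqref{dynpot} with the feedback $a^\star(u^\bullet-u^\bullet)$, It\^o's formula and the master equation~\eqref{eq:master:equation:introl} give that $(\bar u^i_t)$ solves~\eqref{newshjb} up to the extra source $\Delta^i_t:=\sum_{j,k}p^k_t\bigl(a^\star(u^k_t-u^j_t)-a^\star(\bar u^k_t-\bar u^j_t)\bigr)(\fd_jU^i-\fd_kU^i)(t,p_t)$, with $|\Delta^i_t|\le C\max_j|u^j_t-\bar u^j_t|$ by the Lipschitz property of $a^\star$ and boundedness of $\fD U$, while $\bar u^i_T=g^i(p_T)=u^i_T$. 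Hence $e^i_t:=u^i_t-\bar u^i_t$ solves a linear backward SDE with zero terminal condition, a generator Lipschitz in $e$, and the singular coupling $\tfrac{\varepsilon}{\sqrt2}\sqrt{p^j_t/p^i_t}$ acting on the difference of martingale integrands. A weighted Gronwall estimate --- equivalently a Girsanov change of measure --- exploiting the exponential integrability of $\int_0^T(1/p^i_t)\,dt$, valid as soon as $\kappa\ge\kappa_2/\varepsilon^2$ for a suitable $\kappa_2\ge\kappa_1$ (this is the very mechanism of \cite[Theorem~3.3 and Corollary~3.4]{mfggenetic}), then yields $e\equiv0$. Consequently $\alpha^{i,j}_t=a^\star(u^i_t-u^j_t)=a^\star(\fd_i\mathcal{V}-\fd_j\mathcal{V})(t,p_t)=\widetilde\alpha^{\star,i,j}(t,p_t)$ by~\eqref{eq:master:HJB}, so $\mathbf{p}$ solves~\eqref{eq:optim:path}, whose unique solvability (\cite[Proposition~2.1]{mfggenetic}) gives $\mathbf{p}=\mathbf{p}^\star$ and pins $\boldsymbol{\alpha}$ down to~\eqref{eq:optimal:control:mfg}. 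This proves uniqueness of $(\mathbf{p},\boldsymbol{\alpha})$ and identifies it with the MFCP optimizer.

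The hardest part is the stability estimate of Step~4: because~\eqref{newshjb} couples the drift of the backward equation to its own martingale integrand through the weight $\sqrt{p^j_t/p^i_t}$, which blows up near $\partial\mathcal{S}_d$, closing the Gronwall (or Girsanov) loop forces one to control $\int_0^T(1/p^i_t)\,dt$ by exponential moments, and it is precisely this requirement that produces the threshold $\kappa_2/\varepsilon^2$ and makes $\kappa_2$ possibly larger than $\kappa_1$. A secondary, purely computational difficulty is the It\^o bookkeeping of Steps~2 and~3 --- which is the raison d'\^etre of the definitions~\eqref{eq:PDE:mathcal Y}--\eqref{eq:vartheta} --- together with verifying, via the cancellations~\eqref{eq:nice:w:i,i,j}, that the $\sqrt{p_i^{-1}}$ in~\eqref{eq:vartheta} genuinely disappears so that $\vartheta$ extends continuously and boundedly to the boundary of the simplex.
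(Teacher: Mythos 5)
Your overall architecture is the paper's: construct $\mathcal{Y}$ via the linear Kimura PDE \eqref{eq:PDE:mathcal Y}, define $\vartheta$ by \eqref{eq:vartheta} and check via \eqref{eq:expression:vartheta:i} that the $\sqrt{p_i^{-1}}$ singularity cancels, then show by It\^o bookkeeping that the MFCP optimizer solves the forward--backward system \eqref{dynpot}--\eqref{newshjb}. The main stylistic difference is that where the paper simply invokes \cite[Theorem 2.9]{mfggenetic} for uniqueness of the equilibrium and \cite[Theorem 3.8]{mfggenetic} for the master equation, you unpack those black boxes (BSDE stability estimate with the $\sqrt{p^j_t/p^i_t}$ weights controlled by exponential moments of $\int_0^T (1/p^i_t)\,dt$; four-step scheme for uniqueness of the master equation). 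That is legitimate and correctly identifies why the threshold $\kappa_2/\varepsilon^2$ appears.

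There is, however, one concrete error in Step~2: the candidate $U^i := V^i + \mathcal{Y}$ is wrong. Since $V^i(T,p)=g^i(p)-\tfrac1d\sum_j g^j(p)$ and $\mathcal{Y}(T,p)=\langle p,g^\bullet(p)\rangle$, your $U$ satisfies $U^i(T,p)=g^i(p)-\tfrac1d\sum_j g^j(p)+\langle p,g^\bullet(p)\rangle\neq g^i(p)$ in general, so it cannot solve \eqref{eq:master:equation:introl}; the first-order terms $\mathfrak{d}_jU^i$ in the substitution check are likewise off. The correct candidate --- which you do use in Step~3 and which is \eqref{eq:candidate:solving:MFG} --- is $U^i(t,p)=V^i(t,p)-\langle p,V^\bullet(t,p)\rangle+\mathcal{Y}(t,p)$; the subtracted term is $i$-independent, so \eqref{eq:master:HJB} is unaffected, but it is exactly what restores the terminal condition ($-\langle p,V^\bullet(T,p)\rangle+\mathcal{Y}(T,p)=\tfrac1d\sum_j g^j(p)$) and makes the PDE verification close. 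With that substitution, and noting that once $U$ is given explicitly you do not need Leray--Schauder for existence of the master equation (only the characteristics argument for uniqueness in the class $[\mathcal{C}^{0,2+\gamma''}_{\rm WF}]^d$), the proof goes through.
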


\begin{proof}
	We first prove existence of a MFG solution, by using the solution of the MFC problem, and then show 
	uniqueness by invoking the results from \cite{mfggenetic}.
	\vskip 4pt
	
	\textit{Existence.}
	As announced in the previous subsection, we choose ${\boldsymbol p}={\boldsymbol p}^\star$ 
	{with ${\boldsymbol p}^\star$ as in 
		\eqref{eq:optim:path}}
	(here, we drop the superscript $\star$ to alleviate the notation) and then let, as a candidate for solving the SHJB equation
	\eqref{newshjb}:
	\begin{equation}
		\label{eq:candidate:solving:MFG}
		u_{t}^i := v_{t}^i - \langle p_{t},v_{t}^{\bullet} \rangle + {\mathcal Y}(t,p_{t}), \quad t \in [0,T], \quad i \in \ES. 
	\end{equation}
	Importantly, we notice that $u_{t}^i - u_{t}^j = v_{t}^i - v_{t}^j$ for any $i,j \in \ES$ with $i \not = j$. 
	With the same notation as in \eqref{eq:nu:i,j,k:star}, we then get ({some explanations are given after the formula; moreover, the sums below are over $\ES$})
	\begin{equation*}
		\begin{split}
			du_{t}^i &= - \Bigl( \widetilde H^i_{M}(u_{t}) - \langle p_{t}, \widetilde H_{M}^{\bullet}(u_{t}) \rangle
			+f^i(p_{t}) - \langle p_{t},f^{\bullet}(p_{t}) \rangle   \Bigr) dt
			\\
			&\hspace{15pt}-
			\Bigl( \sum\nolimits_j\varphi(p^j_t)(u^j_t-u^i_t) - \sum\nolimits_{j,k}
			p^k_{t}
			\varphi(p^j_t)(u^j_t-u^k_t)\Bigr) dt
			\\
			&\hspace{15pt}+
			\Bigl( \sum\nolimits_j p_{t}^j \varphi'(p^i_t)(u^j_t-u^i_t) - \sum\nolimits_{j,k}
			p^j_{t}p_{t}^k
			\varphi'(p^k_t)(u^j_t-u^k_t)\Bigr) dt
			\\
			&\hspace{15pt}- \tfrac{1}{\sqrt{2}}  \varepsilon \biggl( \sum\nolimits_{j} 
			\sqrt{p_{t}^{j}(p_{t}^{i})^{-1}} 
			\Bigl( w_{t}^{i,i,j} +w_t^{j,j,i}\Bigr) 
			- \sum\nolimits_{j,k} \sqrt{p_{t}^{ j} p_{t}^{k} }\Bigl(w_{t}^{k,k,j}  + w_t^{j,j,k} \Bigr)\biggr)dt
			\\
			&\hspace{15pt} -
			\Bigl(
			\sum\nolimits_{j,k} p_{t}^{j} \bigl( \varphi (p_{t}^{k}) + a^\star(u_{t}^j - u_{t}^k) \bigr) \bigl(u_{t}^k - u_{t}^j \bigr)
			\Bigr) dt
			- \tfrac{1}{\sqrt{2}} \varepsilon\sum\nolimits_{j,k} \sqrt{p_{t}^j p_{t}^k} \bigl( 
			w^{j,j,k}_{t} -w^{j,k,j}_{t} \bigr) dt  
			\\
			&\hspace{15pt} - \Bigl( \tfrac12 \sum_{j,k} p^{j}_{t} \vert a^\star(u^j_t - u^k_{t}) \vert^2 
			+ \sum_{j,k} p^j_{t} p^k_{t} \varphi'(p_{t}^j)\bigl( u^j_{t} - u^k_{t} \bigr) +\langle p_{t},f^{\bullet}(p_{t}) \rangle \Bigr) dt + 
			\sum_{j,k} \nu_{t}^{i,j,k} dB_{t}^{j,k}. 
		\end{split}
	\end{equation*}
	In short, the term on the first line come from the expansion of $dv_{t}^i - d \langle p_{t},v_{t}^\bullet \rangle$, see the first line in 
	\eqref{ponsim}. 
	Similarly, the terms on the second and third lines come from the second line in 
	\eqref{ponsim}. And the fourth line derives from the third line 
	in \eqref{ponsim}. The first term on the fifth line comes from $\langle v^{\bullet}_{t},dp_{t} \rangle$ 
	and the second term on the same line is the bracket in the expansion of the inner product $d\langle v^{\bullet}_{t},p_{t}\rangle$. 
	The first term on the last line comes from the expansion of $({\mathcal Y}(t,p_{t}))_{0 \le t \le T}$ by means of It\^o's formula. 
	The last term is given by \eqref{eq:nu:i,j,k:star}.
	
	We first treat terms that cancel in the above expansion. Obviously, the inner products $\langle p_{t},f^{\bullet}(p_{t})\rangle$
	on the top and bottom lines cancel. Similarly, 
	the second term on the second line cancel out with 
	half of the first term on the penultimate line, and
	the second term on the third line cancel out with the second term on the last line. 
	As for the inner product $\langle p_{t},\widetilde H_{M}^{\bullet}(u_{t}) \rangle$ on 
	the first line, it cancels with the second half of the first term on the fifth line and 
	with the first on term on the last line. Now, using the fact that $w^{j,k,j}_{t}=-w^{j,j,k}_{t}$, we have
	\begin{equation*}
		\begin{split}
			&  \tfrac{1}{\sqrt{2}}  \varepsilon  \sum\nolimits_{j,k} \sqrt{p_{t}^{ j} p_{t}^{k} }\Bigl(w_{t}^{k,k,j}  + w_t^{j,j,k} \Bigr)
			-
			\tfrac{1}{\sqrt{2}} \varepsilon\sum\nolimits_{j,k} \sqrt{p_{t}^j p_{t}^k} \bigl( 
			w^{j,j,k}_{t} -w^{j,k,j}_{t} \bigr) =0,
		\end{split}
	\end{equation*}
	so that the last terms on the fourth and fifth lines also cancel out. 
	Moreover, adding $\vartheta^{i}(t,p_{t})$
	(using 
	\eqref{eq:vartheta}) to the first term on the third line and 
	the first term on the fourth line, we get 
	\begin{equation*}
		\begin{split}
			& \vartheta^{i}(t,p_{t}) +
			\sum\nolimits_j p_{t}^j \varphi'(p^i_t)(u^j_t-u^i_t)
			-
			\tfrac1{\sqrt{2}} \varepsilon \sum\nolimits_{j} 
			\sqrt{p_{t}^j (p_{t}^i)^{-1}} \bigl(w^{i,i,j}_{t} + w^{j,j,i}_{t} \bigr)
			\\
			&= 
			\sum\nolimits_{j }
			\Bigl[ \tfrac1{\sqrt2}\varepsilon
			\sqrt{p_t^{j} (p_{t}^{i})^{-1}}
			\bigl( \widetilde w_{t}^{j,j,i} - \widetilde w_{t}^{i,i,j} - 2 \Upsilon^{i,j}\bigr)(t,p_{t})\Bigr]
			-
			\tfrac1{\sqrt{2}} \varepsilon \sum\nolimits_{j} 
			\sqrt{p_{t}^j (p_{t}^i)^{-1}} \bigl(w^{i,i,j}_{t} + w^{j,j,i}_{t} \bigr)
			\\
			&= 
			\sum\nolimits_{j }
			\Bigl[ \tfrac1{\sqrt2}\varepsilon
			\sqrt{p_t^{j} (p_{t}^{i})^{-1}}
			\bigl(- 2 w_{t}^{i,i,j} + 2 \langle p_{t},w^{\bullet,i,j}_t \rangle - 2 \Upsilon^{i,j}\bigr)(t,p_{t})\Bigr].
		\end{split}
	\end{equation*}
	where, in the second and third lines, we used the two equalities 
	$\langle p_{t},w^{\bullet,i,j}_{t} \rangle = -
	\langle p_{t},w^{\bullet,j,i}_{t} \rangle$
	and $  \Upsilon^{i,j}(t,p_{t})
	= -  \Upsilon^{j,i} (t,p_{t})$. 
	
	It remains to see 
	from 
	\eqref{eq:nu:i,j,k:star}
	that 
	\begin{equation*}
		\begin{split}
			\nu_{t}^{i,i,j} - \nu_{t}^{i,j,i} &= w^{i,i,j}_{t} - w_{t}^{i,j,i} - \langle p_{t},w^{\bullet,i,j}_t \rangle
			+   \langle p_{t},w^{\bullet,j,i}_t \rangle +  \Upsilon^{i,j}_{t}(t,p_{t}) - \Upsilon_{t}^{j,i}(t,p_{t})
			\\
			&= 2w^{i,i,j}_{t}  -
			2   \langle p_{t},w^{\bullet,i,j}_t \rangle   + 2 \Upsilon_{t}^{i,j}(t,p_{t}).
		\end{split}
	\end{equation*}
	We hence get that the pair $(p_{t},u_{t})_{0 \le t \le T}$ solves 
	\eqref{eq:optim:path} (with $v_{t}$ replaced by $u_{t}$ therein) 
	and 
	\eqref{newshjb}. 
	\vskip 4pt
	
	\textit{Uniqueness.}
	For $\kappa_2$  as in the statement, uniqueness follows from\footnote{{As we already explained in 
	footnote \ref{foo:3}, some care is needed to apply the results of \cite{mfggenetic}, since the framework therein
	is not exactly the same. In footnote \ref{foo:3}, we already commented on the 
	shape of the function $\varphi$. This observation is still relevant here. 
	Also, it must be stressed that, in 
	\cite{mfggenetic}, the constant $\kappa_{0}$ (with the same notation as therein, $\kappa_{0}$ denoting the threshold for $\kappa$ in 
	\cite{mfggenetic} and hence being the analogue of $\kappa_{2}/\varepsilon^2$ here)
	 is allowed to depend on $\| f \|_{\infty}$
	and $\|g\|_{\infty}$. In our analysis, $\kappa_{2}$ is allowed to depend $M$, which in turn depends on 
	$\| f \|_{\infty}$
	and $\|g\|_{\infty}$. So, the latter is consistent with the results of 
	\cite{mfggenetic}. In fact, our framework is easier since the controls are already required to be bounded by 
	$M$, which is not the case in \cite{mfggenetic}. This explains why $M$ directly shows up  in our statement; in short, it provides an upper bound for the drift in the dynamics of ${\boldsymbol p}$.}} \cite[Theorem 2.9]{mfggenetic}, using the fact that ${\vartheta}$ in 
	\eqref{eq:expression:vartheta:i} is H\"older continuous, which is turn follows from the fact that 
	the solution to the linear equation 
	\eqref{eq:PDE:mathcal Y}
	belongs to $\mathcal{C}^{0,2+\gamma'}_{\rm WF}([0,T] \times  {\mathcal S}_d)$, by \cite[Theorem 10.0.2]{epsteinmazzeo}. The new exponent $\gamma''$, as well as existence and uniqueness of a classical solution to the master equation \eqref{eq:master:equation:introl}, 
	then follow from \cite[Theorem 3.8]{mfggenetic}. Finally, \eqref{eq:master:HJB} follows from \eqref{eq:candidate:solving:MFG}.
\end{proof}

\section{Selection by vanishing viscosity}
\label{sec:5}

\subsection{Selection of equilibria}
\label{subse:selection}
The purpose of this subsection is to prove Theorem 
\ref{main:thm:4}. To do so, 
we choose $\varphi=\varphi_{\theta,\delta,\varepsilon}$ satisfying (in addition to the aforementioned monotonicity and regularity properties)   
\be
\label{newphi}
\varphi_{\theta,\delta,\varepsilon}(r) = 
\begin{cases}
	\kappa_\varepsilon \qquad & r\leq \theta\\
	\kappa_0 & 2\theta \leq r\leq \delta \\
	0 &r\geq 2\delta,
\end{cases}
\ee 
for $0<\delta\leq 1/2$ and $0< {2\theta}<\delta$ and $\kappa_0>0$. {Above, 
we choose 
$\kappa_{\varepsilon}$ of the form $\kappa_{\varepsilon}:=\varepsilon^{-2} \kappa_{2}$, 
for $\kappa_{2}$ as in 
the statement of Theorem \ref{main:thm:2}; in particular, $\kappa_{2}$ is fixed once for all and 
only depends on $\| f \|_{\infty}$, $\|g \|_{\infty}$, $T$ and $d$, and is thus independent of
the four remaining parameters  
$\theta$, $\delta$, $\varepsilon$ and $\kappa_{0}$
in \eqref{newphi}. As for $\kappa_{0}$, it is a constant whose value is fixed later on; 
say that, in the end, it must be above some threshold $\bar \kappa_{0}$, only depending on 
$d$ and $\kappa_{2}$, see for instance 
Theorem 	\ref{thm:5.10}.} In order for  $\phi_{\theta,\delta,\varepsilon}$ to be non-increasing, we will impose a smallness condition on $\varepsilon$, {namely} $\varepsilon^2\leq {\varepsilon_{0}^2:=\min( \kappa_2 / \kappa_0,1)}$; {again, we stress the fact that, in this condition,} the constant $\kappa_0$ will be chosen later on, while $\kappa_2$ is fixed. We also assume that 
\begin{equation}
\label{eq:bound:phiprime}
\forall r \geq 0, \quad \bigl\vert \varphi_{\theta,\delta,\varepsilon}'(r) \bigr\vert \leq 
\frac{2 \kappa_{\varepsilon}}{\theta} {\mathbbm 1}_{[0,2\theta]}(r) + 
\frac{2 \kappa_{0}}{\delta} {\mathbbm 1}_{[0,2\delta]}(r).
\end{equation}
The rationale for introducing an additional parameter $\theta$ in the decomposition 
\eqref{newphi} is the following: Whilst $\kappa_{\varepsilon}$ blows up with $\varepsilon$, 
$\kappa_{0}$ does not; Here, $\kappa_{\varepsilon}$ is used to force uniqueness of the MFG equilibrium 
(as in \cite{mfggenetic}), hence the need to have it large when the intensity of the common noise is small;
Differently, $\kappa_{0}$ is used below to force the equilibrium to stay sufficiently
far 
away from the boundary, see 
Proposition \ref{expfin}
below. The new decomposition 
\eqref{newphi}
is thus a way to disentangle {the} two issues. 

Accordingly, 
for any initial condition $(t_{0},p_{0}) \in [0,T] \times \textrm{\rm Int}({\mathcal S}_{d})$, 
we write 
$({\boldsymbol p}_{[t_{0},p_{0}]}^{\theta,\delta,\varepsilon},{\boldsymbol \alpha}_{[t_{0},p_{0}]}^{\theta,\delta,\varepsilon}):= (p_{[t_{0},p_{0}],t}^{\theta,\delta,\varepsilon},\alpha_{[t_{0},p_{0}],t}^{\theta,\delta,\varepsilon})_{t_{0} \leq t\leq T}$ 
for the minimizer of 
${\mathcal J}^{\varepsilon,\varphi}$ defined by \eqref{eq:Cost:mfc}
with $\varphi$ being given by \eqref{newphi}. {When there is no ambiguity on the choice of the initial condition, 
we merely write $({\boldsymbol p},{\boldsymbol \alpha})$.}
Importantly, in this notation, 
the $\mathbb{F}$-progressively measurable process ${\boldsymbol \alpha}_{[t_{0},p_{0}]}^{\theta,\delta,\varepsilon}$
is given by \eqref{eq:optimal:control:mfg} through a feedback function; we feel useful to recall that its off-diagonal entries are 
bounded by $M$. 
 In order to state our main result\footnote{The attentive reader will observe that the $L^2$ space we here call 
${\mathcal E}$ is slightly different from the $L^2$ space used in the statement of 
Theorem \ref{main:thm:4}; obviously, ${\mathcal E}$ is here a smaller (closed) 
subset and the result proven below suffices to derive 
Theorem \ref{main:thm:4}. In fact, we felt better not to introduce the space ${\mathcal E}$ earlier in the text, which explains why Theorem \ref{main:thm:4} and \ref{thm10} are slightly different.} here, we also let $\mathcal{E}:= L^2 ([0,T];{\mathscr A} )$
where ${\mathscr A}=\{(a_{i,j})_{i,j=1,\cdots,d} \in \RR^{d \times d} : a_{i,j} \in [ 0,M], \ i \not =j \ ; \ a_{i,i}
= - \sum_{j \not =i} a_{i,j}\}$. 	
 We endow $\mathcal{E}$ with the weak topology, which makes it a compact metric (and hence Polish) space and for which the convergence is denoted by $\rightharpoonup$.
We also denote by $\mathcal{V}^{\theta, \delta,\varepsilon}$ the value function of the viscous MFCP and by $\mathcal{V}$ the value function of the inviscid MFCP. {The following result then subsumes Theorem \ref{main:thm:4}}:

\begin{thm}
\label{thm10}
{Assume that $F\in\mathcal{C}^{1,\gamma}_{\rm WF}({{\mathcal S}_{d}})$ and $G\in\mathcal{C}^{1,2+\gamma}_{\rm WF}({{\mathcal S}_{d}})$ for a given $\gamma\in(0,1)$.
Moreover, fix the value of $\kappa_{0}$ in Definition 
\ref{newphi} and} let $p_0 \in \mathrm{Int}(\mathcal{S}_{d})$ stand for the initial condition 
of 
\eqref{dynpot}
at time $0$. Then there exists $\delta_0$, depending only on $p_0$, $M$, $T$ and $d$, such that
the family
 $({\boldsymbol p}^{\theta,\delta,\varepsilon},{\boldsymbol \alpha}^{\theta,\delta,\varepsilon})_{0<{2}\theta<\delta\leq \delta_0,0<\varepsilon\leq {\varepsilon_{0}}}$ is tight in $ \mathcal{C}([0,T];\mathcal{S}_d) \times {\mathcal E}$.
  The
limit in law of any converging subsequence $({\boldsymbol p}^{\theta_n,\delta_n,\varepsilon_n},{\boldsymbol \alpha}^{\theta_n,\delta_n,\varepsilon_n})$,
with $\lim_{n \rightarrow   \infty} (\theta_n, \delta_n,\varepsilon_n)=(0,0,0)$, 
 is a 
 probability ${\mathbb M}$ that satisfies the conclusion of Theorem 
 \ref{main:thm:4}. Moreover, {for any such converging subsequence}, 
 \begin{equation}
 \label{limcosts}
 \lim_{n \rightarrow \infty}
 {\mathcal J}^{\varepsilon_{n},\varphi_{\theta_{n},\delta_{n},\varepsilon_{n}}}\Bigl(
 {\boldsymbol \alpha}^{\theta_n,\delta_n,\varepsilon_n}
 \Bigr) 
 = \min_{\boldsymbol \beta \in \mathcal{E}} {\mathcal J}({\boldsymbol \beta}).
 \end{equation}
In particular, for any $t_0\in[0,T]$ and any $p_0\in \mathrm{Int}(\mathcal{S}_{d})$, 
 \begin{equation}
  \label{limcosts:bis}
 \lim_{(\theta, \delta,\varepsilon) \rightarrow (0,0,0)} \mathcal{V}_{\theta, \delta,\varepsilon}(t_0,p_0) = \mathcal{V}(t_0,p_0).
\end{equation}
\end{thm}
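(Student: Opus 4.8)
The plan is to establish tightness first, then identify the support of any weak limit as minimizers of the inviscid MFCP, and finally upgrade to the convergence of the costs and of the value functions.

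\textbf{Step 1: Uniform estimates and tightness.} First I would show that the family $({\boldsymbol p}^{\theta,\delta,\varepsilon})$ satisfies uniform moment bounds in $\mathcal{C}([0,T];\mathcal{S}_d)$: since $\mathcal{S}_d$ is compact, the processes are automatically bounded, so the issue is equicontinuity in law. From the dynamics \eqref{dynpot}, the drift is bounded (controls are bounded by $M$ and $\varphi$ appears multiplied by $p^i_t$, hence is harmless), while the martingale part has quadratic variation of order $\varepsilon^2$; a Kolmogorov-type criterion applied uniformly over the parameters then gives tightness of the $\boldsymbol p$-marginals. For the control component, ${\boldsymbol \alpha}^{\theta,\delta,\varepsilon}$ takes values in the fixed compact convex set $\mathscr A$, so $\mathcal{E}=L^2([0,T];\mathscr A)$ is weakly compact, hence the $\boldsymbol\alpha$-marginals are trivially tight. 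Tightness of the joint law follows, so along subsequences we have convergence in law to some ${\mathbb M}$ on $\mathcal{C}([0,T];\mathcal{S}_d)\times\mathcal{E}$.

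\textbf{Step 2: Passing to the limit in the state equation and the cost.} Next I would pass to the limit in the SDE \eqref{dynpot}. As $\varepsilon\to 0$ the diffusion term vanishes (in $L^2$, uniformly), and since $p^i_t\varphi_{\theta,\delta,\varepsilon}(p^i_t)$ is uniformly bounded and supported on $\{p^i_t\le 2\delta\}$, which shrinks, this forcing term disappears in the limit as well (one also uses that limiting trajectories stay in the interior away from the boundary, as recalled in the text just above the statement of Theorem~\ref{main:thm:4}). Hence ${\mathbb M}$-almost every pair $({\boldsymbol p},{\boldsymbol \alpha})$ satisfies the inviscid Fokker--Planck equation \eqref{eq:fp} with $\boldsymbol q=\boldsymbol p$ and $p_0$ as initial condition. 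For the cost, lower semicontinuity of ${\mathcal J}$ under weak convergence of controls (the running cost $\sum_i q^i_t\mathfrak L^i(\alpha_t)=\tfrac12\sum_i q^i_t\sum_{j\neq i}|\alpha^{i,j}_t|^2$ is convex in $\alpha$, and $F,G$ are continuous) gives $\mathbb E^{\mathbb M}[{\mathcal J}({\boldsymbol \alpha})]\le\liminf {\mathcal J}^{\varepsilon_n,\varphi_n}({\boldsymbol\alpha}^{\theta_n,\delta_n,\varepsilon_n})$, after checking that the extra term $\frac{\varepsilon^2}{2}$-type contributions and the $\varphi$-dependent parts of ${\mathcal J}^{\varepsilon,\varphi}$ (which involve $\sum_i p^i_t \mathfrak L^i$ only, with the drift change absorbed) are asymptotically negligible.

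\textbf{Step 3: Optimality, i.e.\ the matching upper bound.} The main obstacle, and the heart of the argument, is the reverse inequality: I must show $\limsup {\mathcal J}^{\varepsilon_n,\varphi_n}({\boldsymbol\alpha}^{\theta_n,\delta_n,\varepsilon_n})\le \min_{\boldsymbol\beta\in\mathcal E}{\mathcal J}({\boldsymbol\beta})$. The clean way is a recovery-sequence argument: fix any admissible deterministic control $\boldsymbol\beta$ for the inviscid MFCP with associated trajectory $\boldsymbol q$ staying in $\textrm{Int}(\mathcal S_d)$; feed $\boldsymbol\beta$ (possibly slightly perturbed near the boundary) into the viscous dynamics \eqref{dynpot} and use optimality of ${\boldsymbol\alpha}^{\theta_n,\delta_n,\varepsilon_n}$ to get ${\mathcal J}^{\varepsilon_n,\varphi_n}({\boldsymbol\alpha}^{\theta_n,\delta_n,\varepsilon_n})\le {\mathcal J}^{\varepsilon_n,\varphi_n}(\boldsymbol\beta)$; then show the right-hand side converges to ${\mathcal J}(\boldsymbol\beta)$ by dominated convergence, using that the $\varepsilon$-diffusion term and the $\varphi$-forcing vanish and that $\boldsymbol q$ stays uniformly away from $\partial\mathcal S_d$ so the $1/p^i$-type singularities never bite. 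Taking the infimum over $\boldsymbol\beta$ gives the bound. Combining with Step~2, every weak limit ${\mathbb M}$ is supported on minimizers of ${\mathcal J}$ and the costs converge, which is precisely \eqref{limcosts}; this simultaneously proves the conclusion attributed to Theorem~\ref{main:thm:4}.

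\textbf{Step 4: Convergence of the value functions.} Finally, \eqref{limcosts:bis} follows by applying Steps 1--3 with the initial time $t_0$ and initial point $p_0$ in place of $0$ and $p_0$: the minimum of ${\mathcal J}^{\varepsilon,\varphi}$ over admissible controls on $[t_0,T]$ started from $p_0$ is exactly $\mathcal{V}^{\theta,\delta,\varepsilon}(t_0,p_0)$ by Theorem~\ref{thm:3.2}, and its limit is $\min_{\boldsymbol\beta}{\mathcal J}(\boldsymbol\beta)$ started from $(t_0,p_0)$, which is $\mathcal{V}(t_0,p_0)$. One should check that the constant $\delta_0$ and the smallness threshold on $\varepsilon$ can be chosen uniformly in $(t_0,p_0)$ on compact subsets of the interior — this is where the explicit lower bound on $p^i_t$ along inviscid trajectories (valid for $p_0$ in a fixed compact subset of $\textrm{Int}(\mathcal S_d)$) is used. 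I expect Step~3 to be the delicate one, mainly because one must handle the $\varphi$-dependent modification of the drift in ${\mathcal J}^{\varepsilon,\varphi}$ versus ${\mathcal J}$ and ensure the recovery sequence's trajectory genuinely avoids the region where $\varphi$ is large; the rest is relatively routine weak-convergence bookkeeping.
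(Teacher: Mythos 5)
Your Steps 2--4 follow the paper's route quite closely (Skorokhod representation and term-by-term passage to the limit in the dynamics, the convexity/lower-semicontinuity bound for the $\liminf$, the comparison with a fixed deterministic $\boldsymbol\beta$ fed into the viscous dynamics for the $\limsup$, and a compactness argument for \eqref{limcosts:bis}). But Step 1 contains a genuine gap, and it is precisely the point where the constant $\delta_0$ of the statement — which you never define — enters. Your claim that the drift of \eqref{dynpot} is uniformly bounded because ``$\varphi$ appears multiplied by $p^i_t$'' is false: the repulsive term reads $\sum_{j\neq i} p^j_t\,\varphi_{\theta,\delta,\varepsilon}(p^i_t)$, so $\varphi(p^i_t)$ is multiplied by $p^j_t$ (not $p^i_t$), and by \eqref{newphi} it equals $\kappa_\varepsilon=\kappa_2/\varepsilon^2$ whenever $p^i_t\le\theta$. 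Hence the drift blows up like $\varepsilon^{-2}$ over the family of parameters, and a Kolmogorov criterion applied directly to ${\boldsymbol p}^{\theta,\delta,\varepsilon}$ does not give equicontinuity. You cannot repair this by invoking that ``limiting trajectories stay in the interior'', because that property is only available \emph{after} tightness and identification of the limit — the argument as you wrote it is circular.

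The paper breaks this circularity by localization: it sets $\delta_0:=\tfrac14 e^{-TM(d-1)}\min_i p_0^i$ (the Gronwall lower bound valid for \emph{any} solution of the inviscid Fokker--Planck equation with $M$-bounded controls), stops ${\boldsymbol p}^{\theta,\delta,\varepsilon}$ at the first time some coordinate drops to $3\delta_0$, and observes that for $\delta\le\delta_0$ the stopped process never enters the support $[0,2\delta]$ of $\varphi_{\theta,\delta,\varepsilon}$, so its drift is genuinely bounded by a constant depending only on $M$ and $d$. Tightness and the limit identification are carried out for the stopped process; since the limit satisfies the inviscid equation and therefore stays above $4\delta_0>3\delta_0$, a Portmanteau argument shows that the probability of stopping before $T$ tends to zero, which removes the localization a posteriori. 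Without this device (or the exponential estimates of Proposition~\ref{expfin}, which are not available at this stage and anyway require coupling $\delta$ to $\varepsilon$), your Step 1 does not go through, and Steps 2--3 inherit the problem since they rely on the almost-sure uniform convergence produced there.
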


\begin{proof}
Throughout the proof, we use the following notation. For ${\boldsymbol \alpha}=(\alpha_{t})_{0 \le t \le T}$
a {bounded} deterministic path in ${\mathcal E}$, we call $ {\boldsymbol p}^{\circ}({\boldsymbol \alpha})$ the 
solution of the equation
{(obviously, the solution exists and is unique as the equation is linear)}
\be
	p^i_t =  p^i_{0} + \int_0^t \sum_{j\in \ES} \left(p^{j}_s\alpha^{j,i}_s 
	- p^{i}_s \alpha^{i,j}_s \right)ds, \quad t \in [0,T], \quad i \in \ES. 
	\label{61}
	\ee

\emph{Step 1.} 	The distributions of the random variables 
$({\boldsymbol \alpha}^{\theta,\delta,\varepsilon})_{\theta,\delta,\varepsilon}$ (regarded as taking values within 
${\mathcal E}$) is tight as $\mathcal{E}$ is compact. 
	For any ${\boldsymbol \alpha}$ in ${\mathcal E}$, the corresponding solution ${\boldsymbol p}^\circ:=
	{\boldsymbol p}^\circ({\boldsymbol \alpha})$ to \eqref{61} is such that 
	$p^{\circ,i}_t\geq p^i_0 -M({d-1})\int_0^t {p}^{\circ,i}_s ds$ and thus, by Gronwall's lemma,
	${p}^{\circ,i}_t \geq p_{0}^{i} e^{-t M({d-1})}$. This prompts us to define
	\[
	\delta_0 := \tfrac 14 e^{-TM({d-1})} \min_{i \in \ES}  p^i_0.
	\]
	and let
	\[
	\tau_{\varepsilon,\theta,\delta}:= \inf \biggl\{ 0\leq t\leq T :
	\min_{i \in \ES} p^{i,\theta,\delta,\varepsilon}_t \leq 3\delta_0	
	\biggr\} \wedge T,
	\]
	with the convention $\inf \varnothing=+\infty$,
	and 
	$\hat{\boldsymbol p}^{\theta,\delta,\varepsilon}
:= \big(p_{\tau_{\theta,\delta,\varepsilon}\wedge t}^{\theta,\delta,\varepsilon} \big)_{0\leq t\leq T}$.

Notice in particular that, for $\delta < \delta_{0}$,  	
	$\hat{\boldsymbol p}^{\theta,\delta,\varepsilon}$ does not see the function $\varphi$ in its drift since the support of latter is restricted to $[0,2\delta]$. By Kolomogorov's criterion, since $\boldsymbol{\alpha}$ is bounded by $M$ and $\boldsymbol{p}$ is bounded by 1, it is then standard to show the tightness of the distributions of the processes $(\hat{\boldsymbol p}^{\theta,\delta,\varepsilon})_{0<{2}\theta<\delta\leq \delta_0,0<\varepsilon\leq {\varepsilon_{0}}}$ in $\mathcal{C}([0,T];\mathcal{S}_{d})$. 
\vskip 4pt

	\emph{Step 2.} We hence consider a weakly convergent subsequence $\big(\hat{\boldsymbol p}^{\theta_n,\delta_n,\varepsilon_n}, 
	{\boldsymbol \alpha}^{\theta_n,\delta_n,\varepsilon_n}\big)_{n\geq 0}$, with some $({\boldsymbol p},{\boldsymbol \alpha})$ as weak limit, where 
	$\lim_{n \rightarrow \infty}(\theta_n,\delta_n,\varepsilon_n) =0$.
	To simplify the notations, we let $(\hat{\boldsymbol p}^{(n)},{\boldsymbol \alpha}^{(n)}):=(\hat{\boldsymbol p}^{\theta_{n},\delta_{n},\varepsilon_{n}},
	{\boldsymbol \alpha}^{\theta_n,\delta_n,\varepsilon_n})$. 
	Applying Skorokhod's representation Theorem, we can assume 
	without any loss generality that the convergence holds almost surely, 
	provided that we allow the Brownian motions $({\boldsymbol B}^{i,j})_{i,j \in \ES : i \not =j}$ to depend on $n$.
	We hence write the latter in the form ${\boldsymbol B}^{(n)}=({\boldsymbol B}^{(n),i,j})_{i,j \in \ES : i \not =j}$. So, we can assume that there exists a full event $\Omega_{0}$ on which 
	$\sup_{0\leq t\leq T} |\hat p^{(n)}_t- p_t| \rightarrow 0$ and ${\boldsymbol \alpha}^{(n)} \rightharpoonup {\boldsymbol \alpha}$. 
	
	We then write ({pay attention that, although we don't mention it explicitly, the last four terms in the right-hand side below depend on 
	$i$})
	\[
	\hat p^{(n),i}_t := p_{0}^{i} + R^{(n),1}_{t} + R^{(n),2}_{t} +R^{(n),3}_{t} +R^{(n),4}_{t}, \quad t \in [0,T],
	\] 
	where (sums being over indices in $\ES$)
	\begin{align*}
	R^{(n),1}_t &= \int_0^t \sum\nolimits_j \left(\hat p^{(n),j}_s\alpha^{(n),j,i}_s 
	- \hat p_{s}^{(n),i} \alpha_{s}^{(n),i,j} \right)ds
	-\int_0^t \sum\nolimits_j \left(p^{j}_s\alpha^{(n),j,i}_s 
	- p^{i}_s \alpha^{(n),i,j}_s \right)ds
	,\\
	R^{(n),2}_t &= \int_0^t \sum\nolimits_j \left(p^{j}_s\alpha^{(n),j,i}_s - p^{i}_s\alpha^{(n),i,j}_s\right)ds
	,\\
	R^{(n),3}_t&= \int_0^t \Big[(1-\hat p^{(n),i}_s)\varphi_{\theta_n,\delta_n,\epsilon_n}\bigl(\hat p^{(n),i}_s\bigr) 
	-\hat p^{(n),i}_s\sum\nolimits_{j\neq i} \varphi_{\theta_n,\delta_n,\epsilon_n} \bigl(\hat p^{(n),j}_s\bigr)\Big]ds
	,\\
	R^{(n),4}_t &= \tfrac{1}{\sqrt{2}} \varepsilon_{n} \int_0^t \sum\nolimits_{j\neq i} \sqrt{\hat p^{(n),i}_s \hat p^{(n),j}_s} d\bigl(B_s^{(n),i,j}-B^{(n),j,i}_s\bigr).
	\end{align*}
	We then work on $\Omega_{0}$ in order to handle the almost sure convergence of the first three terms. By uniform convergence of $\hat {\boldsymbol p}^{(n)}$ to ${\boldsymbol p}$ and by uniform boundedness of ${\boldsymbol \alpha}^{(n)}$, $(R^{(n),1}_{t})_{0 \le t \le T}$ tends to $0$, uniformly in $t \in [0,T]$. 
	By weak convergence 
	of
	${\boldsymbol \alpha}^{(n)}$ to ${\boldsymbol \alpha}$, we deduce that, for any $t \in [0,T]$, 
	 $R_t^{(n),2}$ tends to 
	$\int_0^t \sum_j \left(p^{j}_s\alpha^{j,i}_s 
	- p^{i}_s \alpha^{i,j}_s \right)ds$; by Arzel\`a-Ascoli Theorem, the convergence is uniform in $t \in [0,T]$. 
{Since it is implicitly required that $\delta_{n} < \delta_{0}$
and hence
$\varphi_{\theta_n,\delta_n,\varepsilon_{n}}(\hat p^{(n),j}_t)$ is 0 for all $j\in\ES$ and $t \in [0,T]$
	(recall that 
	$\hat {\boldsymbol p}^{(n),j}$ is stopped before entering the support of $\varphi$), 
	 the term 
	$(R_{t}^{(n),3})_{0 \le t \le T}$ is constantly 0}.
	We hence derive the almost sure limit of the first three terms (the initial condition being excluded) in the expansion of 
	$\hat {\boldsymbol p}^{(n)}$. 
	As for $(R^{(n),4}_{t})_{0 \le t \le T}$, we observe by Doob's inequality that, since the second moment of the stochastic integral
	is uniformly bounded with respect to $n$, $\sup_{t \in [0,T]} \vert R^{(n),4}_{t} \vert$ tends to $0$  in probability.  

	Thus we can conclude that, {with probability 1}, 
	the limit process $({\boldsymbol p},{\boldsymbol \alpha})$ solves equation \eqref{61}. 
	\vskip 4pt

	\emph{Step 3}. We keep the same notation as in the second step ({working in particular with the same Skorokhod representation sequence}). 
	Since 
\[
\PP\Bigl(
\inf_{0\le t\le T} \min_{i \in \ES} \hat p_{t}^{i,\theta_n,\delta_n,\varepsilon_n} \leq 3\delta_0
\Bigr)
\geq \PP \left(  \tau_{\theta_n,\delta_n,\varepsilon_n} <T \right)
\]	
		and the limit process satisfies  
	\[\PP\Bigl(
\inf_{0\le t \le T} \min_{i \in \ES} p^i_t \leq 3\delta_0
\Bigr)=0,
	\]
	 Portmanteau Theorem gives
	\be
	\label{62}
	\lim_{n \rightarrow \infty}  \PP \left(  \tau_{\theta_n,\delta_n,\varepsilon_n} <T \right) =0.
	\ee
	Now, 
	\begin{align*}
	\E\Bigl[ \sup_{t \in [0,T]} \bigl\vert {p}_{t}^{\theta_n,\delta_n,\varepsilon_n}
	-p_{t} \bigr\vert 
	\Bigr] 
	& \leq
	\E\Bigl[ \sup_{t \in [0,T]} \bigl\vert {p}_{t}^{\theta_n,\delta_n,\varepsilon_n}
	-p_{t} \bigr\vert 
	\mathbbm{1}_{\left\{\tau_{\theta_n,\delta_n,\varepsilon_n}<T\right\}}
	\Bigr] 
	+
	\E\Bigl[ \sup_{t \in [0,T]} \bigl\vert \hat{p}_{t}^{\theta_n,\delta_n,\varepsilon_n}
	-p_{t} \bigr\vert 
	\Bigr] 
	\\
	&\qquad\leq2  \PP \left(  \tau_{\theta_n,\delta_n,\varepsilon_n} <T \right) 
	+\E\Bigl[ \sup_{t \in [0,T]} \bigl\vert \hat{p}_{t}^{\theta_n,\delta_n,\varepsilon_n}
	-p_{t} \bigr\vert 
	\Bigr] .
	\end{align*}
	The first term in the right-hand side tends to $0$ by \eqref{62} and the second term by the convergence result proved in the second step ({the almost sure convergence also holding true in $L^1$ since the underlying processes take values in the simplex}). Therefore we obtain $\lim_{n\rightarrow \infty} {\mathbb E}[\sup_{t \in [0,T]} \vert {p}_{t}^{\theta_n,\delta_n,\varepsilon_n} -{p}_{t}\vert]=0$. This in particular implies that the collection of the laws of the random variables $({\boldsymbol p}^{\theta,\delta,\varepsilon},{\boldsymbol \alpha}^{\theta,\delta,\varepsilon})_{0<{2}\theta<\delta\leq \delta_0,0<\varepsilon\leq {\varepsilon_{0}}}$ is tight (on the same space as before).
	\vskip 4pt

	\emph{Step 4.} 
	We pass to the limit in the cost. To this end, we use the convenient notations ${\boldsymbol p}^{(n)}:= p^{\theta_n,\delta_n,\varepsilon_n}$ and ${\mathcal J}^{(n)}(\cdot):=\mathcal{J}^{\varepsilon_{n},\varphi_{\theta_{n},\delta_{n},\varepsilon_{n}}}(\cdot)$, see 
	\eqref{eq:Cost:mfc}. By convexity (splitting $\alpha^{i,j}_{t}$ into $\alpha^{(n),i,j}_{t}+(\alpha^{i,j}_{t} - \alpha^{(n),i,j}_{t})$), we have 
	\begin{align}
	&\mathcal{J}^{(n)}({\boldsymbol \alpha}^{(n)})-\E\bigl[\mathcal{J}\bigl({\boldsymbol \alpha}\bigr)\bigr] \nonumber
	\\
	&= 
	\E \biggl[\int_0^T \biggl(\tfrac12 \sum\nolimits_{i \not = j} p^{(n),i}_t  |\alpha^{(n),i,j}_t|^2 + F(p^{(n)}_t) -
	\tfrac12  \sum\nolimits_{i \not = j} p^i_t |\alpha^{i,j}_t|^2  - F(p_t)\biggr)dt \nonumber
	\\
	&\hspace{15pt}+ G(p^{(n)}_T)-G(p_T)\biggr] \label{eq:thm:5:1:1}
	\\
	&\geq \E \biggl[\int_0^T\tfrac12  \sum\nolimits_{i} (p^{(n),i}_t -p^i_t)\sum\nolimits_{j \not =i}|\alpha^{(n),i,j}_t|^2 dt \biggr]
+ \E\biggl[\int_0^T\sum\nolimits_i p^i_t \sum\nolimits_{j \not = i} \alpha^{i,j}_t(\alpha^{(n),i,j}_t - \alpha^{i,j})\biggr]
\nonumber
\\
	&\hspace{15pt} +\E \biggl[\int_0^T \bigl(F(p^{(n)}_t)-F(p_t)\bigr)dt + G(p^{(n)}_T)-G(p_T)\biggr].\nonumber
	\end{align}
	Since 
	$\lim_{n\rightarrow \infty} {\mathbb E}[\sup_{t \in [0,T]} \vert {p}_{t}^{(n)} -{p}_{t}\vert]=0$,
	the first and third term in the lower bound go to 0  (using the boundedness of ${\boldsymbol \alpha}^{(n)}$
	and the regularity of $F$ and $G$). As for the second term, it can be proved to tend to 0 by combining the convergence ${\boldsymbol \alpha}^{(n)} \rightharpoonup {\boldsymbol \alpha}$ with Lebesgue dominated convergence theorem. 
	Thus we obtain $\E[\mathcal{J}({\boldsymbol \alpha})] \leq \liminf_{n \rightarrow \infty} \mathcal{J}^{(n)}({\boldsymbol \alpha}^{(n)})$.

	In order to complete the proof, consider any deterministic control ${\boldsymbol \beta} \in\mathcal{E}$ (in particular, the off-diagonal components are bounded by $M$). Then, 
	denote by {${\boldsymbol p}^{(n),{\boldsymbol \beta}}$} the {simplex-valued} solution to $\eqref{dynpot}$	
	 with $\varepsilon=\varepsilon_n$ therein
	under the {same initial condition $(0,p_{0})$ as before but under the} deterministic control  ${\boldsymbol \beta}$
	 (see \cite[Proposition 2.1]{mfggenetic} for a solvability result). 
	 {Differently from the analysis 
	 performed for ${\boldsymbol p}^{(n)}$, the choice of the noise does not really matter here, meaning that we can work with the original Brownian motions $({\boldsymbol B}^{i,j})_{i \not =j}$. Indeed, by the same localization argument as in Steps 2 and 3, it can be proved by a standard stability argument (without any further need of weak compactness) that ${\mathbb E}[\sup_{t \in [0,T]} \vert p^{(n),{\boldsymbol \beta}}_{t} - p^\circ_{t}({\boldsymbol \beta})\vert ]$ tends to $0$ 
	as $n$ tends to $\infty$}. We deduce $\lim_{n \rightarrow \infty} \mathcal{J}^{(n)}({\boldsymbol \beta}) = \mathcal{J}({\boldsymbol \beta})$. Therefore we obtain
	\be
	\label{eq:thm:5:1:2}
	\E [\mathcal{J}({\boldsymbol \alpha})] \leq \liminf_{n\rightarrow \infty} \mathcal{J}^{(n)}\bigl({\boldsymbol \alpha}^{(n)}\bigr) 
	\leq {\limsup_{n\rightarrow \infty} \mathcal{J}^{(n)}\bigl({\boldsymbol \alpha}^{(n)}\bigr)} 
	\leq \lim_{n \rightarrow \infty} \mathcal{J}^{(n)}({\boldsymbol \beta}) = \mathcal{J}({\boldsymbol \beta})
	\ee
	for any ${\boldsymbol \beta} \in\mathcal{E}$. 
	Provided that all the minimizers of ${\mathcal J}$ belong to ${\mathcal E}$, this implies that ${\boldsymbol \alpha}$ belongs with probability 1 to the set of minimizers of $\mathcal{J}$ over $\mathcal{E}$ and further that \eqref{limcosts} holds (in particular the limit exists).
The fact that optimizers of ${\mathcal J}$ --over $L^\infty$ controls-- belong to $\mathcal{E}$ is proved in the next Proposition \ref{prop:solution:mfc:zero:epsilon}, together with other properties of the inviscid MFCP.	
	\vskip 4pt
	
{\emph{Step 5.} As for the proof of   \eqref{limcosts:bis}, 
we can assume without any loss of generality that $t_{0}=0$. 
Observing that the family $({\mathcal V}_{\theta,\delta,\varepsilon}(0,p_{0}))_{\theta,\delta,\varepsilon}$
is bounded (since $F$ and $G$ themselves are bounded and the control process in 
\eqref{eq:Cost:mfc}--\eqref{dynpot}
is bounded by $M$), 
  \eqref{limcosts:bis}
 follows from 
  \eqref{limcosts}
 together with a standard compactness argument.}	
\end{proof}

\subsection{Properties of the inviscid MFCP}
{Before we turn to the proof of Theorem \ref{main:thm:6}}, 
we address various properties of the value function of the inviscid MFCP. In this respect,
it is useful to work with the system of local 
coordinates $(x_{1},\cdots,x_{d-1})$ introduced in Subsection \ref{subse:classical:solutions}. 
The dynamics over which the MFCP 
\eqref{eq:cost:functional:mfc}
is defined then have the form (sums are here over $\llbracket d-1 \rrbracket$)
\be 
\label{dynch}
\dot{x}^i_t= \sum\nolimits_{j \not =i} \left(x_{t}^j \alpha_{t}^{j,i} - x_t^i\alpha_t^{i,j}\right) 
+x_{t}^{-d} \alpha_t^{d,i} 
-x^i_t \alpha_t^{i,d},
\ee
for $i \in \ESd$, with the useful notation that 
$x_{t}^{-d}=1-\sum\nolimits_{{l \in \ESd}} x^l_{t}$. Above, the {(deterministic)} control ${\boldsymbol \alpha}=((\alpha^{i,j}_{t})_{i,j \in \ES})_{0 \le t \le T}$ is 
as in \eqref{eq:fp:control}{;  as we already explained, we assume\footnote{\label{foo:linfty} 
The unbounded case looks more difficult. One issue is that the Lagrangian $\frac12 \sum_i p_i \sum_{j\neq i}|\alpha^{i,j}|^2$ is not Lipschitz continuous in $p$, uniformly in $\alpha$, if $\alpha$ is not in a compact set. 
Another issue is that the Lagrangian 
is not uniformly coercive on the simplex: 
As a result, we can easily cook up instances of unbounded controls that drive the trajectory to the boundary but that remains of a bounded energy; and, in turn, those trajectories precisely fall within the region where the Lagrangian is degenerate, which makes their analysis more difficult.}
 it to be bounded (but not uniformly bounded by $M$)}. 
Also, the initial condition is taken in the interior of $\widehat{\mathcal S}_{d}$, which implies in particular that the whole 
path ${\boldsymbol x}$ remains in the interior of the simplex. Lastly, 
following 
\eqref{eq:hamiltonians} and \eqref{eq:theta} (but paying attention that $M$ is formally taken as $+\infty$), 
the Hamiltonian of the problem is given, for $z\in\mathbb{R}^{d-1}$, by (sums are here over $\ESd$)
\be 
\label{eq:mathcal H:5:9}
\begin{split}
	&\widehat{\mathcal{H}}(x,z)= 
	\sum\nolimits_{i} x_{i} \hat{H}^i(z) + 
	x^{-d} \hat{H}^d(z), 
	\\
	&\text{with} \quad \hat{H}^i(z) = -\tfrac12 \Bigl(\sum\nolimits_{j \not =i}
	(z_{i} - z_{j})_{+}^2+(z_{i})_{+}^2) \Bigr), \quad \hat{H}^d(z) = 
	-\tfrac12 \sum\nolimits_{j}
	(-z_{j})_{+}^2.
\end{split}
\end{equation}
It is important to observe that this Hamiltonian is strictly concave in $z$, for any $x\in \Int$.
Indeed, $\widehat{\mathcal{H}}$ is the sum of a concave function
and of $-\frac12 \min(\min_{i \in \ESd}(x_{i}),x^{-d}) \sum_{i} z_{i}^2$. 
Moreover, we may also write down 
the corresponding Pontryagin principle: 
\begin{equation}
\label{eq:fb:inviscid:local}
\left\{
\begin{array}{l}
	\dot{x}_{t}^i = \sum_{j \not = i} \bigl( x_{t}^j  (z_{t}^j-z_{t}^i)_{+} - x_{t}^i (z_{t}^i -z_{t}^j)_{+} \bigr)
	+ x_{t}^{-d}(-z_{t}^i)_{+} - x_{t}^i (z_{t}^i)_{+},
	\\
	\dot{z}_{t}^i = - \Bigl( \widehat H^i ( z_{t}) - \widehat H^d (z_{t}) + \hat f^i(x_{t}) - \hat f^d(x_{t}) \Bigr), \quad z_{T}^i = \hat g^i(x_T) - \hat g^d(x_{T}),
\end{array}
\right.
\end{equation}
for $i \in \ESd$ and for a given initial condition in $[0,T] \times \Int$. It is worth noticing that the Pontryagin principle is here stated in local coordinates, or equivalently in dimension $d-1$. For sure, we can also state it in dimension $d$, in which case the forward-backward system coincides with the standard MFG system {(the sum below is over $\ES$)}
\begin{equation}
\label{eq:fb:inviscid:intrinsic}
\left\{
\begin{array}{l}
	\dot{p}_{t}^i = \sum_{j \not = i} \bigl( p_{t}^j  (u_{t}^j-u_{t}^i)_{+} - p_{t}^i (u_{t}^i -u_{t}^j)_{+} \bigr),
	\\
	\dot{u}_{t}^i = - \Bigl( H \bigl( (u_{t}^i - u_{t}^j)_{j \in \ES} \bigr) +  f^i(p_{t})  \Bigr), \quad u_{T}^i = g^i(p_T),
\end{array}
\right.
\end{equation}
with $H$ as in \eqref{eq:hamiltonian:main}. It is pretty easy to see that the two systems 
\eqref{eq:fb:inviscid:local}
and
\eqref{eq:fb:inviscid:intrinsic} are equivalent: Given a solution ${\boldsymbol u}$ to \eqref{eq:fb:inviscid:intrinsic}, it suffices to 
let ${\boldsymbol z}=((z_{t}^i:=(u_{t}^i - u_{t}^d))_{i \in \ESd})_{t_{0} \le t \le T}$, where $t_{0}$ is the initial time. 
Conversely, given ${\boldsymbol z}$ a solution to \eqref{eq:fb:inviscid:intrinsic}, it suffices to solve 
\eqref{eq:fb:inviscid:intrinsic} where all the occurrences of $u_{t}^i - u_{t}^j$ have been replaced by $z_{t}^i - z_{t}^j$ if $j \in \ESd$ and by $z_{t}^i$ if $j=d$.

The fact that $\widehat{\mathcal H}$ is strictly concave permits to apply to our situation
several results from \cite[Chapter 7, Section 4]{cannarsa}, which 
we collect in the form of a single proposition, although part of the notions 
are introduced in detail or explicitly used in Section 
\ref{sec:uniqueness:master:equation}. 
It is worth mentioning that the results of \cite{cannarsa} are stated for a dynamics in $\R^d$, but is it straightforward to see that they apply also to our situation because, when working in local coordinates, any trajectory remains in $\Int$, if starting {from} $\Int$.

\begin{prop}
	\label{prop:solution:mfc:zero:epsilon}
	Assume that $F$ and $G$ are in $\mathcal{C}^1(\mathcal{S}_d)$. {Recall that} $\mathcal{V}:[0,T]\times {\mathcal S}_d \rightarrow \R$  is the value function of the MFCP 
	\eqref{eq:cost:functional:mfc}, and {call} $\widehat{\mathcal{V}}:[0,T]\times \widehat{\mathcal S}_d \rightarrow \R$ its formulation in local coordinates, i.e. 
	$\widehat{\mathcal{V}}(t,x)= {\mathcal{V}}(t,\check{x})$ .
If the initial condition $p_0\in \mathrm{Int}(\mathcal{S}_d)$, then	
	\begin{enumerate}
			\item[(i)] An optimal ({bounded}) control exists  and is bounded by $M= 2(\|g \|_{\infty} + T \| f \|_{\infty})$;
		\item[(ii)] If $\boldsymbol{\alpha}$ is an optimal control and  $\boldsymbol{p}$ the related optimal trajectory, then there exist $\boldsymbol{u}$ solving \eqref{eq:fb:inviscid:intrinsic} and $\boldsymbol{z}$ solving \eqref{eq:fb:inviscid:local}, and {${\boldsymbol \alpha}$ is given by 
		$(\alpha^{i,j}_t= (u^i_t-u_t^j)_+)_{t_{0} \le t \le T}$};
		\item[(iii)] $\widehat{\mathcal{V}}$ is a viscosity solution of 
		\eqref{eq:hjb:inviscid}
		on {$[0,T] \times \Int$}, 
		{at least when 
		\eqref{eq:hjb:inviscid}
	is formulated in local coordinates, see
		Definition 
		\ref{defvisco}};
		\item[(iv)] $\widehat{\mathcal{V}}$ is {(time-space)} Lipschitz-continuous on $[0,T]\times\Int$ and thus also on $[0,T]\times \widehat{\mathcal S}_d$.
	\end{enumerate}
	If $F$ and $G$ are in $\mathcal{C}^{1,1}(\mathcal{S}_d)$, then 
	\begin{enumerate}
		\item[(v)]  $\widehat{\mathcal{V}}$ is semiconcave on $[0,T]\times\Int$ and thus also in $[0,T]\times \widehat{\mathcal S}_d$;
		\item[(vi)] $\mathcal{V}$  is differentiable at $(t,p_t)$, for any $t>t_0$ and any optimal trajectory ${\boldsymbol p}$ starting from $(t_0,p_0)$, with $p_0\in \mathrm{Int}(\mathcal{S}_{d})$;
		\item[(vii)] If the optimal control for the MFCP starting in $(t_0,p_0)$ is unique (in particular this holds true when \eqref{eq:fb:inviscid:intrinsic} is uniquely solvable), then $\mathcal{V}$ is differentiable in $(t_0,p_0)$.
			\end{enumerate}
	If we assume in addition that  $\mathcal{V}$ is differentiable at $(t_0,p_0)$, with $p_0\in \mathrm{Int}(\mathcal{S}_{d})$, then
	\begin{enumerate}
		\item[(viii)] There exists a unique optimal control process ${\boldsymbol \alpha}$ and an optimal trajectory ${\boldsymbol p}$ for the MFCP starting from $(t_0,p_0)$, and the optimal control is given in feedback form by
		\[
		\alpha^{i,j}_t =  \left(\mathfrak{d}_i \mathcal{V}(t,p_t)-\mathfrak{d}_j\mathcal{V}(t,p_t)\right)_+,
		\qquad t\in [t_0,T];
		\]
		\item[(ix)] The adjoint equation in \eqref{eq:fb:inviscid:local} is such that $z^i_t =\partial_{x_i}\widehat{\mathcal{V}}(t,x_t)$ for any $t\in [t_0,T]$ and $i \in \ESd$. 
		
	\end{enumerate}
\end{prop}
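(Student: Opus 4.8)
The plan is to rewrite the inviscid MFCP in the local coordinates $(x_1,\dots,x_{d-1})$ of Subsection~\ref{subse:classical:solutions}, so that it becomes a classical Bolza problem on $\RR^{d-1}$ governed by the affine dynamics \eqref{dynch}, and then to invoke the optimal control theory of \cite[Chapter 7, Section 4]{cannarsa}. The structural facts to record first are: (a) since $p_0\in\Int$ and the control is in $L^\infty$, the forward equation \eqref{dynch} is affine in $x$, so a trajectory started in $\Int$ never leaves $\Int$, and with a control bounded by $M$ Gronwall's lemma gives $x^i_t\ge p_0^i e^{-(d-1)MT}$, i.e.\ the trajectory stays in a fixed compact subset $K\subset\Int$; (b) Legendre-transforming the running cost $\tfrac12\langle x,\mathfrak L(\alpha)\rangle+\widehat F(x)$ in the control produces, on each compact subset of $\Int$, a Lagrangian in the velocity $v=\dot x$ that is finite, superlinear, strictly convex and $\mathcal C^{1,1}$ in $x$, with associated Hamiltonian exactly $\widehat{\mathcal H}$ of \eqref{eq:mathcal H:5:9}, which, as observed after that formula, is strictly concave in $z$ on $\Int$. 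The (easy) equivalence of the local‑coordinate system \eqref{eq:fb:inviscid:local} with the intrinsic MFG system \eqref{eq:fb:inviscid:intrinsic} under $z^i=u^i-u^d$ is also recorded, as it is used throughout.

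\emph{Items (iii), (iv), (i), (ii).} The value function satisfies the dynamic programming principle; combined with continuity of the coefficients this gives that $\widehat{\mathcal V}$ is a viscosity solution of the local form of \eqref{eq:hjb:inviscid} on $[0,T]\times\Int$ (no boundary condition, consistently with Definition~\ref{defvisco}), where one uses that the unconstrained minimization $\inf_{\alpha\ge0}$ defining the Hamiltonian already yields the minimizer $(u^i-u^j)_+$, so that the $M$‑truncation is immaterial at the level of the equation. Lipschitz continuity on $[0,T]\times\Int$ follows from a steering estimate carried out entirely through the interior (moving mass between nearby states without approaching $\partial\mathcal S_d$), which produces a modulus independent of the distance to the boundary and hence extends to $[0,T]\times\widehat{\mathcal S}_d$ by continuity. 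Existence of an optimal control (item (i)) is then part of the conclusions of \cite[Chapter 7]{cannarsa}; the finer bound $|\alpha^{i,j}_t|\le M$ uses the potential structure: by \eqref{potdef} the Pontryagin adjoint system of the MFCP coincides, up to an additive function common to all components, with \eqref{eq:fb:inviscid:intrinsic}, the optimal control is (by strict convexity, uniquely) $\alpha^{i,j}_t=(u^i_t-u^j_t)_+$, and $u^i_t$ is the value function of the linear control problem \eqref{eq:cost:functional} with environment $\boldsymbol p$ frozen and $\boldsymbol q$ started at a Dirac mass, whence $|u^i_t|\le T\|f\|_\infty+\|g\|_\infty$ by comparison with the zero control and $\mathfrak L\ge0$, giving $|\alpha^{i,j}_t|\le M$; this simultaneously shows that restricting to $M$‑bounded controls does not change the value. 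Item (ii) is exactly the Pontryagin maximum principle (necessary here, but not sufficient, since the reduced $(p,m)$‑problem is convex only when $F,G$ are), together with the above system equivalence.

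\emph{Items (v)--(ix).} Restricting again to the compact set $K\subset\Int$ visited by $M$‑bounded trajectories from interior points, the Lagrangian is smooth, strictly convex and superlinear in the velocity with $\mathcal C^{1,1}$ dependence on the state, so \cite[Chapter 7, Section 4]{cannarsa} applies verbatim and yields: semiconcavity of $\widehat{\mathcal V}$ on $[0,T]\times\Int$, with constant controlled by the $\mathcal C^{1,1}$ norms of $F,G$ on the closed simplex, hence the extension to $[0,T]\times\widehat{\mathcal S}_d$ (item (v)); the costate along any optimal trajectory is a selection of the superdifferential of $\widehat{\mathcal V}$, so $z^i_t=\partial_{x_i}\widehat{\mathcal V}(t,x_t)$ at every differentiability point (item (ix)); every optimal trajectory issued from $(t_0,p_0)$ enters the set of differentiability points for $t>t_0$, because $(t,x_t)$ is simultaneously reached by an initial optimal arc and is the origin of a terminal optimal arc, forcing sub- and super-differential to be nonempty and, by semiconcavity, $\widehat{\mathcal V}$ to be differentiable there (item (vi)); if the optimal control from $(t_0,p_0)$ is unique then the reachable costate is unique, so $D^+\widehat{\mathcal V}(t_0,x_0)$ is a singleton and $\widehat{\mathcal V}$ is differentiable there (item (vii)); and conversely differentiability at $(t_0,p_0)$ pins down the initial costate, hence a unique solution of the forward--backward system, hence a unique optimal trajectory, along which $z_t=D_x\widehat{\mathcal V}(t,x_t)$, which in intrinsic form is the feedback $\alpha^{i,j}_t=(\fd_i\mathcal V(t,p_t)-\fd_j\mathcal V(t,p_t))_+$ of item (viii).

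\emph{Main obstacle.} No single one of (i)--(ix) is hard once the problem is in this classical shape; the real work is the reduction to that shape. The Lagrangian of the MFCP genuinely degenerates on $\partial\mathcal S_d$ — realizing a prescribed flux $m^{i,j}=p^i\alpha^{i,j}$ costs a control $\alpha^{i,j}=m^{i,j}/p^i$ that blows up as $p^i\to0$ — and the controls are constrained to be nonnegative, so the hypotheses of \cite{cannarsa} are not met globally and the footnote after \eqref{dynch} is precisely a warning about the $L^\infty$-but-unbounded regime. The device that makes everything go through is the interior localization of item~(a): trajectories from $\Int$ under $M$‑bounded controls stay in a fixed compact subset of $\Int$ where the data are as regular as \cite{cannarsa} requires, and the nonnegativity constraint on $\alpha$ is automatically satisfied by the candidate optimizer $(u^i-u^j)_+$. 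The only place a bound is needed in advance — the uniform distance to the boundary — is supplied by the a priori estimate $|\alpha^{i,j}_t|\le M$ obtained in the second step from the potential structure.
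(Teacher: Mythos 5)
Your proof is correct and follows essentially the same route as the paper: localize to a compact subset of $\mathrm{Int}(\mathcal{S}_d)$ where trajectories under bounded controls are confined, invoke \cite[Chapter 7, Section 4]{cannarsa} for items (iii)--(ix) via the strict concavity of $\widehat{\mathcal H}$ in $z$ there, and obtain the $M$-bound in (i)--(ii) exactly as the paper does, by identifying the Pontryagin costate with the value process of the frozen-environment problem \eqref{eq:cost:functional} and bounding it by $T\|f\|_\infty+\|g\|_\infty$ using the zero control and the sign of $\mathfrak L$. The paper phrases the existence step as a truncation at an arbitrary level $R>M$ followed by the observation that the optimizer is $M$-bounded independently of $R$, which is the same logic as your remark that the $M$-bound shows the restriction to bounded controls is costless.
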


In point (iii), 
we refer to Definition \ref{defvisco} for a reminder on the notion of viscosity solution; 
In points (v)--(ix),
we refer to the 
 assumption of 
	Theorem 	\ref{main:thm:6} for the meaning of the notation
	$\mathcal{C}^{1,1}$;
In point (v), by time-space semiconcavity, we mean that 
there exists a constant $c$ such that, for any $t \in [0,T]$, $x\in\Int$, $s$ with $t\pm s \in [0,T]$ and $\xi$ with $x\pm\xi\in \Int$,
\be
\label{semitimespace} 
\frac{v(t+s,x+\xi) - 2v(t,x)+ v(t-s
	,x-\xi)}{|s|^2 + |\xi |^2}\leq c.
\ee
Notice that we also exploit the notion of semiconcavity, but in space only, in the next section, see \eqref{semicon}.
Also, 
not only in the statement but also 
 throughout the rest of the text, 
differentiability of ${\mathcal V}$ is understood 
as time-space differentiability (unless it is  stated differently, in which case 
differentiability is explicitly referred to as space differentiability).
Last, we stress that (vii) follows from Theorem 7.4.20 in \cite{cannarsa}, but the statement therein assumes that the Hamiltonian is strictly convex; in fact, it is clear from the proof that the authors mean strictly convex in $z$ only.
\begin{proof}
	To prove (i), assume first that controls are bounded by $R$, for some $R>M$.
	 Then an optimal  control ${\boldsymbol \alpha}_R$ exists by \cite[{Theorem} 7.4.5]{cannarsa} and, by the Pontryagin principle {\cite[Theorem 7.4.17]{cannarsa}}, point (ii) holds but with the truncated Hamiltonian $(H^i_R)_{i\in\ES}$ defined as in \eqref{eq:hamiltonians}.   
	Thus, 
	{${\boldsymbol \alpha}_R$} induces  an equilibrium
	${{\boldsymbol p}^\circ({{\boldsymbol \alpha}_{R}})}$
	 to the MFG 
	\eqref{eq:cost:functional}--\eqref{eq:fp} and ({using the coercivity of the Lagrangian on the interior of the simplex})
	is of the form $((\alpha^{i,j}_{R,t}=(u_{t}^i-u_{t}^j)_{+})_{i,j \in \ES : i \not = j})_{0 \le t \le T}$, 
	where ${\boldsymbol u}=((u_{t}^i)_{i \in \ES})_{0 \le t \le T}$ is
	the value process {associated with} the optimization problem $J(\, \cdot\, ;{{\boldsymbol p}^\circ({{\boldsymbol \alpha}_{R}}}))$ in 
	\eqref{eq:cost:functional}, set over controls {that are} bounded by $R$.  Choosing $0$ as control in \eqref{eq:cost:functional}, we observe that  ${\boldsymbol u}$ is upper bounded by $M/2$. In order to prove that $-M/2$ is a lower bound, it suffices to lower bound the quadratic cost by zero in the cost functional $J(\, \cdot\, ;{{\boldsymbol p}^\circ({\boldsymbol \alpha}_{R}}))$. Hence, {${\boldsymbol \alpha}_R$} is bounded by $M$, which is independent of $R$, implying that an optimal control exists over the set of {bounded} controls. Therefore, (i) and (ii) are proved and the other points follow now from the results in \cite[Section 7.4]{cannarsa}. 
\end{proof}

Since $\mathcal{V}$ is  almost everywhere differentiable in  $[0,T]\times {\mathcal S}_d$, the above result, together with Theorem \ref{thm10}, implies that the sequence of optimal trajectories 
{$({\boldsymbol p}^{\theta,\delta,\varepsilon})_{0 < 2 \theta \leq \delta \leq \delta_{0},0<\varepsilon \leq \varepsilon_{0}}$}
admits a true limit for almost every initial condition $(t_0,p_0)$ ({the convergence hence holding true in probability}).  Moreover, point (vi) above permits to say more about the convergence also when starting from a point of non-differentiability: The randomness of the limit {trajectory} is enclosed in the initial time only. We summarize in the following:

\begin{cor}
	\label{cor:selection}
{Assume that $F$ is in $\mathcal{C}^{1,1}({\mathcal S}_{d})$ and $G$ in 
${\mathcal C}_{\textrm{\rm WF}}^{1,2+\gamma}({\mathcal S}_{d})$ for a given $\gamma \in (0,1)$.}	
	Then, if $\mathcal{V}$ is differentiable in $(t_0,p_0)$, with $p_0\in \mathrm{Int}(\mathcal{S}_{d})$, then, the following 
	holds true in probability (the first one on ${\mathcal C}([0,T];{\mathcal S}_{d})$ and the second one on 
	${\mathcal E}$), 
	\be 
	\lim_{(\theta,\delta,\varepsilon)\rightarrow (0,0,0)} {\boldsymbol p}^{\theta,\delta,\varepsilon}_{[t_0,p_0]}
	= {\boldsymbol p}_{[t_0,p_0]}
	\qquad
	\lim_{(\theta,\delta,\varepsilon)\rightarrow (0,0,0)} {\boldsymbol \alpha}^{\theta,\delta,\varepsilon}_{[t_0,p_0]}
	= {\boldsymbol \alpha}_{[t_0,p_0]},
	\ee
	 where  ${\boldsymbol p}_{[t_0,p_0]}$ is the unique optimal trajectory and  ${\boldsymbol \alpha}_{[t_0,p_0]}$ the unique optimal control process of the limiting MFCP, see the notation in the introduction of Subsection 
	\ref{subse:selection}.
	
	Moreover, if $\mathcal{V}$ is not differentiable in $(t_0,p_0)$, the limit of any converging subsequence is supported on a set of (optimal) trajectories which do not {branch strictly} after the initial time. 
	
\end{cor}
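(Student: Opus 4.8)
The plan is to read off the statement from Theorem \ref{thm10} combined with the fine structure of the inviscid value function recorded in Proposition \ref{prop:solution:mfc:zero:epsilon}. Recall that, for any initial condition $(t_0,p_0)\in[0,T]\times\mathrm{Int}(\mathcal{S}_d)$, Theorem \ref{thm10} asserts that the family $({\boldsymbol p}^{\theta,\delta,\varepsilon}_{[t_0,p_0]},{\boldsymbol \alpha}^{\theta,\delta,\varepsilon}_{[t_0,p_0]})$ is tight on $\mathcal{C}([0,T];\mathcal{S}_d)\times\mathcal{E}$ and that every weak limit $\mathbb{M}$ of a sequence with parameters tending to $(0,0,0)$ is carried by pairs $({\boldsymbol p},{\boldsymbol \alpha})$ that minimize $\mathcal{J}$ with $p_0$ as initial condition at time $t_0$ (with ${\boldsymbol p}={\boldsymbol q}$ therein). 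The whole argument then reduces to analysing this set of minimizers.

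For the first assertion, I would argue as follows. If $\mathcal{V}$ is differentiable at $(t_0,p_0)$, then Proposition \ref{prop:solution:mfc:zero:epsilon}(viii) yields that the minimizer of $\mathcal{J}$ issued from $(t_0,p_0)$ is unique, namely $({\boldsymbol p}_{[t_0,p_0]},{\boldsymbol \alpha}_{[t_0,p_0]})$. Hence every subsequential weak limit $\mathbb{M}$ coincides with the Dirac mass at this single deterministic point. Since the family is tight and all its subsequential limits agree, the net $({\boldsymbol p}^{\theta,\delta,\varepsilon}_{[t_0,p_0]},{\boldsymbol \alpha}^{\theta,\delta,\varepsilon}_{[t_0,p_0]})$ converges in law, as $(\theta,\delta,\varepsilon)\to(0,0,0)$, to that Dirac mass; as the limit is deterministic, this is the same as convergence in probability, on $\mathcal{C}([0,T];\mathcal{S}_d)$ for the trajectory and on $\mathcal{E}$ for the control. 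This is precisely the first display of the corollary.

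For the second assertion, I no longer have uniqueness at time $t_0$, but I still know that any subsequential limit $\mathbb{M}$ is supported on minimizers of $\mathcal{J}$ issued from $(t_0,p_0)$. It then suffices to show that any two such minimizing trajectories ${\boldsymbol p}$ and ${\boldsymbol p}'$ cannot branch strictly after $t_0$: if $p_{t_1}=p'_{t_1}$ for some $t_1>t_0$, then ${\boldsymbol p}\equiv{\boldsymbol p}'$ on $[t_1,T]$. The key input is that, since the optimal trajectory starts in $\mathrm{Int}(\mathcal{S}_d)$, it stays in $\mathrm{Int}(\mathcal{S}_d)$ (the Gronwall bound $p^i_t\geq p^i_0 e^{-tM(d-1)}$ used in Step 1 of the proof of Theorem \ref{thm10}), so $p_{t_1}\in\mathrm{Int}(\mathcal{S}_d)$; by Proposition \ref{prop:solution:mfc:zero:epsilon}(vi), $\mathcal{V}$ is then differentiable at $(t_1,p_{t_1})$ for every $t_1>t_0$, whence, by point (viii) applied at the shifted initial condition $(t_1,p_{t_1})$, the optimal trajectory of the MFCP from $(t_1,p_{t_1})$ is unique. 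Combined with the dynamic programming principle — the restriction of a globally optimal trajectory to $[t_1,T]$ is optimal for the MFCP started at $(t_1,p_{t_1})$ — this forces ${\boldsymbol p}$ and ${\boldsymbol p}'$ to coincide on $[t_1,T]$, which is the announced non-branching property for the support of $\mathbb{M}$.

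The main obstacle I anticipate lies in making the dynamic programming reduction in the last step fully rigorous: one must check that the tail of an optimal trajectory of the inviscid MFCP is genuinely optimal for the shifted problem, which requires the admissible class of \emph{bounded} controls to behave well under restriction and concatenation, and one must apply points (vi) and (viii) of Proposition \ref{prop:solution:mfc:zero:epsilon} at the shifted time $t_1$ rather than at $t_0$. The remaining steps — the identification of the support of $\mathbb{M}$ via Theorem \ref{thm10}, and the passage from uniqueness of subsequential weak limits to convergence in probability in the first assertion — are routine and will be stated only briefly.
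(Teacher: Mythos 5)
Your proposal is correct and follows essentially the same route the paper intends: Theorem \ref{thm10} gives tightness and identifies every subsequential limit as supported on minimizers, Proposition \ref{prop:solution:mfc:zero:epsilon}(viii) collapses that support to a single deterministic point when $\mathcal{V}$ is differentiable at $(t_0,p_0)$ (so convergence in law to a constant upgrades to convergence in probability), and points (vi) and (viii) applied at the shifted condition $(t_1,p_{t_1})$, together with dynamic programming, give the non-branching statement at points of non-differentiability. The interior bound $p^i_{t_1}\geq p^i_0 e^{-TM(d-1)}$ that you invoke to legitimize the shifted application is exactly the observation the paper relies on.
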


\subsection{Selection for the master equation}
\label{subse:distance:MFCP}
Although Corollary \ref{cor:selection} provides an interesting information about the 
limiting behavior of the equilibrium
${\boldsymbol p}^{\theta,\delta,\varepsilon}$ as 
the parameters $(\theta,\delta,\varepsilon)$ tend to $0$, it says nothing about the 
asymptotic behavior of the related equilibrium cost. 
{We address this question in this subsection; in particular,
we prove here 
Proposition 
\ref{prop:cv:cost}
and Theorem \ref{main:thm:6}. Throughout, we assume 
that $F\in\mathcal{C}^{1,\gamma}_{\rm WF}({{\mathcal S}_{d}})$ and $G\in\mathcal{C}^{1,2+\gamma}_{\rm WF}({{\mathcal S}_{d}})$ for a given $\gamma\in(0,1)$; 
at some point, we need to strengthen the condition on
$F$ and assume it to belong to ${\mathcal C}^{1,1}({\mathcal S}_{d})$, see 
Proposition 
	\ref{thm:5:9} and 
	Theorem \ref{thm:5.10}}.

Actually, part of the difficulty for passing to the limit in the cost ${\widetilde J}^{\varepsilon,\varphi}$
 defined by \eqref{newcostmfg} 
is to control the distance from the equilibrium to the boundary.
Back to the formula 
\eqref{newphi}, it is indeed plain to see that 
$\varphi$ should become steeper and steeper 
(and hence 
$\vert \varphi' \vert$ larger and larger)
in the neighborhood of $0$
as $(\theta,\delta,\varepsilon)$ tends to $0$, whence the need
for some uniform integrability properties 
on the inverse of the distance from 
${\boldsymbol p}^{\theta,\delta,\varepsilon}$ 
to the boundary. 
We here collect several useful \textit{a priori} bounds 
in this direction. {Proofs of the first three  statements are postponed to the end of the section, see Subsection 
\ref{subse:auxiliary:proofs:integrability}.}

\begin{prop}
	\label{expfin}
	For $(\theta,\delta,\varepsilon)$ and
	$\varphi=\varphi_{\theta,\delta,\varepsilon}$ as in
	\eqref{newphi}, with 
	$\kappa_0 \geq \epsilon^2/2$ and {$\varepsilon_{0}:=\kappa_{2}/\kappa_{0} \geq \varepsilon^2$}, and for any initial condition $(t_{0},p_{0})\in [0,T]\in \mathrm{Int}(\mathcal{S}_d)$ and any $\lambda >0$ and $i \in \ES$, 
	\begin{align}
		\label{eq:expfin1}
		&{\mathbb E}\biggl[ \exp \biggl\{\frac{\lambda}{\varepsilon^2} \Bigl(\kappa_0-\frac{\varepsilon^2+\lambda}{2}  \Bigr) \int_{t_{0}}^T  \frac{1}{p^{i,\theta,\delta,\varepsilon}_t}
		\mathbbm{1}_{[0,\delta]}(p^{i,\theta,\delta,\varepsilon}_t) dt\biggr\}\biggr] \leq 
		\frac{e^{TC(\delta,\varepsilon,\lambda)} }{(p^i_0)^{\lambda/\varepsilon^2} },
		\\
		&{\mathbb E}\biggl[ \exp \biggl\{\lambda
		\Bigl(\kappa_0-\frac{\varepsilon^2(1+\lambda)}{2}  \Bigr)
		\int_{t_{0}}^T  \frac{1}{p^{i,\theta,\delta,\varepsilon}_t}\mathbbm{1}_{[0,\delta]}
		(p^{i,\theta,\delta,\varepsilon}_t) dt\biggr\}
		{\mathbbm 1}_{\{ \inf_{t_0 \le t \le T} p_{t}^{i,\theta,\delta,\varepsilon}
			> 2 \theta \}}
		\biggr] \leq 
		\frac{e^{TC(\delta,\lambda)} }{(p^i_0)^{\lambda} },
		\label{eq:expfin2}
	\end{align}
	with $C(\delta,\varepsilon,\lambda):= \varepsilon^{-2}[\lambda(1+\lambda)/(2\delta)+ \lambda d (\kappa_\varepsilon +{\kappa_{0}}+M)]$
	and $C(\delta,\lambda):=  \lambda(1+\lambda)/(2\delta)+\lambda d (\kappa_0 +M)$, 
	and where ${\boldsymbol p}^{\theta,\delta,\varepsilon}$ is here understood as ${\boldsymbol p}^{\theta,\delta,\varepsilon}_{[t_{0},p_{0}]}$. 
\end{prop}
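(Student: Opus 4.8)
The plan is to establish the exponential integrability bounds \eqref{eq:expfin1}--\eqref{eq:expfin2} via an application of It\^o's formula to a suitable power (or logarithm) of the coordinate $p^i_t := p^{i,\theta,\delta,\varepsilon}_t$, followed by a supermartingale / Gronwall-type argument. The key starting observation is that, from \eqref{eq:optim:path}, the coordinate $p^i_t$ satisfies an SDE of the form
\[
dp^i_t = \Bigl( \sum\nolimits_{j\neq i} p^j_t \varphi(p^i_t) - p^i_t \sum\nolimits_{j\neq i}\varphi(p^j_t) + b^i_t \Bigr)\, dt + \frac{\varepsilon}{\sqrt 2}\sum\nolimits_{j\neq i}\sqrt{p^i_t p^j_t}\,\bigl(dB^{i,j}_t - dB^{j,i}_t\bigr),
\]
where the control part $b^i_t = \sum_{j\neq i} p^j_t a^\star(v^j_t - v^i_t) - p^i_t\sum_{j\neq i}a^\star(v^i_t-v^j_t)$ is bounded in absolute value by $dM$, the quadratic variation of the martingale part is $d\langle p^i\rangle_t = \varepsilon^2 p^i_t(1-p^i_t)\,dt \le \varepsilon^2 p^i_t\,dt$, and the drift coming from $\varphi$ is bounded below by $\varphi(p^i_t)(1-p^i_t) - p^i_t(d-1)\kappa_\varepsilon$; the crucial point, already exploited in \cite[Proposition 2.1, 2.2]{mfggenetic}, is that when $p^i_t \le \theta$ the term $\varphi(p^i_t)(1-p^i_t)$ equals $\kappa_\varepsilon(1-p^i_t) \ge \kappa_\varepsilon/2$, and more generally when $p^i_t \le \delta$ it is bounded below by $\kappa_0(1-p^i_t)$, so the drift is genuinely repelling from $0$.

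For \eqref{eq:expfin1}, I would apply It\^o's formula to the process $Y_t := (p^i_t)^{-\lambda/\varepsilon^2}$. A direct computation gives
\[
dY_t = -\frac{\lambda}{\varepsilon^2}(p^i_t)^{-\lambda/\varepsilon^2-1}\, dp^i_t + \frac12\frac{\lambda}{\varepsilon^2}\Bigl(\frac{\lambda}{\varepsilon^2}+1\Bigr)(p^i_t)^{-\lambda/\varepsilon^2-2}\, d\langle p^i\rangle_t,
\]
and inserting the bounds above one sees that the finite-variation part of $dY_t / Y_t$ is bounded above by
\[
\Bigl[ -\frac{\lambda}{\varepsilon^2}\cdot\frac{1}{p^i_t}\Bigl(\kappa_0 - \frac{\varepsilon^2+\lambda}{2}\Bigr)\mathbbm 1_{[0,\delta]}(p^i_t) + C(\delta,\varepsilon,\lambda)\Bigr]\, dt,
\]
where $C(\delta,\varepsilon,\lambda)$ collects the contributions of the bounded control drift ($\le \varepsilon^{-2}\lambda d M$ after the relevant scaling), the $(d-1)\kappa_\varepsilon$ repulsion-into-the-interior term, the $\kappa_0$ term on $\{p^i_t > \delta\}$ where $1/p^i_t < 1/\delta$ does not help but stays bounded, and the It\^o correction $\varepsilon^{-2}\lambda(1+\lambda)/(2\delta)$ coming from the part of the quadratic-variation term where $p^i_t \le \delta$; the precise bookkeeping gives exactly $C(\delta,\varepsilon,\lambda)= \varepsilon^{-2}[\lambda(1+\lambda)/(2\delta) + \lambda d(\kappa_\varepsilon + \kappa_0 + M)]$. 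Hence $Z_t := Y_t \exp\{\frac{\lambda}{\varepsilon^2}(\kappa_0 - \frac{\varepsilon^2+\lambda}{2})\int_{t_0}^t \frac{1}{p^i_s}\mathbbm 1_{[0,\delta]}(p^i_s)\,ds - C(\delta,\varepsilon,\lambda)(t-t_0)\}$ is a positive local supermartingale. Using that it is nonnegative (so taking $\E$ of a localizing sequence and Fatou's lemma is legitimate), we get $\E[Z_T] \le Z_{t_0} = (p^i_0)^{-\lambda/\varepsilon^2}$, which rearranges to \eqref{eq:expfin1}; the direction of the drift sign $\kappa_0 \ge \varepsilon^2/2$ and $\varepsilon_0 \ge \varepsilon^2$ ensures the exponential weight has the claimed sign and the computation is meaningful on all of $[t_0,T]$.

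For \eqref{eq:expfin2}, I would instead apply It\^o to $\tilde Y_t := (p^i_t)^{-\lambda}$ and run the same argument on the event $\{\inf_{t_0\le t\le T} p^i_t > 2\theta\}$. The point of restricting to this event is that on it the trajectory never enters the zone $[0,2\theta]$ where $\varphi = \kappa_\varepsilon$ is huge, so the only $\varphi$-contribution to the drift is the moderate $\kappa_0$-repulsion acting on $(2\theta,\delta]$, and the $(d-1)$-term coming from $\sum_{j\neq i}\varphi(p^j_t)$ is bounded by $(d-1)\kappa_0$ rather than $(d-1)\kappa_\varepsilon$; this removes all the $\varepsilon^{-2}\kappa_\varepsilon$ factors and yields the $\varepsilon$-free constant $C(\delta,\lambda) = \lambda(1+\lambda)/(2\delta) + \lambda d(\kappa_0 + M)$ and the weight $\lambda(\kappa_0 - \varepsilon^2(1+\lambda)/2)$. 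To handle the indicator cleanly I would first prove the bound with the stopping time $\sigma := \inf\{t \ge t_0 : p^i_t \le 2\theta\}\wedge T$ in place of the event, deriving that the stopped process is a supermartingale and hence $\E[(p^i_\sigma)^{-\lambda}\exp\{\lambda(\kappa_0-\tfrac{\varepsilon^2(1+\lambda)}{2})\int_{t_0}^\sigma\cdots\} e^{-C(\delta,\lambda)(\sigma-t_0)}] \le (p^i_0)^{-\lambda}$, and then observe that on $\{\inf p^i > 2\theta\}$ one has $\sigma = T$, so multiplying the integrand by $\mathbbm 1_{\{\inf p^i > 2\theta\}}$ only decreases it and the stated inequality follows.

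The main obstacle I anticipate is the careful sign-tracking in the It\^o expansion: one must verify that on the region $\{p^i_t \in (\delta,1)\}$, where $1/p^i_t$ is bounded but the $\varphi$ and control drifts are not obviously sign-favorable, all the remaining terms can genuinely be absorbed into the constant $C$ and do not spoil the supermartingale property — in particular the cross term $-p^i_t\sum_{j\neq i}\varphi(p^j_t)(p^i_t)^{-\lambda/\varepsilon^2-1}$ must be controlled using $\varphi(p^j_t)\le \kappa_\varepsilon$ and $(p^i_t)^{-\lambda/\varepsilon^2-1}p^i_t = (p^i_t)^{-\lambda/\varepsilon^2}$, giving a bound of the form $(d-1)\kappa_\varepsilon Y_t$, which is exactly the $\varepsilon^{-2}\lambda d\kappa_\varepsilon$ contribution (after normalizing). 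This is exactly the type of estimate carried out in \cite[Proposition 2.2]{mfggenetic}, so I would follow that blueprint closely, pointing to the relevant lemmas there and only making explicit the modifications needed to keep the three parameters $(\theta,\delta,\varepsilon)$ separated and to obtain the precise form of the constants.
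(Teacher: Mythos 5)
Your strategy for \eqref{eq:expfin1} is the paper's own argument in a thin disguise: applying It\^o's formula to $(p^i_t)^{-\lambda/\varepsilon^2}$ and exhibiting an exponential local supermartingale is exactly the paper's computation, which expands $\frac{\lambda}{\varepsilon^2}\ln p^i_t$, subtracts the compensator $\frac{\lambda^2}{2\varepsilon^2}\int (1-p^i_t)/p^i_t\,dt$, exponentiates, and uses that the resulting stochastic exponential has expectation at most $1$ together with $p^i_T\le 1$. The drift bookkeeping you describe matches \eqref{ito:log:2} and yields the same constant, so this part is fine (one small slip: the $\lambda(1+\lambda)/(2\delta\varepsilon^2)$ term comes from the region $\{p^i_t>\delta\}$, where $1/p^i_t\le 1/\delta$; on $\{p^i_t\le\delta\}$ the It\^o correction is absorbed into the coefficient $\kappa_0-\tfrac{\varepsilon^2+\lambda}{2}$ of the main term).

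For \eqref{eq:expfin2}, however, one step of your justification does not hold as written. You claim that on the event $\{\inf_{t_0\le t\le T}p^i_t>2\theta\}$ ``the $(d-1)$-term coming from $\sum_{j\neq i}\varphi(p^j_t)$ is bounded by $(d-1)\kappa_0$ rather than $(d-1)\kappa_\varepsilon$.'' That event constrains only the $i$-th coordinate: for $j\neq i$ the trajectory $p^j_t$ may still enter $[0,2\theta]$, where $\varphi_{\theta,\delta,\varepsilon}(p^j_t)=\kappa_\varepsilon$, so the term $-\sum_{j\neq i}\varphi(p^j_t)$ appearing in the drift of $\ln p^i_t$ is only bounded below by $-(d-1)\kappa_\varepsilon$ in general, and your argument as stated does not deliver an $\varepsilon$-free constant $C(\delta,\lambda)$. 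To be fair, the paper's own proof is equally terse at precisely this point (it asserts that ``the indicator function $\mathbbm{1}_{[0,2\theta]}$ in \eqref{ito:log:2} has zero value'' on that event, whereas only the $j=i$ occurrence of $\mathbbm{1}_{[0,2\theta]}(p^j_t)$ actually vanishes); in every downstream application the event that is actually available controls \emph{all} coordinates (it is $\cap_{j\in\ES}A^{1,j}_\eta$ with $a_\eta\ge 2\theta$), which is the natural repair. You should either strengthen the event to $\{\inf_t\min_{j\in\ES}p^j_t>2\theta\}$, or explicitly account for the residual $(d-1)\kappa_\varepsilon$ contribution (e.g.\ via $\varphi(p^j_t)\le 2\theta\kappa_\varepsilon/p^j_t\,\mathbbm{1}_{[0,\delta]}(p^j_t)$ and an application of \eqref{eq:expfin1} to the coordinate $j$, which is how such terms are absorbed later in the paper). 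The stopping-time mechanism you propose ($\sigma=\inf\{t:p^i_t\le 2\theta\}\wedge T$, then restricting to the event where $\sigma=T$) is otherwise exactly the paper's.
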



\begin{prop}
	\label{propexp}
	For any
	 $\lambda\geq 1$, there exists a
	constant $\bar\kappa_{0}$ (depending on $\lambda$ and $\kappa_{2}$)
	such that, for any 
	{$\kappa_{0} \geq \bar \kappa_{0}$}
	and any
	compact subset\footnote{Here, ${\mathcal K}$ is regarded as a compact subset of ${\mathcal S}_{d}$, but, obviously, we could regard it as a $(d-1)$-dimensional compact subset of $\widehat{\mathcal S}_{d}$.} ${\mathcal K} \subset  \mathrm{Int}({\mathcal S}_{d})$, 
	we can find (strictly) positive constants
	$\bar C$, $\bar \delta_{0}$, $\bar \varepsilon_{0}$ and 
	(strictly) positive-valued functions $\hat{\theta}(\delta,\varepsilon)$
	$\hat{\varepsilon}(\delta)$ 
	and 
	$\hat{\delta}(\varepsilon)$ 
	converging to $0$ in $(0,0)$, $0$ and $0$ respectively (all these items  
	 {only depending on $\kappa_{0}$, $\kappa_{2}$, ${\mathcal K}$, $\lambda$, $M$, $T$ and $d$}),
	such that  
	\begin{align}  
	&\label{bigexp}	
	 {
	\forall \delta \in (0,\bar \delta_{0}], 
	\ \forall \varepsilon \in (0,\hat{\varepsilon}(\delta)], \ \forall \theta \in (0,\hat{\theta}(\delta,\varepsilon)]}, \quad 
	\Psi\bigl(\lambda,\theta,\delta,\varepsilon,{\mathcal K}\bigr) \leq \bar C,
\\
	&\label{bigexp2}
 {	\forall \varepsilon \in (0,\bar \varepsilon_{0}], 
	\ \forall \theta \in (0,\hat{\theta}(\hat{\delta}(\varepsilon),\varepsilon)],} 
	\quad
	\Psi\bigl(\lambda,\theta,\hat \delta(\varepsilon),\varepsilon,{\mathcal K}\bigr) \leq \bar C,
	\end{align}
	where 
	\begin{align*}
		\Psi\bigl(\lambda,\theta,\delta,\varepsilon,{\mathcal K}\bigr)
		= \hspace{-2pt}
		\max_{i \in \ES}
		\sup_{(t_{0},p_{0}) \in [0,T] \times {\mathcal K}} 
		\hspace{-2pt}
		\E\bigg[
		\exp\biggl\{\lambda \int_{t_{0}}^T 
		\biggl(
		\Bigl[
		\varphi_{\theta,\delta,\varepsilon}
		-\varphi'_{\theta,\delta,\varepsilon} \Bigr](p_{[t_{0},p_{0}],t}^{i,\theta,\delta,\varepsilon})
		+
		\frac{1}{p_{[t_{0},p_{0}],t}^{i,\theta,\delta,\varepsilon}}
		\biggr)
		dt
		\biggr\}
		\biggr].
	\end{align*}
With the same notations, it also holds that 
\begin{equation}
\begin{split}
&	\forall \varepsilon \in (0,\bar \varepsilon_{0}], 
	\ \forall \theta \in (0,\hat{\theta}(\hat{\delta}(\varepsilon),\varepsilon)], 
\\
&\hspace{15pt}	\min_{i \in \ES}
	\inf_{(t_{0},p_{0}) \in [0,T] \times {\mathcal K}}
{\mathbb P} \Bigl( \inf_{t_{0} \leq t \leq T} 
p_{[t_{0},p_{0}],t}^{i,\theta,\hat{\delta}(\varepsilon),\varepsilon}
\geq \bar C \Bigr) \geq 1 - 2 \exp(-\varepsilon^{-1}),
\label{bigexp3}
\end{split}
\end{equation}
\end{prop}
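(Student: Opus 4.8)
\emph{Proof plan.} The plan is to derive \eqref{bigexp}, \eqref{bigexp2} and \eqref{bigexp3} from the exponential inequalities \eqref{eq:expfin1}--\eqref{eq:expfin2} of Proposition~\ref{expfin}, the heart of the matter being a pointwise reduction of the exponent of $\Psi$ to integrals of $1/p^{i,\theta,\delta,\varepsilon}_{t}$, a splitting of the expectation according to how deep the trajectory dips, and a hierarchical choice $\theta\ll\varepsilon\ll\sqrt\delta$ of the parameters. First I would set $\bar C:=\tfrac12 e^{-(d-1)MT}\inf_{p\in\mathcal K}\min_{i}p_{i}$, a fixed positive constant depending only on $\mathcal K,M,T,d$, and, using \eqref{newphi}, \eqref{eq:bound:phiprime}, the monotonicity $\varphi_{\theta,\delta,\varepsilon}'\le 0$ and the elementary bound $\mathbbm{1}_{[0,a]}(r)\le(a/r)\mathbbm{1}_{[0,a]}(r)$, record the pointwise inequality, valid on $(0,1]$,
\[
\varphi_{\theta,\delta,\varepsilon}(r)-\varphi_{\theta,\delta,\varepsilon}'(r)+\tfrac1r\ \le\ c_{d}\big(\kappa_{0}+\kappa_{\varepsilon}\mathbbm{1}_{[0,2\theta]}(r)\big)\tfrac1r\,\mathbbm{1}_{[0,2\delta]}(r)+\tfrac1r .
\]
I would then split $\E[e^{\lambda\int_{t_{0}}^{T}(\varphi-\varphi'+1/p^{i}_{t})dt}]$ (writing $p^{i}$ for $p^{i,\theta,\delta,\varepsilon}_{[t_{0},p_{0}]}$) over the three events $\{\inf_{t}p^{i}_{t}\ge\bar C\}$, $\{2\theta<\inf_{t}p^{i}_{t}<\bar C\}$ and $\{\inf_{t}p^{i}_{t}\le 2\theta\}$; for $\delta$ so small that $2\delta<\bar C$, the first contribution is at most the fixed constant $e^{\lambda T/\bar C}$, since there $\varphi_{\theta,\delta,\varepsilon}$ and its derivative vanish along the trajectory and $1/p^{i}_{t}\le 1/\bar C$.

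On the moderate event, $\inf_{t}p^{i}_{t}>2\theta$ kills the $\kappa_{\varepsilon}$-parts of $\varphi$ and $\varphi'$, so, bounding also $\tfrac1{p^{i}_{t}}\mathbbm{1}_{(2\delta,1]}\le\tfrac1{2\delta}$, a Cauchy--Schwarz split together with the $\varepsilon$-free inequality \eqref{eq:expfin2} --- applicable once $\kappa_{0}$ exceeds a threshold $\bar\kappa_{0}(\lambda,\kappa_{2},d)$ --- yields a bound $e^{O(1/\delta)}\,\PP(\inf_{t}p^{i}_{t}<\bar C)^{1/2}$; on the rare event, $\varphi$ and $\varphi'$ may reach $\kappa_{\varepsilon}$ and $2\kappa_{\varepsilon}/\theta$, and a Cauchy--Schwarz split together with \eqref{eq:expfin1} yields $e^{O(1/\varepsilon^{4})}\,\PP(\inf_{t}p^{i}_{t}\le 2\theta)^{1/2}$ (the $\varepsilon^{-4}$ being the product of the $\varepsilon^{-2}$ prefactor in \eqref{eq:expfin1} with $\kappa_{\varepsilon}=\kappa_{2}\varepsilon^{-2}$). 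The two exit probabilities are controlled by a comparison argument: up to the first time $\sigma$ some coordinate hits $\bar C$, the forcing is inactive and $e^{(d-1)M(t-t_{0})}p^{i}_{t}\ge p^{i}_{0}+\int_{t_{0}}^{t}e^{(d-1)M(s-t_{0})}dN^{i}_{s}$ with $N^{i}$ the Wright--Fisher martingale ($d\langle N^{i}\rangle_{t}\le\tfrac14\varepsilon^{2}dt$), so a Bernstein estimate gives $\PP(\inf_{t}p^{i}_{t}<\bar C)\le 2e^{-c_{*}/\varepsilon^{2}}\le 2e^{-1/\varepsilon}$ for $\varepsilon\le\bar\varepsilon_{0}$ --- this is \eqref{bigexp3}; and once the trajectory enters $(0,\delta)$ it feels the inward drift $\ge\kappa_{0}$ (the spurious term $-p^{i}_{t}\sum_{j\ne i}\varphi(p^{j}_{t})$ being absorbed by a correction factor $\exp\{-\nu\sum_{j}\int\varphi(p^{j}_{s})ds\}$ whose exponential moments are again controlled by \eqref{eq:expfin1}, provided $\delta$ stays below a fixed threshold $\sim\kappa_{0}/\kappa_{2}$), so the scale function of the Wright--Fisher operator gives $\PP(\inf_{t}p^{i}_{t}\le 2\theta)\le C(\theta/\delta)^{\kappa_{0}/\varepsilon^{2}}$.

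It then remains to make the choices consistent. I would fix $\bar\delta_{0}$ so small that $2\bar\delta_{0}<\bar C$ and $\bar\delta_{0}\le\kappa_{0}/(2\kappa_{2})$, take $\hat\varepsilon(\delta):=c_{1}\sqrt\delta$ so that $e^{O(1/\delta)}(e^{-c_{*}/\varepsilon^{2}})^{1/2}\le 1$ for $\varepsilon\le\hat\varepsilon(\delta)$, and $\hat\theta(\delta,\varepsilon):=\delta e^{-c_{2}/\varepsilon^{2}}$ so that $e^{O(1/\varepsilon^{4})}\big((\theta/\delta)^{\kappa_{0}/\varepsilon^{2}}\big)^{1/2}\le 1$ for $\theta\le\hat\theta(\delta,\varepsilon)$; this gives \eqref{bigexp}. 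Then \eqref{bigexp2} follows by taking $\hat\delta(\varepsilon):=\min(\bar\delta_{0},c_{1}^{-2}\varepsilon^{2})$, which tends to $0$ and satisfies $\varepsilon\le\hat\varepsilon(\hat\delta(\varepsilon))$, while \eqref{bigexp3} holds for every $\theta$ with $2\theta<\delta$, in particular for $\theta\le\hat\theta(\hat\delta(\varepsilon),\varepsilon)$.

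The hard part, in my view, is the simultaneous tuning of the three scales: $\kappa_{\varepsilon}\sim\varepsilon^{-2}$ must blow up --- it is what restores uniqueness of the viscous equilibrium via \cite{mfggenetic} --- and this forces factors $e^{O(1/\varepsilon^{4})}$ into the near-boundary exponential estimates; these can only be absorbed by shrinking the steep region, i.e.\ by taking $\theta$ exponentially small in $\varepsilon^{-2}$, while $\delta$ must stay comparable to $\varepsilon^{2}$ for the $\varepsilon$-free bound \eqref{eq:expfin2} to win on the moderate event, yet below a fixed threshold for the inward forcing to remain supercritical and for $-p^{i}\sum_{j\ne i}\varphi(p^{j})$ to be controllable. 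Keeping this bookkeeping consistent, and checking that all constants depend only on $\kappa_{0},\kappa_{2},\mathcal K,\lambda,M,T,d$, is the delicate point.
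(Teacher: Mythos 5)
Your overall architecture --- splitting the expectation according to how deep $p^{i}$ dips, bounding $\varphi_{\theta,\delta,\varepsilon}-\varphi'_{\theta,\delta,\varepsilon}+1/r$ pointwise by multiples of $r^{-1}\mathbbm{1}_{[0,2\delta]}(r)$, invoking \eqref{eq:expfin2} on the moderate event and \eqref{eq:expfin1} on the deep event via Cauchy--Schwarz, and controlling the exit probability from $\{\min_{j}p^{j}>\bar C\}$ by a martingale estimate with the forcing switched off --- is essentially the paper's strategy (your exit estimate is the paper's event $A^{2,i}$, and your moderate regime matches its Steps 2--3). The genuine gap is the quantitative hitting estimate $\PP(\inf_{t}p^{i}_{t}\le 2\theta)\le C(\theta/\delta)^{\kappa_{0}/\varepsilon^{2}}$, on which your explicit choice $\hat\theta(\delta,\varepsilon)=\delta e^{-c_{2}/\varepsilon^{2}}$ rests. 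The coordinate $p^{i}$ is not a one-dimensional diffusion: its drift contains $-p^{i}_{t}\sum_{j\neq i}\varphi_{\theta,\delta,\varepsilon}(p^{j}_{t})$, which is of order $p^{i}_{t}\kappa_{\varepsilon}=p^{i}_{t}\kappa_{2}/\varepsilon^{2}$ whenever another coordinate lies below $2\theta$ --- precisely the regime being estimated --- so there is no comparison with a scalar Wright--Fisher diffusion of inward drift $\kappa_{0}(1-x)$, and the scale-function computation does not apply. The proposed ``correction factor $\exp\{-\nu\sum_{j}\int\varphi(p^{j}_{s})ds\}$'' modifies the process, not the event $\{\inf_{t}p^{i}_{t}\le 2\theta\}$; no Girsanov or comparison argument is actually carried out; and your parenthetical ``$\delta\lesssim\kappa_{0}/\kappa_{2}$'' does not make the spurious term subcritical (that would require $\delta\lesssim\kappa_{0}\varepsilon^{2}/\kappa_{2}$, which is violated in the regime of \eqref{bigexp} where $\delta$ is fixed and $\varepsilon\to0$). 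Any attempt to absorb the spurious term via the exponential moments of $\int\sum_{j}\varphi(p^{j}_{s})ds$ (which are only of size $e^{O(1/\varepsilon^{4})}$) degrades the exponent beyond what your $\hat\theta$ can compensate.

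The gap is repairable, and the paper's fix is worth noting: it never proves a rate for $\PP(\inf_{t}p^{i}_{t}\le2\theta)$. Instead it shows that, for fixed $(\delta,\varepsilon)$, $\int_{t_{0}}^{T}(-\varphi'_{\theta})(p^{i,\theta,\delta,\varepsilon}_{t})\,dt\to0$ in probability as $\theta\to0$, uniformly over $(t_{0},p_{0})\in[0,T]\times\mathcal{K}$ --- because $\inf_{t}p^{i,\theta,\delta,\varepsilon}_{t}$ exceeds a positive level $a_{\eta}(\delta,\varepsilon)$ with probability at least $1-\eta$, by Doob's inequality applied to $-\ln p^{i}$ combined with \eqref{eq:expfin1} --- and then combines this with the $\theta$-uniform exponential moment bound from \eqref{eq:expfin1} (your $e^{O(1/\varepsilon^{4})}$, with a slightly larger exponent $\lambda'>\lambda$) to conclude by uniform integrability that the deep-event contribution is at most $2$ for all $\theta$ below a non-constructive threshold $\theta_{1}(\delta,\varepsilon,\lambda)$. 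Since the statement only requires the existence of some positive $\hat\theta(\delta,\varepsilon)$ tending to $0$, this soft argument suffices; if you retain your Cauchy--Schwarz formulation, replace the scale-function claim by ``$\PP(\inf_{t}p^{i}_{t}\le2\theta)\to0$ as $\theta\to0$, uniformly in $(t_{0},p_{0})$, for fixed $(\delta,\varepsilon)$'' and accept a non-explicit $\hat\theta$. Your explicit $\hat\varepsilon(\delta)=c_{1}\sqrt{\delta}$ and $\hat\delta(\varepsilon)\sim\varepsilon^{2}$ are, by contrast, a legitimate simplification of the paper's inversion of $\delta\mapsto\int_{0}^{\delta}\hat\varepsilon$, since they exploit the Gaussian exit bound $e^{-c_{*}/\varepsilon^{2}}$ rather than the paper's weaker $e^{-1/\varepsilon}$.
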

In what follows, we prefer to state the convergence result as limits {as the viscosity parameter $\varepsilon^2$ tends} to $0$, instead of limits as $\delta$ {tends to $0$}, which explains why, in \eqref{bigexp2}
and \eqref{bigexp3}, we consider $\delta$ as a function of $\epsilon$, and $\theta$ as function of $\epsilon$ and $\delta$. {In order to formulate the next statement properly, we need 
another notation. Similar to ${\boldsymbol p}_{[t_{0},p_{0}]}^{\theta,\delta,\varepsilon}$, 
${\boldsymbol q}_{[t_{0},p_{0},q_{0}]}^{\theta,\delta,\varepsilon}:=(q_{[t_{0},p_{0},q_{0}],t}^{\theta,\delta,\varepsilon})_{t_{0} \le t \le T}$ denotes the solution to 
\eqref{dynmfg}
with $q_{0}$ as initial condition at time $t_{0}$, 
when $({\boldsymbol p},{\boldsymbol \alpha})$ therein is understood as $({\boldsymbol p}_{[t_{0},p_{0}]}^{\theta,\delta,\varepsilon},{\boldsymbol \alpha}_{[t_{0},p_{0}]}^{\theta,\delta,\varepsilon})$.
In particular, it should be clear for the reader that 
${\boldsymbol p}^{\theta,\delta,\varepsilon}_{[t_{0},p_{0}]}$
and
${\boldsymbol q}^{\theta,\delta,\varepsilon}_{[t_{0},p_{0},p_{0}],t}$
are the same. 
When there is no ambiguity on the choice of the initial condition, 
we merely write ${\boldsymbol q}$.}

\begin{lem}
	\label{lem:moments:p,q:l}
	{For  $\ell \geq 1$,
we can find $\lambda:=\bar \lambda(\ell)$,  {only depending on $\ell$ and $d$}
and then
	take
	$\bar \kappa_{0}$ 
	 {accordingly in Proposition \ref{propexp}
	(in terms of $\lambda$ and $\kappa_{2}$ only)}
	such that, 
	for any $\kappa_{0} \geq \bar \kappa_{0}$
and any compact subset 	${\mathcal K}$ included in $\mathrm{Int}({\mathcal S}_{d})$, 
it holds that, for $\bar \varepsilon_{0}$, $\hat{\theta}$ and $\hat{\delta}$ as in Proposition \ref{propexp}}, 
	for any state $i \in \ES$ and  {any initial point $(t_{0},p_{0},q_{0}) \in [0,T] \times {\mathcal K} \times {\mathcal S}_{d}$,
	and for
	any $\varepsilon\in (0,\bar{\varepsilon}_0]$
	and 
	$\theta \in (0, \hat{\theta}(\hat{\delta}(\varepsilon),\varepsilon)]$,}
	\begin{align}
		&\sup_{t_{0}\leq t\leq T} \E \Bigl[ \sum\nolimits_{i \in \ES} \big(p_{[t_{0},p_{0}],t}^{i,\theta,\hat{\delta}(\varepsilon),\varepsilon}\big)^{-\ell}\Bigr]
		\leq C,\label{momentp}
		\\
		& \E \Bigl[\sup_{t_{0} \leq t\leq T}  \sum\nolimits_{i \in \ES} \big(q_{[t_{0},p_{0},q_{0}],t}^{i,\theta,\hat{\delta}(\varepsilon),\varepsilon}\big)^\ell\Bigr] \leq C,
		\label{momentq}
	\end{align} 
where $C$ depends only on  {$\kappa_{0}$, $\kappa_{2}$, ${\mathcal K}$, $\ell$, $M$, $T$ and $d$}.
\end{lem}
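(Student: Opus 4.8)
The plan is to derive both bounds from the exponential estimates collected in Proposition \ref{propexp}, via elementary manipulations. First I would treat \eqref{momentp}. Fix $\ell\geq 1$; I would choose $\lambda=\bar\lambda(\ell):=\ell$ (or any $\lambda\geq\ell$ that is convenient), take $\bar\kappa_0$ as prescribed by Proposition \ref{propexp} for that $\lambda$, and let $\bar\varepsilon_0$, $\hat\theta$, $\hat\delta$ be the objects produced there. Fix $\kappa_0\geq\bar\kappa_0$, a compact $\mathcal K\subset\mathrm{Int}(\mathcal S_d)$, an initial point $(t_0,p_0)\in[0,T]\times\mathcal K$, parameters $\varepsilon\in(0,\bar\varepsilon_0]$ and $\theta\in(0,\hat\theta(\hat\delta(\varepsilon),\varepsilon)]$, and write $\boldsymbol p$ for ${\boldsymbol p}^{\theta,\hat\delta(\varepsilon),\varepsilon}_{[t_0,p_0]}$. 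Since $\boldsymbol p$ satisfies the Wright-Fisher SDE \eqref{dynpot} with a bounded ($M$-bounded) control and with the repulsive drift $\varphi$, I would apply It\^o's formula to $t\mapsto (p^i_t)^{-\ell}$. The $\ell$-th negative power has derivatives $-\ell (p^i_t)^{-\ell-1}$ and $\ell(\ell+1)(p^i_t)^{-\ell-2}$; plugging in the drift and the diffusion coefficient $\frac{\varepsilon}{\sqrt2}\sqrt{p^i_t p^j_t}$ (whose square contributes $\frac{\varepsilon^2}{2}p^i_t p^j_t$, summed over $j\neq i$, i.e.\ essentially $\frac{\varepsilon^2}{2}p^i_t(1-p^i_t)$) gives, after taking expectations and killing the martingale part,
\[
\frac{d}{dt}\,\E\bigl[(p^i_t)^{-\ell}\bigr]
\leq
\E\Bigl[(p^i_t)^{-\ell}\,\bigl(C_1 + C_2\,\bigl[\varphi(p^i_t)+|\varphi'(p^i_t)|\bigr]/p^i_t\cdot 0 + C_3/p^i_t\bigr)\Bigr],
\]
where the precise constants come from expanding the drift; the key point is that the term $-\varphi(p^i_t)/p^i_t$ carries a favourable sign (it comes from the part of the drift that pushes away from $0$) and dominates the unfavourable $+C/p^i_t$ once $\kappa_0$ is large, exactly as in the derivation of Proposition \ref{expfin}. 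Rather than redo that computation, I would instead bound $\E[(p^i_t)^{-\ell}]$ directly by the exponential moment: since $r\mapsto r^{-\ell}$ grows slower than $\exp(\lambda\int_{t_0}^t r^{-1})$-type quantities along the trajectory, I would use the pathwise inequality obtained from integrating $d(p^i_t)^{-\ell}$ and Gr\"onwall to get $(p^i_t)^{-\ell}\leq (p^i_0)^{-\ell}\exp\{\lambda\int_{t_0}^t(\varphi_{\theta,\hat\delta(\varepsilon),\varepsilon}-\varphi'_{\theta,\hat\delta(\varepsilon),\varepsilon})(p^i_s)+p^i_s{}^{-1}\,ds + C_4(t-t_0) + M_t\}$ for a suitable exponential local martingale $M_t$ of mean $\leq 1$ (or absorb it into the drift), whence by Cauchy-Schwarz and \eqref{bigexp2} of Proposition \ref{propexp} applied with $2\lambda$ in place of $\lambda$ (adjusting $\bar\lambda(\ell)$ and $\bar\kappa_0$ accordingly), $\E[(p^i_t)^{-\ell}]\leq C(\mathcal K,\ell)$ uniformly in $t\in[t_0,T]$. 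Summing over $i\in\ES$ gives \eqref{momentp}.

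Next I would prove \eqref{momentq}. Here $\boldsymbol q$ solves the linear SDE \eqref{dynmfg} driven by $\boldsymbol p={\boldsymbol p}^{\theta,\hat\delta(\varepsilon),\varepsilon}_{[t_0,p_0]}$ and the bounded control $\boldsymbol\alpha^{\theta,\hat\delta(\varepsilon),\varepsilon}$; its diffusion coefficient involves the singular factor $q^i_t/p^i_t\cdot\sqrt{p^i_t p^j_t}=q^i_t\sqrt{p^j_t/p^i_t}$, and its drift contains terms of size $\varphi(p^i_t)q^j_t$. Applying It\^o to $(q^i_t)^\ell$ and taking expectations, the troublesome terms are controlled by $\E[\sup_{s\leq t}(q^i_s)^\ell\cdot(\varphi(p^i_s)+1/p^i_s)]$-type expressions. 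I would split using Young's inequality: $(q^i_s)^\ell\cdot(\varphi(p^i_s)+1/p^i_s)\leq \eta (q^i_s)^{\ell p'} + C_\eta(\varphi(p^i_s)+1/p^i_s)^{p}$ for conjugate exponents — but since $\boldsymbol q$ itself is only $\ell$-integrable a priori, the cleaner route is: by the exponential bound \eqref{bigexp2} the integrand $\int_{t_0}^T(\varphi(p^i_s)+1/p^i_s)ds$ has exponential moments of every order, hence all polynomial moments; then by Doob's maximal inequality applied to the martingale part of $(q^i_t)^\ell$ and by a Gr\"onwall-type argument in the space $L^\infty([t_0,T];L^2(\Omega))$ (or $L^m(\Omega)$ for $m$ large), one closes the estimate. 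Concretely I would first establish $\sup_{t_0\leq t\leq T}\E[(q^i_t)^{\ell}]\leq C$ by Gr\"onwall (the drift is handled using $\E[(q^i_s)^\ell\varphi(p^i_s)]\leq\E[(q^i_s)^{2\ell}]^{1/2}\E[\varphi(p^i_s)^2]^{1/2}$ — so one actually runs the argument at level $2\ell$ and iterates, or uses a bootstrapping from $\ell$ to $2\ell$ exploiting that for any $\ell$ the exponential moments of $\int\varphi$ are finite), and then upgrade to the expectation of the supremum via Burkholder-Davis-Gundy: the bracket of the martingale part of $(q^i_\cdot)^\ell$ is bounded by $C\int_{t_0}^T (q^i_s)^{2\ell}(p^j_s/p^i_s)\,ds$, so $\E[\sup_t(q^i_t)^\ell]\leq C + C\,\E[\int_{t_0}^T(q^i_s)^{2\ell}(1/p^i_s)ds]^{1/2}$, and the last expectation is finite by combining the already-established moment bound on $\boldsymbol q$ (at level $2\ell$) with Cauchy-Schwarz and the exponential bound on $\int 1/p^i_s$ from Proposition \ref{propexp}. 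Summing over $i\in\ES$ yields \eqref{momentq}.

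The main obstacle I anticipate is the bookkeeping of exponents: to control $(q^i_t)^\ell$ one needs moments of $\boldsymbol q$ at a strictly higher level (because of the Cauchy-Schwarz splitting with the singular factors $1/p^i$ and $\varphi(p^i)$), which forces one to fix $\bar\lambda(\ell)$ generously — large enough that Proposition \ref{propexp} delivers exponential moments of $\int(\varphi+|\varphi'|+1/p^i)$ of the order needed for all the H\"older splittings encountered along the Gr\"onwall iteration, which is why the statement only asserts the existence of such $\bar\lambda(\ell)$ depending on $\ell$ and $d$ and a corresponding threshold $\bar\kappa_0$. The one genuinely delicate point is ensuring the Gr\"onwall argument for $\boldsymbol q$ closes despite the multiplicative singular coefficients: this works because $\varphi$ is supported near the boundary where, by construction (cf.\ Proposition \ref{expfin} and \eqref{bigexp2}--\eqref{bigexp3}), $\boldsymbol p$ spends little time and the integral $\int_{t_0}^T(\varphi(p^i_s)+1/p^i_s)\,ds$ has all exponential moments uniformly in $(t_0,p_0)\in[0,T]\times\mathcal K$; everything else is a routine stochastic calculus estimate.
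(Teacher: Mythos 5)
Your proposal is correct and follows essentially the same route as the paper: Itô's formula applied to $(p^i_t)^{-\ell}$ (resp.\ $(q^i_t)^{\ell}$), compensation by an exponential weight built from $\varphi(p^j_s)$ and $1/p^i_s$, Gr\"onwall, then Cauchy--Schwarz with the exponent doubled so that Proposition \ref{propexp} (with $\bar\lambda(\ell)$ and hence $\bar\kappa_0$ chosen in terms of $\ell$ and $d$) absorbs the exponential factor, and finally Doob/BDG to pass the supremum inside the expectation for \eqref{momentq}. The paper's write-up is slightly cleaner in that it works with $\mathcal{E}_t^{-1}(p^i_t)^{-\ell}$ and handles the local martingale by localization and Fatou rather than via a pathwise exponential bound, but this is a presentational difference, not a mathematical one.
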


\begin{prop}
	\label{thmbounds}
{We can find $\bar{\kappa}_{0} \geq 0$, only depending on $\kappa_{2}$ and $d$, such that, 
for any $\kappa_{0} \geq \bar \kappa_{0}$
and any compact subset 	${\mathcal K}$ included in $\mathrm{Int}({\mathcal S}_{d})$, there exist 
constants $\bar C$ and  {$\bar \varepsilon_{0}$}, only depending on 
$\kappa_{0}$, $\kappa_{2}$, ${\mathcal K}$, $\|f\|_\infty$, $\|g\|_\infty$, 
$T$ and $d$
and functions 
$\hat{\theta}(\delta,\varepsilon)$ and $\hat{\delta}(\varepsilon)$ 
as in the statement of Proposition \ref{propexp} (with $\lambda$ therein 
a fixed constant whose
value 
is made explicit in the proof in terms
of $d$ only and is, in particular, required to be greater than $\bar \lambda(12)$
in Lemma 
\ref{lem:moments:p,q:l})
such that}, for
$V_{\theta,\delta,\varepsilon}=(V_{\theta,\delta,\varepsilon}^i)_{i \in \ES}$ denoting the solution 
 {to \eqref{derhjbsimplex}}
with $\varphi=\varphi_{\theta,\delta,\varepsilon}$ therein, 
 {and for any 
$\varepsilon \in (0,\bar \varepsilon_0]$
and 
$\theta\in (0,\hat{\theta}(\hat{\delta}(\varepsilon),\varepsilon)]$}, 
\be
\label{boundVK}
\sup_{t\in [0,T]} \sup_{p\in {\mathcal K}} \max_{i \in \ES} \bigl|V^i_{\theta, \hat{\delta}(\varepsilon),\varepsilon}(t,p)\bigr| \leq \bar C.
\ee 
Moreover, if $(t_0,p_0)\in[0,T]\times {\mathcal K}$,
 {for the same values of  
$\kappa_0$, 
$\varepsilon$ and 
$\theta$
(indices in the sums below being taken in $\ES$), }
\begin{align}
\label{boundV0}
&\E\biggl[
\sup_{t_0\leq t\leq T}
\max_{i \in \ES}
\Big|V^i_{\theta, \hat{\delta}(\varepsilon),\varepsilon}(t,p_t^{\theta, \hat{\delta}(\varepsilon),\varepsilon} )\Big|^2 \biggr] 
\leq \bar C, 
\\
\label{boundzeta0}
&\E\bigg[\int_{t_0}^T \sum\nolimits_{i,j,k}\Bigl|W_{\theta,\hat{\delta}(\varepsilon),\varepsilon}^{i,j,k}\Bigl(t, p_t^{\theta, \hat{\delta}(\varepsilon),\varepsilon}\Bigr)\Bigr|^2 dt\biggr] \leq \bar C,
\\
& \E\biggl[\int_{t_0}^T  \sum\nolimits_{j,k}
\Bigl\vert
\Upsilon_{\theta,\hat{\delta}(\varepsilon),\varepsilon}^{j,k}\Bigl(t, p_t^{\theta, \hat{\delta}(\varepsilon),\varepsilon}\Bigr)
 \Bigr|^2 dt
\biggr]
\leq \bar C,
\label{boundfrak}
\end{align} 
where $W_{\theta,\hat{\delta}(\varepsilon),\varepsilon}$ is defined by 
\eqref{eq:w:i,j,k}
 and $\Upsilon_{\theta,\hat{\delta}(\varepsilon),\varepsilon}$ 
 by \eqref{eq:Upsilon:i,j}, with $\varphi=\varphi_{\theta,\hat{\delta}(\varepsilon),\varepsilon}$. 
 \end{prop}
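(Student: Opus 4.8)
\textbf{Proof plan for Proposition \ref{thmbounds}.}

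The plan is to derive all the bounds from the probabilistic representation of $V_{\theta,\delta,\varepsilon}=\fD\mathcal{V}_{\theta,\delta,\varepsilon}$, solution of \eqref{derhjbsimplex}, along the lines of Step 2 in the proof of Theorem \ref{thmder}, combined with the exponential integrability estimates of Propositions \ref{expfin}--\ref{propexp} and Lemma \ref{lem:moments:p,q:l}. The key structural point is that in the local-chart version \eqref{derhjbchart} of the derivative system, the zero-order coefficients $\hat{c}^{i,j}(x)$ defined in \eqref{notation:a:b} involve $\varphi'$ and $\varphi$ and hence blow up near the boundary; however, the first-order Kimura structure is precisely designed so that, when one writes the Feynman--Kac formula $Z^i(t,x)=\E[\,\overline g^i(X_T^{i,\cdot})+\int_t^T \overline f^i(X_s^{i,\cdot},Z(s,X_s^{i,\cdot}))\,ds\,]$ with a multiplicative functional $(R_s^{i,j})$ solving $dR_s^{i,j}=\sum_\ell R_s^{i,\ell}\hat{c}^{\ell,j}(X_s^{i,\cdot})\,ds$, the integrand $\hat c$ is controlled in absolute value by a constant times $\varphi(p^i)+|\varphi'(p^i)|+1/p^i$ uniformly over the relevant indices. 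Thus $|R_s^{i,j}|\le \exp\{C\int_t^s(\varphi_{\theta,\delta,\varepsilon}-\varphi'_{\theta,\delta,\varepsilon})(X_r^{i,\cdot,i})+1/X_r^{i,\cdot,i}\,dr\}$ up to relabeling, and Proposition \ref{propexp} (applied with $\lambda=\bar\lambda$ sufficiently large, which dictates the choice of $\bar\kappa_0$ and of the functions $\hat\theta,\hat\delta$) gives a uniform $L^1$--in fact uniform exponential--bound on this multiplicative functional, uniformly over starting points in any fixed compact $\mathcal{K}\subset\mathrm{Int}(\mathcal{S}_d)$. Combining with the uniform bounds on $f$, $g$ and the Lipschitz constant of $\widehat H_M$ and running Gronwall as in \eqref{eq:linfty}, one obtains \eqref{boundVK}.

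Next, for \eqref{boundV0} I would evaluate $V_{\theta,\delta,\varepsilon}$ along the optimal path ${\boldsymbol p}^{\theta,\delta,\varepsilon}$ started from $(t_0,p_0)\in[0,T]\times\mathcal{K}$, i.e.\ look at $v_t^i=V^i(t,p_t^\star)$ as in \eqref{ponsim}. Writing $(v_t^i)$ as a backward It\^o process, the drift is bounded by $C(1+\sum_j(\varphi(p_t^{\star,j})+|\varphi'(p_t^{\star,i})|)|v_t^j-v_t^i|)$ plus the correction terms on the last two lines of \eqref{ponsim}, which by \eqref{eq:nice:w:i,i,j} are of order $\varepsilon^2\sum_j p_t^{\star,j}|\fd\cdot V|$; taking conditional expectations, applying Gronwall with the random coefficient $\int(\varphi-\varphi')(p^{\star,i})\,dt$ and then taking $\sup_t$ under a supermartingale/Doob argument, and invoking the exponential moment \eqref{bigexp2}--\eqref{bigexp3} to bound the resulting exponential functional in $L^2$, gives the $L^2$ bound \eqref{boundV0}; here \eqref{boundVK} also re-enters to control the terminal condition and to bound $V$ at times where $p_t^\star$ strays outside $\mathcal{K}$ via the uniform $L^\infty$ bound $M/2$--type estimate valid on the whole simplex (recall $|V^i|\le \|f\|_\infty T+\|g\|_\infty$ from the verification argument). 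Then \eqref{boundzeta0} follows by writing the It\^o isometry for the martingale part $\sum_{j,k}w_t^{i,j,k}\,dB_t^{j,k}$ in \eqref{ponsim}: the bracket is $\int_{t_0}^T\sum_{j,k}|W_{\theta,\delta,\varepsilon}^{i,j,k}(t,p_t^\star)|^2\,dt$, and equating the $L^2$ norm of the whole increment $v_T^i-v_{t_0}^i$ minus the drift part (which is bounded in $L^2$ by the previous step) controls exactly this bracket in expectation. Finally \eqref{boundfrak} is handled in the same way, but applied to the auxiliary function $\mathcal{Y}_{\theta,\delta,\varepsilon}$ solving the linear Kimura PDE \eqref{eq:PDE:mathcal Y}: one first shows $\|\fD\mathcal{Y}\|$ obeys bounds of the same flavor as \eqref{boundVK}--\eqref{boundzeta0} (its source term is quadratic in $V$ and linear in $\varphi' V$, all already controlled), and then $\Upsilon^{j,k}=\tfrac{\varepsilon}{\sqrt2}\sqrt{p_j p_k}(\fd_j\mathcal{Y}-\fd_k\mathcal{Y}-(V^j-V^k))$ in \eqref{eq:Upsilon:i,j} is estimated by the $\varepsilon$-weighted square-root prefactor times these bounds, the $\varepsilon^2$ gain being exactly what is needed to absorb the constants that deteriorate as $\kappa_\varepsilon\sim\varepsilon^{-2}$.

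The main obstacle is the bookkeeping of how the three blow-up scales---$\kappa_\varepsilon=\kappa_2\varepsilon^{-2}$ on $[0,\theta]$, the slope $2\kappa_\varepsilon/\theta$ of $\varphi$ on $[\theta,2\theta]$, and the slope $2\kappa_0/\delta$ on $[\delta,2\delta]$---interact with the exponential moments. Concretely, one must choose $\lambda=\bar\lambda$ large enough that $\lambda$ multiplying the exponent in $\Psi(\lambda,\theta,\delta,\varepsilon,\mathcal{K})$ still yields a finite bound after Gronwall produces exponentials of exponentials; this forces $\kappa_0\ge\bar\kappa_0(\lambda,\kappa_2)$ and, crucially, forces $\theta$ to be chosen as $\hat\theta(\delta,\varepsilon)$ small enough (depending on $\kappa_\varepsilon$, hence on $\varepsilon$) that the steep part $[\theta,2\theta]$ is visited with exponentially small probability, as quantified by \eqref{bigexp3}. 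The technically delicate point is therefore to verify that the product of the (possibly large) value of $|\varphi'|\sim\kappa_\varepsilon/\theta$ on the event $\{p_t^{\star,i}\in[\theta,2\theta]\text{ for some }t\}$ against the exponentially small probability of that event stays bounded; this is exactly the trade-off alluded to in the discussion preceding Proposition \ref{prop:cv:cost}, and it is what pins down the admissible dependence $\theta=\hat\theta(\delta,\varepsilon)$, $\delta=\hat\delta(\varepsilon)$. Once this is in place, all the estimates \eqref{boundVK}--\eqref{boundfrak} follow by the routine Gronwall/Doob/It\^o-isometry manipulations sketched above, with the uniformity over $\mathcal{K}$ coming directly from the uniformity in $(t_0,p_0)\in[0,T]\times\mathcal{K}$ already built into Propositions \ref{propexp} and Lemma \ref{lem:moments:p,q:l}.
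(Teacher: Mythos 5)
Your proposal assembles the right ingredients (the exponential integrability of Proposition \ref{propexp}, Gronwall, It\^o isometry for \eqref{boundzeta0}, and the equation \eqref{eq:PDE:mathcal Y} for $\mathcal Y$ to handle \eqref{boundfrak}), but the core estimate \eqref{boundVK} is obtained by an argument that does not close. You propose to absorb the zero-order coefficients $\hat c^{i,j}$ of \eqref{derhjbchart} into a multiplicative functional $R$ and then run ``Gronwall as in \eqref{eq:linfty}''. The Gronwall in \eqref{eq:linfty} is over $\sup_{x\in\hat{\mathcal S}_d}|Z(s,x)|$, i.e.\ over the \emph{whole} simplex; but the exponential moment bound $\Psi(\lambda,\theta,\delta,\varepsilon,\mathcal K)\le\bar C$ of Proposition \ref{propexp} is only available for starting points $(t_0,p_0)\in[0,T]\times\mathcal K$, and it degenerates as the starting point approaches $\partial\mathcal S_d$. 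Restricting the supremum to $\mathcal K$ does not help either, since the diffusion $X^{i,\cdot}$ exits $\mathcal K$ and the source term then involves $Z(s,X_s)$ at points where you have no $\varepsilon$-uniform control. Your fallback --- the parenthetical claim that $|V^i|\le T\|f\|_\infty+\|g\|_\infty$ globally ``from the verification argument'' --- is unjustified: $V=\mathfrak D\mathcal V_{\varepsilon,\varphi}$ is the \emph{gradient} of the value function, and its identification \eqref{eq:master:HJB} with MFG value functions goes through the \emph{modified} game whose extra cost $\vartheta_{\varepsilon,\varphi}$ is not uniformly bounded in $\varepsilon$ (it contains $\varphi'$ and second derivatives of $\mathcal V$). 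If such a soft global bound held, the whole restriction to compact subsets in the statement would be pointless.

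The repair is essentially to reverse your logical order, which is what the paper does: one does \emph{not} prove \eqref{boundVK} first and feed it into \eqref{boundV0}, but rather proves a weighted $L^2$ estimate along the optimal trajectory and reads \eqref{boundVK} off at the initial time. Concretely, set $v_t=V(t,p_t)$ and ${\mathcal E}_t=\exp\{\lambda\int_{t_0}^t\sum_j([\varphi-\varphi'](p^j_s)+(p^j_s)^{-1})\,ds\}$, and apply It\^o's formula to ${\mathcal E}_t|v^i_t|^2$ using the backward dynamics \eqref{ponsim}. The derivative of the weight produces the good term $\lambda{\mathcal E}_t|v_t^i|^2\sum_j([\varphi-\varphi'](p^j_t)+(p^j_t)^{-1})$, which for $\lambda$ large (depending on $d$ only) dominates \emph{pointwise along the trajectory} both the $\varphi,\varphi'$-terms in the drift and, after Young's inequality, the $\varepsilon^2\sum_l(p^l_t)^{-1}|v_t|^2$ term coming from the $\sqrt{p^j_t/p^i_t}\,w_t$ corrections; the remaining $|w|^2$ contribution is absorbed into the quadratic-variation term. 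Gronwall plus ${\mathbb E}[{\mathcal E}_T^2]\le C$ (Proposition \ref{propexp} with $\lambda$ replaced by $2\lambda d$, via \eqref{bigexp2}) then gives $\sup_t{\mathbb E}[{\mathcal E}_t|v_t|^2]+{\mathbb E}\int{\mathcal E}_t\sum|w_t|^2\,dt\le C$, BDG upgrades this to \eqref{boundV0}, taking $t=t_0$ (where ${\mathcal E}_{t_0}=1$) and varying $(t_0,p_0)$ over $[0,T]\times\mathcal K$ gives \eqref{boundVK}, and \eqref{boundzeta0} is already contained in the energy identity. No control of $V$ outside $\mathcal K$ is ever needed. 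Your treatment of \eqref{boundfrak} via $\mathcal Y$ is correct in spirit and matches the paper, modulo the fact that it then only requires the second moment of the It\^o expansion of $\mathcal Y(t,p_t)$, not separate gradient bounds on $\mathcal Y$.
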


\begin{proof}[Proof of Proposition \ref{thmbounds}]
{For a suitable $\lambda>1$ that will be fixed in 
	\eqref{ineq3}
	below in terms of $d$ only, we consider $\bar{\kappa}_{0}$ as in the statement of Proposition \ref{propexp}
	but with $\lambda$ therein replaced by $2\lambda d$ (the need for changing $\lambda$ into $2 \lambda d$ is made clear in the proof, see again the discussion right after \eqref{ineq3}; in short $\lambda$ in the statement should be understood as $2\lambda d$ in the sequel of the proof). 
	Then, for any $\kappa_{0} \geq \bar \kappa_{0}$ and for any compact subset ${\mathcal K}$ included in $\mathrm{Int}({\mathcal S}_{d})$, 
	we consider 
		$\bar \delta_{0}$, $\bar \varepsilon_{0}$, $\hat{\theta}(\delta,\varepsilon)$
	and 
	$\hat{\delta}(\varepsilon)$ also 
as in the statement of Proposition \ref{propexp}. 
We then fix
$\varepsilon \in (0, \bar \varepsilon_{0}]$, 
$\theta \in (0,\hat{\theta}(\hat{\delta}(\varepsilon),\varepsilon)]$}
and
	 $(t_0,p_0)\in[0,T]\times {\mathcal K}$ and we write 
	${\boldsymbol p}$ for the process ${\boldsymbol p}^{\theta, \hat{\delta}(\varepsilon),\varepsilon}=(p_{t}^{\theta, \hat{\delta}(\varepsilon),\varepsilon})_{t_{0} \le t \le T}$, 
	$\varphi$ for  {$\varphi_{\theta,\hat{\delta}(\varepsilon),\varepsilon}$}
	and $(V^i)_{i \in \ES}$ for 
	the corresponding solution to 
	\eqref{derhjbsimplex}, and similarly for 
	$(W^{i,j,k})_{i,j,k \in \ES}$
	and $(\Upsilon^{i,j})_{i,j \in \ES}$. 
	We then let 	
	 $(v_t^i:=V^i(t,p_t))_{t_{0} \le t \le T}$, for $i \in \ES$, and 
	 $(w_{t}^{i,j,k}:=W^{i,j,k}(t,p_t))_{t_{0} \le t \le T}$, for $i,j,k \in \ES$. 
 {We know that $(v_t,w_t)_{t_0\leq t\leq T}$ satisfy \eqref{ponsim}. 
We	  consider then}
	\[
	\mathcal{E}_t := \exp\biggl\{ \lambda
	\int_{t_0}^t \sum\nolimits_{j \in \ES}
\Bigl(
\bigl[\varphi -\varphi'\bigr](p^j_s) + \bigl( p^j_s \bigr)^{-1}
\Bigr)
    ds
	\biggr\}, \quad t_{0} \le t \le T.
	\]
It\^o's formula and \eqref{ponsim} give
(indices being taken in $\ES$)
\begin{align*}
d&\left( \mathcal{E}_t |v_t^i|^2 \right)
=2\mathcal{E}_t  v_t^idv_t^i + \lambda \mathcal{E}_t  |v_t^i|^2  
\sum\nolimits_j \Bigl(
\bigl[ \varphi 
-\varphi' \bigr](p^j_t)   + \bigl( p^j_t \bigr)^{-1}
\Bigr)dt
+\mathcal{E}_t \sum\nolimits_{j,k}|w_t^{i,j,k}|^2dt\\
&= -2 \mathcal{E}_t v_t^i \Bigl(  \widetilde H^i(v_{t}) -\tfrac1d \sum\nolimits_l \widetilde H^l(v_t)+ f^i(p_{t}) -\tfrac1d \sum\nolimits_l f^l(p_t) \Bigr)dt
\\
& - 2 \mathcal{E}_t v_t^i  \biggl( \sum\nolimits_j \Bigl( \varphi(p^{j}_t) - p_{t}^{j} \varphi'(p_{t}^{i}) \Bigr) (v^j_t-v^i_t)  - \tfrac1d  \sum\nolimits_{j,l} \Bigl( \varphi(p^{j}_t) - p_{t}^j \varphi'(p_{t}^{l}) \Bigr) (v^j_t-v^l_t) 
  \biggr)dt
\\
&-  {\sqrt{2}}  \varepsilon \mathcal{E}_t v_t^i \biggl( \sum\nolimits_{j} 
\sqrt{p_{t}^{j}(p_{t}^{i})^{-1}} 
\Bigl( w_{t}^{i,i,j} +w_t^{j,j,i}\Bigr) 
 - \tfrac1d \sum\nolimits_{j,l} \sqrt{p_{t}^{j} (p_{t}^{l})^{-1}}\Bigl(w_{t}^{l,l,j}  + w_t^{j,j,l} \Bigr)\biggr)dt
\\
&+ 2\mathcal{E}_t v_t^i\sum\nolimits_{j,k} w^{i,j,k}_t dB_{t}^{j,k}
+ \lambda \mathcal{E}_t  |v_t^i|^2  
\sum\nolimits_j \Bigl(
\bigl[ \varphi 
-\varphi' \bigr](p^j_t)   + \bigl( p^j_t\bigr)^{-1}
\Bigr)dt
+\mathcal{E}_t \sum\nolimits_{j,k}|w_t^{i,j,k}|^2dt.
\end{align*}
{Integrating from $t\geq t_0$ to $T$ and using the Lipschitz continuity of the Hamiltonian and the boundedness of $f$ and $g$,
we deduce that there exists a constant $C$, which is allowed to vary from line to line as long as it only depends on the 
same parameters as $\bar C$ in the statement}, such that
\begin{align*}
	&\mathcal{E}_t |v_t^i|^2 
	 + \lambda \int_t^T  \mathcal{E}_s
	 |v_s^i|^2
	\sum\nolimits_j \Bigl(
	\bigl[ \varphi -\varphi'\bigr] (p^j_s)   + (p^j_s)^{-1}
	\Bigr)ds
	+ \int_t^T\mathcal{E}_s \sum\nolimits_{j,k}|w_s^{i,j,k}|^2ds 
	\\
	&\leq  \mathcal{E}_T |g^i(p_T)|^2  
	+
	2 \int_{t}^T 
	\mathcal{E}_s v_s^i \sum\nolimits_{j,k} w^{i,j,k}_s dB_{s}^{j,k}
	+  \int_t^T \mathcal{E}_s  \vert v_s^i \vert \bigg\{ 
	C + C |v_{s}| + c_{d} \vert v_{s} \vert 
	  \sum\nolimits_j \bigl[  \varphi -\varphi' \bigr](p^j_s)  
	\\
	&\hspace{200pt} +  c_{d} \varepsilon  
	\sum\nolimits_{j,l} 	\sqrt{ p_{s}^j  (p_{s}^l)^{-1}} \bigl( | w_{s}^{l,l,j}|  + |w_s^{j,j,l}|\bigr) \biggr\} ds,
\end{align*}	
	where $c_{d}$ only depends on $d$. 
	Hence, by 
Young's inequality $ab\leq 2\eta a^2+ b^2/2\eta$,  which holds true for any $\eta>0$,  
\begin{align*}
	& \mathcal{E}_t |v_t^i|^2  
	 + \lambda \int_t^T  \mathcal{E}_s 
	 |v_s^i|^2
	\sum\nolimits_j \Bigl(
	\bigl[ \varphi -\varphi'\bigr] (p^j_s)   + (p^j_s)^{-1}
	\Bigr)ds 
	+ \int_t^T\mathcal{E}_s \sum\nolimits_{j,k}|w_s^{i,j,k}|^2ds 
	\\
&\leq C  \mathcal{E}_T + C  \int_t^T \mathcal{E}_s
\bigl( 1+  |v_s|^2 \bigr) ds 
+c_{d}  \int_t^T \mathcal{E}_s |v_s|^2 \sum\nolimits_j \bigl[ \varphi - 
\varphi' \bigr](p^j_s) 
 ds 
\\
&\hspace{15pt} + 8\eta \int_t^T \mathcal{E}_s
\sum\nolimits_{j,l} | w_{s}^{j,j,l}|^2
ds 
+\frac{c_{d}^2}{2\eta} \varepsilon^2
 \int_t^T \mathcal{E}_s |v^i_s|^2 \sum\nolimits_l  (p^l_s)^{-1} ds 
 +
	2 \int_{t}^T 
	\mathcal{E}_s v_s^i \sum\nolimits_{j,k} w^{i,j,k}_s dB_{s}^{j,k}.
	\end{align*}
	By summing over $i \in \ES$, we get
	\begin{align*}
		&|v_t|^2 \mathcal{E}_t 
	+ \lambda \int_t^T  \mathcal{E}_s
	|v_s|^2
	\sum\nolimits_j \Bigl( \bigl[ \varphi 
	-\varphi' \bigr](p^j_s) +  (p^j_s)^{-1}
	\Bigr)ds 
	+ \int_t^T \mathcal{E}_s \sum\nolimits_{i,j,k}|w_s^{i,j,k}|^2ds 
	\\
	&\leq C \mathcal{E}_T + C \int_t^T \mathcal{E}_s \bigl( 1+ |v_s|^2 \bigr) ds
	+c_{d} d  \int_t^T \mathcal{E}_s |v_s|^2\sum\nolimits_j \bigl[ \varphi  - \varphi' \bigr]
(p^j_s)
	 ds
	\\
	&\hspace{15pt} + 8\eta d \int_t^T \mathcal{E}_s
	\sum_{i,j,k}| w_{s}^{i,j,k}|^2 
	ds 
	+\frac{c_{d}^2}{2\eta}  \varepsilon^2
	 \int_{t_0}^T \mathcal{E}_s |v_s|^2  \sum\nolimits_l  (p^l_s)^{-1} ds  +
	2 \int_{t}^T 
	\mathcal{E}_s \sum\nolimits_{i,j,k} v_s^i  w^{i,j,k}_s dB_{s}^{j,k}.
	\end{align*}
	Choosing $\eta = 1/(16d)$ and $\lambda= {\max(\bar \lambda(12)/(2d), (32 c_{d}^2 +c_{d})d + 1/2)}$, we obtain 
	\begin{align}
	&\mathcal{E}_t |v_t|^2 
	+ \tfrac12 \int_t^T  \mathcal{E}_s
|v_s|^2	\sum\nolimits_j \Bigl( \bigl[ \varphi 
	-\varphi' \bigr](p^j_s) +  (p^j_s)^{-1}
	\Bigr)ds 
	+ \tfrac12  \int_t^T \mathcal{E}_s \sum\nolimits_{i,j,k}|w_s^{i,j,k}|^2ds 
	\nonumber
	\\
	&\leq C  
\mathcal{E}_T + C \int_t^T \mathcal{E}_s \bigl( 1+ |v_s|^2 \bigr) ds +
	2 \int_{t}^T 
	\mathcal{E}_s \sum\nolimits_{i,j,k} v_s^i  w^{i,j,k}_s dB_{s}^{j,k}.\label{ineq3}
	\end{align} 
The stochastic integral is a martingale  since $(v_t^i)_{t_{0} \le t \le T}$ and $(w_t^{i,j,k})_{t_{0} \le t \le T}$ are bounded  (possibly not uniformly in $\varepsilon$ {at this stage of the proof}). Also, 
by \eqref{bigexp2} in Proposition \ref{propexp}, replacing therein $\lambda$ by $2 \lambda d$ ({as we already explained}) and then using H\"older's inequality, 
we 
have 
$\E[\mathcal{E}_T^2]\leq C$  {for 
our choices of $\bar \kappa_{0}$ and 
$\kappa_{0}$ (the latter being greater than ${\bar \kappa_0}$)}.
Therefore, taking expectation in the above inequality and applying Gronwall's lemma, we get 
	\be
	\label{boundVV}
	\sup_{t_{0}\leq t\leq T} \E  \left[ \mathcal{E}_t |v_t|^2 \right]
	+
{\mathbb E}\biggl[	\int_{t_{0}}^T \mathcal{E}_t \sum\nolimits_{i,j,k}|w_t^{i,j,k}|^2dt \biggr] 
	\leq C.
	\ee
In order to pass the supremum inside the expectation in the first term of the left-hand side, we return back to 
	\eqref{ineq3}, take the supremum therein and then apply Burkholder-Davis-Gundy's inequality to handle the martingale, noticing that 
\begin{equation*}
\begin{split}
	{\mathbb E} \biggl[ \biggl( 
	\int_{t_{0}}^T 
	\mathcal{E}_t^2 \sum\nolimits_{i,j,k} \bigl\vert  v_t^i  w^{i,j,k}_t \bigr\vert^2 dt
	\biggr)^{1/2} \biggr]
	&\leq 
	{\mathbb E} \biggl[ \Bigl( \sup_{t_{0} \le t \le T}
	{\mathcal E}_{t}  
	\vert  v_t \vert^2 \Bigr)^{1/2} 
	 \biggl( 
	\int_{t_{0}}^T 
	\mathcal{E}_t \sum\nolimits_{i,j,k} \bigl\vert  w^{i,j,k}_t \bigr\vert^2 dt
	\biggr)^{1/2} \biggr]
	\\
	&\leq C 	{\mathbb E} \Bigl[  \sup_{t_{0} \le t \le T}
	{\mathcal E}_{t}  
	\vert  v_t \vert^2 \Bigr]^{1/2},
\end{split}
\end{equation*}
where we used \eqref{boundVV} together with Cauchy-Schwarz inequality to get the last line. We easily obtain 
\begin{equation*}
	{\mathbb E} \Bigl[ 
	\sup_{t_{0}\leq t\leq T}  {\mathcal E}_{t}  	\vert  v_t \vert^2 \Bigr]
	\leq C + C {\mathbb E} \Bigl[  \sup_{t_{0} \le t \le T}
	{\mathcal E}_{t}  
	\vert  v_t \vert^2 \Bigr]^{1/2},
\end{equation*}
which is enough to derive \eqref{boundV0}, recalling that the left-hand side is already known to be finite.
 Taking $t=t_{0}$ in \eqref{boundV0} and then 
	letting $(t_{0},p_{0})$ vary over the entire $[0,T] \times {\mathcal K}$, 
we obtain	
	 \eqref{boundVK}.
%

	Inequality \eqref{boundzeta0} derives from 
{\eqref{boundVV}}.
	Finally, in order to prove \eqref{boundfrak}, 
	we return back to 
	\eqref{eq:PDE:mathcal Y} and then expand $({\mathcal Y}(t,p_{t}))_{t_{0} \le t \le T}$ by It\^o's formula. We get 
\begin{equation*}
	\begin{split}
	{\mathcal Y}(t,p_{t}) &=\bigl\langle p_{T},g^{\bullet}(p_{T}) \bigr\rangle + 
	\int_{t}^T 
	\biggl[ \sum\nolimits_{j,k} 
	\Bigl( \tfrac12 p_{s}^j \vert
	\alpha_{s}^{j,k} 
	\vert^2 + p_s^j p_{s}^k \varphi'(p_{s}^j) \bigl( v_{s}^j - v_{s}^k \bigr) 
	\Bigr) 
	+ \bigl\langle p_{s},f^{\bullet}(p_{s}) \bigr\rangle \biggr] ds
	\\
	&\hspace{15pt} 
	+ \tfrac{1}{\sqrt{2}} \varepsilon
	\int_{t}^T
	 \sqrt{p_{s}^{j}p_{s}^{k}}
 \Bigl( {\mathfrak d}_{j} {\mathcal Y}(s,p_{s}) - {\mathfrak d}_{k} {\mathcal Y}(s,p_{s})
 \Bigr) dB_{s}^{j,k}.  
	\end{split} 
\end{equation*}
By Proposition \ref{propexp}
and 
{by \eqref{boundV0}
(recall also that
${\boldsymbol \alpha}$ is bounded by $M$)}, we have a bound for the second order moment of the right-hand side in the first line. 
Taking $t=t_{0}$, passing the stochastic integral to the left, squaring the whole equality and then tacking expectation, we 
get the announced result. 
%
%
%
\end{proof}

{We now address the (local) uniform convergence of the value function $\mathcal{V}_{\theta,\delta,\epsilon}$ (of the viscous MFCP) towards the value function $\mathcal{V}$ of the inviscid MFCP. Recall that
the convergence is already known to hold pointwise, see 
 Theorem 
\ref{thm10}. 
Recall also that 
$V_{\theta,\delta,\epsilon} = {\mathfrak D} \mathcal{V}_{\theta,\delta,\epsilon}$ solves \eqref{derhjbsimplex}.}

\begin{prop}
\label{cor:5:8}
{We can find $\bar{\kappa}_{0} \geq 0$, only depending on $\kappa_{2}$ and $d$, such that, 
for any $\kappa_{0} \geq \bar \kappa_{0}$
and any compact subset 	${\mathcal K}$ included in $\mathrm{Int}({\mathcal S}_{d})$, 
for the same two 
 functions 
$\hat{\theta}(\delta,\varepsilon)$ and $\hat{\delta}(\varepsilon)$ 
as in the statement of Proposition 	\ref{thmbounds}
(which only depend on
$\kappa_{0}$, $\kappa_{2}$, ${\mathcal K}$, $\|f\|_\infty$, $\|g\|_\infty$, 
$T$ and $d$), it holds that}
	\be
	\label{eq:mathcal V:varepsilon} 
	\lim_{\varepsilon\rightarrow0}
	{\mathcal V}_{\hat{\theta}(\hat{\delta}(\varepsilon),\varepsilon), \hat{\delta}(\varepsilon), \varepsilon}= \mathcal{V},
	\ee
	uniformly on $[0,T]\times {\mathcal K}$.
\end{prop}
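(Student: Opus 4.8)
The strategy is to upgrade the pointwise convergence of Theorem \ref{thm10} to local uniform convergence by an Arzel\`a--Ascoli argument: we will show that, for $\varepsilon$ small, the functions $\mathcal{V}_{\varepsilon} := \mathcal{V}_{\hat{\theta}(\hat{\delta}(\varepsilon),\varepsilon),\hat{\delta}(\varepsilon),\varepsilon}$ restricted to $[0,T]\times \mathcal{K}$ are uniformly bounded and equicontinuous. Once this is known, any sequence $\varepsilon_{n}\to 0$ has a subsequence along which $\mathcal{V}_{\varepsilon_{n}}\to \bar{\mathcal{V}}$ uniformly on $[0,T]\times\mathcal{K}$; since the pointwise limit is $\mathcal{V}$ by Theorem \ref{thm10}, we must have $\bar{\mathcal{V}}=\mathcal{V}$, and as the limit of every uniformly convergent subsequence is the same, the whole family converges uniformly on $[0,T]\times\mathcal{K}$. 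It is convenient to fix once and for all a compact \emph{convex} set $\mathcal{K}'$ with $\mathcal{K}\subset \mathrm{Int}(\mathcal{K}')\subset \mathcal{K}'\subset \mathrm{Int}(\mathcal{S}_{d})$ (for instance a slightly shrunk simplex containing $\mathcal{K}$), and to take $\hat{\theta},\hat{\delta}$ to be the functions provided by Proposition \ref{thmbounds} relative to $\mathcal{K}'$; these are admissible choices for $\mathcal{K}$ as well since $\mathcal{K}\subset\mathcal{K}'$.

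\emph{Uniform bounds.} The uniform $L^{\infty}$ bound is immediate from the control representation \eqref{eq:Cost:mfc}: choosing the zero control gives $\mathcal{V}_{\varepsilon}\le T\|f\|_{\infty}+\|g\|_{\infty}$ (here I use that $F,G$ are bounded, which follows from the boundedness of $f,g$), and the nonnegativity of $\sum_{i}p^{i}\mathfrak{L}^{i}$ together with boundedness of $F,G$ from below gives the reverse bound, so that $\|\mathcal{V}_{\varepsilon}\|_{\infty}\le C_{0}$ uniformly in $\varepsilon$. The uniform \emph{spatial} equicontinuity on $\mathcal{K}'$ is exactly where \eqref{boundVK} enters: it yields $\sup_{[0,T]\times\mathcal{K}'}|\mathfrak{D}\mathcal{V}_{\varepsilon}|\le \sqrt{d}\,\bar{C}$ for all $\varepsilon\le\bar\varepsilon_{0}$, and integrating $\mathfrak{D}\mathcal{V}_{\varepsilon}$ along straight segments (which stay in $\mathcal{K}'$ by convexity) gives a uniform Lipschitz constant $L$ in the space variable on $\mathcal{K}'$, hence on $\mathcal{K}$.

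\emph{Time equicontinuity — the main point.} We must show $|\mathcal{V}_{\varepsilon}(t,p)-\mathcal{V}_{\varepsilon}(t',p)|\le \omega(|t-t'|)$ for $p\in\mathcal{K}$, uniformly in $\varepsilon$ small, with $\omega(h)\to0$. The key structural observation is that, for $\varepsilon$ small enough that $2\hat{\delta}(\varepsilon)$ is below the minimal coordinate over $\mathcal{K}'$, the function $\varphi=\varphi_{\hat{\theta}(\hat{\delta}(\varepsilon),\varepsilon),\hat{\delta}(\varepsilon),\varepsilon}$ in \eqref{newphi} \emph{vanishes identically on $\mathcal{K}'$}, so that the controlled dynamics \eqref{dynpot} have a bounded drift as long as the state stays in $\mathcal{K}'$, together with a diffusion coefficient of size $O(\varepsilon)$. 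We use the dynamic programming principle between $t$ and $t'=t+h$ ($h\le1$). For the bound on $\mathcal{V}_{\varepsilon}(t,p)-\mathcal{V}_{\varepsilon}(t+h,p)$ we insert the zero control: the resulting trajectory started from $p\in\mathcal{K}$ has zero drift until its exit time $\tau$ from $\mathcal{K}'$, so $\mathbb{E}[\sup_{t\le s\le (t+h)\wedge\tau}|p_{s}-p|^{2}]\le C\varepsilon^{2}h$ by Doob's and Burkholder--Davis--Gundy's inequalities; splitting $\mathbb{E}[|\mathcal{V}_{\varepsilon}(t+h,p_{t+h})-\mathcal{V}_{\varepsilon}(t+h,p)|]$ over $\{\tau>t+h\}$ (bounded by $L\,\mathbb{E}[|p_{t+h}-p|]$ via the spatial Lipschitz bound, since then $p,p_{t+h}\in\mathcal{K}'$) and over $\{\tau\le t+h\}$ (bounded by $2C_{0}\,\mathbb{P}(\tau\le t+h)\le 2C_{0}\,C\varepsilon^{2}h/\mathrm{dist}(\mathcal{K},\partial\mathcal{K}')^{2}$ via Markov's inequality) gives a bound of the form $C(h+\varepsilon\sqrt{h})$. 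For the reverse bound on $\mathcal{V}_{\varepsilon}(t+h,p)-\mathcal{V}_{\varepsilon}(t,p)$, we run the problem with a (near-)optimal control for $\mathcal{V}_{\varepsilon}(t,p)$, which is bounded by $M$ (Theorem \ref{thm:3.2}, Theorem \ref{thm4}); then the trajectory has drift bounded by $dM$ until it exits $\mathcal{K}'$, and the same splitting, now with $\mathbb{E}[\sup|p_{s}-p|^{2}]\le C((dM)^{2}h^{2}+\varepsilon^{2}h)\le Ch$, together with the sign of $\mathfrak{L}$, gives a bound $C(h+\sqrt{h})$. Altogether $|\mathcal{V}_{\varepsilon}(t,p)-\mathcal{V}_{\varepsilon}(t',p)|\le C|t-t'|^{1/2}$ for all $p\in\mathcal{K}$, $t,t'\in[0,T]$ with $|t-t'|\le1$, and all $\varepsilon\le\bar\varepsilon_{0}$.

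\emph{Conclusion and main obstacle.} Combining the uniform $L^{\infty}$ bound, the uniform spatial Lipschitz bound, and the uniform $\tfrac12$-H\"older bound in time, the family $\{\mathcal{V}_{\varepsilon}\}_{0<\varepsilon\le\bar\varepsilon_{0}}$ is relatively compact in $C([0,T]\times\mathcal{K})$, and the Arzel\`a--Ascoli argument sketched above, fed with Theorem \ref{thm10}, yields \eqref{eq:mathcal V:varepsilon}. The hard part is precisely the $\varepsilon$-uniform equicontinuity in time: one cannot read it off the viscous Hamilton--Jacobi--Bellman equation because the second-order Kimura operator degenerates as $\varepsilon\to0$ and its coefficient is not controlled uniformly, and because $\varphi$ blows up near the boundary; the resolution is to localize strictly inside the simplex, where $\varphi\equiv0$ for $\varepsilon$ small, and to lean on the a priori gradient bound \eqref{boundVK}. (An alternative route would use the Barles--Perthame semi-relaxed limits, showing that $\limsup^{*}\mathcal{V}_{\varepsilon}$ and $\liminf_{*}\mathcal{V}_{\varepsilon}$ are respectively a sub- and a supersolution of \eqref{eq:hjb:inviscid} and then invoking the comparison principle of Corollary \ref{CORUNIQ}; this, however, requires the uniqueness machinery of Section \ref{sec:uniqueness:master:equation} and some care with the $\varphi$-term and the terminal data, so we prefer the direct argument above, which is also consistent with the time-space Lipschitz regularity of $\mathcal{V}$ recorded in Proposition \ref{prop:solution:mfc:zero:epsilon}.)
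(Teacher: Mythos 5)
Your proposal is correct and follows essentially the same route as the paper: a localization inside a slightly larger compact set $\mathcal{K}'$ where $\varphi_{\varepsilon}\equiv 0$ for small $\varepsilon$, uniform spatial Lipschitz bounds from \eqref{boundVK}, a $\frac12$-H\"older-in-time bound obtained from the dynamic programming identity together with a stopping-time/Markov-inequality splitting, and then Arzel\`a--Ascoli combined with the pointwise convergence of Theorem \ref{thm10} to identify the limit. The only cosmetic difference is that the paper derives both directions of the time estimate at once by expanding $\mathcal{V}_{\varepsilon}$ along the optimal trajectory via It\^o's formula and the HJB equation, whereas you treat the two inequalities separately with the zero control and a near-optimal control.
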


\begin{proof}
{Throughout the proof, we consider 
$\bar \kappa_{0}$
as in the statement of Proposition 
\ref{thmbounds} 
and then, for $\kappa_{0} \geq \bar \kappa_{0}$
and for two compact subsets 
${\mathcal K}$
and ${\mathcal K}'$ included in 
$\textrm{\rm Int}({\mathcal S}_{d})$ such that the interior of ${\mathcal K}'$ contains 
${\mathcal K}$, we consider 
$\bar \varepsilon_{0}$, $\widehat{\theta}(\delta,\varepsilon)$ and 
$\widehat{\delta}(\varepsilon)$
as in the statement of Proposition 
\ref{thmbounds} when the compact subset therein is not ${\mathcal K}$ but ${\mathcal K}'$.
For simplicity, we let 
${\mathcal V}_{\varepsilon}:=	\mathcal{V}_{\hat{\theta}(\hat{\delta}(\varepsilon),\varepsilon), \hat{\delta}(\varepsilon), \varepsilon}$ 
and similarly for $V_{\varepsilon}$, 
for $\varepsilon \in (0,\bar \varepsilon_{0}]$. 
\vspace{5pt}

\emph{Step 1.}
The first step is to prove that the functions 
$({\mathcal V}_{\varepsilon})_{0<\varepsilon \leq \bar \varepsilon_{0}}$ are uniformly continuous on $[0,T] \times {\mathcal K}$.
In fact, recalling that $V_{ \varepsilon}={\mathfrak D} {\mathcal V}_{\varepsilon}$, 
we already know from \eqref{boundVK} that the functions 
$({\mathcal V}_{\varepsilon})_{0<\varepsilon \leq \bar \varepsilon_{0}}$
are uniformly Lipschitz continuous in space on $[0,T] \times {\mathcal K}'$. 
	
In order to prove uniform continuity in time, we fix 
some $\varepsilon \in (0,\bar \varepsilon_{0}]$
together with 
an initial condition $(t_{0},p_{0}) \in [0,T] \times {\mathcal K}$. Writing ${\boldsymbol p}^{\varepsilon}$ for 
${\boldsymbol p}^{\hat{\theta}(\hat{\delta}(\varepsilon),\varepsilon), \hat{\delta}(\varepsilon), \varepsilon}_{[t_{0},p_{0}]}$ and similarly for 
${\boldsymbol \alpha}^{\varepsilon}$, we define the stopping time 
$\sigma_{\varepsilon} := \inf \{ t \geq t_{0} : p_{t}^{\varepsilon} \not \in {\mathcal K}'\} \wedge T$. 
Since $\widehat{\delta}(\varepsilon)$ tends to $0$ with $\varepsilon$, we can change the value of 
$\bar \varepsilon_{0}$ in such a way that
$q_{i} > 2 \widehat{\delta}(\varepsilon)$, 
for any $\varepsilon \in (0,\bar \varepsilon_{0}]$, $i \in \ES$ and $q \in {\mathcal K}'$. Since 
$\varphi_{\varepsilon}
:= 
\varphi_{{\hat{\theta}(\hat{\delta}(\varepsilon),\varepsilon), \hat{\delta}(\varepsilon), \varepsilon}}$
is zero outside $[0,2 \widehat{\delta}(\varepsilon)]$, we deduce that, up 
to the stopping time $\sigma_{\varepsilon}$,  
${\boldsymbol p}^{\varepsilon}$ does not see the function $\varphi_{\varepsilon}$ in its own dynamics 
\eqref{dynpot}. Also, since the off-diagonal entries of the control ${\boldsymbol \alpha}^{\varepsilon}$ in \eqref{dynpot} are bounded by $M$, we easily deduce that there exists a constant $C$, independent of $\varepsilon$ and $(t_{0},p_{0})$, such that, for  any $t \in [t_{0},T]$,   
\begin{equation}
\label{eq:holder:continuity:1}
{\mathbb E} \Bigl[ \sup_{t_{0} \le s \leq t \wedge \sigma_{\varepsilon}} \vert p_{s}^{\varepsilon} - p_{0} \vert^2 \Bigr] \leq C (t-t_{0}). 
\end{equation}
In particular, denoting by $\textrm{\rm dist}({\mathcal K},({\mathcal K'})^{\complement})$ the distance from 
${\mathcal K}$ to the complementary of ${\mathcal K'}$ and then allowing the value of $C$ to vary from line to line (and to depend on both ${\mathcal K}$ and ${\mathcal K}'$ but not on $\varepsilon$), we have
\begin{equation}
\label{eq:holder:continuity:2}
{\mathbb P} \bigl( \sigma_{\varepsilon} < t \bigr) 
\leq 
{\mathbb P} \Bigl(\sup_{t_{0} \le s \leq t \wedge \sigma_{\varepsilon}} \vert p_{s}^{\varepsilon} - p_{0} \vert
\geq \textrm{\rm dist}\bigl({\mathcal K},({\mathcal K'})^{\complement}\bigr)
\Bigr) 
\leq C (t-t_{0}). 
\end{equation}
We now apply It\^o's formula to $({\mathcal V}_{\varepsilon}(t,p_{t}))_{t_{0} \le t \le \sigma_{\varepsilon}}$. 
By the HJB equation 
\eqref{hjbpotnew:sec:2}
(see also \eqref{hjbchart}), 
we obtain, for any $t \in [t_{0},T]$,  
\begin{equation*}
{\mathcal V}_{\varepsilon}(t_{0},p_{0}) = {\mathbb E} \biggl[  \int_{t_{0}}^{t \wedge \sigma_{\varepsilon}} 
\biggl( 
\frac12 \sum\nolimits_{i \in \ES} p_{s}^i \sum\nolimits_{j \in \ES : j \not = i}
\vert \alpha_{s}^{\varepsilon,i,j} \vert^2  +  
F\bigl(p_{s}^{\varepsilon}\bigr) \biggr) ds + {\mathcal V}_{\varepsilon} \bigl( t \wedge \sigma_{\varepsilon}, p_{t \wedge \sigma_{\varepsilon}}^{\varepsilon} \bigr) \biggr]. 
\end{equation*}
Subtracting ${\mathcal V}_{\varepsilon}(t,p_{0})$ to both sides and recalling that the integrand in the 
right-hand side can be bounded independently of $\varepsilon$, we deduce that 
\begin{equation*}
\begin{split}
&\bigl\vert {\mathcal V}_{\varepsilon}(t_{0},p_{0})
-
{\mathcal V}_{\varepsilon}(t,p_{0})
\bigr\vert
\\
&\leq C \bigl( t-t_{0}) + {\mathbb E} \Bigl[ \bigl\vert {\mathcal V}_{\varepsilon} \bigl( t \wedge \sigma_{\varepsilon}, p_{t \wedge \sigma_{\varepsilon}}^{\varepsilon} \bigr) - 
{\mathcal V}_{\varepsilon} \bigl( t, p_{t \wedge \sigma_{\varepsilon}}^{\varepsilon} \bigr)
\bigr\vert \Bigr] + 
{\mathbb E}\Bigl[
  \bigl\vert 
   {\mathcal V}_{\varepsilon} \bigl( t, p_{t \wedge \sigma_{\varepsilon}}^{\varepsilon} \bigr)
   -
   {\mathcal V}_{\varepsilon} \bigl( t, p_0 \bigr)
   \bigr\vert \Bigr]
   \\
   &\leq C \bigl( t-t_{0}) 
   + 2 \| {\mathcal V}_{\varepsilon} \|_{\infty} {\mathbb P} \bigl( \sigma_{\varepsilon} < t \bigr) + 
   C {\mathbb E}\bigl[
 \vert 
p_{t  \wedge \sigma_{\varepsilon}}^{\varepsilon} -  p_0 
 \vert \bigr],
   \end{split}
\end{equation*}
where we used the Lipschitz property of ${\mathcal V}_{\varepsilon}$ in the space variable (at least whenever the latter belongs 
to 
${\mathcal K}'$) to derive the last line. 
Since the value function ${\mathcal V}_{\varepsilon}$
can be bounded independently of $\varepsilon$ (using for instance the fact that controls themselves are required to be bounded), 
we deduce from 
\eqref{eq:holder:continuity:1} and \eqref{eq:holder:continuity:2} that 
\begin{equation*}
\begin{split}
&\bigl\vert {\mathcal V}_{\varepsilon}(t_{0},p_{0})
-
{\mathcal V}_{\varepsilon}(t,p_{0})
\bigr\vert
\leq C \bigl( t-t_{0}\bigr)^{1/2},
   \end{split}
\end{equation*}
which shows that the functions $({\mathcal V}_{\varepsilon})_{0 < \varepsilon \leq \bar \varepsilon_{0}}$
are uniformly continuous in time (and hence in time and space) on $[0,T] \times {\mathcal K}$.
\vskip 4pt

\emph{Step 2.}
Applying Ascoli-Arzel\'a theorem, 
we deduce 
that
there exist a subsequence $
	(\mathcal{V}_{\varepsilon_n})_{n \geq 0}$ and a function $\overline{\mathcal V}_{\mathcal K}$, a priori depending on ${\mathcal K}$, such that 
	$\lim_{n\rightarrow \infty} \mathcal{V}_{\varepsilon_n} = \overline{\mathcal V}_{\mathcal K}$ uniformly in $[0,T]\times {\mathcal K}$. Thanks to \eqref{limcosts}, we have pointwise convergence $\lim_{\varepsilon\rightarrow 0} \mathcal{V}_{\varepsilon}(t_0,p_0)= \mathcal{V}(t_0,p_0)$ for any $t_0\in [0,T]$ and $p_0\in\Int$. Hence any subsequence $(\mathcal{V}_{\varepsilon_n})_{n \geq 0}$ converges uniformly to the same limit which is the value function, and thus we obtain $\lim_{\varepsilon\rightarrow 0} \mathcal{V}_{\varepsilon}= \mathcal{V}$ uniformly in $[0,T]\times {\mathcal K}$. 
}
\end{proof}

{We are now in position to prove a preliminary version Proposition \ref{prop:cv:cost}, but restricted to initial conditions in a compact subset of $[0,T] \times \textrm{\rm Int}({\mathcal S}_{d})$:}
	{
		\begin{prop}
			\label{prop:5:9}
			We can find $\bar{\kappa}_{0} \geq 0$, only depending on $\kappa_{2}$ and $d$, such that, 
for any $\kappa_{0} \geq \bar \kappa_{0}$
and any compact subset 	${\mathcal K}$ included in $\mathrm{Int}({\mathcal S}_{d})$, 
for the same two 
 functions 
$\hat{\theta}(\delta,\varepsilon)$ and $\hat{\delta}(\varepsilon)$ 
as in the statement of Proposition 	\ref{thmbounds}
(which only depend on
$\kappa_{0}$, $\kappa_{2}$, ${\mathcal K}$, $\|f\|_\infty$, $\|g\|_\infty$, 
$T$ and $d$),
the additional cost induced by \eqref{eq:vartheta} tends to $0$ with $\varepsilon$:
			\be	
			\label{conv:vartheta}
			\begin{split}
			&\lim_{\varepsilon \rightarrow 0}
			\Xi_{\hat{\theta}(\hat{\delta}(\varepsilon),\varepsilon),\hat{\delta}(\varepsilon),\varepsilon}\bigl(t_{0},p_{0},q_{0}\bigr)
			=0, 
			\end{split}
			\ee
uniformly in $t_0\in [0,T]$, $p_0 \in {\mathcal K}$ and $q_0\in \mathcal{S}_d$, where
\begin{equation*}		
				\Xi_{\theta,\delta,\varepsilon}\bigl(t_{0},p_{0},q_{0}\bigr)	
				=
			\E\biggl[\bigg|\int_{t_0}^T \sum_{i\in\ES} q^{i,\theta,\delta, \varepsilon}_{[t_0,p_0,q_0],t} \vartheta^{i,\varepsilon,\phi_{\theta,\delta,\varepsilon}}
			(t,p^{\theta,\delta,\varepsilon}_{[t_0,p_0],t}) dt \bigg|\biggr].
			\end{equation*}
		\end{prop}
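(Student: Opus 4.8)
The plan is to split $\vartheta^i$ into its two groups of terms in \eqref{eq:vartheta} and to control each of them separately by the \emph{a priori} estimates already at our disposal, namely those of Proposition~\ref{expfin}, Proposition~\ref{propexp}, Lemma~\ref{lem:moments:p,q:l} and Proposition~\ref{thmbounds}. First I would fix $\bar\kappa_{0}$ as in the statement of Proposition~\ref{thmbounds}, enlarging it if needed so that Lemma~\ref{lem:moments:p,q:l} applies for every $\ell\in\{2,3,4,5,6\}$; for $\kappa_{0}\geq\bar\kappa_{0}$ and a compact ${\mathcal K}\subset\mathrm{Int}({\mathcal S}_{d})$ I keep the functions $\hat\theta(\delta,\varepsilon)$, $\hat\delta(\varepsilon)$ and the threshold $\bar\varepsilon_{0}$ of Proposition~\ref{thmbounds}, shrinking $\bar\varepsilon_{0}$ so that $2\hat\delta(\varepsilon)<\bar C$ for all $\varepsilon\in(0,\bar\varepsilon_{0}]$, with $\bar C$ the constant appearing in \eqref{bigexp3}. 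Throughout, $C$ denotes a constant that may change from line to line but depends only on $\kappa_{0},\kappa_{2},{\mathcal K},\|f\|_{\infty},\|g\|_{\infty},T,d$ — in particular not on $(t_{0},p_{0},q_{0})$, which is what will give the claimed uniformity. Dropping the initial point and the parameters $\theta,\hat\delta(\varepsilon),\varepsilon$ from the notation and writing ${\boldsymbol p},{\boldsymbol q},\varphi,V,W,\Upsilon$ for the relevant objects, recall from \eqref{eq:vartheta} that $\vartheta^i=\vartheta^{i,(1)}+\vartheta^{i,(2)}$ with
\[
\vartheta^{i,(1)}(t,p)=\sum_{j\in\ES}p_{j}\,\varphi'(p_{i})\bigl(V^i-V^j\bigr)(t,p),\qquad \vartheta^{i,(2)}(t,p)=\tfrac1{\sqrt2}\,\varepsilon\sum_{j\in\ES}\sqrt{p_{j}p_{i}^{-1}}\,\bigl(\widetilde W^{j,j,i}-\widetilde W^{i,i,j}-2\Upsilon^{i,j}\bigr)(t,p),
\]
so that $\Xi_{\hat\theta(\hat\delta(\varepsilon),\varepsilon),\hat\delta(\varepsilon),\varepsilon}(t_{0},p_{0},q_{0})\leq A_{\varepsilon}+B_{\varepsilon}$, where $A_{\varepsilon}$ and $B_{\varepsilon}$ are the expectations of the time integrals of $\sum_{i}q^i_t\,|\vartheta^{i,(1)}(t,p_{t})|$ and $\sum_{i}q^i_t\,|\vartheta^{i,(2)}(t,p_{t})|$ respectively.

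For $B_{\varepsilon}$ I would use that $\widetilde W^{m,j,k}=W^{m,j,k}-\langle p,W^{\bullet,j,k}\rangle$ is bounded by $2\max_{n}|W^{n,j,k}|$ and that $\sqrt{p_{j}p_{i}^{-1}}\leq(\sum_{k}p_{k}^{-1})^{1/2}$ since $p_{j}\leq1$, which gives
\[
\sum_{i}q^i_t\,\bigl|\vartheta^{i,(2)}(t,p_{t})\bigr|\leq C\,\varepsilon\Bigl(\sum_{i}q^i_t\Bigr)\Bigl(\sum_{i}(p^i_t)^{-1}\Bigr)^{1/2}\Bigl(\sum_{m,j,k}\bigl|W^{m,j,k}(t,p_{t})\bigr|+\sum_{j,k}\bigl|\Upsilon^{j,k}(t,p_{t})\bigr|\Bigr).
\]
Integrating in $t$ and applying H\"older's inequality with exponents $(4,4,2)$, first in time and then under $\E$, one bounds $B_{\varepsilon}$ by $C\varepsilon$ times the product of $\bigl(\E\int_{t_{0}}^{T}(\sum_{i}q^i_t)^{4}dt\bigr)^{1/4}$, of $\bigl(\E\int_{t_{0}}^{T}(\sum_{i}(p^i_t)^{-1})^{2}dt\bigr)^{1/4}$ and of $\bigl(\E\int_{t_{0}}^{T}(\sum_{m,j,k}|W^{m,j,k}|^{2}+\sum_{j,k}|\Upsilon^{j,k}|^{2})(t,p_{t})\,dt\bigr)^{1/2}$; these three factors are controlled by \eqref{momentq} (with $\ell=4$), by \eqref{momentp} (with $\ell=2$) and by \eqref{boundzeta0}--\eqref{boundfrak} respectively, so $B_{\varepsilon}\leq C\varepsilon\to0$.

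For $A_{\varepsilon}$, the crucial observation is that $\varphi'(p_{i})$ vanishes outside $\{p_{i}\leq2\hat\delta(\varepsilon)\}$: hence, on the event $A^{i}:=\{\inf_{t_{0}\leq t\leq T}p^i_t\leq2\hat\delta(\varepsilon)\}$, one has $\int_{t_{0}}^{T}q^i_t|\varphi'(p^i_t)|\,dt=\mathbbm 1_{A^{i}}\int_{t_{0}}^{T}q^i_t|\varphi'(p^i_t)|\,dt$, while $2\hat\delta(\varepsilon)<\bar C$ together with \eqref{bigexp3} gives $\PP(A^{i})\leq2e^{-1/\varepsilon}$. Bounding $\sum_{j}p^j_t|V^i-V^j|(t,p_{t})\leq2\max_{m}|V^m(t,p_{t})|$ and $\int_{t_{0}}^{T}q^i_t|\varphi'(p^i_t)|\,dt\leq\mathbbm 1_{A^{i}}(\sup_{t}q^i_t)\int_{t_{0}}^{T}|\varphi'(p^i_t)|\,dt$, and applying H\"older with exponents $(6,2,6,6)$, one gets
\[
A_{\varepsilon}\leq C\sum_{i\in\ES}\PP(A^{i})^{1/6}\Bigl(\E\sup_{t}\max_{m}|V^m(t,p_{t})|^{2}\Bigr)^{1/2}\Bigl(\E(\sup_{t}q^i_t)^{6}\Bigr)^{1/6}\Bigl(\E\Bigl(\int_{t_{0}}^{T}|\varphi'(p^i_t)|\,dt\Bigr)^{6}\Bigr)^{1/6}.
\]
The second factor is bounded by \eqref{boundV0}, the third by \eqref{momentq} (with $\ell=6$), and for the last one I would use that $\varphi\geq0$ and $\varphi'\leq0$, so $|\varphi'|=-\varphi'\leq[\varphi-\varphi'](p^i)+(p^i)^{-1}$, whence \eqref{bigexp2} and the elementary inequality $x^{6}\leq Ce^{\lambda x}$ give a uniform estimate. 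Therefore $A_{\varepsilon}\leq C\,e^{-1/(6\varepsilon)}\to0$. Combining the two bounds yields \eqref{conv:vartheta}, uniformly in $t_{0}\in[0,T]$, $p_{0}\in{\mathcal K}$ and $q_{0}\in{\mathcal S}_{d}$ since all the constants involved are independent of $(t_{0},p_{0},q_{0})$.

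The main obstacle is the term $A_{\varepsilon}$: the prefactor $|\varphi'|$ becomes arbitrarily large near the boundary as $(\theta,\delta,\varepsilon)\to0$, and there is no pointwise control of it uniform in $\varepsilon$. What rescues the argument is purely probabilistic and is exactly the content of Proposition~\ref{propexp} once $\hat\theta$ and $\hat\delta$ encode the right trade-off: on the one hand the $\varepsilon$-uniform exponential integrability of $\int_{t_{0}}^{T}|\varphi'(p^i_t)|\,dt$, on the other hand the fact that the equilibrium enters the support $\{p_{i}\leq2\hat\delta(\varepsilon)\}$ of $\varphi'$ only with probability $O(e^{-1/\varepsilon})$. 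The remaining work is bookkeeping: matching the H\"older exponents used above with the orders $\ell$ and the exponential parameter $\lambda$ for which Lemma~\ref{lem:moments:p,q:l}, Proposition~\ref{propexp} and Proposition~\ref{thmbounds} are available, which is what fixes $\bar\kappa_{0}$.
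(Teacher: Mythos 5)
Your proof is correct and follows the same overall strategy as the paper's: split $\vartheta^i$ into the $\varphi'$ term and the $W/\Upsilon$ term, kill the latter via its explicit $\varepsilon$ prefactor together with \eqref{momentp}, \eqref{momentq}, \eqref{boundzeta0}--\eqref{boundfrak}, and kill the former by exploiting that the equilibrium only visits the support of $\varphi'_{\theta,\hat\delta(\varepsilon),\varepsilon}$ on an event of probability $O(e^{-1/\varepsilon})$ (that is, \eqref{bigexp3}) combined with the exponential integrability \eqref{bigexp2}. The one place where you diverge is the treatment of the $\varphi'$ term: the paper argues by convergence in probability of the integrand plus uniform integrability (an $L^{3/2}$ bound obtained by H\"older with exponents $8,8,4/3$, which is why it invokes Lemma \ref{lem:moments:p,q:l} with $\ell=12$), whereas you insert the indicator of the rare event directly into a four-factor H\"older inequality, which yields an explicit rate $O(e^{-1/(6\varepsilon)})$ and only requires $\ell=6$ moments of $\boldsymbol q$. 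Both implementations rest on the same estimates; yours is slightly more quantitative, at no extra cost since the $\lambda$ fixed in Proposition \ref{thmbounds} already makes all the needed moment bounds available.
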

		}

\begin{proof}
Throughout the proof, we consider 
$\bar \kappa_{0}$
as in the statement of Proposition 
\ref{thmbounds} 
({and implicitly the same 
value of $\lambda$ as in its proof, see
	\eqref{ineq3} and the discussion below 	\eqref{ineq3}})
and then, for $\kappa_{0} \geq \bar \kappa_{0}$
and for a compact subset 
${\mathcal K}$ included in 
$\textrm{\rm Int}({\mathcal S}_{d})$, we consider 
$\bar \varepsilon_{0}$, $\widehat{\theta}(\delta,\varepsilon)$ and 
$\widehat{\delta}(\varepsilon)$, also 
as in the statement of Proposition 
\ref{thmbounds}.
For simplicity, we let 
	$\varphi_{\varepsilon}
	:= 
	\varphi_{{\hat{\theta}(\hat{\delta}(\varepsilon),\varepsilon), \hat{\delta}(\varepsilon), \varepsilon}}$
	and 
	$V_{\varepsilon}:= {V}
	_{\hat{\theta}(\hat{\delta}(\varepsilon),\varepsilon), \hat{\delta}(\varepsilon), \varepsilon}$
	for $\varepsilon \in (0,\bar \varepsilon_{0}]$. 
	{Similarly, we use the abbreviated notations $\boldsymbol{p}^\epsilon$ and $\boldsymbol{q}^{\epsilon}$
	for the two processes appearing in 
	\eqref{conv:vartheta}, the underlying initial condition $(t_{0},p_{0},q_{0})$ 
	being fixed in $[0,T] \times {\mathcal K} \times {\mathcal S}_{d}$ (which is licit provided we prove that
	the convergences below hold uniformly with respect to $(t_{0},p_{0},q_{0})$)}.	
	To prove the claim, we have to show (see \eqref{eq:vartheta}) that
	({uniformly with respect to the initial condition})
	%
	\begin{align}
	&\lim_{\varepsilon\rightarrow 0}
	\E \biggl[ \biggl\vert \int_{t_0}^T \sum\nolimits_{k} q^{\varepsilon,k}_{t}
	\sum\nolimits_j p^{\varepsilon,j}_{t} \varphi_{\varepsilon}'(p^{\varepsilon,k}_{t})\bigl(V^k_\varepsilon(t,p^{\varepsilon}_{t})-V^j_\varepsilon(t,p^{\varepsilon}_{t})\bigr) dt \biggr\vert \biggr] =0,
	\label{lim1}
	\\
	& \lim_{\varepsilon\rightarrow 0} 
	\frac{\varepsilon}{\sqrt2} \E\biggl[ \biggl\vert \int_{t_0}^T \sum\nolimits_k q^{\varepsilon,k}_{t} 
	\sum\nolimits_j \sqrt{p^{\varepsilon,j}_{t} (p^{\varepsilon,k}_{t})^{-1}} 
	\bigl( \widetilde W_{\varepsilon}^{j,j,k} - \widetilde W_{\varepsilon}^{k,k,j}
	- 2 \Upsilon_{\varepsilon}^{k,j}
	\bigr)(t,p_{t}^{\varepsilon}) 
	dt  \biggr\vert \biggr]=0,
	\label{lim3}
	\end{align}
	where $W_{\varepsilon}=W_{\hat{\theta}(\hat{\delta}(\varepsilon),\varepsilon),\hat{\delta}(\varepsilon),\varepsilon}$
	and similarly for $\Upsilon_{\varepsilon}$. 
	
	We begin by proving \eqref{lim1}. 
{We know
from 
\eqref{bigexp3}
(all the results from Proposition \ref{propexp} are applied with $2 \lambda d$, see again the discussion 
below 	\eqref{ineq3})
	that 
	$\lim_{\varepsilon \rightarrow 0}{\mathbb P}(\inf_{t_{0} \le t \le T} \min_{k \in \ES}
	p_{t}^{\varepsilon,k} \leq \eta)=0$ for $\eta>0$ small enough (independently of $\varepsilon$), the convergence being uniform with respect to the initial point $(t_{0},p_{0}) \in [0,T] \times {\mathcal K}$}. 
	Together with the fact that the support of $\varphi_{\varepsilon}'$ shrinks with $\varepsilon$,  
	we deduce that the integrand tends to 0 in probability as $\varepsilon\rightarrow 0$. Hence, to obtain \eqref{lim1}, {we have to prove uniform integrability, namely it is enough to show that
		\[
		\E \bigg[ \bigg| \int_{t_0}^T \sum\nolimits_{k} q^{\varepsilon,k}_{t}
		\sum\nolimits_j p^{\varepsilon,j}_t \varphi_{\varepsilon}'\bigl(p^{\varepsilon,k}_t\bigr)
		\Bigl(V^k_\varepsilon\bigl(t,p_t^{\varepsilon}\bigr)-V^j_\varepsilon\bigl(t,p_t^{\varepsilon}\bigr)\Bigr)d t \biggr|^{3/2}  \biggr] \leq C,
		\]
		for a constant $C$ independent of $\varepsilon$ and of $(t_{0},p_{0},q_{0})$. 
		By 
		\eqref{boundV0}
		and by H\"older inequality with exponents $8$, $8$ and $4/3$, it suffices to prove 
		that 
		\[
		\E \biggl[ 
		\sum\nolimits_{k}
		\sup_{t_{0} \le t \le T}
		\vert q_{t}^{\varepsilon,k} \vert^{12} \biggr]^{1/8}
		{\mathbb E}
		\biggl[
		\bigg| \int_{t_0}^T \sum\nolimits_{k}  \varphi_{\varepsilon}'\bigl(p^{\varepsilon,k}_t\bigr)
		d t \biggr|^{12}  \biggr]^{1/8} \leq C.
		\]}The first term in the left-hand side is easily bounded by means of 
	Lemma 
	\ref{lem:moments:p,q:l}, {recalling that
	$\lambda$ in the statement of Proposition 
	\ref{thmbounds} 
	is required to satisfy
	 $\lambda \geq \bar \lambda(12)$}. 
	As for the second one, it follows from 
	\eqref{bigexp2}.

	{To prove \eqref{lim3}, we have to show that the expectation is bounded (since there is the additional factor $\varepsilon$ in front of it), but this easily follows from 
		Holder's inequality, with $1= 1/3+1/6+1/2$,
		together with}
	\eqref{momentp}, \eqref{momentq},  \eqref{boundzeta0} and \eqref{boundfrak}.
\end{proof}

We now address the convergence of the master equation. 
 {To do so, we}
denote by $U_{\theta, \delta,\epsilon}$ the solution to the viscous master equation \eqref{eq:master:equation:introl}
({as provided by Theorem \ref{thm4}}), with 
$\phi=\phi_{\theta,\delta,\epsilon}$ therein. 
We recall (see Proposition \ref{prop:solution:mfc:zero:epsilon}, (viii)) that there exist a unique optimal control $\boldsymbol{\alpha}$ and optimal trajectory $\boldsymbol{p}$ for the inviscid MFCP starting at points $(t_0,p_0)\in [0,T]\times\mathrm{Int}(\mathcal{S}_d)$ where 
the value function $\mathcal{V}$  is differentiable.   
For such points, let, as in Section \ref{sec:main} ({see Theorem \ref{main:thm:6}, part II}), 
$U^i(t_0,p_0):=\inf_{\balpha} J(\balpha;{\boldsymbol p})$ with 
 ${\boldsymbol q}$
being initialized at time $t_0$ from $(q_{t_0}^j=\delta_{i,j})_{j \in \ES}$.

\begin{prop}
	\label{thm:5:9}
{On top of the assumptions quoted in the beginning of the subsection, assume that $F$ is in $\mathcal{C}^{1,1}({\mathcal S}_{d})$. Then, we can find $\bar{\kappa}_{0} \geq 0$, only depending on $\kappa_{2}$ and $d$, such that, 
for any $\kappa_{0} \geq \bar \kappa_{0}$
and any compact subset 	${\mathcal K}$ included in $\mathrm{Int}({\mathcal S}_{d})$, 
for the same two 
 functions 
$\hat{\theta}(\delta,\varepsilon)$ and $\hat{\delta}(\varepsilon)$ 
as in the statement of Proposition 	\ref{thmbounds}
(which only depend on
$\kappa_{0}$, $\kappa_{2}$, ${\mathcal K}$, $\|f\|_\infty$, $\|g\|_\infty$, 
$T$ and $d$),
and for any $(t_{0},p_{0}) \in [0,T] \times {\mathcal K}$
	at which ${\mathcal V}$
	is differentiable}, 
	\begin{align} 
	\label{eq:D:mathcal V:varepsilon} 
	&\lim_{\varepsilon\rightarrow0}
	\mathfrak{D}\mathcal{V}
	_{\hat{\theta}(\hat{\delta}(\varepsilon),\varepsilon), \hat{\delta}(\varepsilon), \varepsilon}(t_0,p_0)= \mathfrak{D}\mathcal{V}(t_0,p_0),
\\
	&\lim_{\varepsilon\rightarrow0}
	U
	_{\hat{\theta}(\hat{\delta}(\varepsilon),\varepsilon), \hat{\delta}(\varepsilon), \varepsilon}(t_0,p_0)= U(t_0,p_0),
	\label{convergence:U}
	\end{align}
	Moreover, these convergence hold  in $[L^r_{loc}([0,T]\times {\rm Int}({\mathcal S}_{d}))]^d$, for any $r \geq 1$,
	{where $\textrm{\rm Int}({\mathcal S}_{d})$ is equipped with the $(d\!-\!1)$-dimensional Lebesgue measure}. 
\end{prop}

\begin{proof}
	\emph{Step 1.}
As in the previous proof, we consider 
$\bar \kappa_{0}$
as in the statement of Proposition 
\ref{thmbounds} 
({and implicitly the same 
value of $\lambda$ as in its proof})
and then, for $\kappa_{0} \geq \bar \kappa_{0}$
and for a compact subset 
${\mathcal K}$ included in 
$\textrm{\rm Int}({\mathcal S}_{d})$, we consider 
$\bar \varepsilon_{0}$, $\widehat{\theta}(\delta,\varepsilon)$ and 
$\widehat{\delta}(\varepsilon)$, also 
as in the statement of Proposition 
\ref{thmbounds}.	{We also use the same notations}
	$\varphi_{\varepsilon}$, 
	$V_{\varepsilon}$, 	${\boldsymbol p}^\varepsilon$ as in the previous proof, and similarly
 {we write} 
 ${\boldsymbol \alpha}^\varepsilon$
 {for the corresponding optimal control}
 and 
 $U_\epsilon$ {for the solution of the (viscous) master equation.
 Here, the initial condition of 
 ${\boldsymbol p}^\varepsilon$
 is implicitly understood as a point $(t_{0},p_{0}) \in [0,T] \times {\mathcal K}$ at which ${\mathcal V}$ is differentiable.} By Corollary \ref{cor:selection}
	({writing $({\boldsymbol p},{\boldsymbol \alpha})$ for 
	$({\boldsymbol p}_{[t_0,p_0]},{\boldsymbol \alpha}_{[t_0,p_0]})$ 
	therein}), 
	the convergence of 
	$({\boldsymbol p}^{\varepsilon},{\boldsymbol \alpha}^{\varepsilon})_{\varepsilon \in (0,{\varepsilon_{0}}]}$
	to $({\boldsymbol p},{\boldsymbol \alpha})$ holds in probability
	(for the same topology as in the statement of Theorem \ref{thm10}).
	In fact, 
	by combining
	\eqref{eq:thm:5:1:1} 
	and
	\eqref{eq:thm:5:1:2} (with ${\boldsymbol \beta}={\boldsymbol \alpha}$ therein), we have (indices below are in 
		$\ES$)
	\begin{equation*}
	\lim_{\varepsilon \rightarrow 0}
	{\mathbb E} \int_{t_{0}}^T \sum\nolimits_{i \not =j}
	p_{t}^{i}  \vert \alpha_{t}^{\varepsilon,i,j} \vert^2 dt
	= 
	\int_{t_{0}}^T \sum\nolimits_{i \not =j}
	p_{t}^{i}  \vert \alpha_{t}^{i,j} \vert^2 dt, 
	\end{equation*}
	from which we deduce that 
	\begin{equation}
	\label{eq:strong:alpha:eps}
	\lim_{\varepsilon \rightarrow 0}
	{\mathbb E} \int_{t_{0}}^T \sum\nolimits_{i \not =j}
	p_{t}^{i}  \vert \alpha_{t}^{\varepsilon,i,j} - \alpha_{t}^{i,j} \vert^2 dt
	= 0. 
	\end{equation}
	As the limit process ${\boldsymbol p}$ does not touch the boundary of the simplex, 
	the latter shows that 
	${\boldsymbol \alpha}^{\varepsilon}$ converges to ${\boldsymbol \alpha}$ in probability but 
	for the strong (instead of weak) topology on ${\mathcal E}$. We make use of this property later on in the proof.

	{In order to prove \eqref{convergence:U}, 
	it is worth recalling that $U_{\varepsilon}^{i}(t_0,p_0)$ is the value function of the 
	cost functional 
	$\tilde{J}^{\varepsilon,\varphi_{\varepsilon}}( \, \cdot \, ; {\boldsymbol p}^{\varepsilon})$ when 
	the state trajectory ${\boldsymbol q}$ in 
		\eqref{dynmfg} is initialized from 
	$q^k_{t_0}=\delta_{i,k}$, $k \in \ES$, and similarly $U^i(t_0,p_0)$
	 is the value function of the 
	cost functional 
	${J}( \, \cdot \, ; {\boldsymbol p})$ when the state trajectory in 
			\eqref{eq:fp} is also initialized from 
	$q^k_{t_0}=\delta_{i,k}$, $k \in \ES$.}
	Recalling 
	\eqref{eq:new:J:varepsilon,varphi}, we have {(indices in the sum belonging to $\ES$)}
	\begin{equation}
	\label{eq:535}
	\tilde{J}^{\varepsilon,\varphi_{\varepsilon}}\bigl({\boldsymbol \alpha}^{\varepsilon};{\boldsymbol p}^{\varepsilon}\bigr) =
	{\mathbb E}
	\biggl[
	\int_{t_{0}}^T \sum\nolimits_{k} q^{\varepsilon,k}_t \Bigl({\mathfrak L}^k(\alpha_t^{\varepsilon}) +   f^k(p_t^{\varepsilon}) + \vartheta^{\varepsilon,\varphi_{\varepsilon},k}(t,p_{t}^{\varepsilon})\Bigr)dt +  \sum\nolimits_k q^{\varepsilon,k}_T g^k(p_T^{\varepsilon}) \biggr],
	\end{equation}
	%
	%
	where
	$((q^{\varepsilon,k}_{t})_{k \in \ES})_{t_{0} \le t \le T})$ solves
	\begin{equation*}
	\begin{split}
	dq_{t}^{\varepsilon,k} &= \sum\nolimits_{j\neq k} \Bigl(q^{\varepsilon,j}_{t} 
	\bigl(\varphi_\varepsilon(p^{\varepsilon,k}_{t})+\alpha_{t}^{\varepsilon,j,k}\bigr) - 
	q_t^{\varepsilon,k}\bigl(\varphi_\varepsilon(p^{\varepsilon,j}_{t}) + \alpha_{t}^{\varepsilon,k,j}\bigr) \Bigr) dt
	\\ 
	&\hspace{15pt} + \frac{\varepsilon}{\sqrt{2}} \sum\nolimits_{j\neq k} 
	\frac{q_t^{\varepsilon,k}}{p_t^{\varepsilon,k}} 
	\sqrt{p_t^{\varepsilon,k} p_t^{\varepsilon,j}} \bigl( dB_{t}^{{k},j} - dB_{t}^{j,{k}} \bigr),
	\end{split}
	\end{equation*}
	with $q_{t_{0}}^{\varepsilon,k}=\delta_{i,k}$.

%
By 	\eqref{bigexp3}, there exists 
	$\eta >0$ such that 
	$\lim_{\varepsilon \rightarrow 0}{\mathbb P}(\inf_{t_{0} \le t \le T} \min_{k \in \ES}
	p_{t}^{\varepsilon,k} \leq \eta)=0$. 
	This suffices to 
	kill asymptotically the terms $\varphi_{\varepsilon}$ in the drift right above on the model of the proof of Theorem \ref{thm10}. As 
	for the martingale part, we may invoke 
	Lemma 
	\ref{lem:moments:p,q:l} with $\ell=4$ ({recall that 
	$\bar \kappa_{0}$ is chosen in such a way that 	
	Lemma 
	\ref{lem:moments:p,q:l} applies with $\ell=12$})
	to show that its supremum norm converges to $0$ in probability. 
	Altogether with  
	\eqref{eq:strong:alpha:eps}, 
	we easily deduce that 
	\begin{equation*}
	q_{t}^{\varepsilon,k} = \delta_{k,i}+ \int_{t_{0}}^t 
	\sum\nolimits_{j\neq k} \Bigl(q^{\varepsilon,j}_{s}  \alpha_{s}^{j,k}  - 
	q_t^{\varepsilon,k}  \alpha_{s}^{k,j} \Bigr) ds + r_{t}^{\varepsilon,k}, \quad t \in [t_{0},T], 
	\end{equation*}
	where $\sup_{t_{0} \leq t \leq T} \vert r_{t}^{\varepsilon,k} \vert$ tends to $0$ in probability with 
	$\varepsilon$, which prompts us to consider the differential equation 
	\begin{equation*}
	\dot{q}_{t}^k = \sum\nolimits_{j\neq k} \bigl(q_{t}^j \alpha_{t}^{j,k} - 
	q_t^i\alpha_t^{k,j}\bigr), \quad 
	q_{t_0}^k = \delta_{k,i}.
	\end{equation*}
	Forming the differences 
	$((q_{t}^{\varepsilon,k}-q_{t}^{k})_{k \in \ES})_{t_{0} \le t \le T}$, we easily deduce that 
	$\sup_{t_{0} \le t \le T} \vert q_{t}^{\varepsilon}-q_{t} \vert$
	tends to $0$ in probability.  By Lemma 
	\ref{lem:moments:p,q:l} again, the convergence holds in $L^{2}$ (recall that $\lambda \geq \bar {\lambda}(12)$). 
	Using \eqref{eq:new:J:varepsilon,varphi}
	and 	\eqref{eq:strong:alpha:eps}
	together with the fact that all the off-diagonal controls are bounded by $M$, we get  
	\begin{align*}
	&\lim_{\varepsilon\rightarrow 0}
	\E \biggl[
	\int_{t_0}^T \sum\nolimits_{k} q^{\varepsilon,k}_{t} \Bigl( {\mathfrak L}^k(\alpha_{t}^{\varepsilon}) + f^k(p_t^{\varepsilon}) \Bigr) dt +
	\sum\nolimits_k   q^{\varepsilon,k}_{T}g^k(p^\varepsilon_{T}) 
	\biggr] 
	\\
	&\hspace{15pt} = 
	\int_{t_0}^T \sum\nolimits_k q^{k}_{t} \Bigl({\mathfrak L}^k(\alpha_{t}) + f^k(p_{t}) \Bigr) dt 
	+
	\sum\nolimits_k   q^{k}_{T}g^k(p_{T}) 
	= J({\boldsymbol \alpha};{\boldsymbol p})=  {U^i(t_0,p_0)},
	\end{align*} 
	which, together with \eqref{conv:vartheta} that holds for any initial condition, gives \eqref{convergence:U}.
	\vskip 4pt
	
\emph{Step 2}. 	To prove \eqref{eq:D:mathcal V:varepsilon}, we note that, by  item (viii) in 
	Proposition 
	\ref{prop:solution:mfc:zero:epsilon}
	again
	$\alpha^{i,j}_t= \left(\mathfrak{d}_i \mathcal{V}(t,p_t)-\mathfrak{d}_j\mathcal{V}(t,p_t)\right)_+$ for any $t_0\leq t\leq T$
	(recalling that ${\mathcal V}$ is differentiable at any $(t,p_{t})$),
	and by item {(ix)} in 
	Proposition 
	\ref{prop:solution:mfc:zero:epsilon}, 
	the backward equation
	in 
	\eqref{eq:fb:inviscid:local} 
	(in the unknown ${\boldsymbol z}=(z_{t})_{t_{0} \le t \le T}$)
	represents the gradient of $\mathcal{V}$, so in particular at the initial time we have $z^i_{t_0}= \partial_{x_i} \widehat{\mathcal V}(t_0,x_0)$ with $x_{0}=(p_{0}^1,\cdots,p_{0}^{d-1})$. But $z^i_{t_0}$ is also equal to $U^i(t_0,p_0)-U^d(t_0,p_0)$, 
	which is exactly  
	\eqref{eq:fb:inviscid:intrinsic}.   
	Thus we have  $\partial_{x_i}\mathcal{V}(t_0,x_0) = U^i(t_0,p_0)-U^d(t_0,p_0)$. 
	Importantly, 
	we have a similar identity when {$\varepsilon \in (0, \varepsilon_{0}]$}, which is provided by \eqref{eq:master:HJB} proved in Theorem \ref{thm4}. 
	Therefore \eqref{eq:D:mathcal V:varepsilon} now follows from \eqref{convergence:U}.
	
	\vskip 4pt

	\emph{Step 3.}
	The last claim follows from uniform boundedness of 
	$\fD \mathcal{V}_\epsilon$ and $U_\epsilon$. The former is given by 
	\eqref{boundVK},
	together with 
	the fact that ${\mathcal V}$ is almost everywhere ({for the $(d\!-\!1)$-dimensional Lebesgue measure}) differentiable,  while the latter follows easily from the definition \eqref{eq:535} together with the bounds in Proposition \ref{thmbounds}.  
\end{proof}

At this stage of the proof, the reader must understand that 
 {Propositions \ref{prop:5:9}
and
\ref{thm:5:9} do not provide complete proofs of
Proposition \ref{prop:cv:cost}
and
Theorem
\ref{main:thm:6}}. The reason is that the functions $\hat{\theta}$ and $\hat{\delta}$ therein depend on 
the underlying compact set ${\mathcal K}$. In words, we should write 
$\hat{\theta}_{\mathcal K}$ 
and $\hat{\delta}_{\mathcal K}$. Now, we would like to choose ${\mathcal K}={\mathcal K}_\varepsilon$
depending on $\varepsilon$ such that, letting
\be
\label{not:bar:eps}
\overline{\mathcal{V}}_\varepsilon := \mathcal{V}_{\hat{\theta}_{{\mathcal K}_{\varepsilon}}(\hat{\delta}_{{\mathcal K}_\varepsilon}(\varepsilon),\varepsilon),\hat{\delta}_{{\mathcal K}_\varepsilon}(\varepsilon),\varepsilon},
\quad 
{\overline{U}_\varepsilon := U_{\hat{\theta}_{{\mathcal K}_{\varepsilon}}(\hat{\delta}_{{\mathcal K}_\varepsilon}(\varepsilon),\varepsilon),\hat{\delta}_{{\mathcal K}_\varepsilon}(\varepsilon),\varepsilon}},
\quad
{\overline{\Xi}_\varepsilon := \Xi_{\hat{\theta}_{{\mathcal K}_{\varepsilon}}(\hat{\delta}_{{\mathcal K}_\varepsilon}(\varepsilon),\varepsilon),\hat{\delta}_{{\mathcal K}_\varepsilon}(\varepsilon),\varepsilon}},
\ee
\eqref{eq:mathcal V:varepsilon} 
 {and
\eqref{conv:vartheta}}
hold locally uniformly on $[0,T] \times \textrm{\rm Int}({\mathcal S}_{d})$,
 and  \eqref{eq:D:mathcal V:varepsilon}
 and
 	\eqref{convergence:U}
   hold almost everywhere. 
We mostly argue by an inversion argument very similar to the proof of 
Proposition \ref{propexp} ({which is given below}). 
\begin{thm}
	\label{thm:5.10}	
{Under the assumptions quoted in the beginning of the subsection, we can find $\bar{\kappa}_{0} \geq 0$, only depending on $\kappa_{2}$ and $d$, such that, 
for any $\kappa_{0} \geq \bar \kappa_{0}$
and any $e \in (0,\varepsilon_{0}=\sqrt{\kappa_{2}/\kappa_{0}}]$, 
there exist a compact subset ${\mathcal K}_{e}$ included in $\mathrm{Int}({\mathcal S}_{d})$, 
with ${\mathcal K}_{e} \supset {\mathcal K}_{e'}$ if $e<e'$ and 
$\cup_{e \in (0,\varepsilon_{0}]} {\mathcal K}_{e}= \textrm{\rm Int}({\mathcal S}_{d})$},
together with functions 
$\hat{\theta}_{{\mathcal K}_{e}}(\delta,\varepsilon)$ and $\hat{\delta}_{{\mathcal K}_{e}}(\varepsilon)$ 
as in the statement of Proposition \ref{thmbounds}
such that, 
{using the same notations
as in 
\eqref{not:bar:eps} (and in particular letting $e=\varepsilon$)},
\begin{align}
&\lim_{\varepsilon\rightarrow0}
	\overline{\mathcal{V}}
	_{\varepsilon}= \mathcal{V} &&\mbox{locally uniformly in } [0,T] \times \textrm{\rm Int}({\mathcal S}_{d}),
	\label{L1}\\
&{\lim_{\varepsilon\rightarrow0}
	\overline{\Xi}
	_{\varepsilon}= 0} &&{\mbox{locally uniformly in } [0,T] \times \textrm{\rm Int}({\mathcal S}_{d})
	\times {\mathcal S}_{d}}.
	\label{L1b}
	\end{align}
{If, in addition 
$F$ is in $\mathcal{C}^{1,1}({\mathcal S}_{d})$, then} 
\begin{align}
&\lim_{\varepsilon\rightarrow0}
\mathfrak{D}\overline{\mathcal{V}}
_{\varepsilon}= \mathfrak{D}\mathcal{V}
 &&\mbox{a.e. on } [0,T]\times \textrm{\rm Int}({\mathcal S}_{d})
\mbox{ and in } [L^1_{loc}([0,T] \times \textrm{\rm Int}({\mathcal S}_{d})]^d,
\label{L2} 
\\
&{\lim_{\varepsilon\rightarrow0}
\overline{U}
_{\varepsilon} = U}
 &&
{ \mbox{a.e. on } [0,T]\times \textrm{\rm Int}({\mathcal S}_{d})
\mbox{ and in } [L^1_{loc}([0,T] \times \textrm{\rm Int}({\mathcal S}_{d})]^d}, 
\label{L3}
\end{align}
{where $\textrm{\rm Int}({\mathcal S}_{d})$ is equipped with the $(d-1)$-dimensional Lebesgue measure}. 
\end{thm}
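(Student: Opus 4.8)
\textbf{Proof of Theorem \ref{thm:5.10} (plan).}
The strategy is a diagonal ``inversion'' argument, in the spirit of the proof of Proposition \ref{propexp}, that lets the compact set grow with $\varepsilon$. First I would fix a constant $\bar\kappa_0$, depending only on $\kappa_2$ and $d$, that is admissible simultaneously in Propositions \ref{cor:5:8}, \ref{prop:5:9} and \ref{thm:5:9} (take the largest of the three thresholds), and then fix any $\kappa_0 \geq \bar\kappa_0$. Next I would pick an exhaustion $(\mathcal{K}^{(n)})_{n \geq 1}$ of $\mathrm{Int}(\mathcal{S}_d)$ by compact sets, each contained in the interior of the next, with $\bigcup_n \mathcal{K}^{(n)} = \mathrm{Int}(\mathcal{S}_d)$ (e.g. $\mathcal{K}^{(n)} = \{p \in \mathcal{S}_d : p_i \geq 1/(n+d),\ i \in \ES\}$). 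Applying Propositions \ref{cor:5:8}, \ref{prop:5:9} and \ref{thm:5:9} with $\mathcal{K} = \mathcal{K}^{(n)}$ produces a threshold $\bar\varepsilon_0^{(n)}$ and parameter functions $\hat\theta^{(n)} := \hat\theta_{\mathcal{K}^{(n)}}$, $\hat\delta^{(n)} := \hat\delta_{\mathcal{K}^{(n)}}$; writing $[n,\varepsilon]$ for the resulting triple of parameters, one has, as $\varepsilon \to 0$, that $\mathcal{V}_{[n,\varepsilon]} \to \mathcal{V}$ uniformly on $[0,T]\times\mathcal{K}^{(n)}$, that $\Xi_{[n,\varepsilon]} \to 0$ uniformly on $[0,T]\times\mathcal{K}^{(n)}\times\mathcal{S}_d$, and — assuming moreover $F \in \mathcal{C}^{1,1}(\mathcal{S}_d)$ — that $\mathfrak{D}\mathcal{V}_{[n,\varepsilon]} \to \mathfrak{D}\mathcal{V}$ and $U_{[n,\varepsilon]} \to U$ both almost everywhere on $[0,T]\times\mathcal{K}^{(n)}$ and in $[L^1([0,T]\times\mathcal{K}^{(n)})]^d$.

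The core of the proof is the choice of thresholds. By Egorov's theorem (using the continuity in $\varepsilon$ of the objects involved to handle the continuous parameter, or passing through a countable dense set of $\varepsilon$'s), I would first select, for each $n$, a Borel set $G_n \subset [0,T]\times\mathcal{K}^{(n)}$ with $|([0,T]\times\mathcal{K}^{(n)})\setminus G_n| \leq 2^{-n}$ on which $\mathfrak{D}\mathcal{V}_{[n,\varepsilon]} \to \mathfrak{D}\mathcal{V}$ and $U_{[n,\varepsilon]} \to U$ uniformly as $\varepsilon \to 0$. Then I would choose a strictly decreasing sequence $\varepsilon_0 = e_1 > e_2 > \cdots \downarrow 0$, with $e_n \leq \bar\varepsilon_0^{(n)}$, such that for every $\varepsilon \in (0, e_n]$ one has simultaneously $\sup_{[0,T]\times\mathcal{K}^{(n)}}|\mathcal{V}_{[n,\varepsilon]} - \mathcal{V}| \leq 2^{-n}$, $\sup_{[0,T]\times\mathcal{K}^{(n)}\times\mathcal{S}_d}|\Xi_{[n,\varepsilon]}| \leq 2^{-n}$, $\|\mathfrak{D}\mathcal{V}_{[n,\varepsilon]} - \mathfrak{D}\mathcal{V}\|_{L^1([0,T]\times\mathcal{K}^{(n)})} + \|U_{[n,\varepsilon]} - U\|_{L^1([0,T]\times\mathcal{K}^{(n)})} \leq 2^{-n}$, and $\sup_{G_n}\bigl(|\mathfrak{D}\mathcal{V}_{[n,\varepsilon]} - \mathfrak{D}\mathcal{V}| + |U_{[n,\varepsilon]} - U|\bigr) \leq 2^{-n}$. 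Finally I would \emph{define} $\mathcal{K}_e := \mathcal{K}^{(n)}$, $\hat\theta_{\mathcal{K}_e} := \hat\theta^{(n)}$ and $\hat\delta_{\mathcal{K}_e} := \hat\delta^{(n)}$ for $e \in (e_{n+1}, e_n]$; this family is nested in $e$ and exhausts $\mathrm{Int}(\mathcal{S}_d)$, as required. With the notation of \eqref{not:bar:eps} and $e = \varepsilon$, one then has $\overline{\mathcal{V}}_\varepsilon = \mathcal{V}_{[n,\varepsilon]}$, $\overline{\Xi}_\varepsilon = \Xi_{[n,\varepsilon]}$, $\mathfrak{D}\overline{\mathcal{V}}_\varepsilon = \mathfrak{D}\mathcal{V}_{[n,\varepsilon]}$ and $\overline{U}_\varepsilon = U_{[n,\varepsilon]}$ whenever $\varepsilon \in (e_{n+1}, e_n]$.

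The four conclusions then reduce to bookkeeping. For \eqref{L1} and \eqref{L1b}: given a compact $\mathcal{L} \subset \mathrm{Int}(\mathcal{S}_d)$, choose $n_0$ with $\mathcal{L} \subset \mathcal{K}^{(n_0)}$; for $\varepsilon \leq e_{n_0}$, writing $\varepsilon \in (e_{n+1}, e_n]$ with $n \geq n_0$, one gets $\sup_{[0,T]\times\mathcal{L}}|\overline{\mathcal{V}}_\varepsilon - \mathcal{V}| \leq 2^{-n}$, and similarly for $\overline{\Xi}_\varepsilon$ on $[0,T]\times\mathcal{L}\times\mathcal{S}_d$, with $2^{-n} \to 0$ since $n \to \infty$ as $\varepsilon \to 0$. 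The same estimate with $L^1$ norms in place of sup norms gives the $L^1_{loc}$ halves of \eqref{L2} and \eqref{L3}. For the almost-everywhere halves: since $\sum_n |([0,T]\times\mathcal{K}^{(n)})\setminus G_n| < \infty$, Borel–Cantelli gives, for a.e. $x \in [0,T]\times\mathrm{Int}(\mathcal{S}_d)$, an index $n_1(x)$ with $x \in G_n$ for every $n \geq n_1(x)$; hence, for $\varepsilon \in (e_{n+1}, e_n]$ with $n \geq n_1(x)$, $|\mathfrak{D}\overline{\mathcal{V}}_\varepsilon(x) - \mathfrak{D}\mathcal{V}(x)| + |\overline{U}_\varepsilon(x) - U(x)| \leq 2^{-n} \to 0$ as $\varepsilon \to 0$. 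Proposition \ref{prop:cv:cost} and Theorem \ref{main:thm:6} are then immediate restatements, once $\hat\varphi(\varepsilon)$ is identified with the function $\varphi$ attached to the parameters $[n(\varepsilon),\varepsilon]$ and $\overline{\Xi}_\varepsilon(0,\cdot,\cdot)$ with the expectation in the statement of Proposition \ref{prop:cv:cost}. The one genuinely delicate point in all of this is the inversion step itself: Propositions \ref{cor:5:8}, \ref{prop:5:9} and \ref{thm:5:9} control the viscous objects only on a \emph{fixed} compact set and with \emph{compact-dependent} parameters, so $\mathcal{K}_\varepsilon$ must be let expand at a rate compatible with the (non-explicit) convergence rates there; for the space-uniform quantities $\mathcal{V}$ and $\Xi$ this is routine, but the convergences of $\mathfrak{D}\mathcal{V}_{[n,\varepsilon]}$ and $U_{[n,\varepsilon]}$ are only almost everywhere, hence without a uniform rate, which is exactly why the Egorov truncation by the sets $G_n$ must be inserted before the thresholds $e_n$ are chosen, and why in the limit one only recovers convergence almost everywhere rather than at every point of differentiability of $\mathcal{V}$.
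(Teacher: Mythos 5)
Your proposal is correct and follows essentially the same route as the paper: exhaustion of $\textrm{\rm Int}({\mathcal S}_{d})$ by compacts ${\mathcal K}^{(n)}$, an Egorov truncation for each $n$ with exceptional sets of measure $\leq 2^{-n}$, a diagonal choice of thresholds $e_n\downarrow 0$ defining ${\mathcal K}_{\varepsilon}:={\mathcal K}^{(n)}$ on $(e_{n+1},e_n]$, and Borel--Cantelli to upgrade the pointwise statements to almost-everywhere convergence. The only (harmless) cosmetic differences are your choice of compacts and the $2^{-n}$ versus $1/n$ tolerances; your remark about handling the continuous parameter $\varepsilon$ in Egorov is a point the paper glosses over, and your fix is adequate.
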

\begin{proof}
{Throughout the proof, we consider 
$\bar \kappa_{0}$
as in the statement of Proposition 
\ref{thmbounds}
and then, for $\kappa_{0} \geq \bar \kappa_{0}$
and for a compact subset 
${\mathcal K}$ included in 
$\textrm{\rm Int}({\mathcal S}_{d})$, we consider 
$\widehat{\theta}_{\mathcal K}(\delta,\varepsilon)$ and 
$\widehat{\delta}_{\mathcal K}(\varepsilon)$, also 
as in the statement of Proposition 
\ref{thmbounds}.
Without any loss of generality, 
the functions 
$\widehat{\theta}_{\mathcal K}(\delta,\varepsilon)$ and 
$\widehat{\delta}_{\mathcal K}(\varepsilon)$
may be assumed to be defined for all $\delta \in (0,1/2]$ and $\varepsilon \in (0,\varepsilon_{0}=\sqrt{\kappa_{2}/\kappa_{0}}]$; in fact, only the limits 
in $(0,0)$ and $0$ matter for our purpose. }
For any $n\geq 1$, let ${\mathcal K}_n$ be the compact set $\{x\in {\mathcal S}_{d} : \mathrm{dist}(x, \partial \mathcal{S}_{d}) \geq 1/n\}$. Below, we restrict ourselves to the set ${\mathbb N}_{\partial}$ of large enough integers $n$ such that ${\mathcal K}_{n} \not = \emptyset$. Obviously, ${\mathbb N}_{\partial}$ is of the form ${\mathbb N}_{\partial} = \{n_{\partial},n_{\partial}+1,\cdots\}$ for some 
integer $n_{\partial} \geq 1$. 
For $n \in {\mathbb N}_{\partial}$,  
we let $\mathcal{V}_{n,\varepsilon}(t,p):=\mathcal{V}_{\hat{\theta}_{{\mathcal K}_{n}}(\hat{\delta}_{{\mathcal K}_{n}}(\varepsilon), \varepsilon),\hat{\delta}_{{\mathcal K}_{n}}(\varepsilon),\varepsilon}(t,p)$ for 
$t \in [0,T]$ and 
$p\in {\mathcal S}_d$; {similarly, we introduce 
$U_{n,\varepsilon}$
and $\Xi_{n,\varepsilon}$
(the latter being defined on 
$[0,T] \times {\mathcal S}_{d} \times {\mathcal S}_{d}$)}. By Corollary \ref{cor:5:8}, for any fixed $n \in {\mathbb N}_{\partial}$,  
$\lim_{\varepsilon\rightarrow 0} \mathcal{V}_{n,\varepsilon} = \mathcal{V}$, uniformly on $[0,T] \times {\mathcal K}_n$, 
and, {by Proposition
\ref{prop:5:9}}, 
$\lim_{\varepsilon\rightarrow 0} \Xi_{n,\varepsilon} = 0$, uniformly on $[0,T] \times {\mathcal K}_n \times {\mathcal S}_{d}$.
 Further, by Proposition \ref{thm:5:9}, 
$\lim_{\varepsilon\rightarrow0}
\mathfrak{D}\mathcal{V}
_{n,\varepsilon}(t,p)= \mathfrak{D}\mathcal{V}(t,p)$ for a.e. $(t,p) \in [0,T]\times {\mathcal K}_n$,
and $\lim_{\varepsilon\rightarrow0}
\mathfrak{D}\mathcal{V}
_{n,\varepsilon}= \mathfrak{D}\mathcal{V}$ in $[L^1([0,T] \times {\mathcal K}_n)]^d$, {and similarly with $U_{n,\varepsilon}$ and $U$}.
Applying Egoroff's theorem for any $n \in {\mathbb N}_{\partial}$, there exists $E_n\subset [0,T]\times {\mathcal K}_n$ with 
({$d$-dimensional --since the simplex is equipped with the $(d\!-\!1)$ Lebesgue measure--}) Lebesgue measure $|E_n|\leq 2^{-n}$ such that $\lim_{\varepsilon\rightarrow0}
\mathfrak{D}\mathcal{V}
_{n,\varepsilon}= \mathfrak{D}\mathcal{V}$ 
{and
$\lim_{\varepsilon\rightarrow0}
U_{n,\varepsilon}= U$} 
uniformly in $[0,T]\times {\mathcal K}_n\setminus E_n$. Therefore, for any $n \in {\mathbb N}_{\partial}$, there exists $\varepsilon_n \in (0,\varepsilon_{0}]$ such that, for any $\varepsilon\leq \varepsilon_n$, {
\begin{equation}
\label{E3}
\begin{split}
&\sup_{(t,p) \in [0,T] \times {\mathcal K}_n} \bigl|\bigl( 
\mathcal{V}_{n,\varepsilon} -\mathcal{V}\bigr)(t,p)\bigr|
+
\sup_{(t,p,q) \in [0,T] \times {\mathcal K}_n \times {\mathcal S}_{d}} \bigl|\Xi_{n,\varepsilon}(t,p,q)\bigr|
\\
&\hspace{15pt} + \sup_{(t,p)\in [0,T]\times {\mathcal K}_n\setminus E_n}
\Bigl( 
\bigl| \bigl( \fD\mathcal{V}_{n,\varepsilon} -\fD\mathcal{V}\bigr)(t,p)\bigr|
+
\bigl| \bigl( U_{n,\varepsilon} -U\bigr)(t,p)\bigr|
\Bigr) 
\\
&\hspace{15pt} + \int_0^T \int_{{\mathcal K}_n}\Bigl( |(\fD\mathcal{V}_{n,\varepsilon}-\fD\mathcal{V})(t,p)|+
 |(U_{n,\varepsilon}-U)(t,p)|
\Bigr) d {\varrho(p)}  dt \leq \frac1n,
\end{split}
\end{equation}
where $\varrho$ is the image of the $(d\!-\!1)$-dimensional Lebesgue measure by the map $(x_{1},\cdots,x_{d-1})
\mapsto (x_{1},\cdots,x_{d-1},x^{-d})$.} Moreover, we can assume that $\varepsilon_{n+1}<\varepsilon_n \leq 1/n$, so that $\lim_{n \rightarrow \infty} \varepsilon_n =0$.
We now define $n$, and thus ${\mathcal K}_n$, in terms of $\varepsilon$: for any $\varepsilon \in (0,\varepsilon_{n_{\partial}})$, let $n_\varepsilon$ be the unique $n\in\mathbb{N}$ such that $\varepsilon_{n+1}<\varepsilon\leq \varepsilon_n$. 
Obviously, the function $(0,n_{\partial}) \ni \varepsilon \mapsto n_{\varepsilon}$ is decreasing
and the supremum, say $N:=\sup_{\varepsilon \in (0,n_{\partial})} n_\varepsilon$ cannot be finite as otherwise we would have
$0<\varepsilon_{N+1}\leq \varepsilon_{n_\varepsilon + 1} <\varepsilon$ for any $\varepsilon \in (0,\varepsilon_{0}]$, which is a contradiction.
Hence, choosing $n=n_{\varepsilon}$ in the right-hand side of 
\eqref{E3},
{letting
with a slight abuse of notation
${\mathcal K}_{\varepsilon}:={\mathcal K}_{n_{\varepsilon}}$
(the definition of ${\mathcal K}_{\varepsilon}$, for $\varepsilon \in [\varepsilon_{n_{\partial}},\overline 
\varepsilon_{0}]$, does not really matter)
and then using the same notation as in 
\eqref{not:bar:eps},}
we get, 
for any compact subset ${\mathcal K}$ included in $\textrm{\rm Int}({\mathcal S}_{d})$, 
\[
\sup_{(t,p) \in [0,T] \times {\mathcal K}} |\overline{\mathcal{V}}_{\varepsilon}(t,p)-\mathcal{V}(t,p)|\leq 
\sup_{(t,p) \in [0,T] \times {\mathcal K}_\varepsilon} |\mathcal{V}_{n_\varepsilon,\varepsilon}(t,p)-\mathcal{V}(t,p)|\leq \frac{1}{n_\varepsilon},
\]
for $\varepsilon$ small enough, which gives \eqref{L1}. 
{Obviously, the proof of \eqref{L1b} 
is similar, and in fact the same argument applies for 
proving the $L^1$ convergence 
in \eqref{L2} and \eqref{L3}}.
{To prove the a.e. convergence in \eqref{L2} (and similarly in \eqref{L3}}), consider again a compact set ${\mathcal K}\subset \textrm{\rm Int}({\mathcal S}_{d})$ and $\varepsilon$ small enough such that ${\mathcal K}\subset {\mathcal K}_{{\varepsilon}}$. From \eqref{E3} again, we get that 
\[
|\fD\overline{\mathcal{V}}_{\varepsilon}(t,p)-\fD\mathcal{V}(t,p)| \leq \frac{1}{n_\varepsilon}
\]
if $(t,p)\in[0,T]\times {\mathcal K}\setminus E_{n_\varepsilon}$. Therefore, the set of points $(t,p)\in[0,T]\times \textrm{\rm Int}({\mathcal S}_{d})$ such that $\fD\overline{\mathcal{V}}_{\varepsilon}(t,p)$ does not converge to $\fD\mathcal{V}(t,p)$, as $\varepsilon\rightarrow 0$, is included in the set of points $(t,p)$ such that $(t,p)\in E_{n_\varepsilon}$ for infinitely many $n_\varepsilon$. The latter is nothing but $\limsup_{n\geq n_{\partial}} E_n$, which has Lebesgue measure 0 by Borel-Cantelli lemma, since $\sum_{n=1}^\infty |E_n|\leq \sum_{n=1}^\infty 2^{-n} <\infty$. 
Hence $\lim_{\varepsilon\rightarrow0}
\mathfrak{D}\overline{\mathcal{V}}
_{\varepsilon}(t,p)= \mathfrak{D}\mathcal{V}(t,p)$ for a.e. $(t,p) \in [0,T]\times \textrm{\rm Int}({\mathcal S}_{d})$, from which {the a.e. convergence in \eqref{L2} follows}.
 {The a.e. convergence in \eqref{L3} is treated in the same way}.  
\end{proof}

\subsection{Proofs of auxiliary exponential integrability properties}
\label{subse:auxiliary:proofs:integrability}

\begin{proof}[Proof of Proposition \ref{expfin}]
We prove 
\eqref{eq:expfin1}
and 
\eqref{eq:expfin2} in a single row, mostly following \cite[Proposition 2.2]{mfggenetic}. 
Fix $i\in \ES$ and, for simplicity, write
${\boldsymbol p}^i$ for ${\boldsymbol p}^{i,\theta,\delta,\varepsilon}$ and take $t_{0}=0$.  
As in the second step of the proof of \cite[Proposition 2.1]{mfggenetic}, 
we write the equation for ${\boldsymbol p}^i$  in the form 
\begin{equation}
\label{eq:tilde w:i}
{dp_{t}^i = 
\sum_{j \in \ES} \Bigl[  p^j_{t} \bigl(\varphi_{\theta,\delta,\varepsilon}(p^i_t)  + \alpha^{j,i}_t \bigr)  
- p_{t}^i\bigl(\varphi_{\theta,\delta,\varepsilon}(p^j_t)+ \alpha^{i,j}_t \bigr)
\Bigr]
   dt + 
 \varepsilon \sqrt{  p_{t}^i ( 1-p_{t}^i)}
d \widetilde W_{t}^i,}
\end{equation}
for 
$t \in [0,T]$ 
and for
$\widetilde{\boldsymbol W}^i=(\widetilde W_{t}^i)_{0 \le t \le T}$
a $1d$-Brownian motion.  
Then,
It\^o's formula yields (the left-hand side below is well-defined since 
${\boldsymbol p}^i$ does not vanish)
\begin{equation}
\label{ito:log}
\begin{split}
d \biggl[\frac{\lambda}{\varepsilon^2} \ln p_{t}^i \biggr] &=  
\sum_{j \in \ES} \biggl[\frac{\lambda}{\varepsilon^2} \frac{p^j_{t}}{p^i_{t}}\Bigl(\varphi_{\theta,\delta,\varepsilon}(p^i_t)  + \alpha^{j,i}_t \Bigr)  
-\frac{\lambda}{\varepsilon^2}\Bigl(\varphi_{\theta,\delta,\varepsilon}(p^j_t)+ \alpha^{i,j}_t \Bigr)
\biggr]
dt
 - \frac{\lambda}{2} \frac{1- p_{t}^i}{p_{t}^i} dt
\\
&\hspace{15pt}
+   \frac{\lambda}{\varepsilon} \sqrt{\frac{1-p_{t}^i}{p_{t}^i}} d \widetilde W_{t}^{i}, \quad t \in [0,T].
\end{split}
\end{equation}
We now subtract the quantity $ \lambda^2 ( 1-p^i_t)/(2 \varepsilon^2 p^i_{t})$ to the drift of  \eqref{ito:log}
and then get the following lower bound (using the definition of $\varphi$ in \eqref{newphi} together with the fact that 
$0\leq\alpha^{i,j}\leq M$ if $j \not = i$)
\begin{equation}
\label{ito:log:2}
\begin{split}
&\sum_{j \in \ES} \biggl[\frac{\lambda^2}{\varepsilon^2} \frac{p^j_{t}}{p^i_{t}}\Bigl(\varphi_{\theta,\delta,\varepsilon}(p^i_t)  + \alpha^{j,i}_t \Bigr)  
-\frac{\lambda}{\varepsilon^2}\Bigl(\varphi_{\theta,\delta,\varepsilon}(p^j_t)+ \alpha^{i,j}_t \Bigr)
\biggr] - \frac{\lambda}{2} \frac{1- p_{t}^i}{p_{t}^i}
-\frac{\lambda^2}{2\varepsilon^2}\frac{1-p^i_t}{p^i_t}
 \\
&\geq 
\frac{\lambda}{\varepsilon^2} \frac{1}{p_{t}^i}  \kappa_0  
\mathbbm{1}_{[0,\delta]}(p_{t}^i) -
{\frac{\lambda}{\varepsilon^2}
\sum_{j \in \ES}
 \Bigl(\kappa_\varepsilon
{\mathbbm 1}_{[0,2\theta]}(p_{t}^j) +
\kappa_{0} 
{\mathbbm 1}_{[0,2\delta]}(p_{t}^j)
 +M
\Bigr)} -\frac{\lambda}{2\varepsilon^2}(\varepsilon^2+\lambda)\frac{1}{p_{t}^i}\\
&\geq  \frac{\lambda}{\varepsilon^2}\Bigl(\kappa_0- \frac{\varepsilon^2+\lambda}{2}  \Bigr)\frac{1}{p_{t}^i}\mathbbm{1}_{[0,\delta]}(p_{t}^i)
-\frac{\lambda(1+\lambda)}{2\delta\varepsilon^2} -\frac{\lambda d}{\varepsilon^2} \bigl(\kappa_\varepsilon + {\kappa_{0}} + M\bigr).
\end{split}
\end{equation}
Hence, integrating \eqref{ito:log} from $0$ to $T$,
adding and subtracting the compensator $\varepsilon^{-2} \lambda^2 \int_{0}^{T} (1-p_{t}^i)/(2p_{t}^i) dt$
  and then taking the exponential, we get
\begin{equation*}
\label{eq:E:Pt:eta}
\begin{split}
&(p_{T}^i)^{\lambda/\varepsilon^2} \exp \biggl( - \frac{\lambda}{\varepsilon} \int_{0}^{T}  \sqrt{\frac{1-p_{t}^i}{p_{t}^i}}
 d \widetilde W_{t}^{i} - \frac{\lambda^2}{2 \varepsilon^2} 
\int_{0}^{T} \frac{1-p_{t}^i}{p_{t}^i} dt 
\biggr)
\\
&\geq (p_{0}^i)^{\lambda/\varepsilon^2}
\exp \biggl(  
\frac{\lambda}{\varepsilon^2}\Bigl(\kappa_0- \frac{\varepsilon^2+\lambda}{2}  \Bigr)\int_0^T \frac{1}{p_{t}^i} \mathbbm{1}_{[0,\delta]}(p_{t}^i)dt
\biggr)
e^{-TC(\delta,\varepsilon,\lambda)}.
\end{split}
\end{equation*}
Since the left-hand side has expectation less than 1, claim \eqref{eq:expfin1} follows.
In order to get \eqref{eq:expfin2}, it suffices to 
replace $\lambda$
by $\varepsilon^2 \lambda$, 
{to observe that the indicator function ${\mathbf 1}_{[0,2\theta]}$ 
in 
\eqref{ito:log:2} has zero value 
if $\inf_{0 \leq t \leq T} p_{t}^{i} > 2 \theta$, and to integrate from $0$ to the first time when ${\boldsymbol p}^{i}$ becomes lower than 
$2\theta$}. 
\end{proof}

\begin{proof}[Proof of Proposition \ref{propexp}]
Throughout the proof, the initial condition $(t_{0},p_{0}) \in [0,T] \times {\mathcal K}$ is implicitly understood in the notation 
${\boldsymbol p}^{\theta,\delta,\varepsilon}$. 
Also, we fix the state $i \in \ES$ and the value of $\lambda \geq 1$
and we make explicit the dependence of the various constants upon the two parameters 
	$\delta$ and $\varepsilon$. However, we do not indicate the fact that the constants may depend on 
	${\mathcal K}$. Below, we use the same notation $\widetilde {\boldsymbol W}^i$
	as in \eqref{eq:tilde w:i}.
	\vskip 4pt

\vskip 4pt

	
	\emph{Step 1.}	
	\textit{a.}
	We first claim that, for any $\eta >0$, there exists $a_{\eta}(\delta,\varepsilon) \in (0,1)$, such that, 
	for all $(t_{0},p_{0}) \in [0,T] \times {\mathcal K}$ and $\theta \in [0,\delta/2]$,
\begin{equation}	
\label{eq:p:eta}
	{\mathbb P}\Bigl(\inf_{t_{0} \le t \le T}  p_{t}^{i,\theta,\delta,\varepsilon} >  a_{\eta}(\delta,\varepsilon) \Bigr) \geq 1-\eta.
	\end{equation}
The proof is a consequence of 
	\eqref{ito:log} and of \eqref{eq:expfin1} (with $\lambda=\varepsilon^2$ and $\kappa_{0}> \varepsilon^2$). Indeed, the former, together with Doob's maximal inequality, yield
\begin{equation*}
\begin{split}
&{\mathbb P} \Bigl( \sup_{t_{0} \le t \le T} 
\bigl[ - \ln(p_{t}^{i,\theta,\delta,\varepsilon}) \bigr] \geq -\ln(a_{\eta}) 
\Bigr) 
\leq \frac{c}{\vert \ln(a_{\eta}) \vert}{\mathbb E} \int_{t_{0}}^T \Bigl( 1+ \frac{1}{p_{t}^{i,\theta,\delta,\varepsilon}} \Bigr) dt. 
\end{split}
\end{equation*}	
for some $c$ only depending on $\varepsilon$ and $\delta$, $M$, $\kappa_{0}$ and {$\kappa_{2}$}. 
Then, \eqref{eq:expfin1}
gives a bound (depending on $\delta$ and $\varepsilon$) for the above right-hand side. 

\textit{b.} Our second step is to prove that, provided that
$\kappa_{0}$ satisfies
\begin{equation}
\label{75}
\kappa_{0} - {1 - \frac{\lambda}2} \geq 4 {\kappa_{2}},
\end{equation}
there exists $\theta_{1}(\delta,\varepsilon,{\lambda})>0$ such that, for any $\theta \leq \theta_{1}(\delta,\varepsilon)$ and any $(t_{0},p_{0}) \in [0,T] \times {\mathcal K}$,
\be
\label{76} 
\E\biggl[\exp\biggl\{\lambda \int_{t_{0}}^T-\varphi'_{\theta}(p_t^{i,\theta,\delta,
	\varepsilon})dt\biggr\}\biggr] \leq 2,
\ee 	
where we have let for convenience 
$\varphi_{\theta}'(r) := - (2 {\kappa_{\varepsilon}}/\theta) {\mathbbm 1}_{[0,2\theta]}(r)$.

Obviously, $\int_{t_{0}}^T\varphi'_{\theta}(p_t^{i,\theta,\delta,\varepsilon})dt$ converges to $0$ in probability
as $\theta$ tends to $0$, uniformly in $(t_{0},p_{0}) \in [0,T] \times {\mathcal K}$ (the other two parameters $\delta$ and $\varepsilon$ being kept fixed), as the indicator function appearing in the definition of $\varphi_{\theta}'$ vanishes for $\theta$ small enough {(it hence suffices to choose $2 \theta \leq a_{\eta}(\delta,\varepsilon)$ for
$a_{\eta}(\delta,\varepsilon)$ as in \eqref{eq:p:eta}, for a given $\eta>0$ as small as needed)}.
In order to prove 
\eqref{76}, we then notice that 
	\be 
	\label{phieps}
	-\varphi'_{\theta}(p_t^{i,\theta,\delta,\varepsilon}) 
	= 2\frac{\kappa_\varepsilon}{\theta} \mathbbm{1}_{[0,2\theta]}(p^{i,\theta,\delta,\varepsilon}_t)
	\leq 2\frac{\kappa_\varepsilon}{\theta}\frac{2\theta}{p^{i,\theta,\delta,\varepsilon}_t}
	\mathbbm{1}_{[0,2\theta]}(p^{i,\theta,\delta,\varepsilon}_t)
	\leq 
	 \frac{4 \kappa_\varepsilon}{p^{i,\theta,\delta,\varepsilon}_t}
	\mathbbm{1}_{[0,\delta]}(p^{i,\theta,\delta,\varepsilon}_t).	
	\ee
Recall now that ${\kappa_\varepsilon=\varepsilon^{-2} \kappa_2}$. Hence, if we choose 
another real $\lambda'$ that {satisfies
$\kappa_{0}\geq 4 {\kappa_{2}}+(1+\lambda')/2$}
 (take for instance that $\lambda'=\lambda+1$ and recall
	$\kappa_{0}\geq 4 {\kappa_{2}}+1+\lambda/2$), 
	 then
	\eqref{phieps} and \eqref{eq:expfin1} yield
	\[ 
	\E\biggl[
	\exp\biggl\{-\lambda' \int_{t_{0}}^T  \varphi'_{\theta}(p_t^{i,\theta,\delta,\varepsilon}) dt
	\biggr\}
	\biggr]	
	\leq 
	\E\biggl[
	\exp\biggl\{ \frac{4\kappa_2 \lambda'}{\varepsilon^2}\int_{t_{0}}^T \frac{1}{p^{i,\theta,\delta,\varepsilon}_t}
	\mathbbm{1}_{[0,\delta]}(p^{i,\theta,\delta,\varepsilon}_t) dt
	\biggr\}
	\biggr]	
	\leq C_{1}(\delta,\varepsilon,\lambda'),
	\]
	where $C_{1}(\delta,\varepsilon,\lambda')$ is a constant independent of $\theta$ and depending on $(t_{0},p_{0})$ through ${\mathcal K}$ only\footnote{{We do not keep track of the parameters $\kappa_{0}$, $\kappa_{2}$,
	${\mathcal K}$, $M$, $T$ and $d$ in the constants.}}.
Combining the above upper bound with the fact that $\int_{t_{0}}^T\varphi'_{\theta}(p_t^{i,\theta,\delta,\varepsilon})dt$
tends to $0$ in probability (uniformly in $(t_{0},p_{0}) \in [0,T] \times {\mathcal K}$), we easily derive
\eqref{76}.
\vskip 4pt

\emph{Step 2.}
The goal of this step is to address a similar result to \eqref{76} but with $\varphi_{\theta}'$ 
replaced by $\varphi_{\delta}'$, defined 
as $\varphi_{\delta}'(r) := - (2 \kappa_{0}/\delta)  
 \mathbbm{1}_{[0,2\delta]}(r)$.
\vskip 4pt

\textit{a.}
The first step is to notice that 
\begin{equation}
\label{eq:secondstep:a}
\begin{split}
-\varphi'_{\delta}(p_t^{i,\theta,\delta,\varepsilon}) 
&\leq 2\frac{\kappa_0}{\delta}\frac{2\delta}{p^{i,\theta,\delta,\varepsilon}_t}
\mathbbm{1}_{[0,2\delta]}(p^{i,\theta,\delta,\varepsilon}_t)
\leq 
\frac{4 \kappa_0}{p^{i,\theta,\delta,\varepsilon}_t}
\mathbbm{1}_{[0,\delta]}(p^{i,\theta,\delta,\varepsilon}_t)
+ \frac{4 \kappa_0}{p^{i,\theta,\delta,\varepsilon}_t}
\mathbbm{1}_{[\delta,2 \delta]}(p^{i,\theta,\delta,\varepsilon}_t).	
\end{split}
\end{equation}	
\vskip 4pt

\textit{b.} We address the first term in the right-hand side of 
\eqref{eq:secondstep:a}.
To do so, we need a finer lower bound on 
the coordinates of ${\boldsymbol p}^{\theta,\delta,\varepsilon}$ and hence we must 
revisit 
the proof of Proposition 
\ref{expfin}. As in the first step, we fix some $\eta>0$ and, for {$a_{\eta}:=a_{\eta}(\delta,\varepsilon)$ as therein}, we consider the event 
{$A_{\eta}^{1,i} := \{ \inf_{t_{0} \le t \le T}  p_{t}^{i,\theta,\delta,\varepsilon} > a_{\eta}\}$}. 
Obviously, 
\eqref{eq:p:eta}
says that 
${\mathbb P}(A_{\eta}^{1,i}) \geq 1-\eta$.

Next, we define the event 
\begin{equation*}
{A^{2,i}} = \biggl\{ \forall t \in [t_{0},T], \quad \varepsilon \int_{t_{0}}^t \sqrt{\frac{1-p_{s}^{i,\theta,\delta,\varepsilon}}{p_{s}^{i,\theta,\delta,\varepsilon}}} d
\widetilde W_{s}^i \geq -1 - \frac{\varepsilon}2 \int_{t_{0}}^t \frac{1-p_{s}^{i,\theta,\delta,\varepsilon}}{p_{s}^{i,\theta,\delta,\varepsilon}} ds\biggr\}. 
\end{equation*}
We observe that the complementary reads
\begin{equation*}
\begin{split}
({A^{2,i}})^{\complement} = \biggl\{ \exists t \in [t_{0},T] : 
\exp \biggl( -   
 \int_{t_{0}}^t \sqrt{\frac{1-p_{s}^{i,\theta,\delta,\varepsilon}}{p_{s}^{i,\theta,\delta,\varepsilon}}} d
\widetilde W_{s}^i  - \frac1{2 }
\int_{t_{0}}^t \frac{1-p_{s}^{i,\theta,\delta,\varepsilon}}{p_{s}^{i,\theta,\delta,\varepsilon}} ds\biggr) 
> \exp \bigl( {\varepsilon}^{-1} \bigr) \biggr\},
\end{split}
\end{equation*}
from which we get by Doob's inequality that 
${\mathbb P}(A^{{2,i}}) \geq 1 - \exp(-\varepsilon^{-1})$. 

We now work on ${(\cap_{j \in \ES}A^{1,j}_{\eta})} \cap A^{2,i}$ for $2 \theta \leq a_{\eta}$. By 
combining 
\eqref{ito:log}
and 
\eqref{ito:log:2}, we get (choosing $\lambda=\varepsilon^2$ therein and noticing that, since we work on 
${\cap_{j \in \ES} A^{1,j}_{\eta}}$, we can remove the 
second indicator function in the second line of \eqref{ito:log:2}):
\begin{equation*}
\begin{split}
 \ln \bigl(p_{t}^{i,\theta,\delta,\varepsilon} \bigr)  &\geq 
  \ln \bigl(p_{0}^{i,\theta,\delta,\varepsilon} \bigr) - 1
 \\
&\hspace{15pt} +  \int_{t_{0}}^t 
 \bigl( \kappa_{0}- {\varepsilon} \bigr) \frac1{p_{s}^{i,\theta,\delta,\varepsilon}}
{\mathbbm 1}_{[0,\delta]} \bigl( p_{s}^{i,\theta,\delta,\varepsilon} \bigr) ds - \Bigl( \frac{{\varepsilon}}{\delta} + d \bigl( \kappa_{0}+ M \bigr) \Bigr) T. 
\end{split}
\end{equation*}
Hence, for $\kappa_{0} \geq \varepsilon$, we can find a constant 
$C_{2} \geq 0$ ({only depending on $\kappa_{0}$, $M$, $T$ and 
$d$}) such that, on ${(\cap_{j \in \ES}A^{1,j}_{\eta})} \cap A^{2,i}$, for 
$2\theta \leq a_{\eta} \leq \varepsilon \leq \delta$, 
 $i \in \ES$ and $t \in [0,T]$, we have
 $p_{t}^{i,\theta,\delta,\varepsilon} \geq \exp(-C_{2} )$. 
{(Observe indeed that, in 
 \eqref{eq:p:eta}, we can always assume that $a_{\eta}(\delta,\varepsilon) \leq \min(\varepsilon,\delta)$.)}
\vskip 4pt

\textit{c.}
%
Return back to the first term in the right-hand side of 
\eqref{eq:secondstep:a}.
By  \eqref{eq:expfin2} (applied with $\lambda$ replaced by $8 \lambda'$ for $\lambda'>\lambda$), if 
$\kappa_0 \geq \varepsilon^2 (1+ 8\lambda')$,
which is for instance true 
if 
$\kappa_0 \geq 2$
and
 $8 \lambda'  \varepsilon^2 \leq 1$ (in turn the latter is true if 
 ${8 (\lambda+1)} \varepsilon^2 \leq 1$ 
 and $\lambda'-\lambda=1$), then 
\be
\label{76:b}
{\mathbb E}\biggl[ \exp \biggl\{4 \lambda' \kappa_0 \int_{t_{0}}^T  \frac{1}{p^{i,\theta,\delta,\varepsilon}_t}
\mathbbm{1}_{[0,\delta]}(p^{i,\theta,\delta,\varepsilon}_t) dt\biggr\}
{\mathbbm 1}_{\{ \inf_{t_{0} \le t \le T} p_{t}^{i,\theta,\delta,\varepsilon}
> 2 \theta \}}
\biggr] \leq C_{3}(\delta,\lambda'),
\ee	
where $C_{3}(\delta,\lambda')$ is {non-increasing} with $\delta$. 
Under the same condition $\kappa_{0} \geq 2$ and 
 ${16} \lambda ' \varepsilon^2 \leq 1$,
\eqref{eq:expfin1} (applied with $\lambda$ replaced by $8 \varepsilon^2 \lambda'$)
yields
\[{\mathbb E}\biggl[ \exp \biggl\{4 \lambda' \kappa_0 \int_{t_{0}}^T  \frac{1}{p^{i,\theta,\delta,\varepsilon}_t}
\mathbbm{1}_{[0,\delta]}(p^{i,\theta,\delta,\varepsilon}_t) dt\biggr\}
{\mathbbm 1}_{\{ \inf_{t_{0} \le t \le T} p_{t}^{i,\theta,\delta,\varepsilon}
\leq 2 \theta \}}
\biggr] \leq C_{4}(\delta,\varepsilon,\lambda') {\mathbb P} \bigl( ({A_{\eta}^{1,i}})^{\complement} \bigr)^{1/2}.
\]	
at least whenever $2 \theta \leq a_{\eta}$. Choosing 
$\eta$ such that 
$C_{4}(\delta,\varepsilon,\lambda') \eta^{1/2} \leq C_{3}(\delta,\lambda')$
 {and allowing}
for a new value of $C_{3}(\delta,\lambda')$, we may remove the second indicator function
in \eqref{76:b}. Then,
%
we can easily change the first indicator function in \eqref{76:b}
into ${\mathbbm 1}_{[0,2\delta]}$ by noticing that 
$r^{-1}{\mathbbm 1}_{[0,2\delta]}(r) \leq 
r^{-1}{\mathbbm 1}_{[0,\delta]}(r)
+
\delta^{-1}$. For a new value of $C_{3}(\delta,\lambda')$ (as long as it remains {non-increasing} with $\delta$), we then have
\be 
\label{eq:79}
{\mathbb E}\biggl[ \exp \biggl\{ 4 \lambda' \kappa_0 \int_{t_{0}}^T  \frac{1}{p^{i,\theta,\delta,\varepsilon}_t}
\mathbbm{1}_{[0,2\delta]}(p^{i,\theta,\delta,\varepsilon}_t) dt\biggr\}
\biggr] \leq C_{3}(\delta,\lambda').
\ee	
Recall $C_{2}$ from step \textit{2b} and deduce by H\"older's inequality that, for 
$2\theta \leq a_{\eta} \leq \varepsilon \leq \delta$
and  
$3\delta \leq \exp(-C_{2})$,  
\begin{equation*}
\begin{split}
&{\mathbb E}\biggl[ \exp \biggl\{ - \lambda   \int_{t_{0}}^T   
\varphi'_{\delta}(p_t^{i,\theta,\delta,\varepsilon}) 
dt\biggr\}\biggr]
\\
&\leq 
{\mathbb E}\biggl[ \exp \biggl\{ 4 \lambda \kappa_0 \int_{t_{0}}^T  \frac{1}{p^{i,\theta,\delta,\varepsilon}_t}
\mathbbm{1}_{[0,2\delta]}(p^{i,\theta,\delta,\varepsilon}_t) dt\biggr\}\biggr]
\\
&=
1+
{\mathbb E}\biggl[ \exp \biggl\{ 4 \lambda \kappa_0 \int_{t_{0}}^T  \frac{1}{p^{i,\theta,\delta,\varepsilon}_t}
\mathbbm{1}_{[0,2\delta]}(p^{i,\theta,\delta,\varepsilon}_t) dt\biggr\}
{\mathbbm 1}_{( {(\cap_{j \in \ES}A^{1,j}_{\eta})} \cap A^{2,i})^{\complement}} 
\biggr] 
\\
&\leq 1+ C_{3}(\delta,\lambda')^{\lambda/\lambda'} \bigl( \eta {d} + \exp(-\varepsilon^{-1}) \bigr)^{1-\lambda/\lambda'}. 
\end{split}
\end{equation*}
For $3 \delta \leq \exp(-C_{2})$, we may choose 
$\varepsilon \leq \hat{\varepsilon}(\delta)  \wedge \delta$ with 
$ C_{3}(\delta,\lambda')^{\lambda/\lambda'} [ 2 \exp(-\hat \varepsilon^{-1}(\delta))]^{1-\lambda/\lambda'}
=1$ and then $\eta  {d} \leq \exp(-\varepsilon^{-1})$, with $a_{\eta} \leq \varepsilon$, and $\theta \leq \min(a_{\eta}/2,\theta_{1}(\delta,\varepsilon,\lambda))$ ({with $\theta_{1}(\delta,\varepsilon,\lambda)$ as in Step \textit{1b}}).
We get that the above right-hand side is less than 2, which is 
the analogue of \eqref{76}. 
\vskip 4pt

\textit{d.} By collecting 
\eqref{eq:bound:phiprime}, 
\eqref{76}
and the above conclusion
with $\lambda'=\lambda+1$
and
by applying Cauchy-Schwarz inequality, we obtain
\be 
\label{80}
\E\biggl[
\exp\biggl\{ - \frac{\lambda}{2} \int_{t_{0}}^T  \varphi'_{\theta,\delta,\varepsilon}(p_t^{i,\theta,\delta,\varepsilon}) 
dt
\biggr\}
\biggr]	
\leq 2,
\ee 
for the following choices: $ \delta \leq \bar \delta_{0}:= \exp(-C_{2})/3$, 
$\kappa_0 \geq \bar \kappa_{0} :=\max(2,4  {\kappa_{2}} + 1+\lambda/2)$,
$\varepsilon \leq \min( 
1/ {\sqrt{16 \lambda+16}}, \delta,\sqrt{\kappa_2/\kappa_0}, \hat{\varepsilon}(\delta))$
and 
$\theta \leq \min(\theta_{1}(\delta,\varepsilon,\lambda),a_{\eta})$, 
for $\eta   {d}  \leq \exp(-\varepsilon^{-1})$ such that 
$a_{\eta} \leq \varepsilon$. We recall that the condition 
$\epsilon^2 \leq \sqrt{\kappa_2/\kappa_0}$ is required to ensure that $\phi_{\theta, \delta, \epsilon}$ is non-increasing. This is one
part of the inequality in the definition of the term $\Psi$ showing up in \eqref{bigexp}. 
In fact, the term with $\varphi_{\theta,\delta}(p_t^{i,\theta,\delta,\varepsilon})$ in $\Psi$ is bounded in the same way
 {since $\varphi_{\theta,\delta,\varepsilon} \leq -(\varphi_{\theta}'+\varphi_{\delta}')$ for 
$2\theta \leq \delta \leq 1$}, yielding 
(for the same range of parameters)
\be
\label{81}
\E\left[
\exp\left\{\frac{\lambda}{2} \int_{t_{0}}^T 
\varphi_{\theta,\delta,\varepsilon}(p_t^{i,\theta,\delta,\varepsilon})
dt
\right\}
\right]	
\leq 2.
\ee
\vskip 4pt

\emph{Step 3.}
We now handle the term with 
$1/p_t^{i,\theta,\delta,\varepsilon}$ in 
\eqref{bigexp}.
\vskip 4pt
 
\textit{a.} On the one hand,  {recalling that $\kappa_{0} \geq 2$, using \eqref{eq:79} and 
arguing as above}, we obtain
\[
\E\biggl[
\exp\biggl\{\lambda \int_{{t_{0}}}^T 
\frac{1}{p_t^{i,\theta,\delta,\varepsilon}}
\mathbbm{1}_{[0,\delta]}(p_t^{i,\theta,\delta,\varepsilon})
dt
\biggr\}
\biggr]	
\leq 2,
\]
again for $\kappa_0,\kappa_{1},\theta,\delta,\varepsilon$ as in item $d$ of the second step.
\vskip 4pt

\textit{b.}
On the other hand, by following the second step, 
for $2\theta \leq a_{\eta} \leq \varepsilon \leq \delta$, we get  
\begin{equation*}
\begin{split}
&\E\biggl[
\exp\biggl\{\lambda  \int_{t_{0}}^T 
\frac{1}{p_t^{i,\theta,\delta,\varepsilon}}
\mathbbm{1}_{[\delta,1]}(p_t^{i,\theta,\delta,\varepsilon})
dt
\biggr\}
\biggr]	
\\
&\leq 
\exp\bigl(  \lambda  T e^{C_{2}} \bigr)  
	+
\E\biggl[
\exp\biggl\{\lambda  \int_{t_{0}}^T 
\frac{1}{p_t^{i,\theta,\delta,\varepsilon}}
\mathbbm{1}_{[\delta,1]}(p_t^{i,\theta,\delta,\varepsilon})
dt
\biggr\}
{\mathbbm 1}_{({(\cap_{j \in \ES}A^{1,j}_{\eta})} \cap A^{2,i})^{\complement}
} 
\biggr]	
\\
& \leq 
\exp\bigl( \lambda  T e^{C_{2}} \bigr)  
	+
\exp \bigl( \frac{\lambda T}{\delta} \bigr) 
\bigl( \eta {d} + \exp(- \varepsilon^{-1}) \bigr).
\end{split}	
\end{equation*}
Following item \textit{2c}, we can render the last term in the right-hand side 
less than 1.  
\vskip 4pt
 
\textit{c.}
We then combine items \textit{3a} and \textit{3b} by Cauchy-Schwarz inequality. And, then by H\"older inequality, 
we gather all the three cases addressed in item \textit{2d} and in this third step to get 
\eqref{bigexp}, provided we replace $\lambda$ therein by 
$\lambda/6$ and then fix 
the various parameters as in item \textit{2d} {with the additional constraint that 
$2 \exp(\lambda T/\delta) \exp(- \varepsilon^{-1}) \leq 1$ (which is equivalent to 
$\varepsilon \leq (\ln(2) + \lambda T/\delta)^{-1}$}. 
\vskip 4pt

\emph{Step 4.}
{We now want to prove \eqref{bigexp2} and \eqref{bigexp3}.}
Throughout the step, we fix the value of $\lambda$.
\vskip 4pt

\textit{a.} We recall that, for $0< \delta \leq \bar \delta_{0}$, $\hat{\varepsilon}(\delta)$ is defined as
\begin{equation*}
\hat{\varepsilon}(\delta) = \min\biggl(\frac1{4 \sqrt{\lambda}},\sqrt{\frac{\kappa_2}{\kappa_0}},\delta, \Bigl(\ln\bigl( 2 [C_{3}(\delta,\lambda+1)]^{\lambda} \bigr)  \Bigr)^{-1}, {\bigl(\ln ( 2)+ \lambda T/\delta   \bigr)^{-1}} \biggr),
\end{equation*}
where $C_{3}(\delta,\lambda+1)$ is a {non-increasing} function of $\delta$. (We omit to specify the dependence of 
$\hat{\varepsilon}(\delta)$ upon $\lambda$ and ${\mathcal K}
$.)
Clearly, $\hat{\varepsilon}$ is  {non-decreasing} on $(0,\bar \delta_{0}]$, takes positive values and has $0$ as limit in $0$. We then define
\[
\bar{\varepsilon}(\delta):= \int_0^\delta \hat{\varepsilon}(\delta') d\delta '.
\]
It is straightforward to verify that, for $\delta \in (0,\bar \delta_{0}]$, 
$0< \bar{\varepsilon}(\delta) \leq \delta \hat{\varepsilon}(\delta) < \hat{\varepsilon}(\delta)$ (assume 
without any loss of generality that $\bar \delta_{0} \leq 1$). Moreover, $\bar{\varepsilon}$
extends by continuity to $[0,\bar{\delta}_{0}]$, letting $\bar{\varepsilon}(0)=0$, and the extension, still denoted by $\bar{\varepsilon}$, 
is continuous and strictly increasing.

\vskip 4pt

\textit{b.} We now define $\hat{\delta} : [0,\bar \varepsilon_{0}] \ni \varepsilon \mapsto 
\hat{\delta}(\varepsilon) \in [0,\bar \delta_{0}]$ as the  {converse} of the mapping 
$\bar{\varepsilon} : [0,\bar \delta_{0}] \ni \delta \mapsto \bar{\varepsilon}(\delta) \in [0,\bar \varepsilon_{0}]$, 
where 
$\bar \varepsilon_0= \bar{\varepsilon}(\bar \delta_0)$. Conclusion \eqref{bigexp2} hence follows from \eqref{bigexp}, noticing that, 
for any $\varepsilon \in [0,\bar \varepsilon_{0}]$, $\varepsilon=\bar \varepsilon(\hat{\delta}(\varepsilon)) < \hat \varepsilon(\hat{\delta}(\varepsilon))$, from which we indeed deduce that
{$\Psi(\lambda,\theta,\hat{\delta}(	\varepsilon),\varepsilon,{\mathcal K})
 \leq \bar C$, for $\theta \leq \hat{\theta}(\hat{\delta}(\varepsilon),\varepsilon)$}. 
 {As for 
 \eqref{bigexp3}, it follows from Step 2b, recalling that, under our choice
 for $\eta$, the probability of 
 $(\cap_{j \in \ES}A^{1,j}_{\eta}) \cap A^{2,i}$
 is less than $2 \exp(-\varepsilon^{-1})$.}
\end{proof}

\begin{proof}[Proof of Lemma \ref{lem:moments:p,q:l}]
We first prove \eqref{momentp}. For simplicity, we write ${\boldsymbol p}$ for ${\boldsymbol p}_{[t_{0},p_{0}]}^{\theta,\hat{\delta}(\varepsilon),\varepsilon}$ and $\varphi$ for $\varphi_{\theta,\delta,\varepsilon}$. Using 
the same notation as in 
\eqref{eq:tilde w:i}
and applying It\^o's formula, we expand
\begin{align*}
d \frac{1}{(p^i_t)^\ell}&= \biggl\{-\ell \frac{1}{(p^i_t)^{\ell+1}}
\sum\nolimits_{j \in \ES} \left( p^j_t(\varphi(p^i_t)+\alpha^{j,i}_t) - p^i_t (\varphi(p^j_t)+\alpha^{i,j}_t)\right) 
+\ell(\ell+1)\frac{\varepsilon^2}{2} \frac{1-p^i_t}{(p^i_t)^{\ell+1}}\biggr\}dt\\
&\quad -\ell {\varepsilon}  \frac{\sqrt{p^i_t(1-p^i_t)}}{(p^i_t)^{\ell+1}} d\widetilde{W}^i_t.
\end{align*}	
Letting
\[
\mathcal{E}_t = \exp \biggl\{\int_{t_{0}}^t \Bigl(\ell\sum\nolimits_{j \in \ES}  \varphi(p^j_s)+ 
\ell(\ell+1)\frac{\varepsilon^2}{2} {\bigl(\frac{1}{p^i_s}-1\bigr)}\Bigr)ds
\biggr\}, \quad t \in [t_{0},T], 
\]	
we have
\begin{equation*}
\begin{split}
d\left(\frac{\mathcal{E}_t^{-1}}{(p^i_t)^\ell}\right)
&= - \ell \frac{\mathcal{E}_t^{-1}}{(p^i_t)^{\ell+1}}
\sum\nolimits_{j \not  = i}  p^j_t(\varphi(p^i_t)+\alpha^{j,i}_t) dt
+\ell {\sum\nolimits_{j \not =i}} \frac{\mathcal{E}_t^{-1}}{(p^i_t)^\ell}
   \alpha^{i,j}_t dt + dm_t
\\
&\leq  {(d-1)} {\ell M} \frac{\mathcal{E}_t^{-1}}{(p^i_t)^\ell} dt +dm_t,
\end{split}
\end{equation*}	
where $m_t$ is a local martingale. By a standard localization, {we deduce that there 
exists an increasing sequence of (localizing) stopping times $(\sigma_{n})_{n \geq 1}$ converging to $T$ such that}
\[
\forall t \in [t_{0},T], \quad \E\left[\frac{\mathcal{E}_{{t \wedge \sigma_{n}}}^{-1}}{(p^i_{{t \wedge \sigma_{n}}})^\ell}\right]
\leq C + \ell M \int_{t_{0}}^t \E\left[\frac{\mathcal{E}_{{s \wedge \sigma_{n}}}^{-1}}{(p^i_{{s \wedge \sigma_{n}}})^\ell}  \right]ds,
\]	
for a constant $C$ that is allowed to vary from line to line as long as it only depends on the parameters quoted in 
the statement of the lemma ({in particular, it is independent of $n$}). 
Thus Gronwall's lemma and {then Fatou's lemma (letting $n$ tend to $\infty$)} give
\[
\sup_{{t_{0}}\leq t\leq T} \E\left[\frac{\mathcal{E}_t^{-1}}{(p^i_t)^\ell}\right] \leq C.
\] 
 Applying Cauchy-Schwarz inequality and then the above inequality, with $\ell$ replaced by $2\ell$, we obtain
 \begin{equation*}
 \sup_{{t_{0}}\leq t\leq T} \E\left[\frac{1}{(p^i_t)^\ell}\right]
 \leq C
 \biggl(\E\biggl[\exp \bigg\{\int_{{t_{0}}}^T \bigg(2\ell\sum\nolimits_{j \in \ES}  \varphi(p^j_s)+ 
 \ell(2\ell+1)  \frac{1}{p^i_s}\bigg)ds
 \bigg\}\biggr]\biggl)^{1/2},
  \end{equation*}
 which is bounded by a constant thanks to \eqref{bigexp}, choosing $\lambda$ in terms of $\ell$ and $d$.
  
  The proof of \eqref{momentq} follows from the same argument and then from Doob's maximal inequality (to pass the supremum inside the expectation), see for instance \cite[Proof of Proposition 2.3]{mfggenetic}.
\end{proof}

\section{Uniqueness for the master equation}
\label{sec:uniqueness:master:equation}

Here, our aim is to show Theorem \ref{thm:uniqueness:master}, namely that $\mathcal{V}$, the value function of the potential game,  is the unique viscosity solution of the HJ equation
\eqref{eq:hjb:inviscid}
and that its derivative $V=\mathfrak{D} \mathcal{V}$ is the unique solution, in a suitable class which we will determine,  of the conservative form 
\eqref{eq:master:equation:conservative:full}
of the master equation of the MFG. In this regard, it is worth emphasizing that we work
below with the local coordinates $(x_{1},\dots,x_{d-1},x^{-d})=\left(p_1,\dots,p_{d}\right)$ for 
$p \in {\mathcal S}_{d}$ (and thus $x \in \hat{\mathcal S}_{d}$). We recall that, for any $x\in\widehat{\mathcal{S}}_d$, we have $\widehat{\mathcal{V}}(t,x) = \mathcal{V}(t,\check{x})$, where  $\check{x}= (x_1,\dots,x_{{d-1}}, x^{-d})$, and we denote $x^{-d}=1-\sum_{j\in\ESd} x_j$. Following 
\eqref{hjbchart}, 
\eqref{eq:hjb:inviscid}
may be indeed rewritten
\begin{subnumcases}{}
\label{HJBPOT}
\partial_t \widehat{ \mathcal V} + \widehat{\mathcal{H}}\bigl(x, D_x \widehat {\mathcal V}\bigr) + \widehat F(x) =0, 
\\
\label{HJBPOT:bc}
\widehat{\mathcal{V}}(T,x)=\widehat G(x), 
\end{subnumcases}
for $t \in [0,T]$ and $x \in \mathrm{Int}(\widehat {\mathcal{S}}_d)$, and 
with $\widehat{\mathcal H}$ as in 
\eqref{eq:mathcal H:5:9}.
Its derivative $Z =D_x \widehat{\mathcal V}$
should satisfy
\eqref{eq:master:equation:conservative:full}, at least when the latter is formulated in local coordinates, namely 
\be
\label{CONS}
\left\{
\begin{array}{l}
\partial_t Z^i + \partial_{x_{i}} \bigl[\widehat{\mathcal{H}}(x,Z) +\widehat{ F}(x)\bigr] 
=0,
\\
Z^i(T,x)= \partial_{x_{i}} \widehat{G}(x),
\end{array}
\right.
\ee
the latter reading as a multidimensional {hyperbolic} system of PDEs. 
Let us point out a common difficulty in the study of the above two equations: Both are set in a bounded domain, but there are no boundary conditions in space, which is due to the fact that the dynamics of the forward characteristics of the MFG system do not see the boundary of the simplex when starting from its interior.

Concerning the HJ equation \eqref{HJBPOT}, there are no $\mathcal{C}^1$ solutions in general, which prompts us to consider viscosity solutions. Below, we first handle the HJ equation of the MFCP and then turn to the well-posedness of the conservative form of the master equation. 
The idea for proving uniqueness of the latter is to construct a correspondence between weak solutions in a suitable class and viscosity solutions of the HJ equation.

\subsection{HJ equation for the MFCP}
{In this subsection, we assume that $F$ anf $G$ are just Lipschitz-continuous}. As we have just said, 
the HJ equation \eqref{HJBPOT} is set in a bounded domain but without any boundary conditions in space. 
We hence define viscosity solutions in the interior of the simplex only:
\begin{defn}
\label{defvisco}
A function $v\in \mathcal{C}([0,T)\times \Int) $ (hence defined in local coordinates) is said to be:
\begin{itemize}
\item[(i)] a \emph{viscosity subsolution} of \eqref{HJBPOT} on $[0,T) \times \Int$ if, for any 
$\psi\in \mathcal{C}^1([0,T)\times \Int)$, 
\be 
-\partial_t \psi(\bar{t},\bar{x}) - \widehat{\H}\bigl(\bar{x}, D_x\psi(\bar{t},\bar{x})\bigr)- \widehat F(\bar{x}) \leq 0,
\ee
at every  
$(\bar{t},\bar{x})\in [0,T)\times \Int$ which is a local maximum of $v-\psi$ on 
$[0,T)\times \Int$;
\item[(ii)] a \emph{viscosity supersolution} of \eqref{HJBPOT} on $[0,T) \times \Int$ if, for any 
$\psi\in \mathcal{C}^1([0,T)\times \Int)$, 
\be 
-\partial_t \psi(\bar{t},\bar{x}) - \widehat{\H}\bigl(\bar{x}, D_x\psi(\bar{t},\bar{x})\bigr)-\widehat F(\bar{x}) \geq 0,
\ee
at every  
$(\bar{t},\bar{x})\in [0,T)\times \Int$ which is a local minimum of $v-\psi$ on 
$[0,T)\times \Int$;
\item[(iii)]a \emph{viscosity solution} of \eqref{HJBPOT} on $[0,T) \times \Int$ if it is both  a viscosity subsolution and a viscosity supersolution of \eqref{HJBPOT} in $\Int$.
\end{itemize}
\end{defn}

In order to prove uniqueness of viscosity solutions, in absence of boundary conditions in space, we must use the fact that the forward characteristics, given by 
an equation of the type 
\eqref{eq:fp}
with 
$\balpha$ bounded therein,  
do not leave the interior of the simplex. The result is the following:

\begin{thm}[Comparison Principle]
\label{thmcomparison}
Let $u,v$ be Lipschitz continuous in $[0,T]\times \widehat{\mathcal S}_d$, $u$ be a  viscosity subsolution and $v$ be a  viscosity supersolution, respectively, of \eqref{HJBPOT} in $\Int$. 
If $u(T,x)\leq v(T,x)$ for any $x\in\widehat{\mathcal S}_d$, then $u(t,x)\leq v(t,x)$ for any $t\in[0,T]$ and
$x\in\widehat{\mathcal S}_d$.
\end{thm}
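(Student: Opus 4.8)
The plan is to follow the classical doubling-of-variables technique of Crandall--Ishii--Lions, but with the crucial twist --- as the statement indicates --- that we must compensate for the absence of boundary conditions by exploiting that the first-order Hamiltonian $\widehat{\mathcal H}(x,z) = \sum_{i \in \ESd} x_i \widehat H^i(z) + x^{-d}\widehat H^d(z)$ is built out of the controlled dynamics \eqref{dynch}, whose drift points inward the simplex. First I would argue by contradiction: suppose $\sup_{[0,T]\times\widehat{\mathcal S}_d}(u-v) = \sigma > 0$. Because $u$ and $v$ are Lipschitz on all of $[0,T]\times\widehat{\mathcal S}_d$ and $u(T,\cdot)\le v(T,\cdot)$, a standard argument shows the supremum is attained at some $(t^\star,x^\star)$ with $t^\star < T$, and after adding a small linear-in-time penalty $\eta/(T-t)$ (or $\eta(T-t)$) one may assume the maximum is strict in $t$ and achieved at an interior time; the real issue is that $x^\star$ could a priori lie on $\partial\widehat{\mathcal S}_d$, where the equation is not assumed to hold.

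The heart of the argument is therefore to rule out $x^\star \in \partial\widehat{\mathcal S}_d$, and this is where I expect the main obstacle to be. The idea, in the spirit of Soner-type conditions for state-constrained problems, is to perturb the test function near the boundary in a direction that exploits the inward-pointing structure: near a boundary face where some coordinate $x_i$ (or $x^{-d}$) vanishes, I would add to the doubled test function a term like $\beta\,\Phi(x)$ where $\Phi$ is smooth, vanishes in the interior away from the boundary, and has gradient pointing strictly inward on the relevant face; evaluating the subsolution inequality against this perturbed test function and using that $\widehat H^i$ and $\widehat H^d$ have the explicit sign structure of \eqref{eq:mathcal H:5:9} (each $\widehat H^i(z)\le 0$, and $x_i \widehat H^i$ contributes with the right monotonicity in $x_i$), one forces a contradiction unless the maximum migrates into the interior. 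Concretely, I would first establish, via \eqref{dynch} and a duality/optimal-control representation of $u$ and $v$ (using Proposition \ref{prop:solution:mfc:zero:epsilon}), that any viscosity sub/supersolution admits a sub/super-optimality representation along admissible trajectories that stay in $\Int$; then the comparison becomes a direct consequence of comparing these representations. Alternatively --- and this is probably the cleaner route --- I would note that $\widehat{\mathcal V}$ itself is the value function and a viscosity solution by Proposition \ref{prop:solution:mfc:zero:epsilon}(iii)--(iv), so it suffices to prove that \emph{any} Lipschitz viscosity subsolution $u$ satisfies $u \le \widehat{\mathcal V}$ and any Lipschitz supersolution $v$ satisfies $v \ge \widehat{\mathcal V}$, each of which can be done by running the dynamic programming inequality backward along an admissible control (for the supersolution, take the optimal control of the MFCP; for the subsolution, take an arbitrary bounded control), the boundary never being seen because trajectories started in $\Int$ stay in $\Int$.

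After the boundary point is excluded, the remainder is the routine interior doubling argument: set $\Psi_{\alpha,\varepsilon}(t,x,s,y) = u(t,x) - v(s,y) - \tfrac{|x-y|^2}{2\varepsilon} - \tfrac{|t-s|^2}{2\varepsilon} - \beta\Phi(x) - \eta(T-t)$, let $(\bar t,\bar x,\bar s,\bar y)$ be a maximizer, use that for $\varepsilon$ small $\bar x,\bar y$ stay in a fixed compact subset of $\Int$ (by the boundary exclusion together with the Lipschitz bounds), apply the Crandall--Ishii lemma to get sub/superjets, plug into the two viscosity inequalities, and use the local Lipschitz continuity of $x\mapsto \widehat{\mathcal H}(x,z)$ in $x$ (uniform on compacts) together with $|\widehat{\mathcal H}(x,p) - \widehat{\mathcal H}(x,q)| \le C|p-q|$ from the boundedness of $a^\star$-type terms, to absorb all error terms; sending $\beta,\eta\to 0$ then $\varepsilon\to 0$ yields $\sigma \le 0$, a contradiction. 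I would expect the write-up to lean on \cite{cannarsa} and on the structure already recorded in \eqref{eq:mathcal H:5:9} (strict concavity in $z$, which also guarantees the comparison with the value function via Pontryagin), so most of these steps are standard once the boundary behavior is handled; the genuinely delicate point remains the construction of the inward perturbation $\Phi$ (or the equivalent optimal-control representation) showing the extremum cannot sit on $\partial\widehat{\mathcal S}_d$.
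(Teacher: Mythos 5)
Your diagnosis of the difficulty (no boundary condition in space, so one must prevent the extremum of $u-v$ from sitting on $\partial\widehat{\mathcal S}_d$ by exploiting the inward-pointing structure of $D_z\widehat{\mathcal H}$) is exactly right, and your truncation of the Hamiltonian using the Lipschitz bounds on $u,v$ matches what the paper does. But the mechanism you propose for excluding a boundary maximum does not work as stated. Definition \ref{defvisco} only asserts the sub/supersolution inequalities at local extrema lying in $[0,T)\times\Int$; at a point of the boundary there is simply no viscosity inequality available to test your perturbed test function against, so ``evaluating the subsolution inequality'' there is not an option. The Soner-type argument you invoke is designed for constrained viscosity solutions, where the supersolution property is assumed to hold up to the boundary --- that is not the hypothesis here. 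A \emph{bounded} perturbation $\beta\Phi$ with inward-pointing gradient can shift the maximizer but cannot guarantee it lands in the open interior, and if it does not, the argument stalls.

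The fix --- and this is what the paper does, following Porretta--Ricciardi --- is to make the perturbation blow up at the boundary: set $v_h := v - h^2\sum_{i\in\ES}\ln\rho_i + h(T-t)$, where $\rho_i$ is the distance to the $i$-th face of $\widehat{\mathcal S}_d$. Then $v_h=+\infty$ on $\partial\widehat{\mathcal S}_d$, so $v_h\ge u$ outright on a collar $\{\prod_i\rho_i\le\eta\}$, and the comparison reduces to the classical one (cited from Fleming--Soner) on the smooth interior subdomain $\{\prod_i\rho_i\ge\eta\}$ with genuine boundary data on $\{\prod_i\rho_i=\eta\}$ --- no doubling-with-boundary-exclusion is needed at all. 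What must then be checked is that $v_h$ is still a supersolution; this uses the concavity of $\widehat{\mathcal H}_{2R}$ in $z$ together with the quantitative inward-drift inequality $\langle D_z\widehat{\mathcal H}_{2R}(x,z),D\rho_i(x)\rangle\ge -C_R\,\rho_i(x)$, which cancels the singularity of $h^2\sum_iD\rho_i/\rho_i$ and leaves an error $h^2dC_R$ absorbed by the extra $+h$ from the time penalty once $h\le 1/(dC_R)$. Your alternative route via sub/super-optimality principles and comparison with the value function is viable in principle, but it presupposes that every Lipschitz viscosity sub(super)solution \emph{without boundary conditions} satisfies the corresponding optimality principle along trajectories confined to $\Int$ --- a statement of essentially the same depth as the comparison principle itself, so as written it does not close the argument.
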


Before giving the proof, we state an immediate consequence.

\begin{cor}
\label{CORUNIQ}
There exists a unique viscosity solution of \eqref{HJBPOT} in $\Int$ 
{that} is Lipschitz continuous in $[0,T]\times \widehat{\mathcal S}_d$
and satisfies the terminal condition  \eqref{HJBPOT:bc}. It is the value function $\mathcal{V}$ of the MFCP.
\end{cor}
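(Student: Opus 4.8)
\textbf{Proof proposal for Corollary \ref{CORUNIQ}.}

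The plan is to derive the corollary directly from the Comparison Principle (Theorem \ref{thmcomparison}) together with the properties of the value function $\mathcal V$ already collected in Proposition \ref{prop:solution:mfc:zero:epsilon}. First I would record the existence part: by points (iii) and (iv) of Proposition \ref{prop:solution:mfc:zero:epsilon}, the function $\widehat{\mathcal V}(t,x)=\mathcal V(t,\check x)$ is a viscosity solution of \eqref{HJBPOT} on $[0,T)\times\Int$ and is Lipschitz continuous on $[0,T]\times\widehat{\mathcal S}_d$; moreover by definition of the MFCP it satisfies the terminal condition $\widehat{\mathcal V}(T,x)=\widehat G(x)$, i.e.\ \eqref{HJBPOT:bc}. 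Hence at least one Lipschitz viscosity solution satisfying the terminal condition exists, namely $\widehat{\mathcal V}$.

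Second, I would prove uniqueness. Let $u$ be any Lipschitz continuous viscosity solution of \eqref{HJBPOT} on $[0,T)\times\Int$ with $u(T,\cdot)=\widehat G$ on $\widehat{\mathcal S}_d$. Applying Theorem \ref{thmcomparison} once with subsolution $u$ and supersolution $\widehat{\mathcal V}$ (using $u(T,\cdot)\le\widehat G=\widehat{\mathcal V}(T,\cdot)$) yields $u\le\widehat{\mathcal V}$ on all of $[0,T]\times\widehat{\mathcal S}_d$; applying it again with the roles reversed, using $\widehat{\mathcal V}(T,\cdot)\le\widehat G=u(T,\cdot)$, yields $\widehat{\mathcal V}\le u$. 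Combining the two inequalities gives $u=\widehat{\mathcal V}$, which is precisely the claimed uniqueness, and identifies the unique solution with the value function of the MFCP.

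Since both statements needed (a viscosity solution exists and is Lipschitz, and a comparison principle holds in the class of Lipschitz functions without spatial boundary data) have already been established in the excerpt, this corollary is essentially a one-line consequence and there is no real obstacle; the only point requiring a modicum of care is to make sure the comparison principle is invoked with the correct monotone roles of $u$ and $v$ and that the terminal inequalities go in the right direction in each of the two applications. It is also worth noting explicitly, for the reader, that the absence of boundary conditions on $\partial\widehat{\mathcal S}_d$ causes no loss of uniqueness here precisely because Theorem \ref{thmcomparison} compares Lipschitz sub/supersolutions on the whole closed set $[0,T]\times\widehat{\mathcal S}_d$ — this is the content inherited from the fact that the forward characteristics of the MFG system, driven by controls bounded by $M$, never leave $\Int$ when started from $\Int$.
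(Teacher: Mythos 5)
Your proof is correct and follows the same route as the paper: existence and Lipschitz regularity of $\widehat{\mathcal V}$ come from Proposition \ref{prop:solution:mfc:zero:epsilon} (items (iii) and (iv)), and uniqueness follows by applying the comparison principle of Theorem \ref{thmcomparison} in both directions. No gaps.
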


\begin{proof}
Uniqueness holds in $\widehat{\mathcal S}_d$ by the above theorem. The fact that the value function is a viscosity solution in $\Int$ is given by Theorem 7.4.14 of \cite{cannarsa}
(as we already accounted in the statement of Proposition \ref{prop:solution:mfc:zero:epsilon}).
\end{proof}

\begin{proof}[Proof of Theorem \ref{thmcomparison}]
We borrow ideas from the proofs of Theorem 3.8 and Proposition 7.3 in \cite{porrettaricciardi}.
The idea is to define a supersolution $v_h$ that dominates $u$ at points near the boundary, for any $h$, and then use the comparison principle and pass to the limit in $h$. The parameter $h$ is needed to force $v_h$ to be infinity at the boundary of the simplex. Since the simplex has corners, the distance {to the boundary} is not a smooth function, so the first step is to construct a nice test function that goes to $0$ as $x$ approaches the boundary. Roughly speaking, we consider the product of the distances to the faces of the simplex, and then take its logarithm. 
\vskip 4pt

\emph{Step 1}. Let $\rho_i(x)$, for $x\in\Int$,  be the distance from $x$ to the hyperplane $\{y \in {\mathbb R}^{{d-1}} : y_i=0\}$, for $i \in \ESd$, and $\rho_d(x)$ be the distance to $\{y \in {\mathbb R}^{{d-1}} : \sum_{l=1}^{{d-1}} y_l=1\}$. Specifically, for  $x\in\Int$, we have
\[
\rho_i(x) = 
\begin{cases}
x_{i} \qquad & i \in \ESd,
\\
{x^{-d}}/
\sqrt{{d-1}}
\qquad& i=d,
\end{cases}
\]
{where we recall that $x^{-d}=1 - \sum_{l \in \ESd} x_{l}$}.
Clearly $\rho_i\in \mathcal{C}^{\infty}(\Int)$. 

Since $u$ and $v$ are Lipschitz-continuous, we may let $R:= \max\{\|D_x u\|_\infty, \|D_x v\|_\infty\}$, 
{which is licit since the}
gradients {are} defined almost everywhere.
Hence it is easy to show\footnote{{In short, the argument is as follows: If 
$\psi$ is a continuously differentiable function such that $u-\psi$ has a minimum 
at some point $(\bar t ,\bar x) \in [0,T) \times \Int$, then necessarily $\|D_{x} \psi(\bar t,\bar x)\|_{\infty} \leq R$
and similarly when $(\bar t,\bar x)$ is a maximum of $v-\psi$.}} that  $u$ and $v$ are viscosity subsolution and supersolution, respectively, in $\Int$, of the modified HJB equation
\be
\label{HJBR}
\partial_t \widehat{ \mathcal V} + \widehat{\mathcal{H}}_{2R}\bigl(x, D_x \widehat {\mathcal V}\bigr) + \widehat F(x) =0,
\ee
{with
$\widehat{\mathcal{H}}_{2R}(x,z)  = \sum_{k \in \ESd} x_k \hat{H}^k_{2R}(z) +  x^{-d} \hat{H}^d_{2R}(z)$, 
where
$\widehat{H}^i_{2R}$, for $i \in \ES$, is 
given by 
\eqref{eq:theta}}, with $M$ replaced by $2R$ therein and also in the definition 
{\eqref{eq:astar}}
of $a^\star$, which we denote here by $a^\star_{2R}$ {(see also 
\eqref{eq:hamiltonians}
for the way the latter shows up in the Hamiltonian)}. This modified Hamiltonian has the property that $\widehat{\mathcal{H}}_{2R}(x,z)
=\widehat{\mathcal{H}}(x,z)$ for any $z\in\R^{{d-1}}$ such that $|z|\leq R$, and is further globally Lipschitz continuous in $(x,z)$ and  concave in $z\in \R^{{d-1}}$, even if not strictly. 
Now, we show that there exists a constant $C_R$, depending on $R$, such that 
\be 
\label{ineqDD}
\bigl\langle D_z \widehat{\mathcal H}_{{2R}}(x,z) , D\rho_i(x) \bigr\rangle \geq - C_R \rho_i(x),
\ee
for any $i \in \ES$, $x\in\Int$ and $z\in\mathbb{R}^{{d-1}}$. 
Indeed, 
we have
\[
\partial_{x_j}\rho_i(x) = 
\begin{cases}
\delta_{i,j} \qquad & i \in \ESd,
\\
 -1/\sqrt{{d-1}}
 \qquad& i=d.
\end{cases}
\]
{Similar to 
\eqref{eq:derivative:H}},
we also have,  for $j \in \ESd$, 
\[
\partial_{z_j} {\widehat{\mathcal{H}}_{2R}}(x,z) = 
\sum_{k \in \ESd} \bigl(x_{k} a^\star_{2R}(z_{k}-z_{j}) -x_{j} a^\star_{2R}(z_{j}-z_{k}) \bigr) +
{x^{-d}} a^\star_{2R}(-z_j) - x_{j} a^\star_{2R}(z_{j}). 
\]
Hence, for $i \in \ESd$,
\[
\langle D_z 
{\widehat{\mathcal{H}}_{2R}}(x,z) , D\rho_i(x)\rangle= 
\partial_{z_i} {\widehat{\mathcal{H}}_{2R}}(x,z)
\geq -2R({d-1})x_i = -2R({d-1}) \rho_i(x),
\]
while {(noticing that the contribution of the first sum in the expansion of $D_{z} \widehat{\mathcal{H}}_{2R}$ is null in the computation below)}
\begin{align*}
\langle D_z {\widehat{\mathcal{H}}_{2R}}(x,z) , D\rho_d(x)\rangle 
&= \frac{1}{\sqrt{{d-1}}}\sum_{j \in \ESd} \biggl(x_{j} a^\star_{2R}(z_{j}) - 
{x^{-d}}a^\star_{2R}(-z_{j})\bigg)
\\
&\geq -2R \frac{{d-1}}{\sqrt{{d-1}}} 
{x^{-d}}
= -2R({d-1}) \rho_d(x),
\end{align*}
and thus \eqref{ineqDD} holds with $C_R=2R({d-1})$. 
\vskip 4pt

\emph{Step 2}. For any $h>0$, let
\[
v_h(t,x) :=v(t,x) - h^2 \sum_{i \in \ES} \ln \bigl(\rho_i(x)\bigr) + h (T-t), 
\quad {(t,x) \in [0,T] \times \Int}.
\]
We claim that $v_h$ 
is a viscosity supersolution of \eqref{HJBR} on $[0,T) \times \Int$. 
Let then 
$\psi\in \mathcal{C}^1([0,T)\times \Int)$, 
and  
$(\bar{t},\bar{x})\in [0,T)\times \Int$ be  a local minimum of $v_h-\psi$ on 
$[0,T)\times \Int$. Since $v$ is a viscosity supersolution of \eqref{HJBR} in $[0,T) \times \Int$, considering the test function 
$\psi_h \in\mathcal{C}^1([0,T)\times \Int)$ given by $\psi_{h}(t,x)= \psi(t,x)+ h^2 \sum_{i \in \ES} \ln (\rho_i(x)) - h (T-t)$, we get
\[
-\partial_t \psi_h(\bar{t},\bar{x}) - \widehat{\mathcal{H}}_{{2R}}\bigl(\bar{x}, D_x\psi_h(\bar{t},\bar{x})\bigr)-\widehat F(\bar{x}) 
\geq 0.
\]
Using the concavity of $\widehat{\mathcal{H}}_{{2R}}$ in the second argument, see 
{\eqref{eq:hamiltonians}}, and \eqref{ineqDD}, we obtain
\begin{align*}
0&\leq -\partial_t \psi(\bar{t},\bar{x}) -h  
- \widehat{\mathcal{H}}_{{2R}}\biggl(\bar{x}, D_x\psi(\bar{t},\bar{x})
+h^2 \sum\nolimits_{i \in \ES} \frac{D\rho_i(\bar{x})}{\rho_i(\bar{x})}
\biggr)-\widehat F(\bar{x})\\
&\leq -\partial_t \psi(\bar{t},\bar{x}) -h 
- \widehat{\mathcal{H}}_{{2R}}\bigl(\bar{x}, D_x\psi(\bar{t},\bar{x})\bigr)
-\widehat F(\bar{x}) 
\\
&\hspace{15pt}-\biggl\langle D_z \widehat{\mathcal H}_{{2R}}\biggl(\bar{x}, D_x\psi(\bar{t},\bar{x})
+h^2 \sum\nolimits_{i \in \ES} \frac{D\rho_i(\bar{x})}{\rho_i(\bar{x})}
\biggr) , h^2 \sum\nolimits_{i \in \ES} \frac{D\rho_i(\bar{x})}{\rho_i(\bar{x})} \biggr\rangle 
\\
&\leq -\partial_t \psi(\bar{t},\bar{x}) -h 
- \widehat{\mathcal H}_{{2R}}\bigl(\bar{x}, D_x\psi(\bar{t},\bar{x})\bigr) -\widehat F(\bar{x}) 
+ h^2d C_R,
\end{align*}
giving
\[
-\partial_t \psi(\bar{t},\bar{x})  
- \widehat{\mathcal{H}}_{{2R}}\bigl(\bar{x}, D_x\psi(\bar{t},\bar{x})\bigr) -\widehat F(\bar{x})
\geq h- h^2d C_R \geq 0 \qquad \mbox{ if } h\leq \frac{1}{dC_R},
\]
which implies that $v_h$ is a viscosity supersolution of \eqref{HJBR} on $[0,T) \times \Int$.
\vskip 4pt
\emph{Step 3}.
As $\rho_i\leq 1$, we have $v_h(t,x)\geq v(t,x)$ for any $(t,x) \in [0,T] \times \Int$.  In particular, $v_h(T,x)\geq v(T,x)\geq u(T,x)$ for any $(t,x) \in [0,T] \times \Int$. 
We denote $\rho(x)=\prod_{i=1}^d \rho_i(x)$. Since $u$ and $v$ are bounded, we find that for any $h>0$ there exists $\eta>0$ (which may depend on $h$) such that 
$-h^2 \ln \rho(x) \geq \|u\|_\infty + \|v\|_{\infty}$ if $\rho(x)\leq\eta$. We denote by 
$\Gamma^\eta= \{x\in\widehat{\mathcal S}_d : \rho(x) =\eta\}$, 
$\mathcal{O}^\eta= \{x\in\widehat{\mathcal S}_d : \rho(x) \geq\eta\}$, and 
$\mathcal{O}^\eta_c= \{x\in\widehat{\mathcal S}_d : \rho(x) \leq\eta\}$; note that $\mathcal{O}^\eta$ is a smooth domain.   Thus 
$v_h(t,x)\geq u(t,x)$ for any $t\in[0,T]$ and $x\in \mathcal{O}^\eta_c$, in particular for any $x\in\Gamma^\eta$. Therefore we can apply the comparison principle 
(Theorem 9.1 page 90 in \cite{flemingsoner}) in $[0,T] \times \mathcal{O}^\eta$, because 
$u,v_h \in\mathcal{C}([0,T] \times \mathcal{O}^\eta)$: we obtain $u\leq v_h$ on $[0,T] \times \mathcal{O}^\eta$ and hence $u \leq v_{h}$  on the entire $[0,T] \times \widehat{\mathcal S}_{d}$, since we already have 
$u \leq v_{h}$ on $[0,T] \times 
\mathcal{O}^\eta_{c}$. 
%
Finally, the conclusion follows by sending $h$ to 0, as $\lim_{h\rightarrow 0}v_h(t,x) = v(t,x)$ for any $(t,x) \in [0,T] 
\times 
{\Int}$.
\end{proof}

\subsection{Uniqueness of the MFG master equation}
We now turn to the analysis of \eqref{CONS}. Clearly, it has to be understood in the sense of distributions.  {We assume in this subsection that $F$ and $G$ are in $\mathcal{C}^{1,1}(\mathcal{S}_d)$}. The {multidimensional hyperbolic} system  \eqref{CONS} is known to be ill-posed in general; nevertheless, in this specific potential case, it is possible  to prove uniqueness of solutions in a suitable class, thanks to a result of Kruzkov \cite{kruzkov}.
We remark that the system is hyperbolic in the wide sense, but not strictly hyperbolic.
We denote ${\mathcal Q}_T=(0,T) \times \Int$, $\overline{\mathcal Q}_T=[0,T]\times \widehat{\mathcal S}_d$, 
$\mathfrak{f}(x,z)=\widehat{\mathcal{H}}(x,z)+\widehat F(x)$ and $\mathfrak{g}(x)= D_x \widehat G (x)$.

The set of weak solutions in which we prove uniqueness is the following: 
\begin{defn}
\label{def:admissible:solution}
A function $Z\in [{\mathcal C}([0,T];(L^\infty(\widehat{\mathcal S}_{d}),*))]^{{d-1}}$ 
(where $*$ denotes the weak star topology $\sigma^*(L^\infty(\widehat{\mathcal S}_{d}),
L^1(\widehat{\mathcal S}_{d}))$)
is said to be an \emph{admissible solution} to the Cauchy problem \eqref{CONS} if the following three properties hold true:
\begin{enumerate}
\item For any $\phi= (\phi^1,\dots,\phi^{{d-1}})\in 
\mathcal{C}^1_C({\mathcal Q}_T;\mathbb{R}^{{d-1}})$,
\be 
\label{weaksol}
\int_{{\mathcal Q}_T} \left[ Z^i \partial_t\phi^i +  \mathfrak{f}(x,Z) \partial_{x_{i}}\phi^i\right] dx dt =0 ;
\ee
\item At time $t=T$, $Z(T,\cdot) = {\mathfrak g}$ a.e.; 
in particular, by time continuity of $Z$ with respect to the weak star topology, 
\be 
\label{ini}
Z(t,\cdot) \overset{*}{\rightharpoonup}  \mathfrak{g}
\quad \textrm{as} \quad t \rightarrow T;
\ee
\item
There exists a universal constant $c$ such that, for any 
$\psi\in \mathcal{C}^1_C(\Int;\mathbb{R}_+)$ (where the index $C$ means that $\psi$ is compactly supported) and any nonnegative matrix $A=(A_{i,j})_{i,j \in \ESd}$ with 
{${\rm Trace}(A) \leq 1$},
\be 
\label{dersemi}
\int_{\Int} \left[ \langle D_x\psi, AZ \rangle + c\psi \right] dx \geq 0.
\ee
\end{enumerate}
\end{defn}
By Banach-Steinhaus theorem, note that 
$Z\in [{\mathcal C}([0,T];(L^\infty(\widehat{\mathcal S}_{d}),\sigma^*(L^\infty(\widehat{\mathcal S}_{d}),L^1(\widehat{\mathcal S}_{d}))))]^{{d-1}}$ implies $Z \in L^\infty({\mathcal Q}_{T};
{{\mathbb R}^{d-1}})$. 

Before we say more about the solvability of \eqref{CONS}, we feel useful to elucidate 
the connection between \eqref{CONS}
and the original form 
\eqref{eq:master:equation}
of the master equation. 
For sure, the main difference between the two is that the former is in conservative form while the latter is not, but also the reader must pay attention to the fact that \eqref{CONS} is in local coordinates $(x_{1},\cdots,{x_{d-1}})$ 
while \eqref{eq:master:equation}
is written in intrinsic coordinates $(p_{1},\cdots,p_{d})$. Obviously, \eqref{eq:master:equation} can be easily written in local coordinates, which makes it easier to compare with \eqref{CONS}. Similar to 
\eqref{derhjbchart}, but with $\phi=\epsilon=0$, the version in local coordinates writes
(indices in the sums belonging to $\ESd$):
\begin{equation}
\label{eq:master:eq:inviscid:local}
\left\{
\begin{array}{l}
\partial_{t} \widehat U^i+ H \bigl( (\widehat U^i - \widehat U^j)_{j \in \ES} \bigr) + \sum_{j,k} \Bigl(  x_{k} (\widehat U^k- \widehat U^{j})_{+}
- x_{j} (\widehat U^j - \widehat U^k)_{+} \Bigr) \partial_{x_{j}} \widehat U^i
\\
 \hspace{15pt}
 + \sum_{j} \Bigl( x^{-d} (\widehat U^d - \widehat U^{j})_{+} - x_{j} (\widehat U^{j}- \widehat U^{d})_{+} \Bigr)
 \partial_{x_{j}} \widehat U^i  + \hat f^i(x)  =0,
 \\
\widehat  U^i(T,x) = \hat g^i(x),
 \end{array}
\right.
\end{equation}
 for $(t,x) \in [0,T] \times \Int$ and $i \in \ES$. 
As we explained in Subsections 
\ref{subse:2:master} and \ref{subse:3:2}, 
the key step to pass from one formulation to another is 
Schwarz identity.
 The following statement clarifies this fact.  
%
%

\begin{prop}
\label{prop:classical:weak}
 We have
\begin{enumerate}
\item if $U\in[\mathcal{C}^1([0,T]\times {\mathcal S}_d)]^d$ is a classical solution of the master equation \eqref{eq:master:equation} and $\widehat U$ denotes its version in local coordinates, then $Z$ defined by $Z^i=\widehat U^i-\widehat U^d$, for $i \in \ESd$, is a weak admissible solution to \eqref{CONS}; it satisfies 
$\partial_{x_{j}} Z^i = \partial_{x_{i}} Z^j$ for any $i,j \in \ESd^2$;
\item if $Z$ is a weak solution to \eqref{CONS}, in the sense that it satisfies     \eqref{weaksol}, and $Z$ is in $[\mathcal{C}^1([0,T]\times\widehat{\mathcal S}_d)]^{{d-1}}$, then 
the master equation 
\eqref{eq:master:equation}
has a (unique) classical solution 
$U\in[\mathcal{C}^1([0,T]\times {\mathcal S}_d)]^d$; denoting $\widehat U$ its version in local coordinates, 
the latter satisfies
$Z^i=\widehat U^i-\widehat U^d$, for $i \in \ES$.
\end{enumerate}
\end{prop}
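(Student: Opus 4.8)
\textbf{Proof strategy for Proposition \ref{prop:classical:weak}.}
The plan is to prove the two directions by explicit differentiation, the crux being the validity of Schwarz's identity and the passage between the master equation in intrinsic coordinates and its conservative reformulation in local coordinates. I would first recall the dictionary between intrinsic derivatives ${\mathfrak d}_{i}$ and local-chart derivatives $\partial_{x_{i}}$, as laid out in Subsection \ref{subse:classical:solutions}, and note that all the computations below are carried out on $[0,T]\times\textrm{\rm Int}({\mathcal S}_{d})$, where both coordinate systems are available. The guiding principle is that \eqref{CONS} is nothing but the system obtained by applying $\partial_{x_{i}}$ to the HJ equation \eqref{HJBPOT}, so the two directions of the proposition amount to saying that classical solutions of the non-conservative master equation are curl-free (hence gradients) and conversely that curl-free solutions of the conservative system integrate back to solutions of the non-conservative one.

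\emph{Direction (1).} Suppose $U\in[\mathcal{C}^1([0,T]\times {\mathcal S}_d)]^d$ solves \eqref{eq:master:equation}, and set $\widehat U$ for its version in local coordinates and $Z^i:=\widehat U^i-\widehat U^d$ for $i\in\ESd$. The first task is to establish the Schwarz identity $\partial_{x_{j}}Z^i=\partial_{x_{i}}Z^j$ on $[0,T]\times\Int$. This is the heart of the matter: I would argue exactly as in Subsection \ref{subse:3:2} (and as in \cite[Subsection 3.2.1]{mfggenetic}), differentiating the master equation \eqref{eq:master:equation} once in space and observing that, because the zeroth-order nonlinearity $H$ depends on $U$ only through the differences $U^i-U^j=Z^i-Z^j$ and because the first-order coefficients are symmetric in the appropriate sense, the resulting system for the quantities $\partial_{x_{j}}Z^i-\partial_{x_{i}}Z^j$ is a \emph{linear} first-order system with zero terminal data; uniqueness for that linear system (which can be obtained by the method of characteristics, i.e.\ along the flow of the forward equation in \eqref{eq:master:equation}, which stays in $\Int$) forces $\partial_{x_{j}}Z^i=\partial_{x_{i}}Z^j$ identically. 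Granted the Schwarz identity, one checks directly, using the dictionary, that \eqref{eq:master:equation} rewritten in local coordinates is \eqref{eq:master:eq:inviscid:local}, and then that the combination $Z^i=\widehat U^i-\widehat U^d$ satisfies the non-conservative PDE appearing inside the divergence in \eqref{CONS}; the Schwarz identity is precisely what turns that non-conservative form into the conservative form $\partial_t Z^i + \partial_{x_{i}}[\widehat{\mathcal H}(x,Z)+\widehat F(x)]=0$, because $\sum_{j,k}x_{k}(Z^k-Z^j)_+(\partial_{x_{j}}Z^i-\partial_{x_{k}}Z^i)=-\tfrac12\sum_{j,k}x_{k}\partial_{x_{i}}[(Z^k-Z^j)_+^2]$ once $\partial_{x_{i}}Z^j=\partial_{x_{j}}Z^i$. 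Since $Z$ is $\mathcal{C}^1$, it is in particular in $[\mathcal{C}([0,T];(L^\infty,\ast))]^{d-1}$, it satisfies \eqref{weaksol} by integration by parts, it meets the terminal condition \eqref{ini} because $U(T,\cdot)=g(\cdot)$ so $Z^i(T,\cdot)=\hat g^i-\hat g^d=\partial_{x_{i}}\widehat G$, and finally it satisfies the weak semiconcavity inequality \eqref{dersemi}: here I would invoke Proposition \ref{prop:solution:mfc:zero:epsilon}(v), which gives semiconcavity of $\widehat{\mathcal V}$ when $F,G\in\mathcal{C}^{1,1}$, together with the identification $Z=D_x\widehat{\mathcal V}$ (valid since, by Schwarz, $Z$ derives from a potential which must coincide with $\widehat{\mathcal V}$ up to the constant fixed by the HJ equation), so that $AZ=D_x(\text{something semiconcave})$ and $\int_{\Int}[\langle D_x\psi,AZ\rangle+c\psi]\ge 0$ follows from the distributional bound $D^2_{xx}\widehat{\mathcal V}\le c\,\mathrm{Id}$ tested against the nonnegative matrix $A$ with $\mathrm{Trace}(A)\le1$.

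\emph{Direction (2).} Conversely, assume $Z\in[\mathcal{C}^1([0,T]\times\widehat{\mathcal S}_d)]^{d-1}$ satisfies \eqref{weaksol}. Since $Z$ is $\mathcal{C}^1$, \eqref{weaksol} implies that $Z$ satisfies the conservative system \eqref{CONS} pointwise on ${\mathcal Q}_T$, hence on $[0,T]\times\Int$ by continuity. Writing out $\partial_{x_{i}}\widehat{\mathcal H}(x,Z)$ and using that $\widehat{\mathcal H}(x,z)$ depends on $z$ only through the differences $z_k-z_j$ (and through $z_j$, i.e.\ $z_j-z_d$ with the convention $z_d=0$), one sees that the system for the antisymmetric part $\partial_{x_{j}}Z^i-\partial_{x_{i}}Z^j$ is again linear, first order, with zero terminal data, so $\partial_{x_{j}}Z^i=\partial_{x_{i}}Z^j$ on $[0,T]\times\Int$. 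Therefore, for each $t$, $Z(t,\cdot)$ is a gradient: there is $\widehat{\mathcal V}(t,\cdot)\in\mathcal{C}^2(\Int)$, unique up to an additive time-dependent constant, with $Z^i=\partial_{x_{i}}\widehat{\mathcal V}$; fixing the constant by imposing the terminal value $\widehat{\mathcal V}(T,x)=\widehat G(x)$ and then integrating the scalar relation obtained from \eqref{CONS} (the $x$-primitive of $\partial_t Z^i + \partial_{x_{i}}[\widehat{\mathcal H}+\widehat F]=0$) gives that $\widehat{\mathcal V}$ solves the HJ equation \eqref{HJBPOT}. Now reconstruct $\widehat U^i$, $i\in\ES$, by the formula dictated by the dictionary: from $Z^i=\widehat U^i-\widehat U^d$ together with the requirement that $\widehat U$ be the value of the game, which amounts to prescribing $\widehat U^d$ through the scalar backward ODE obtained by restricting the master equation to the $d$-th component; equivalently, set $\widehat U^i:=Z^i+{\mathcal W}$ where ${\mathcal W}$ solves the appropriate scalar transport equation so that $\sum_i p_i\widehat U^i(t,p)=$ value function. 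A cleaner route, which I would adopt, is: define ${\mathring U}^i:=\partial_{x_{i}}\widehat{\mathcal V}-\tfrac1d\sum_{j\in\ESd}\partial_{x_{j}}\widehat{\mathcal V}$ for $i\in\ESd$ and ${\mathring U}^d:=-\tfrac1d\sum_{j\in\ESd}\partial_{x_{j}}\widehat{\mathcal V}$ (this is exactly ${\mathfrak D}\widehat{\mathcal V}$ via the dictionary), check using the Schwarz identity that ${\mathring U}$ solves the centered master equation \eqref{eq:master:equation:centred}, and then obtain $U=(U^i)_{i\in\ES}$ by adding the scalar correction $\bar U$ solving the scalar linear backward equation with terminal data $\bar U(T,\cdot)=\tfrac1d\sum_j g^j$; the resulting $U\in[\mathcal{C}^1([0,T]\times{\mathcal S}_d)]^d$ is then a classical solution of \eqref{eq:master:equation}, and by construction $\widehat U^i-\widehat U^d=Z^i$. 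Uniqueness of such $U$ is clear from uniqueness in the scalar backward ODE for $\bar U$ given ${\mathring U}$.

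\textbf{Main obstacle.} The delicate point in both directions is the Schwarz identity $\partial_{x_{j}}Z^i=\partial_{x_{i}}Z^j$: one must differentiate a first-order system whose nonlinear coefficients $(U^k-U^j)_+$ (resp.\ $a^\star$) are only Lipschitz, not $\mathcal{C}^1$, so the differentiated equation for the antisymmetric part must be interpreted carefully, and the uniqueness argument (method of characteristics along the simplex-preserving flow, with zero terminal data) must be made robust to this lack of smoothness. I would handle it exactly as in the proof of Theorem \ref{thmder} and in \cite{mfggenetic}, exploiting the fact that the forward characteristics stay in $\Int$ so that the degeneracy of the coefficients at $\partial{\mathcal S}_d$ never enters, and that the terminal data of the antisymmetric system vanishes because $\partial_{x_{j}}\partial_{x_{i}}\widehat G=\partial_{x_{i}}\partial_{x_{j}}\widehat G$.
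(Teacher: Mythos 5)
Your overall architecture (Schwarz identity as the pivot, then conservative $\leftrightarrow$ non-conservative form, then reconstruction of $U$ from $Z$ in direction (2)) matches the paper's, and your checks of the terminal condition and of \eqref{dersemi} are fine. But the way you propose to obtain the Schwarz identity is a genuine gap, and you half-acknowledge it without resolving it. In both directions you want to differentiate the first-order system (the master equation in (1), the conservative system \eqref{CONS} in (2)) and argue that the antisymmetric part $\partial_{x_j}Z^i-\partial_{x_i}Z^j$ solves a linear system with zero terminal data. This requires second derivatives of $U$ (resp.\ of $Z$) and differentiability of the coefficients $(U^k-U^j)_+$, i.e.\ of $a^\star$, none of which are available: the data are only $\mathcal{C}^1$ and $a^\star$ is merely Lipschitz, so the "linear system for the antisymmetric part" is not even well defined. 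The analogy with Theorem \ref{thmder} does not rescue this: there the Kimura operator provides $\mathcal{C}^{0,2+\gamma}$ interior regularity through parabolic smoothing, which is exactly what is missing in the inviscid, $\mathcal{C}^1$ setting of the present proposition.

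The paper closes this gap by two arguments that avoid differentiating the equations altogether. In direction (1), it exploits the classical solution $Z$ of \eqref{eq:master:eq:inviscid:local:Z} to prove that the Pontryagin system \eqref{eq:fb:inviscid:local} is uniquely solvable from every $(t_0,x_0)$ (the standard expansion of $Z^i(t,x_t)$ along any solution), hence that the inviscid MFCP has a unique optimizer from every interior starting point; by Proposition \ref{prop:solution:mfc:zero:epsilon}(vii) the value function $\widehat{\mathcal V}$ is then differentiable everywhere, and by (ix) the adjoint variable satisfies $z^i_{t_0}=\partial_{x_i}\widehat{\mathcal V}(t_0,x_0)$, so that $Z^i=\partial_{x_i}\widehat{\mathcal V}$ pointwise; this makes $\widehat{\mathcal V}$ of class $\mathcal{C}^2$ in space on $[0,T]\times\Int$ and yields Schwarz as a byproduct. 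In direction (2), Schwarz follows from the integration-by-parts identity \eqref{curl}, which is derived from the weak formulation \eqref{weaksol} alone by testing with $\phi^i=\zeta\,\partial_{x_j}w$, $\phi^j=\zeta\,\partial_{x_i}w$ (so that the flux contributions cancel); combined with the assumed $\mathcal{C}^1$ regularity of $Z$ this gives $\partial_{x_j}Z^i=\partial_{x_i}Z^j$ with no differentiation of the PDE. Your reconstruction of $U$ from $Z$ in direction (2) (solving a scalar linear backward/transport equation for the missing component) is essentially the paper's, so once the Schwarz step is repaired along these lines the rest of your argument goes through.
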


The proof of Proposition 
\ref{prop:classical:weak}
is postponed to the end of the section, as we feel 
better to focus now on the following statement, which is the refined version of Theorem \ref{thm:uniqueness:master}. 
Indeed, 
the next theorem establishes uniqueness of admissible solutions to \eqref{CONS}, by determining a correspondence with viscosity solutions to (\ref{HJBPOT}--\ref{HJBPOT:bc}). The proof is to establish first a connection between admissible solutions to \eqref{CONS} and \emph{semiconcave} solutions to (\ref{HJBPOT}--\ref{HJBPOT:bc}) and then to show that viscosity 
and semiconcave  solutions to (\ref{HJBPOT}--\ref{HJBPOT:bc}) are equivalent.
We recall that, in our case, a function $v\in \mathcal{C}([0,T]\times \widehat{\mathcal S}_d)$ is called semiconcave (in space) if 
there exists a constant $c$ such that, for any $t \in [0,T]$, $x\in\Int$ and $\xi$ with $x\pm\xi\in \Int$,
	\be
	\label{semicon} 
	\frac{v(t,x+\xi) - 2v(t,x)+ v(t,x-\xi)}{|\xi |^2}\leq c.
	\ee
	We stress that only semiconcavity in space is needed in the analysis below (for simplicity, we just call it semiconcavity), although the value function $\mathcal{V}$ is shown to be semiconcave in time and space in Proposition \ref{prop:solution:mfc:zero:epsilon} (v), see \eqref{semitimespace}.
	In this framework, condition \eqref{dersemi} can be referred to as a weak semiconcavity condition, since it reads as the derivative of the above condition. Indeed, assuming for a moment that $v$ is $\mathcal{C}^2$, \eqref{semicon} can be equivalently formulated by saying that $\sum_{i,j \in \ESd} A_{i,j} \partial^2_{x_{i},x_{j}} v \leq c$
		for any  {non-negative} matrix $A$ with 
		{$\textrm{\rm Trace}(A) \leq 1$ (write for instance $A$ as the square of a symmetric matrix)}. Hence, denoting $z=D_xv$ and integrating by parts, we obtain \eqref{dersemi}.
We say that $v$ is a semiconcave solution if \eqref{semicon} holds, $v$ is Lipschitz-continuous in $[0,T]\times \widehat{\mathcal S}_d$, Equation \eqref{HJBPOT} holds almost everywhere and the terminal condition
 \eqref{HJBPOT:bc}
 is satisfied (everywhere).

The proof of the following theorem is mostly due to Kruzkov \cite{kruzkov}, see Theorem 8 therein; for the sake of completeness, we write its adaptation to our framework (as the state variable here belongs to the simplex).
\begin{thm}
	\label{thm:6.6}
There exists a unique admissible solution to \eqref{CONS}. It is given by $D_x\widehat{\mathcal{V}}$, {where $\mathcal{V}$ is the value function of the inviscid MFCP and $\widehat{\mathcal V}$ is its version in local chart}.
\end{thm}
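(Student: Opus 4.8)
The plan is to establish a two-way correspondence between admissible weak solutions of the conservative system \eqref{CONS} and semiconcave solutions of the Hamilton--Jacobi equation \eqref{HJBPOT}, and then to invoke the comparison principle (Theorem \ref{thmcomparison}) to pin down uniqueness. I would proceed in three main steps, following Kru\v{z}kov \cite{kruzkov} but adapting to the simplex.

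\textbf{Step 1: an admissible solution derives from a potential.} Given an admissible solution $Z$ to \eqref{CONS}, the first task is to show that the distributional Schwarz relations $\partial_{x_j} Z^i = \partial_{x_i} Z^j$ hold in $\mathcal{Q}_T$. This is where the weak semiconcavity condition \eqref{dersemi} is essential: it forces $Z$ to be (locally) of bounded variation in space with a one-sided bound on the distributional Jacobian, hence $Z$ can be well-approximated by mollification, and the antisymmetric part of $D_x Z$ can be controlled. Actually, the cleanest route is the one in \cite{kruzkov}: mollify $Z$ in space at scale $h$, call it $Z_h$; then \eqref{dersemi} gives a uniform (in $h$) upper bound $A : D_x Z_h \le c$ for all nonnegative $A$ with trace $\le 1$, i.e. the symmetrized gradient $D_x Z_h + (D_x Z_h)^{\mathsf T}$ is bounded above, and the conservation law \eqref{weaksol} gives an evolution equation for $Z_h$ up to an $o(1)$ commutator term. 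Integrating the antisymmetric combination $\partial_t(\partial_{x_j} Z_h^i - \partial_{x_i} Z_h^j)$ against the evolution, and using that at $t=T$ we have $Z(T,\cdot) = \mathfrak{g} = D_x \widehat{G}$, which is a gradient (here $F, G \in \mathcal{C}^{1,1}$ so $\mathfrak g$ is Lipschitz and curl-free), one shows the curl stays zero for all $t$. Consequently $Z = D_x \widehat{\mathcal{W}}$ for some Lipschitz (in space, uniformly in $t$) function $\widehat{\mathcal{W}}$ on $\overline{\mathcal Q}_T$, normalized e.g. by fixing its value at a point; time regularity of $\widehat{\mathcal W}$ follows from \eqref{weaksol} written for the potential.

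\textbf{Step 2: the potential is a semiconcave a.e.\ solution of \eqref{HJBPOT}.} Integrating the conservative system \eqref{CONS} with respect to $x_i$ (which is exactly what \eqref{weaksol} says once we know $Z = D_x \widehat{\mathcal W}$) yields that $\widehat{\mathcal W}$ satisfies $\partial_t \widehat{\mathcal W} + \widehat{\mathcal H}(x, D_x \widehat{\mathcal W}) + \widehat F(x) = c(t)$ a.e.\ for some function $c(t)$ depending only on time; absorbing $\int_t^T c$ into $\widehat{\mathcal W}$ and using the terminal condition $Z(T,\cdot) = D_x\widehat G$ together with a normalization, we get $\widehat{\mathcal W}(T,\cdot) = \widehat G$ and that $\widehat{\mathcal W}$ solves \eqref{HJBPOT} almost everywhere with terminal data \eqref{HJBPOT:bc}. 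The weak semiconcavity \eqref{dersemi} translates, through $Z = D_x \widehat{\mathcal W}$ and integration by parts (exactly as indicated in the discussion preceding the theorem), into the semiconcavity estimate \eqref{semicon} for $\widehat{\mathcal W}$. Thus $\widehat{\mathcal W}$ is a semiconcave solution of \eqref{HJBPOT}--\eqref{HJBPOT:bc} in the sense defined above.

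\textbf{Step 3: semiconcave a.e.\ solutions are viscosity solutions, hence unique.} A classical fact (see Cannarsa--Sinestrari \cite{cannarsa}, Chapter 1) is that a locally Lipschitz function that satisfies a first-order PDE a.e.\ and is semiconcave is automatically a viscosity solution: semiconcavity kills the possibility of ``bad'' subdifferential test functions, so the a.e.\ equation upgrades to the viscosity supersolution property, while the subsolution property is automatic from a.e.\ validity plus Lipschitz continuity. Applying this to $\widehat{\mathcal W}$, it is a Lipschitz viscosity solution of \eqref{HJBPOT} in $\mathrm{Int}(\widehat{\mathcal S}_d)$ with terminal condition $\widehat G$; by Corollary \ref{CORUNIQ} it must coincide with the value function $\widehat{\mathcal V}$ of the inviscid MFCP. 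Therefore $Z = D_x \widehat{\mathcal V}$, which proves uniqueness. Existence of an admissible solution is the reverse direction: $\widehat{\mathcal V}$ is Lipschitz (Proposition \ref{prop:solution:mfc:zero:epsilon}(iv)) and semiconcave in space (Proposition \ref{prop:solution:mfc:zero:epsilon}(v), using $F \in \mathcal{C}^{1,1}$), so $D_x \widehat{\mathcal V}$ is bounded, lies in $\mathcal{C}([0,T];(L^\infty,*))$ (time continuity follows from the HJ equation and Lipschitz-in-time regularity), satisfies \eqref{weaksol} because $\widehat{\mathcal V}$ solves \eqref{HJBPOT} a.e., satisfies \eqref{ini} from the terminal condition, and satisfies \eqref{dersemi} because semiconcavity of $\widehat{\mathcal V}$ is precisely the integrated form of that inequality.

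The main obstacle I anticipate is \textbf{Step 1}: making the ``curl-free propagation'' argument fully rigorous at the level of merely $L^\infty$ (plus one-sided BV) solutions, i.e.\ controlling the commutator arising when one mollifies the nonlinear flux $\mathfrak{f}(x,Z)$ in the conservation law and differentiates. The semiconcavity bound \eqref{dersemi} is exactly what provides the compactness needed to pass to the limit $h \to 0$ in the mollified curl identity, but the bookkeeping — tracking that the antisymmetrized mollified gradient genuinely satisfies a transport-type identity with vanishing source, uniformly up to $t=T$ — requires care; this is the technical heart borrowed from \cite{kruzkov}. A secondary, more cosmetic point is handling the geometry of the simplex (no boundary conditions, corners), but since all test functions in Definition \ref{def:admissible:solution} are compactly supported in $\mathrm{Int}(\widehat{\mathcal S}_d)$ and the comparison principle Theorem \ref{thmcomparison} already accommodates the boundaryless setup, this causes no real difficulty.
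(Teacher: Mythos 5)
Your overall architecture — admissible weak solution $\Rightarrow$ curl-free $\Rightarrow$ space-time potential $\Rightarrow$ semiconcave a.e.\ solution of \eqref{HJBPOT} $\Rightarrow$ viscosity solution $\Rightarrow$ value function via Corollary \ref{CORUNIQ}, plus the reverse direction for existence — is exactly the paper's, and your Steps 2 and 3 are essentially correct as stated (the paper invokes Theorem 10.2 of \cite{Lions_HJB} for ``semiconcave a.e.\ solution $\Rightarrow$ viscosity solution'' and Proposition 3.1.7 of \cite{cannarsa} for the converse; compactness of the mollified potentials $v_h$ comes from the uniform $L^\infty$ bounds on $Z_h$ and $\mathfrak f_h$, not from semiconcavity).

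The genuine gap is in your Step 1, precisely the step you flag as the technical heart. You propose to propagate curl-freeness by mollifying, differentiating the nonlinear flux, and controlling a commutator, with \eqref{dersemi} supplying the needed compactness. This is the wrong mechanism and I do not see how it closes: a one-sided bound on the symmetrized gradient of $Z$ does not control the antisymmetric part, and without already knowing the curl vanishes you cannot absorb the commutator between mollification and the nonlinear map $z\mapsto \mathfrak f(x,z)$ into a Gronwall estimate for the curl. The correct observation — which makes the step almost trivial — is that the flux in \eqref{CONS} is the $x_i$-derivative of a \emph{single scalar} $\mathfrak f(x,Z)=\widehat{\mathcal H}(x,Z)+\widehat F(x)$, common to all components. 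Testing \eqref{weaksol} with the antisymmetric pair $\phi^i=\zeta(t)\,\partial_{x_j}w$, $\phi^j=-\zeta(t)\,\partial_{x_i}w$ (all other components zero), the flux contribution is $\int \zeta\,\mathfrak f\,(\partial_{x_i}\partial_{x_j}w-\partial_{x_j}\partial_{x_i}w)=0$ \emph{exactly}, with no mollification and no regularity of $Z$ beyond $L^\infty$. What remains is that $t\mapsto\int_{\widehat{\mathcal S}_d}\bigl[Z^i(t,x)\partial_{x_j}w(x)-Z^j(t,x)\partial_{x_i}w(x)\bigr]dx$ is constant; since the terminal datum $\mathfrak g=D_x\widehat G$ is a gradient and \eqref{ini} holds, that constant is $0$, i.e.\ $Z(t,\cdot)$ is weakly curl-free for a.e.\ $t$. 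Condition \eqref{dersemi} plays no role here; it enters only afterwards, to give the uniform bound $\sum_{i,j}A_{i,j}\partial^2_{x_ix_j}v_h\le c$ on the mollified potentials and hence the semiconcavity \eqref{semicon} of the limit $v$, which is what ultimately upgrades the a.e.\ solution to a viscosity solution. With Step 1 repaired in this way the rest of your argument goes through as you describe.
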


\begin{proof}
As we have just explained, we first establish a connection between admissible solutions to \eqref{CONS} and semiconcave solutions  to (\ref{HJBPOT}--\ref{HJBPOT:bc}) and, then, we show equivalence between semiconcave and viscosity solutions to 
(\ref{HJBPOT}--\ref{HJBPOT:bc}). 
\vskip 4pt

\emph{Step 1}. 
Let $Z\in
 [{\mathcal C}([0,T];(L^\infty(\widehat{\mathcal S}_{d}),*))]^{d-1}$ be an admissible solution to \eqref{CONS}. Let 
$w\in\mathcal{C}^2_C(\Int)$ and $\zeta\in \mathcal{C}^\infty_C((0,T))$, and for fixed $i\neq j$ choose as test functions 
$\phi^i(t,x)= \zeta(t) \partial_{x_{j}}w(x)$, $\phi^j(t,x)= \zeta(t) \partial_{x_{i}}w(x)$. Then \eqref{weaksol} provides
\[
\int_0^T \partial_t \zeta(t) \int_{\widehat{\mathcal S}_d}  \left[ Z^i(t,x) \partial_{x_{j}}w(x) -  
Z^j(t,x) \partial_{x_{i}}w(x)\right] dx dt =0,
\] 
which, by the fundamental lemma of the calculus of variations, implies that the quantity
$ 
\int_{\widehat{\mathcal S}_d}   [ Z^i(t,x) \partial_{x_{j}}w(x) -  
Z^j(t,x) \partial_{x_{i}}w(x) ] dx 
$ is a constant for almost every $t\in [0,T]$. Hence \eqref{ini} and the fact that the final condition is a gradient yield
\be
\label{curl}
\int_{\widehat{\mathcal S}_d}  \left[ Z^i(t,x) \partial_{x_{j}}w(x) -  
Z^j(t,x) \partial_{x_{i}}w(x)\right] dx  =0, \quad \text{for all} \  w\in\mathcal{C}^2_C(\Int),
\ee
which means that $Z(t,\cdot)$ admits a potential, in the weak sense,  for almost every $t$.
\vskip 4pt

\emph{Step 2}. Fix $(s,y)\in {\mathcal Q}_T$ and choose as test function $\varphi$ the mollification kernel 
$\rho_{h}(t,x)=h^{-d}\rho\left(  (s-t)/{h}, (y-x)/{h}\right)$. Then \eqref{weaksol} gives
\be 
\partial_t Z^i_h + \partial_{x_{i}} \mathfrak{f}_h = 0 \qquad \mbox{ in } {\mathcal Q}_T^h,
\ee
where $Z^i_h = \rho_h * Z^i$, $\mathfrak{f}_h = \rho_h * (\mathfrak{f}(\cdot,Z))$ and 
${\mathcal Q}_T^h$ is the set of $(s,y)$ in ${\mathcal Q}_T$ with a distance to the (time-space) boundary that is greater than or equal to $h$. Thanks to \eqref{curl}, $Z_h$ derives from a potential for fixed $t$, and the equation above implies that $(-\mathfrak{f}_h, Z^1_h, \dots, Z^{d}_h)$ also derives from a potential (but in time and space) for $(t,x)\in {\mathcal Q}_T^h$. 
Thus there exists a function $v_h$ defined in ${\mathcal Q}_T^h$ such that 
$\partial_t v_h= -\mathfrak{f}_h$ and $\partial_{x_{i}}v_h= Z_h^i$; since $v_h$ is defined up to a constant, we fix $v^h(T-h,x^M) = \widehat G(x^M)$, where $x^M=(1/{d},\dots,1/{d})\in\mathbb{R}^{{d-1}}$ is the point in the middle of the simplex.

By condition \eqref{dersemi}, substituting again the mollification kernel and integrating over $t$, we obtain, on ${\mathcal Q}_{T}^h$,  
\be 
\label{93}
\sum_{i,j \in \ESd} A_{i,j} \partial^2_{x_{i},x_{j}} v_h \leq c,
\ee
for any  {nonnegative} matrix $A$ with  {$\textrm{\rm Trace}(A) \leq 1$}, which implies in particular that 
for any vector $\nu$ with $|\nu|=1$ we have, also on 
${\mathcal Q}_{T}^h$,
\be 
\label{94}
\frac{\partial^2 v_{h}}{\partial \nu^2} \leq c.
\ee 
\vskip 4pt

\emph{Step 3}. Let $h\rightarrow 0$. We have $\lim_{h\rightarrow 0} \partial_t v_h = -\mathfrak{f}(x,Z)$ and
$\lim_{h\rightarrow 0} D_x v_{h} = Z$ almost everywhere in $\mathcal{Q}_T$. By Ascoli-Arzel\`a theorem 
and by boundedness of ${\mathfrak f}_{h}$ and $Z_{h}$, uniformly in $h >0$, 
the sequence $(v_h)_{h >0}$ is precompact in $\mathcal{C}(\overline{\mathcal Q}_T)$ endowed with the topology of uniform convergence, it being understood that we extend $v_h$ outside ${\mathcal Q}_T^h$ as a Lipschitz function. Let $v$ be any limit point. We have necessarily that $v$ is Lipschitz continuous (with a fixed Lipschitz constant) on $\overline{\mathcal Q}_T$ and has weak derivatives $\partial_t v = -\mathfrak{f}(x,Z)$ and
$D_x v = Z$ a.e. in ${\mathcal Q}_T$, proving that 
$\partial_t v + \mathfrak{f}(x,D_x v) =0$ a.e. in $\mathcal{Q}_T$. 

Since $v_{h}(T-h,x^M)=\widehat{G}(x^M)$, we get $v(T,x^M)=\widehat G(x^M)$. 
Moreover, for any test function 
$w\in\mathcal{C}^2_C(\Int)$ and any $h>0$ that is less than the distance $\textrm{\rm dist}(\textrm{\rm Supp}(w),
\partial \widehat{\mathcal S}_{d})$ 
from the support of $w$ to the boundary of the simplex, 
we have 
\begin{align}
&\int_{\widehat{\mathcal S}_{d}} v_{h}(T-h,x) D_{x} w(x) dx \nonumber
\\
&= - 
\int_{\widehat{\mathcal S}_{d}} Z_{h}(T-h,x) w(x) dx
\label{eq:limit:h:zero}
\\
&= 
- 
\int_{\widehat{\mathcal S}_{d}} (\rho_{h}*Z)(T-h,x) w(x) dx
=
- \int_{{\mathbb R}^{d}}
\rho_{h}(s,y)
\biggl[
\int_{\widehat{\mathcal S}_{d}}  Z(T-h-s,x) w(x+y) dx
\biggr] ds dy. \nonumber
\end{align}
By \eqref{ini}, 
we know that, for any $\vert y \vert \leq
\textrm{\rm dist}(\textrm{\rm Supp}(w),
\partial \widehat{\mathcal S}_{d})/2$, $\lim_{h \rightarrow 0}
[\int_{\widehat{\mathcal S}_{d}}  Z(T-h,x) w(x+y) dx]
= 
\int_{\widehat{\mathcal S}_{d}}  {\mathfrak g}(x) w(x+y) dx$. Since the function in argument of the limit is uniformly continuous with respect to 
$y$, the convergence holds uniformly with respect to $y$.
Hence, the right-hand side in 
\eqref{eq:limit:h:zero} converges to 
$- \int_{\widehat{\mathcal S}_{d}}  {\mathfrak g}(x) w(x) dx$. 
 Since the left-hand side in \eqref{eq:limit:h:zero} converges to $\int_{\widehat{\mathcal S}_{d}} v(T,x) D_{x} w(x) dx$, 
we deduce that   $D_{x} v(T,\cdot) = D_{x} \widehat{G}={\mathfrak g}$ a.e. and then 
$v(T,\cdot)=\widehat{G}$ on ${\widehat{\mathcal S}_{d}}$ since both are continuous and coincide in $x^M$. 

Lastly, by 
inequality \eqref{94} {(writing first the inequality below for $v_{h}$ and then taking the limit as $h$ tends to $0$)}
 \[
 v(t,x+\xi) - 2v(t,x)+ v(t,x-\xi)\leq c |\xi|^2,
 \]
for any $t \in (0,T)$, $x\in\Int$ and $\xi$ such that $x\pm\xi\in \Int$, thus \eqref{semicon} holds. 
%
%
%
Hence, we have proved that $v$ is a semiconcave solution to the Cauchy problem (\ref{HJBPOT}--\ref{HJBPOT:bc}) and $z=D_x v$ a.e. in $\mathcal{Q}_T$.
\vskip 4pt

\emph{Step 4}. On the converse, if $v$ is a semiconcave solution to \eqref{HJBPOT} then, for
any $t \in [0,T]$, $v(t,\cdot)$ is a.e. differentiable in $x$. By integration by parts, it is clear 
that, for any $w \in {\mathcal C}^1_{C}(\widehat{\mathcal S}_{d})$, the function $[0,T] \ni t \mapsto 
\int_{\widehat{\mathcal S}_{d}}
D_{x} v(t,x) w(x) dx$ is continuous. Since $D_{x} v \in L^\infty({\mathcal Q}_{T})$, the result easily extends to any 
$w \in L^1({\mathcal S}_{d})$, hence proving that $D_{x} v \in {\mathcal C}([0,T];(L^\infty({\mathcal S}_{d}),*))$.
Also, $v$ is a.e. differentiable in $(t,x)$ and the $(t,x)$-derivative clearly satisfies \eqref{weaksol}. Obviously, \eqref{ini} holds true. So we have just to  check \eqref{dersemi}. For any $h>0$, let $v_{h}':= \rho_{h}*v$ (it being understood that $v$ can be extended in a Lipschitz fashion outside ${\mathcal Q}_{T}$). From \eqref{semicon} we derive again inequality \eqref{94}, but with 
$v_{h}$ replaced by $v_{h}'$, and then \eqref{93} follows. 
Multiplying \eqref{93} by $\psi\in \mathcal{C}^1_C(\Int;\mathbb{R}_+)$ (provided that $h$ is smaller than 
$\mathrm{dist}(\textrm{\rm Supp}(\psi),
\partial \widehat{\mathcal S}_{d})$)
and integrating by parts we get 
{(for any 
nonnegative matrix $A$
with a trace lower than or equal to 1)}
\[
\int_{\Int} \left[ \langle D_x\psi, A D_x v_h'\rangle + c\psi \right] dx \geq 0,
\]
and letting $h\rightarrow 0$ we obtain \eqref{dersemi}.
\vskip 4pt

\emph{Step 5.} It remains to show that there is a correspondence between semiconcave and viscosity solutions to 
\eqref{HJBPOT}--\eqref{HJBPOT:bc}. 
By Corollary  \ref{CORUNIQ}, 
any viscosity solution $\mathcal{V}$ is in fact the value function of the MFCP. 
By items (iv) and (v) in Proposition 
\ref{prop:solution:mfc:zero:epsilon}, the value function is
Lipschitz continuous and
 semiconcave. 
 By  Proposition 3.1.7 in \cite{cannarsa}, ${\mathcal V}$ solves \eqref{HJBPOT} almost everywhere. On the converse, if $v$ is a semiconcave solution then it is also a viscosity solution on $[0,T) \times \Int$ by Theorem 10.2 in \cite{Lions_HJB}. 
By Corollary \ref{CORUNIQ}, it hence coincides with the value function.
\end{proof}

We now turn:

\begin{proof}[Proof of Proposition
\ref{prop:classical:weak}]
  { \ } 

\emph{Step 1.}
We first assume that $U$ is a classical solution of the master
equation \eqref{eq:master:equation} or equivalently that $\hat{U} \in[\mathcal{C}^1([0,T]\times\widehat{\mathcal S}_d)]^{d}$ is a classical solution to 
\eqref{eq:master:eq:inviscid:local}. 
Then, $(Z^i = \widehat U^i -\widehat U^d)_{i \in \ESd}$
is a (classical) solution of  
\begin{equation}
\label{eq:master:eq:inviscid:local:Z}
\left\{
\begin{array}{l}
\partial_{t} Z^i+ \hat{H}^i ( Z \bigr) - \hat{H}^d (Z) + \sum_{j,k} \Bigl(  x_{k} (Z^k- Z^{j})_{+}
- x_{j} (Z^j - Z^k)_{+} \Bigr) \partial_{x_{j}} Z^i
\\
 \hspace{15pt}
 + \sum_{j} \Bigl( x^{-d} (-Z^{j})_{+} - x_{j} (Z^{j})_{+} \Bigr)
 \partial_{x_{j}} Z^i  + \hat f^i(x) - \hat f^d(x) =0,
 \\
Z^i(T,x) = \hat g^i(x) - \hat g^d(x),
 \end{array}
\right.
\end{equation}
for $(t,x) \in [0,T] \times \Int$, $i \in \ESd$.
Obviously, the system of characteristics of 
\eqref{eq:master:eq:inviscid:local:Z}
is nothing but the Pontryagin system (in local coordinates) 
\eqref{eq:fb:inviscid:local}, {see (ii) in Proposition \ref{prop:solution:mfc:zero:epsilon}}. Hence, the fact that $Z$ is a classical solution 
of 
\eqref{eq:master:eq:inviscid:local:Z}
implies that
\eqref{eq:fb:inviscid:local}
has a unique solution,
for 
any initial condition $(t_{0},x_{0})$ of the forward equation in \eqref{eq:fb:inviscid:local}. The argument is pretty standard:
By expanding $(Z^i(t,x_{t}))_{t_{0} \le t \le T}$ and then comparing with the backward equation, 
we prove that any solution $({\boldsymbol x},{\boldsymbol z})$ of 
\eqref{eq:fb:inviscid:local}
must be of the form $z_{t}^i = Z^i(t,x_{t})$, for $i \in \ESd$ and $t_{0} \leq t \leq T$; 
Conversely, solving the forward equation with $z_{t}^i=Z^i(t,x_{t})$, for 
$i \in \ESd$ and $t_{0} \leq t \leq T$, we can indeed easily construct a solution. In turn, we deduce that the inviscid 
MFCP admits a unique optimizer:   By (i) in Proposition \ref{prop:solution:mfc:zero:epsilon},
there exists a minimizer; uniqueness follows from 
the fact 
the Pontryagin system \eqref{eq:fb:inviscid:local} has a unique solution. 
By Proposition 
\ref{prop:solution:mfc:zero:epsilon} (vii), the value function $\widehat{\mathcal V}$ of the MFCP is differentiable in any $(t_0,x_0)$ and, by point  (ix) in the same Proposition,  
$z^i_{t_0}=\partial_{x_{i}}\widehat {\mathcal V}(t_0,x_0)$, 
whenever 
the forward equation in \eqref{eq:fb:inviscid:local}
starts from $x_{0}$ at time $t_{0}$, 
but in turn 
$z^i_{t_{0}}=Z^i(t_{0},x_{0})$ hence showing that 
$Z^i(t,x)= \partial_{x_{i}}{\mathcal V}(t,x)$ for any $t\in[0,T]$ and $x\in\Int$, which implies that, on $[0,T] \times \Int$, $\widehat {\mathcal V}$ is $\mathcal{C}^2$ and thus $\partial_{x_{j}}Z^i= \partial_{x_i}Z^j$ for any $i,j \in \ESd$. 
Recalling \eqref{eq:mathcal H:5:9}, it is plain to see that 
$\partial_{x_{i}}
\widehat{\mathcal{H}}(x,Z)$ 
coincides with 
the nonlinear terms in 
\eqref{eq:master:eq:inviscid:local:Z}, which shows that 
$Z$ is a solution to \eqref{CONS} on $[0,T] \times \Int$. It is straightforward to see that it satisfies \eqref{weaksol} and \eqref{dersemi}, because $\mathcal{V}$ is $\mathcal{C}^2$ and (obviously) semiconcave.

\emph{Step 2}. Let $Z\in [\mathcal{C}^1([0,T]\times\widehat{\mathcal S}_d)]^{ {d-1}}$ satisfy \eqref{weaksol}. From \eqref{curl} and the fact that $Z\in [\mathcal{C}^1([0,T]\times\widehat{\mathcal S}_d)]^{ {d-1}}$, we obtain that $\partial_{x^j}Z^i= \partial_{x^i}Z^j$ on $[0,T] \times \Int$. Thus $Z$ solves \eqref{eq:master:eq:inviscid:local:Z} on $[0,T] \times \Int$, but then it solves the equation also at the boundary, because it is differentiable up to the boundary.
It remains to construct a classical solution to the master equation
\eqref{eq:master:eq:inviscid:local}. To do so, it suffices to solve 
\eqref{eq:master:eq:inviscid:local} with 
all the occurrences of 
$\widehat U^k - \widehat U^j$ replaced
by $Z^k - Z^j$ and all the occurrences of 
$\widehat U^j-\widehat{U}^d$ replaced by 
$Z^j$. By doing so, we hence solve a linear system of transport equations with 
a vector field that is ${\mathcal C}^1$. Despite the fact that the linear system is
set on the simplex, there is no real difficulty 
for proving that the solution is also 
${\mathcal C}^1$. 
\end{proof}

\appendix

\section{Wright-Fisher Spaces}

We describe the so-called Wright-Fisher spaces used in the paper, as recently introduced in the monograph of
Epstein and Mazzeo \cite{epsteinmazzeo}. We here follow {the exposition given} in \cite{mfggenetic}. 
In short, 
these Wright-Fischer spaces are  H\"older spaces, tailored made to the study of second order operators of the form
 \begin{equation}
 \label{eq:generator:kimura}
 \begin{split}
 {\mathcal L}_{t}  h(p)  &= 
 \sum_{i \in \ES} a_{i}(t,p) \partial_{p_{i}} h(p)
 + 
 \frac{\varepsilon^2}{2} \sum_{i,j \in \ES}  \bigl( p_{i} \delta_{i,j} - {p_{i} p_{j}} \bigr)
 \partial^2_{p_{i} p_{j}} h(p),
 \end{split} 
 \end{equation}
where $p \in \mathcal{S}_{d}$ and $a_{i}(p) \geq 0$ if $p_{i} =0$. As we already alluded to, such operators are called Kimura operators; we refer to \cite{ChenStroock,Kimura,Shimakura} for earlier analyses. 
Clearly, the second order term in \eqref{eq:generator:kimura} is degenerate, which is somehow the price to pay 
for forcing the corresponding SDE
to stay in the simplex; in fact, the latter SDE is nothing but a Wright-Fisher SDE of the same type as \eqref{dynpot}, at least for a relevant choice of $a$.
The
key feature is that, under the identification of 
${\mathcal S}_{d}$
with $\hat{\mathcal S}_{d}$  (see the introduction for the notation),
we may regard the simplex 
as a $(d\!-\!1)$-dimensional \textit{manifold with corners}, the corners being obtained by intersecting at most $d$ of the hyperplanes 
$\{x \in \RR^{d-1} : x_{1}=0\}$, $\dots$, 
$\{x \in \RR^{d-1} : x_{d-1}=0\}$,
$\{x \in \RR^{d-1} : x_{1}+\cdots+x_{d-1}=1\}$ with $\hat{\mathcal S}_{d}$ (we then call the codimension of the corner the number of hyperplanes showing up in the intersection). 
Accordingly, we can rewrite 
\eqref{eq:generator:kimura}
as an
operator acting on functions from $\hat{\mathcal S}_{d}$ to ${\mathbb R}$,  
by reformulating 
\eqref{eq:generator:kimura}
in terms of the sole $d\!-\!1$ first coordinates $(p_{1},\cdots,p_{d-1})$
or, more generally, in terms of $(p_{i})_{i \in \ES \setminus \{ l\}}$ for any 
given coordinate $l \in \ES$. 
Somehow, choosing the coordinate $l$ amounts to choosing a system of local coordinates and 
the choice of $l$ is mostly dictated by the position of $(p_{1},\cdots,p_{d})$ inside the simplex. 
Whenever all the entries of $p=(p_{1},\cdots,p_{d})$ are positive, meaning that 
$(p_{1},\cdots,p_{d})$ belongs to the interior 
of $\hat{\mathcal S}_{d}$, the choice of $l$ does not really matter and we work, for convenience, 
with $l=d$ ({which is, in fact, what we have done throughout the paper)}.  

In \cite[Subsection 2.3.1]{mfggenetic}, {it is shown} that the operator \eqref{eq:generator:kimura} fits the decomposition of \cite[Definition 2.2.1]{epsteinmazzeo}, which allows to use the Schauder-like theory developed in the latter reference. We do not repeat the computations here, but we recall the following two facts: Firstly, the operator \eqref{eq:generator:kimura} is elliptic non-degenerate in the interior of the simplex, when written 
in local coordinates in $\hat{\mathcal{S}_d}$ in the form  
\begin{equation}
\label{eq:reduced:L:d}
\hat {\mathcal L}_{t}  \hat h(x)  
=   \sum_{i \in \ESd} \hat a_{i}(t,x) \partial_{x_{i}} \hat h(x)
+ \frac{\varepsilon^2}{2} \sum_{i,j \in \ESd}  \bigl(x_{i} \delta_{i,j} - {x_{i} x_{j}} \bigr)
\partial^2_{x_{i} x_{j}} \hat h(x),
\end{equation}
where now $x \in \hat{\mathcal S}_{d}$, $\hat{h}$ is a smooth function 
on $\hat{\mathcal S}_{d}$ (which must be thought of $\hat{h}(x)=h(\check x)$)
 and $\hat{a}_{i}(t,x) = a_{i}(t,\check{x})$; Secondly, for a point in the relative interior of a corner of codimension $\ell$, there exist local coordinates, of the form  $(p_{i})_{i \in \ES \setminus \{ l\}}$ for a given $l$ depending on the shape of the corner, such that, in the new coordinates, the operator satisfies the normal form required in \cite[Definition 2.2.1]{epsteinmazzeo} ({the details of which are however useless here}).

Hence, for a point $x^{0} \in \hat{\mathcal S}_{d}$ in the relative interior of a corner ${\mathscr C}$ of $\hat{\mathcal S}_{d}$ of codimension $\ell \in \{0,\cdots,d\}$ (if 
$\ell=0$, then $x^{0}$ is in the interior of $\hat{\mathcal S}_{d}$), we may consider a 
new system of coordinates $(y_{1},\cdots,y_{d-1})$ (obtained as in the second point above) such that ${\mathscr C} = \{y \in \hat{\mathcal S}_{d} : y_{i_{1}}= \cdots = 
y_{i_{\ell}}=0\}$, for $1 \leq i_{1} < \cdots < i_{\ell}$. 
Letting 
$I:=\{i_{1},\cdots,i_{\ell}\}$ and 
denoting by 
$(y^0_{1},\cdots,y^0_{d-1})$ the coordinates of $x^0$ in the new system (for sure $y^0_{i_{j}}=0$ for 
$j=1,\cdots,\ell$), we may find a $\delta^0 >0$ such that:
\begin{enumerate}
	\item 
	the closure 
	$\overline{\mathcal U}(\delta^0,x^0)$ of ${\mathcal U}(\delta^0,x^0):= \{ y \in (\RR_{+})^{d-1} : \sup_{i \in \ESd} \vert y_{i} - y^0_{i} \vert < 
	\delta^0 \}$ is included in $\hat{\mathcal S}_{d}$,
	\item for $y$ in $\overline{\mathcal U}(\delta^0,x^0)$, for $j \not \in I$, $y_{j}>0$,
	\item for $y$ in $\overline{\mathcal U}(\delta^0,x^0)$, for $y_{1}+\cdots+y_{d-1}<1-\delta^0$. 
\end{enumerate}
A function $\hat{h}$ defined on $\overline{\mathcal U}(\delta^0,x^0)$ 
is then said to belong to ${\mathcal C}^{\gamma}_{\textrm{\rm WF}}(\overline{\mathcal U}(\delta^0,x^0))$, for some $\gamma \in (0,1)$, if, in the new system of coordinates, $\hat{h}$ is H\"older continuous on $\overline{\mathcal U}(\delta^0,x^0)$ with respect to the distance
\begin{equation}
\label{eq:distance:d}
d(y,y') := \sum_{i \in \ESd}  \bigl\vert \sqrt{ y_{i}} -\sqrt{ y_{i}'} \bigr\vert. 
\end{equation}
We then let
\begin{equation*}
\bigl\| \hat h \bigr\|_{\gamma;{\mathcal U}(\delta^0,x^0)}
:= \sup_{y \in \overline{\mathcal U}(\delta^0,x^0)}
\bigl\vert \hat h(y) \bigr\vert
+ \sup_{y,y' \in \overline{\mathcal U}(\delta^0,x^0)}
\frac{\vert \hat h(y) - \hat h(y') \vert}{d(y,y')^{\gamma}}.
\end{equation*}
Following \cite[Lemma 5.2.5 and Definition 10.1.1]{epsteinmazzeo}, 
we say that a function $\hat h$ defined on ${\mathcal U}(\delta^0,x^0)$
belongs to ${\mathcal C}^{2+\gamma}_{\textrm{\rm WF}}({\mathcal U}(\delta^0,x^0))$ if, in the new system of coordinates,
\begin{enumerate}
	\item 	
	$\hat{h}$ is continuously differentiable on
	${\mathcal U}(\delta^0,x^0)$ and $\hat{h}$ and its derivatives extend continuously to 
	$\overline{\mathcal U}(\delta^0,x^0)$ and the resulting extensions 
	belong to 
	${\mathcal C}^{\gamma}_{\textrm{\rm WF}}(\overline{\mathcal U}(\delta^0,x^0))$;
	\item $\hat{h}$ is twice continuously differentiable on 
	${\mathcal U}_{+}(\delta^0,x^0) = {\mathcal U}(\delta^0,x^0) \cap \{ (y_{1},\cdots,y_{d-1}) \in (\RR_{+})^{d} : \forall i \in I, y_{i} >0
	\}$. Moreover
	\begin{equation}
	\label{eq:derivatives:boundary:WF}
	\begin{split}
	&\lim_{\min(y_{i},y_{j}) \rightarrow 0_{+}}
	\sqrt{y_{i} y_{j}} \partial^2_{y_{i}y_{j}}
	\hat{h}(y) = 0,
	\quad \lim_{y_{i} \rightarrow 0_{+}}
	\sqrt{y_{i}} \partial^2_{y_{i}y_{k}}
	\hat{h}(y) = 0,
	\end{split}
	\end{equation}
	and the functions
	$y \mapsto  
	\sqrt{y_{i} y_{j}} \partial^2_{y_{i}y_{j}}
	\hat{h}(y)$, 
	$y \mapsto  
	\sqrt{y_{i}} \partial^2_{y_{i}y_{k}}
	\hat{h}(y)$
	and
	$y \mapsto  
	\partial^2_{y_{k}y_{l}}
	\hat{h}(y)$
	belong to ${\mathcal C}^{\gamma}_{\textrm{\rm WF}}(\overline{\mathcal U}(\delta^0,x^0))$ (meaning in particular that they can be extended by continuity to 
	$\overline{\mathcal U}(\delta^0,x^0)$).
\end{enumerate} 
We then let
\begin{equation*}
\begin{split}
\| \hat{h} \|_{2+\gamma;{\mathcal U}(\delta^0,x^0)}
&:= 
\| \hat{h} \|_{\gamma;{\mathcal U}(\delta^0,x^0)}
+
\sum_{i \in \ESd} 
\| \partial_{y_{i}} \hat{h} \|_{\gamma;{\mathcal U}(\delta^0,x^0)}
+
\sum_{i,j \in I}
\|
\sqrt{y_{i} y_{j}} \partial^2_{y_{i}y_{j}}
\hat{h}
\|_{\gamma;{\mathcal U}(\delta^0,x^0)}
\\
&\hspace{15pt} +
\sum_{k,l \not \in I}
\|
\partial^2_{y_{k}y_{l}}
\hat{h}
\|_{\gamma;{\mathcal U}(\delta^0,x^0)}
+
\sum_{i \in I}
\sum_{k  \not \in I}
\|
\sqrt{y_{i}} \partial^2_{y_{i}y_{k}}
\hat{h}
\|_{\gamma;{\mathcal U}(\delta^0,x^0)}, 
\end{split}
\end{equation*}
where 
$\sqrt{y_{i} y_{j}} \partial^2_{y_{i}y_{j}}
\hat{h}$ is a shorten notation for 
$y \mapsto 
\sqrt{y_{i} y_{j}} \partial^2_{y_{i}y_{j}}
\hat{h}(y)$  (and similarly for the others). 
For a given finite covering $\cup_{i=1}^K {\mathcal U}(\delta^0,x^{0,i})$ of $\hat {\mathcal S}_{d}$, which is then fixed in  {the rest of the discussion}, a function 
$\hat{h}$ is said to be in ${\mathcal C}^{\gamma}_{\textrm{\rm WF}}(\hat{\mathcal S}_{d})$, respectively in
${\mathcal C}^{2+\gamma}_{\textrm{\rm WF}}(\hat{\mathcal S}_{d})$ if 
$\hat{h}$ belongs to each ${\mathcal C}^{\gamma}_{\textrm{\rm WF}}({\mathcal U}(\delta^0,x^{0,i}))$, respectively each
${\mathcal C}^{2+\gamma}_{\textrm{\rm WF}}({\mathcal U}(\delta^0,x^{0,i}))$. Equivalently, we write
 {$h\in {\mathcal C}^{\gamma}_{\textrm{\rm WF}}({\mathcal S}_{d})$
(respectively 
$h\in {\mathcal C}^{2+\gamma}_{\textrm{\rm WF}}({\mathcal S}_{d})$)}, for a function $h$ defined on ${\mathcal S}_{d}$, if the associated function $\hat{h}$ defined on $\hat{\mathcal S}_{d}$ belongs to 
 {${\mathcal C}^{\gamma}_{\textrm{\rm WF}}(\hat{\mathcal S}_{d})$
(respectively 
${\mathcal C}^{2+\gamma}_{\textrm{\rm WF}}(\hat{\mathcal S}_{d})$)}.
We then let
\begin{align*}
\| \hat{h} \|_{\rm{WF},\gamma} := \sum_{i=1}^K \| \hat{h} \|_{\gamma;{\mathcal U}(\delta^0,x^{0,i})}, \quad
\| \hat{h} \|_{\rm{WF},2+\gamma} &:= \sum_{i=1}^K \| \hat{h} \|_{2+\gamma;{\mathcal U}(\delta^0,x^{0,i})}. 
\end{align*} 
We refer to 
\cite[Chapter 10]{epsteinmazzeo} and to \cite[Subsection 2.3]{mfggenetic} for more details. 
{Also, we feel useful to notice that, in Subsection \ref{subse:potential}, 
the spaces ${\mathcal C}^{\gamma}_{\textrm{\rm WF}}(\hat{\mathcal S}_{d})$
and 
${\mathcal C}^{2+\gamma}_{\textrm{\rm WF}}(\hat{\mathcal S}_{d})$
are denoted
${\mathcal C}^{0,\gamma}_{\textrm{\rm WF}}({\mathcal S}_{d})$
and 
${\mathcal C}^{0,2+\gamma}_{\textrm{\rm WF}}({\mathcal S}_{d})$, 
with a `0' in superscript and without a `hat' on ${\mathcal S}_{d}$, and similarly for the two norms 
$\| \hat{h} \|_{\rm{WF},\gamma}$
and 
$\| \hat{h} \|_{\rm{WF},2+\gamma}$, which are written 
$\| h \|_{\rm{WF},0,\gamma}$
and 
$\| h \|_{\rm{WF},0,2+\gamma}$ where $h : {\mathcal S}_{d} \rightarrow {\mathbb R}$ is canonically associated with 
$\hat h : \hat{\mathcal S}_{d} \rightarrow {\mathbb R}$. Our choice for inserting the additional index `0' is 
made clear below.}

\subsubsection*{Parabolic Wright-Fisher spaces}
 {Similar definitions hold for the spaces 
${\mathcal C}_{\textrm{\rm WF}}^{\gamma}([0,T] \times {\mathcal S}_{d})$
and
${\mathcal C}_{\textrm{\rm WF}}^{2+\gamma}([0,T] \times {\mathcal S}_{d})$. They are respectively spaces of
time-space functions that are 
$\gamma$-H\"older continuous functions and 
spaces of 
time-space functions that are 
continuously differentiable in time and twice continuously differentiable in space, with derivatives that are locally $\gamma$-H\"older continuous, 
H\"older continuity being understood in both cases
with respect to the time-space distance (in the local system of coordinates)}
\begin{equation}
\label{eq:distance:D}
D\bigl((t,y),(t',y')\bigr) := \vert t-t'\vert^{1/2} + d(y,y').
\end{equation}
To make it clear, a function $\hat{h}$ defined on $[0,T] \times \overline{\mathcal U}(\delta^0,x^0)$ 
is said to belong to ${\mathcal C}_{\textrm{\rm WF}}^{\gamma}([0,T] \times \overline{\mathcal U}(\delta^0,x^0))$, for some $\gamma \in (0,1)$, if, in the new system of coordinates, $\hat{h}$ is H\"older continuous on $[0,T] \times \overline{\mathcal U}(\delta^0,x^0)$ with respect to the distance
$D$.
We then let
\begin{equation*}
\bigl\| \hat h \bigr\|_{\gamma;[0,T] \times {\mathcal U}(\delta^0,x^0)}
:= \sup_{(t,y)  \in [0,T] \times \overline{\mathcal U}(\delta^0,x^0)}
\bigl\vert \hat h(t,y) \bigr\vert
+ \sup_{t,t' \in [0,T],\ y,y' \in \overline{\mathcal U}(\delta^0,x^0)}
\frac{\vert \hat h(t,y) - \hat h(t',y') \vert}{D((t,y),(t',y'))^{\gamma}}.
\end{equation*}
Following \cite[Lemma 5.2.7]{epsteinmazzeo}, we
say that  
a function $\hat h$ defined on $[0,T] \times {\mathcal U}(\delta^0,x^0)$
belongs to the space ${\mathcal C}_{\textrm{\rm WF}}^{2+\gamma}([0,T] \times {\mathcal U}(\delta^0,x^0))$ if, in the new system of coordinates,
\begin{enumerate}
	\item  $\hat{h}$ is continuously differentiable on
	$(0,T) \times {\mathcal U}(\delta^0,x^0)$ and $\hat{h}$ and its time and space derivatives extend continuously to 
	$[0,T] \times \overline{\mathcal U}(\delta^0,x^0)$ and the resulting extensions 
	belong to 
	${\mathcal C}^{\gamma}_{\textrm{\rm WF}}([0,T] \times \overline{\mathcal U}(\delta^0,x^0))$;
	\item $\hat{h}$ is twice continuously differentiable in space on 
	$[0,T] \times {\mathcal U}_{+}(\delta^0,x^0)$. Moreover, for any $i,j \in I$ and any $k,l \not \in I$,
	\begin{equation}
	\label{eq:rate:2nd:order}
	\begin{split}
	&\lim_{\min(y_{i},y_{j}) \rightarrow 0_{+}}
	\sqrt{y_{i} y_{j}} \partial^2_{y_{i}y_{j}}
	\hat{h}(t,y) = 0,
	\quad \lim_{y_{i} \rightarrow 0_{+}}
	\sqrt{y_{i}} \partial^2_{y_{i}y_{k}}
	\hat{h}(t,y) = 0,
	\end{split}
	\end{equation}
	and the functions
	$(t,y) \mapsto  
	\sqrt{y_{i} y_{j}} \partial^2_{y_{i}y_{j}}
	\hat{h}(t,y)$, 
	$(t,y) \mapsto  
	\sqrt{y_{i}} \partial^2_{y_{i}y_{k}}
	\hat{h}(y)$
	and
	$(t,y) \mapsto  
	\partial^2_{y_{k}y_{l}}
	\hat{h}(t,y)$
	belong to ${\mathcal C}_{\textrm{\rm WF}}^{\gamma}([0,T] \times \overline{\mathcal U}(\delta^0,x^0))$.
\end{enumerate} 
We then let
\begin{equation*}
\begin{split}
\| \hat{h} \|_{2+\gamma; [0,T] \times {\mathcal U}(\delta^0,x^0)}
&:= 
\| \hat{h} \|_{\gamma; [0,T] \times {\mathcal U}(\delta^0,x^0)}
+
\| \partial_{t} \hat{h} \|_{  \gamma; [0,T] \times {\mathcal U}(\delta^0,x^0)}
+
\sum_{i=1}^{d} 
\| \partial_{y_{i}} \hat{h} \|_{  \gamma;[0,T] \times  {\mathcal U}(\delta^0,x^0)}
\\
&\hspace{15pt} +
\sum_{i,j \in I}
\|
\sqrt{y_{i} y_{j}} \partial^2_{y_{i}y_{j}}
\hat{h} 
\|_{ \gamma; [0,T] \times {\mathcal U}(\delta^0,x^0)}
+
\sum_{k,l \not \in I}
\|
\partial^2_{y_{k}y_{l}}
\hat{h}
\|_{  \gamma; [0,T] \times {\mathcal U}(\delta^0,x^0)}
\\
&\hspace{15pt}+
\sum_{i \in I}
\sum_{k  \not \in I}
\|
\sqrt{y_{i}} \partial^2_{y_{i}y_{k}}
\hat{h}
\|_{ \gamma; [0,T] \times {\mathcal U}(\delta^0,x^0)}. 
\end{split}
\end{equation*}
For the fixed covering $\cup_{i=1}^K {\mathcal U}(\delta^0,x^{0,i})$ of $\hat{\mathcal S}_{d}$, a function 
$\hat{h}$  is said to be in ${\mathcal C}_{\textrm{\rm WF}}^{\gamma}([0,T] \times \hat{\mathcal S}_{d})$, respectively in 
${\mathcal C}_{\textrm{\rm WF}}^{2+\gamma}([0,T] \times \hat{\mathcal S}_{d})$ (as before, the definition extends  equivalently to the associated function $h$ defined on $[0,T] \times {\mathcal S}_{d}$)), if 
$\hat{h}$ belongs to each ${\mathcal C}_{\textrm{\rm WF}}^{\gamma}([0,T] \times {\mathcal U}(\delta^0,x^{0,i}))$, respectively each ${\mathcal C}_{\textrm{\rm WF}}^{2+\gamma}([0,T] \times {\mathcal U}(\delta^0,x^{0,i}))$.
We then let
\begin{align*}
\| \hat{h} \|_{\rm{WF},\gamma} := \sum_{i=1}^K \| \hat{h} \|_{\gamma;[0,T]\times{\mathcal U}(\delta^0,x^{0,i})},
\quad
\| \hat{h} \|_{\rm{WF},2+\gamma} := \sum_{i=1}^K \| \hat{h} \|_{2+\gamma;[0,T]\times{\mathcal U}(\delta^0,x^{0,i})}. 
\end{align*} 
{As before, we stress the fact that, in the core of the text,  we put an additional index `0' and we removed the `hat' in the notations 
${\mathcal C}_{\textrm{\rm WF}}^{\gamma}([0,T] \times \hat{\mathcal S}_{d})$,
${\mathcal C}_{\textrm{\rm WF}}^{2+\gamma}([0,T] \times \hat{\mathcal S}_{d})$, 
$\| \hat h \|_{\rm{WF},\gamma}$
and 
$\| \hat h \|_{\rm{WF},2+\gamma}$, hence writing
${\mathcal C}_{\textrm{\rm WF}}^{0,\gamma}([0,T] \times {\mathcal S}_{d})$,
${\mathcal C}_{\textrm{\rm WF}}^{0,2+\gamma}([0,T] \times {\mathcal S}_{d})$, 
$\| h \|_{\rm{WF},0,\gamma}$
and 
$\| h \|_{\rm{WF},0,2+\gamma}$.}

\subsubsection*{Hybrid spaces}
\label{subse:hybrid}
{We now introduce hybrid spaces of functions with mixed classical and Wright-Fisher regularity.
Again, this notion is directly borrowed from \cite[Chapter 5]{epsteinmazzeo}.
More precisely, a function 
$h$, defined on ${\mathcal S}_{d}$, belongs to 
${\mathcal C}_{\textrm{\rm WF}}^{1,\gamma}({\mathcal S}_{d})$
(respectively 
${\mathcal C}_{\textrm{\rm WF}}^{1,2+\gamma}({\mathcal S}_{d})$), for some $\gamma \in (0,1)$, if 
it is continuously differentiable on ${\mathcal S}_{d}$ (meaning that it is continuously differentiable 
on the interior and the derivatives extend by continuity up to the boundary)  
and each ${\mathfrak d}_{i}h$, for $i \in \ES$, belongs to  
${\mathcal C}_{\textrm{\rm WF}}^{0,\gamma}({\mathcal S}_{d})$
(respectively 
${\mathcal C}_{\textrm{\rm WF}}^{0,2+\gamma}({\mathcal S}_{d})$). 
For $h \in {\mathcal C}_{\textrm{\rm WF}}^{1,\gamma}({\mathcal S}_{d})$, we then let
\begin{equation*}
\| h \|_{\rm{WF},1,\gamma}
:= \| h \|_{\infty} + 
\sum_{i \in \ES} \bigl\| {\mathfrak d}_{i} h \bigr\|_{\rm{WF},0,\gamma},
\end{equation*}
and, for $h \in {\mathcal C}_{\textrm{\rm WF}}^{1,2+\gamma}({\mathcal S}_{d})$, we let
\begin{equation*}
\| h \|_{\rm{WF},1,2+\gamma}
:= \| h \|_{\infty} + 
\sum_{i \in \ES} \bigl\| {\mathfrak d}_{i} h \bigr\|_{\rm{WF},0,2+\gamma}.
\end{equation*}

The parabolic version of ${\mathcal C}_{\textrm{\rm WF}}^{1,2+\gamma}({\mathcal S}_{d})$
(which is the only one we need in the text) 
is defined in a similar way. 
A function 
$h$, defined on $[0,T] \times {\mathcal S}_{d}$, belongs to 
${\mathcal C}_{\textrm{\rm WF}}^{1,2+\gamma}([0,T] \times {\mathcal S}_{d})$, for some $\gamma \in (0,1)$, if 
it 
belongs to 
 ${\mathcal C}_{\textrm{\rm WF}}^{0,2+\gamma}([0,T] \times {\mathcal S}_{d})$
 (and is hence differentiable in space) 
 and each ${\mathfrak d}_{i}h$, for $i \in \ES$, belongs to  
${\mathcal C}_{\textrm{\rm WF}}^{0,2+\gamma}([0,T] \times {\mathcal S}_{d})$. 
For $h \in {\mathcal C}_{\textrm{\rm WF}}^{1,2+\gamma}([0,T] \times {\mathcal S}_{d})$, we then let
\begin{equation*}
\| h \|_{\rm{WF},1,2+\gamma}
:= \| h \|_{\rm{WF},0,\gamma} +
 \| \partial_{t} h \|_{\rm{WF},0,\gamma}
 + 
\sum_{i \in \ES} \bigl\| {\mathfrak d}_{i} h \bigr\|_{\rm{WF},0,2+\gamma}.
\end{equation*}}

\bibliographystyle{abbrv}
\bibliography{references}
%
%
%
%
%
%
%
%
%
%
%
%
%
%

\end{document}